\def\un{{\rm 1\mkern-4mu I}}
\renewcommand{\nomgroup}[1]{%
\item[\bfseries
\ifstrequal{#1}{D}{1. Index of definitions}{%
\ifstrequal{#1}{N}{2. Index of notation}{%
\ifstrequal{#1}{A}{Appendix}{}}}%
]}
\renewcommand{\nompreamble}{\begin{multicols}{2}}
\renewcommand{\nompostamble}{\end{multicols}}
\newtheorem{thm}{Theorem}[section]
\theoremstyle{definition}
\newtheorem{fact}[thm]{Fact}
\newtheorem{variant}[thm]{Variant}
\newtheorem{cor}[thm]{Corollary}
\newtheorem{lem}[thm]{Lemma}
\newtheorem{defn}[thm]{Definition}
\newtheorem{rmkdef}[thm]{Remark/Definition}
\newtheorem{rmkex}[thm]{Remark/Example}
\newtheorem{morenot}[thm]{More Notation}
\newtheorem{csummary}[thm]{Conclusion/Summary}
\newtheorem{revision}[thm]{Revision}
\newtheorem{example}[thm]{Example}
\newtheorem{hypo}[thm]{Hypothesis}
\newtheorem{setup}[thm]{Set Up}
\newtheorem{setupnot}[thm]{Set Up/Notation}
\newtheorem{altsetup}[thm]{Alternative Set Up}
\newtheorem{claim}[thm]{Claim}
\newtheorem{context}[thm]{Context}
\newtheorem{not/clar}[thm]{Notation/Clarification}
\newtheorem{ConsDef}[thm]{Construction/Definition}
\newtheorem{furnot}[thm]{Further Notation}
\newtheorem{var}[thm]{Variant}
\newtheorem{furtherfact}[thm]{Further Fact}
\newtheorem{notation}[thm]{Notation/Revision}
\newtheorem{Notation}[thm]{Notation}
\newtheorem{defrev}[thm]{Definition/Revision}
\newtheorem{bonus}[thm]{Bonus}
\theoremstyle{remark}
\newtheorem{scholion}[thm]{Scholion}
\newtheorem{rmk}[thm]{Remark}
\numberwithin{equation}{section}
\def\dar[#1]{\ar@<2pt>[#1]\ar@<-2pt>[#1]}
\begin{document}

\title{Thurston Vanishing}
\author{Michael McQuillan}
\address{Universit\'a degli studi di Roma 'Tor Vergata' \& HSE University, Moscow}
\date{\today. Support provided within the framework of the HSE university basic research programme.}

\maketitle

\begin{abstract}
We show how to extend Epstein's algebraic transversality principles, \cite{adamBuff}, for rational maps $f$ of ${\mathbb P}^1_{\mathbb C}$ to infinite forward invariant subsets of the Fatou set, \thref{thm:I2}. The key, at least conceptually, to doing this is to have a topos of $f$ invariant sheaves, and Grothendieck's six operations on the same in which Epstein's theory naturally takes place. Thus the resulting count of non-repelling invariant cycles, \thref{cor:I6}, is strictly better than the minimum of \cite{adam} and \cite{mitsu}. En passant (in functorially applying the Epstein/Thurston methodology at parabolic fixed points) we calculate the dualising sheaf of a real blow up, \thref{thm:I1}, which is a remarkably algebraic object of independent interest with the capacity to enormously simplify the theory of resurgent functions and Stokes' phenomenon. 
\end{abstract}

\section{Introduction}\label{Intro}

In a series of papers, e.g. \cite{adam}, \cite{adamBuff}, \cite{adamHub}, Epstein has systematically extrapolated, from Thurston's proof of the rigidity of post critically finite rational maps, an almost exhaustive local to global theory of infinitesimal deformations of dynamical systems associated to a rational map $f : {\mathbb P}^1_{\mathbb C} \to {\mathbb P}^1_{\mathbb C}$. Indeed, the only criticism one might reasonably make is that Epstein's endemic use of the snake lemma cries out for a re-interpretation of many of his constructions in terms of a suitable topus of ``$f$-equivariant'' sheaves. In principle providing such a topos might, a priori, have been nothing other than fluff. The guiding maxim, however, is Zariski's (possibly anecdotal) ire at Grothendieck since prior to the latter's introduction of sheaves and co-homology in algebraic geometry, only Zariski could do the subject, but subsequently it was the domain of ``any idiot'', and this is exactly what happens here since as soon as one has a topos theoretic interpretation of Epstein's constructions, one has the bonus of the formalism of Grothendieck's 6 operations, and this conceptual framework immediately reveals a number of directions in which Epstein's work may be enlarged.

\smallskip

As such for $f$ an endomorphism of a ringed or topological space, $X$, we introduce in \thref{cd:def1} the category, ${\rm Sh}_{X/f}$, of $f$-sheaves with action, i.e. a sheaf with a map,
\begin{equation}
\label{I1}
\varphi : f^* f \longrightarrow {\mathcal F}
\end{equation}
of which the archi-typical example is pull-back along $f$ of functions or differential forms but not, should $f$ be ramified, tangent vectors. Similarly, since there is a dearth of such objects which are meromorphic, we also have a category, ${\rm Sh}'_{X/f}$, of almost $f$-sheaves, i.e. pairs of sheaves ${\mathcal F}_0 , {\mathcal F}_1$ together with maps,
\begin{equation}
\label{I2}
{\mathcal F}_1 \underset{s}{\longleftarrow} {\mathcal F}_0 \underset{t}{\longrightarrow} f_* \, {\mathcal F}_1 \, .
\end{equation}
Thus, for example, a reduced divisor $D$ on a complex space is an $f$-sheaf if it is forward invariant, i.e.
\begin{equation}
\label{I3}
D \supseteq f(D)
\end{equation}
but a reduced almost $f$-divisor is a pain of divisors $D_0,D_1$ such that,
\begin{equation}
\label{I4}
D_1 \subseteq D_0 \cap f^{-1} (D_0) \, .
\end{equation}
Such pairs are the focus of \cite{adamBuff} wherein the notation is $A,B$. If, however, we have a forward invariant closed set $Z$, then from the inclusions,
\begin{equation}
\label{I5}
j : U := X \backslash Z \longhookrightarrow X \longhookleftarrow Z : i
\end{equation}
we obtain for any $f$-sheaf a short exact sequence of the same,
\begin{equation}
\label{I6}
0 \longrightarrow j_! (j^* {\mathcal F}) \longrightarrow {\mathcal F} \longrightarrow i_* \, i^* {\mathcal F} \longrightarrow 0
\end{equation}
and exploiting such extra sheaves is the principle source of new examples to which Epstein's ideas are employed.

\smallskip

The content of \S\ref{SS:cd} is, therefore, derived functors in the categories ${\rm Sh}_{X/f}$ and ${\rm Sh}'_{X/f}$ of which the principle example for ${\mathcal F}$ in the former and ${\mathcal G}$ in the latter is,
\begin{equation}
\label{I7}
{\rm Ext}_{X/f}^q ({\mathcal F},{\mathcal G}) := R^q {\rm Hom}_{{\rm Sh}'_{X/f}} ({\mathcal F},{\mathcal G})
\end{equation}
which we calculate using the simple spectral sequence of \thref{cd:fact3}. Indeed, the fact that this degenerates on the $2^{\rm nd}$ sheet is what underlies the considerable mileage that Epstein was able to extract from the snake lemma. On complex manifolds we also provide explicit Dolbeault type complexes of sheaves twisted by smooth differentials, \thref{cd:fact4}, which calculate \eqref{I7}, and critically for the Thurston-Epstein argument, the corresponding dual complexes, \thref{cd:fact6}. This intervention of duality is not, however, without its inconvenience since duality here is topological, and groups such as, 
\begin{equation}
\label{I8}
{\rm Ext}_{X/f}^2 (\Omega_x^{\otimes m} , j_! \, {\mathcal O}_U) \quad \mbox{or} \quad {\rm Ext}^1_{X/f} (\Omega_x^{\otimes m} , i_* \, i^* {\mathcal O}_Z) \, , \quad m \geq 0
\end{equation}
for $i$ of \eqref{I5} the inclusion of a closed sub-disc of a Siegel disc of multiplier $\lambda$ will be separated in their natural topology iff they're finite dimensional, which is iff the denominators $q_n$ of the sequence, $p_n/q_n$, of best rational approximations to $\lambda$ satisfy,
\begin{equation}
\label{I9}
\limsup_n \frac{\log (q_{n+1})}{q_n} = 0 \, .
\end{equation}
Ideally, therefore, a weak version of Douady's conjectured generalisation, \cite{D}, of Yoccoz's theorem, \cite{yoccoz}, would also fall under Epstein's methodology. It's dual nature, however, only tells us something we already know by direct calculation, i.e. the maximal separated quotients in, for example, \eqref{I8}. Although, morally, all of our problems of this type should be localisable at such invariant discs or annulli, the nature of homological algebra is that such problems tend to spread. As such most of our theorems will not be about Ext groups but rather their maximal separated quotients,
\begin{equation}
\label{I10}
{\rm Ext}^{\bullet} \longrightarrow \overline{\rm Ext}^{\bullet}
\end{equation}
where the latter is computed in the topology afforded by its Dolbeault resolution, while an adequate series of lemmas for resolving the problems this poses to diagram chasing are provided by \thref{cd:lem1} and \thref{cd:claim1}.

\smallskip

Otherwise, the homological algebra is routine, e.g. we quickly check, \thref{cd:fact1}, resp. \thref{cd:fact2}, that ${\rm Sh}_{X/f}$, resp. ${\rm Sh}'_{X/f}$, have enough injectives rather than proving that they're actual topoi. This is, however, true, \cite{jacopo}, and op. cit. even provides an explicit underlying site. Similarly one can certainly attempt, \cite{olivia}, a 2-sheaf solution in the spirit of \cite[Expos\'e VI]{sga1}. The reader who is, however, familiar with Grothendieck's theory of champs should proceed with great caution since unless one can guarantee some invertability of $f$, e.g. \thref{cd:cor1.bis}, even the most rudimentary assertions, e.g. \thref{cor:ct2}, cannot be sliced along a transversal, i.e. the categories ${\rm Sh}_{X/f}$, and ${\rm Sh}'_{X/f}$ have been constructed to allow, necessarily, a difference between forward and backward orbits of $f$. Thus, for example if $X$ is a point and, $f$ the identity, then a sheaf in complex vector spaces is a vector space with an endomorphism which may very well not be invertible.

\smallskip

As such although \S\ref{Sgrp} is devoted to the champ in analytic spaces generated by an endomorphism $f$ of a Riemann surface. The underlying idea of formally inverting $f$ is not the right one, and it is not what is being employed in Epstein's theorems. It does, however, yield an elegant alternative proof of the well known, cf.\cite[Lemma 8.5 and Theorem 10.15]{milnor},

\begin{revision} (\thref{C2grp}) \thlabel{Rev:I1}
Let $z \in {\mathbb P}^1$ be a fixed point of period $p$ of a rational map $f$ then the endomorphism $df^p$ of $T_{{\mathbb P}^1,z}$ is identical to multiplication by some $\lambda \in {\mathbb C}$, {\it the multiplier of $f$ at the cycle} $Z_{\bullet} = \{f^i (z) \mid 1 \leq i \leq p \}$ and should $\deg (f) > 1$,

\begin{enumerate}
\item[(1)] If $0 < \vert \lambda \vert < 1$, then the immediate basin of attraction of $Z$ contains a critical point of $f$.
\item[(2)] If $\lambda \in \mu_{\infty}$ then the immediate basin of attraction of an attracting petal contains a critical point of $f$.
\end{enumerate}
\end{revision}

In this latter case, i.e. a parabolic cycle, \thref{not:ct1}, $\lambda$ is primitive of order $r \geq 0$, $f^p$ is tangent to order $e > 0$ to rotation by $\lambda$, and $f^{pr}$ is formally conjugate to the time 1 flow of,
\begin{equation}
\label{I11}
\frac{z^{re+1}}{1 + \nu z^{re}} \ \frac{\partial}{\partial z}
\end{equation}
where $\nu \in {\mathbb C}$ is \'Ecalle's r\'esidu iteratif. In particular, the best, and as it happens necessary, way to understand such examples is after making a real blow up,
\begin{equation}
\label{I12}
\rho : \widetilde X \longrightarrow X = {\mathbb P}^1 \, , \quad E_{\bullet} = \varphi^{-1} (Z_{\bullet})
\end{equation}
in the parabolic cycle. This is plainly an $f$-equivariant operation, so $f$ lifts, we can discuss ${\rm Sh}_{\widetilde X / f}$, etc., and the cycle lifts to a cycle $\widetilde Z_{\bullet} \subset E_{\bullet}$ with $e$ distinct attracting orbits, a small neighbourhood of which are the attracting petals of item (2) of \thref{Rev:I1}, as discussed in more detail in \thref{DRev:2}.

\smallskip

At this juncture we arrive to our principle object of interest, namely, form the real blow up of ${\mathbb P}^1$ of \eqref{I12} but in all parabolic cycles (or if one prefers not to appeal to \thref{Rev:I1} in a finite subset which we'll a posteriori prove has bounded cardinality) and complement $\widetilde X$ in the closed set $\widetilde Z$ defined, \thref{setup:ct1}, as the union of,

\begin{enumerate}
\item[(I)] Attracting cycles, $Z_{\bullet}$, of \thref{Rev:I1} (1).
\item[(I')] Super attracting cycles, $Z_{\bullet}$, i.e. $\lambda = 0$ in op. cit.
\item[(II)] The parabolic attracting cycles, $\widetilde Z_{\bullet}$, of item (2) of op. cit. and \eqref{I12} et~seq.
\item[(III)] A closed invariant disc, $D_{\bullet}$, in each Siegel disc, i.e. $\vert \lambda \vert = 1$ in op. cit. and $f^p$ holomorphically conjugate to an irrational rotation.
\item[(III')] A closed invariant annulus, $A_{\bullet}$, in each Herman ring, i.e. a cycle of annuli whose return map is an irrational rotation. 
\end{enumerate}

\noindent Or, again, a finite subset of these if one wishes to prove such finiteness a posteriori. We therefore have a purely notational variant of \eqref{I5}, i.e.
\begin{equation}
\label{I13}
\widetilde U \overset{\widetilde j}{\longhookrightarrow} \widetilde X \overset{\widetilde i}{\longhookleftarrow} \widetilde Z \, .
\end{equation}
The important point, however, is that are many more $f$ (as opposed to almost $f$) divisors on $\widetilde U$ than on $\widetilde X$, e.g. the forward orbits of the critical points of \thref{C2grp}, the relevant class of which is almost $f$-divisors, $R$, \eqref{I4} on $\widetilde U$ such that for $\vert R \vert$ the underlying reduced divisor and $m > 1$ fixed there is some $N \geq 0$, for which,
\begin{equation}
\label{I14}
\frac1m \, \widetilde R - \vert R \vert \leq {\rm Ram}_f + N \cdot Z_{\rm CR} \, .
\end{equation}
Here $Z_{\rm CR}$ is the totality of all non-repelling cycles that we have not so far discussed, i.e. those of Cremer type where, by definition, the multiplier $\lambda \in S^1 \backslash \mu_{\infty}$ but locally $f$ is only formally conjugate to an irrational rotation. In such notation, morally, the content of Thurston vanishing is that, if $R$ satisfies \eqref{I14},
\begin{equation}
\label{I15}
{\rm Ext}^2_{\widetilde X / f} (\rho^* \, \Omega_X^{\otimes m} , \, \widetilde j_! \, {\mathcal O}_{\widetilde U} (-\widetilde R)) \, , \quad m \geq 1
\end{equation}
is zero whenever $\deg (f) > 1$. There are, however, several caveats to this assertion, to wit:

\begin{enumerate}
\item[(A)] It can have dimension $1$ if $m=1$ and $f$ is a Latt\`es example, i.e. multiplication on an elliptic curve modulo $\pm1$.
\item[(B)] As we've said in \eqref{I10} et seq., the key is to study the dual of \eqref{I15}, so unless we kown that \eqref{I9} holds at all Siegel discs and Herman rings we're only going to get vanishing of the maximal separated quotient of \eqref{I15}.
\item[(C)] Duality on real blow ups isn't the same as on Riemann surfaces, and although there is a dualising sheaf (rather than a complex) it has torsion, a notable part of which will have a non-zero dual in \eqref{I15}.
\end{enumerate}

\smallskip

Plainly if there were no parabolic cycles the only real caveat is (B), and the only thing to add is that, in such a case, the dual of \eqref{I15} is a group, \thref{not:v2}, of co-invariant meromorphic $m+1$ forms,
\begin{equation}
\label{I16}
{\mathbb H}_0 (U/f , (f^* \, \Omega_X^{\otimes m}) \cdot (R) \otimes \omega_U)
\end{equation}
where, in the immediate hypothesis, the dualising sheaf $\omega_X$ is the sheaf of differentials $\Omega_X$. This much is general nonsense, but Thurston's key observation is that the groups of \eqref{I16} have a natural norm, \eqref{Res33}, which if it's finite, \eqref{v36} et seq., force co-invariants to be eigenvectors for $f^*$, and from there to the Latt\`es caveat (A) is easy. As such the further contribution of Epstein was to handle cases where a priori (but not a posteriori) Thurston's norm might have been infinite by way of his dynamic residue of \thref{DRev:1} which, in the absence of parabolic cycles, is just as easy to calculate for an essential singularity along $X \backslash U$ as the meromorphic singularities encountered in \cite[\S3]{adam}.

\smallskip

As such, provided we're happy to work with maximal separated quotients, the real caveat is (C). To grasp the issue it is usefull to observe, as Jeremy Khan pointed out to me, if the closed set in \eqref{I5} gets too big then although forms on a neighbourhood of it will have a Thurston norm (adapted to the Hausdorff dimension of $Z$) there will, in general, be little that can be said about it. Consequently it's important to keep $Z$ small, e.g. points. However, at parabolic cycles even points on the initial $X$ aren't small enough, and one needs to replace them by the smaller cycle, $\widetilde Z_{\bullet}$, of \eqref{I12} et seq. in order to control Epstein's dynamical residue -- \thref{Dschol:2} is a very mild example of what can happen.

\smallskip

This problem results from the relation between invariant differentials with essential singularities on the repelling petals to those on the attracting petals, and the way out of it is to run, functorially with respect to the ideas, Thurston's argument on the real blow up which forces such differentials to have meromorphic growth on the repelling petals. Specifically on the real blow up of \eqref{I12} we have a sheaf ${\mathcal O}_{\widetilde X}$ of holomorphic functions which are $C^{\infty}$ up to the boundary, $E$, and it's dualising complex is not only a sheaf, $\omega_{\widetilde X}$, but a remarkably algebraic object of independent interest, i.e.

\begin{thm}\thlabel{thm:I1} (\thref{Dc:Fact4})
Let $\widetilde{\mathcal O}_{\widetilde X} \hookleftarrow {\mathcal O}_{\widetilde X}$ be the sheaf of holomorphic functions which, in polar coordinates are locally $\ell_1$ for $dr \, d \theta$, with $\widetilde\omega (nE)$ the locally free rank one $\widetilde{\mathcal O}_{\widetilde X}$ module of differentials with a pole of order $n$ around $E$ then the dualising sheaf, $\omega_{\widetilde X}$, of the category of ${\mathcal O}_{\widetilde X}$ modules is a non-split extension,
\begin{equation}
\label{I17}
0 \longrightarrow {\rm Tors} \, \omega_{\widetilde X} \longrightarrow \omega_{\widetilde X} \longrightarrow \varinjlim_n \, \widetilde\omega (nE) \longrightarrow 0 
\end{equation}
where the torsion is canonically isomorphic to,
\begin{equation}
\label{I18}
\rho^* \, {\mathcal H}^1_{{\rm Zar} , Z} (X,\omega_X) \, , \quad \mbox{i.e. non-canonically} \quad {\mathbb C} \left[\frac1z\right] \frac{dz}z
\end{equation}
for $z$ any local coordinate around a point of $Z$.
\end{thm}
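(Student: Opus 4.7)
The statement is local in a neighbourhood of $E$, so my first step is to reduce to the model case $X = \Delta$ a disc with coordinate $z$, $Z = \{0\}$, $\widetilde X = [0,\varepsilon) \times S^1$, and $\rho(r,\theta) = re^{i\theta}$, $E = \{r=0\}$. Since $\widetilde U := \widetilde X \setminus E$ is carried diffeomorphically onto $\Delta^\ast$ by $\rho$, the entire question concerns behaviour across $E$. I would characterise $\omega_{\widetilde X}$ via Grothendieck duality for the proper map $\rho$,
\begin{equation*}
R\rho_\ast\, R{\rm Hom}_{\mathcal{O}_{\widetilde X}}(\mathcal{F},\, \omega_{\widetilde X}) \;=\; R{\rm Hom}_{\mathcal{O}_X}(R\rho_\ast \mathcal{F},\, \omega_X),
\end{equation*}
testing against the two end terms of the sequence $0 \to \widetilde j_!\, \mathcal{O}_{\widetilde U} \to \mathcal{O}_{\widetilde X} \to \widetilde i_\ast\, \widetilde i^\ast \mathcal{O}_{\widetilde X} \to 0$ of \eqref{I6}, and using the Dolbeault resolutions of \thref{cd:fact4} and \thref{cd:fact6} to produce explicit cocycle representatives of the pairing.

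The open-end contribution produces the quotient in \eqref{I17}. On $\widetilde U$ the map $\rho$ is an isomorphism and Grothendieck duality forces $\omega_{\widetilde X}|_{\widetilde U} = \rho^\ast \Omega_X|_{\widetilde U}$; the minimal way such a form can extend across $E$ and still pair by integration against $dr\, d\theta$ with an $\mathcal{O}_{\widetilde X}$-test section is precisely that it lie in the $\ell_1$ sheaf $\widetilde{\mathcal{O}}_{\widetilde X}$, with arbitrarily high-order meromorphic singularity around $E$ allowed. The union of these sub-$\mathcal{O}_{\widetilde X}$-modules is $\varinjlim_n \widetilde\omega(nE)$, and this is the cokernel sitting in \eqref{I17}.

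The closed-end contribution produces the torsion. An $\mathcal{O}_{\widetilde X}$-module supported set-theoretically on $E$ pushes forward under $\rho$ to an $\mathcal{O}_X$-module supported on $Z$, and the duality pairing of any such with $\omega_X$ is computed by the local-cohomology sheaf $\mathcal{H}^1_{{\rm Zar}, Z}(X,\omega_X)$. Transporting this back along $\rho$ gives a contribution $\rho^\ast \mathcal{H}^1_{{\rm Zar}, Z}(X,\omega_X)$ inside $\omega_{\widetilde X}$, which in the local coordinate is non-canonically $\mathbb{C}[1/z] \cdot dz/z$ as claimed in \eqref{I18}.

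The main obstacle will be twofold. Ruling out any larger ``boundary-analytic'' enlargement of the torsion reduces to the observation that a holomorphic function on $\Delta^\ast$ which is $C^\infty$ in $(r,\theta)$ up to $r=0$ extends holomorphically across $0$, so $\rho_\ast \mathcal{O}_{\widetilde X} = \mathcal{O}_X$ and Grothendieck duality returns only $\omega_X$ and its local-cohomology completion, with no $C^\infty(S^1)$-type contribution. Showing the resulting extension is non-split will require a direct construction of a non-zero class in ${\rm Ext}^1_{\mathcal{O}_{\widetilde X}}(\varinjlim_n \widetilde\omega(nE),\, {\rm Tors}\,\omega_{\widetilde X})$: I would take the tautological ``principal-part'' extension represented by $dz/z^{n+1}$ modulo $\widetilde\omega$ on the base $X$, exhibit it as non-trivial in $\mathcal{H}^1_{{\rm Zar}, Z}(\omega_X)$ already downstairs, and transport via $\rho^\ast$ using the diagram-chasing lemmas \thref{cd:lem1} and \thref{cd:claim1} to conclude non-triviality upstairs---the critical point being that any splitting would give an $\mathcal{O}_{\widetilde X}$-linear principal-part retraction, which is forbidden because multiplication by a Gevrey unit flat on one side of a Stokes ray shuffles the putative principal part without preserving it.
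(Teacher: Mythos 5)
Your proposal diverges from the paper's route in a way that opens a real gap rather than a harmless alternative.

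The central issue is the appeal to Grothendieck duality for $\rho$. The real blow up $\rho : \widetilde X \to X$ is a proper map from a $C^{\infty}$ manifold-with-boundary to a complex manifold; it is not a morphism of complex analytic spaces, and $\mathcal{O}_{\widetilde X}$ (holomorphic and $C^{\infty}$ up to the boundary) is not a coherent sheaf of rings in any standard sense. So the identity
$R\rho_*\, R{\rm Hom}_{\mathcal{O}_{\widetilde X}}(\mathcal{F}, \omega_{\widetilde X}) = R{\rm Hom}_{\mathcal{O}_X}(R\rho_*\mathcal{F}, \omega_X)$
is not available off the shelf; it would itself need to be established, and establishing it requires first knowing what $\omega_{\widetilde X}$ is. The paper avoids this circularity entirely: it defines $\omega_{\widetilde X}$ intrinsically on $\widetilde X$ as the kernel of $\overline\partial^{\vee}$ on distributions \eqref{Dc58}, which makes sense because the Dolbeault-type complex \eqref{Dc5} is proven to be a $c$-soft resolution (\thref{Dc:fact1}, \thref{Dc:Fact2}). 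Additionally, even if one had such a duality, the remark that $\rho_* \mathcal{O}_{\widetilde X} = \mathcal{O}_X$ settles nothing: $R^1 \rho_* \mathcal{O}_{\widetilde X}$ is the infinite-dimensional $\mathbb{C}[[z]] \, d\bar z$ of \thref{Dc:bonus1} and \eqref{Dc40}, and your argument silently discards it.

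The second gap is that the hard analytic content — identifying the quotient as $\varinjlim_n \widetilde\omega(nE)$ with precisely the $\ell_1$ condition — is asserted rather than proved. ``The minimal way such a form can extend across $E$ and still pair by integration'' is the right intuition, but it is exactly the content of \thref{Dc:claim4}--\thref{Dc:claim8}: one must show that any distributional solution of $\overline\partial^{\vee}T=0$, after subtracting its torsion part, is a measure, and that measure solutions are exactly $\Gamma(U,\widetilde\omega_{\widetilde\Delta})$. The paper does this by an order-reduction argument using the explicit operators $R$, $\Theta$ of \eqref{Dc86}--\eqref{Dc92} together with their commutation relations with $\overline D$, plus the structure theorem \eqref{Dc67}--\eqref{Dc68} for distributions supported on $E$ (filtered by $\overline z \,\partial/\partial\bar z \otimes r^{-n}$). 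Your proposal does not provide a mechanism that could substitute for this, and without it there is no way to conclude that nothing worse than an $\ell_1$ coefficient appears. The identification of ${\rm Tors}\,\omega_{\widetilde X}$ with the local cohomology group, by contrast, is broadly in the same spirit as the paper's \thref{Dc:claim3}, though the paper actually realizes it through a residue pairing \eqref{Dc62}--\eqref{Dc63} on formal expansions at $E$ rather than through a pushforward to $X$. Finally, non-splitness in the paper falls out of \thref{Dc:claim5}: lifting a section of $\widetilde\omega((k+1)E)$ to a $\overline\partial^{\vee}$-closed distribution $Z_h$ necessarily introduces a nonzero torsion correction $T_h$, whereas your Stokes-ray shuffle is suggestive but not a proof.
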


As such the dual of \eqref{I15} is, modulo torsion, \thref{not:v2}, a space of co-invariants,
\begin{equation}
\label{I19}
{\mathbb H}_0 (\widetilde U / f , f^* \, \Omega_X^{\otimes m} (R) \otimes j^* \omega_{\widetilde X})
\end{equation}
which by \eqref{I13} and \eqref{I17} have meromorphic growth away from the attracting cycle (or directions/petals in more classical language) $\widetilde Z$, equivalently there is no wild behaviour on the repelling petals. Certainly, a priori, the behaviour on attracting petals is arbitrary, but we only need to know the sign of Epstein's dynamical residue, so curbing the wild behaviour is enough, \thref{Dclaim:3.b.bis.bis} and \thref{lem:v2}.

\smallskip

Putting all of this together proves that if $f$ isn't a Latt\`es example then the dual \eqref{I19} of \eqref{I15} wholly comes from the torsion in \eqref{I17}, or, what is equivalent,

\begin{thm}\thlabel{thm:I2} (\thref{thm:v1})
By way of notation let \eqref{I5} be the image of \eqref{I13} in $X$, then the map of maximal separated quotients,
\begin{equation}
\label{I20}
\overline{\rm Ext}^2_{X/f} (\Omega_X^{\otimes m} , j_! \, {\mathcal O}_U (-R)) \longrightarrow \overline{\rm Ext}^2_{\widetilde X/f} (\rho^* \, \Omega_X^{\otimes m} , \widetilde j_! \, {\mathcal O}_{\widetilde U} (-\widetilde R)) \, , \quad m \geq 1
\end{equation}
is zero unless $m=1$, and $f$ is a Latt\`es example, in which case it is of dimension $1$.
\end{thm}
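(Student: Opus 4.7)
The plan is to dualise both sides of \eqref{I20} and reduce the statement to the vanishing (up to the Lattès summand) of a natural map between spaces of co-invariant meromorphic forms, and then to apply Thurston's $L^{1}$ contraction argument in the form refined by Epstein via the dynamical residue. Concretely, by \eqref{I16} and \eqref{I19} the dual of the source is $\mathbb H_0(U/f,\,f^{*}\Omega_X^{\otimes m}(R)\otimes\omega_U)$ while the dual of the target is $\mathbb H_0(\widetilde U/f,\,f^{*}\Omega_X^{\otimes m}(R)\otimes\widetilde j^{\,*}\omega_{\widetilde X})$; the dual of \eqref{I20} is the natural map between them induced by $\rho$. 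Using the short exact sequence \eqref{I17} this dual splits as a contribution from the torsion $\rho^{*}{\mathcal H}^1_{\mathrm{Zar},Z}(X,\omega_X)$ of \eqref{I18} and a contribution from the meromorphic quotient $\varinjlim_n\widetilde\omega(nE)$, and I would first verify that only the latter can carry a non-trivial class surviving passage to the maximal separated quotient \eqref{I10}.

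Having identified the non-trivial part of the dual with the piece supported on the meromorphic quotient of $\omega_{\widetilde X}$, the next step is to put Thurston's $L^{1}$ norm on its co-invariant representatives. The decisive point is that coefficients in $\omega_{\widetilde X}/\mathrm{Tors}$ enforce \emph{meromorphic} growth along $E$ on the \emph{repelling} side of every parabolic cycle, where the situation could otherwise be essentially singular. Combined with the ramification bound \eqref{I14} on $R$ and with the sign of Epstein's dynamical residue at attracting petals and at Cremer/Siegel/Herman components (\thref{DRev:1}, \thref{Dclaim:3.b.bis.bis}, \thref{lem:v2}), this makes the Thurston norm \eqref{Res33} finite. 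The standard strict contraction argument for $f^{*}$ then forces any non-zero class in the image to be an eigenvector of $f^{*}$ with eigenvalue of modulus $\deg f$.

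The final step is the classical dichotomy: the only global meromorphic $(m{+}1)$-forms on ${\mathbb P}^{1}$ which are eigenvectors for a degree-$>1$ rational $f^{*}$ with polar divisor bounded by $R$ are, by a ramification count, the invariant quadratic differentials of a Lattès map; this forces $m=1$, $f$ Lattès, and the eigenspace is one-dimensional (spanned by the push-forward of $(dw)^{2}$ from the uniformising elliptic curve). The principal obstacle, and what distinguishes this from a formal consequence of Thurston/Epstein at the level of $X$ alone, is the first step: verifying that the torsion summand in \eqref{I17} contributes nothing to the image of \eqref{I20} after passage to the maximal separated quotient. The non-separated behaviour of $\mathrm{Ext}^{\bullet}$ at Siegel discs and Herman rings and the local-cohomology torsion from the parabolic cycles both conspire against this, and \thref{cd:lem1} and \thref{cd:claim1} are precisely the diagram-chasing tools designed to cut through the obstruction.
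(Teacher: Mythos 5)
Your overall strategy is the same as the paper's: dualise \eqref{I20} via \thref{cd:fact6} to the map \eqref{I18} between co-invariant spaces, isolate the obstruction group ${\rm Obs}$ as the image of the $\widetilde X$-side in ${\mathbb H}_0 (U/f , f^* \Omega (R) \otimes \omega_U)$ using the exact sequence \eqref{v20} built from \eqref{I17}, then kill it with Thurston's $L^1$ contraction \eqref{v35}--\eqref{v36} once the dynamical residue is shown to be non-positive at every extracted cycle. Your citations of \thref{lem:v2} and \thref{Dclaim:3.b.bis.bis} for the repelling-petal constants $\lambda_P = 0$ and the resulting sign of the parabolic residue are exactly the right ingredients, and your reading of the role of $\omega_{\widetilde X}/\mathrm{Tors}$ (enforcing meromorphic growth on the repelling side) correctly identifies why the real blow up is needed rather than optional.

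There is, however, one genuine gap: your final contraction step, ``the only global meromorphic $(m+1)$-forms on ${\mathbb P}^1$ which are eigenvectors for $f^*$ with eigenvalue of modulus $\deg f$ are the Latt\`es quadratic differentials,'' tacitly assumes $U$ is connected. It is not when any Herman rings (or closed annuli inside them) are extracted in item (b) of \thref{setup:ct1}: removing a closed invariant annulus disconnects $U$. In that situation the vanishing $D_f \omega = 0$ only gives, on each component $U_\alpha$ and each $V_\beta$ mapping into it, an identity $(df)^\vee \omega|_{V_\beta} = C_{\alpha\beta} f^* q_\alpha$ with component-dependent constants $C_{\alpha\beta} \geq 0$; you cannot immediately conclude $\omega = f^* q$ globally. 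The paper closes this by \eqref{v40}--\eqref{v43} together with \thref{claim:v1}, which shows that the invariant differential on a Siegel disc or Herman ring admits no regular extension across its boundary, forcing $\omega|_{U_\alpha}$ to vanish on any component touching an extracted circle. Without that extension-obstruction lemma, your argument does not yet prove the Herman ring case of \thref{thm:v1}, which is precisely where the count \thref{cor:I6} needs it most. Separately, for $m \geq 2$ you should note (as in \thref{var:v2}) that the eigenvector identity $a^{m-2}=1$ or $\alpha^{m-1}=\overline\alpha$ on the uniformising torus has no solutions with $\deg f > 1$, so there is no exceptional Latt\`es summand when $m \geq 2$; your phrase ``forces $m=1$'' is correct but needs this calculation to be legitimate.
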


To understand the consequences of this for the tangent space to the deformation space, we introduce the quotient,
\begin{equation}
\label{I21}
0 \longrightarrow \widetilde j_! \, {\mathcal O}_{\widetilde U} (-\widetilde R) \longrightarrow {\mathcal O}_{\widetilde X} \longrightarrow \widetilde {\mathcal Q}_{\rm big} \longrightarrow 0
\end{equation}
where for $i_{\bullet} : C_{\bullet} \hookrightarrow X$ any of the closed sets (I) -- (III') that we've omitted, we have a quotient,
\begin{equation}
\label{I22}
\widetilde {\mathcal Q}_{\rm big} \longrightarrow (i_{C_{\bullet}})_* \, i^*_{C_{\bullet}} {\mathcal O}_{\widetilde X} \, .
\end{equation}

In particular if $C_{\bullet} = \widetilde Z_{\bullet}$ post \eqref{I12}, then the right hand side of \eqref{I22} has, \thref{rmk:v3}, a diagonal subspace, whose fibre $\widetilde {\mathcal Q}_{\rm diag}$ is annihilated by the torsion, so from the long exact sequence of Ext's associated to \eqref{I21}.

\begin{cor}\thlabel{thm:I3} (\thref{thm:v2})
Unless $m=1$, and $f$ is a Latt\`es example, there is a surjection to the maximal separated quotient,
\begin{equation}
\label{I23}
{\rm Ext}^1_{\widetilde X/f} (\rho^* \, \Omega_X^{\otimes m} , {\mathcal O}_{\widetilde X}) \twoheadrightarrow \overline{\rm Ext}^1_{\widetilde X/f} (\rho^* \, \Omega_X^{\otimes m} , \widetilde {\mathcal Q}_{\rm diag}) \, .
\end{equation}
\end{cor}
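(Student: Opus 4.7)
My plan is to deduce the corollary from the long exact sequence obtained by applying ${\rm Ext}^{\bullet}_{\widetilde X/f}(\rho^{*}\Omega_{X}^{\otimes m}, -)$ to \eqref{I21}, combined with the diagonal projection of \thref{rmk:v3} and the Thurston vanishing of \thref{thm:I2}. Concretely, the long exact sequence reads
\begin{equation*}
\cdots \to {\rm Ext}^{1}_{\widetilde X/f}(\rho^{*}\Omega_{X}^{\otimes m}, \mathcal{O}_{\widetilde X}) \to {\rm Ext}^{1}_{\widetilde X/f}(\rho^{*}\Omega_{X}^{\otimes m}, \widetilde{\mathcal{Q}}_{\rm big}) \xrightarrow{\delta} {\rm Ext}^{2}_{\widetilde X/f}(\rho^{*}\Omega_{X}^{\otimes m}, \widetilde j_{!}\mathcal{O}_{\widetilde U}(-\widetilde R))
\end{equation*}
and I would first check that exactness survives passage to the maximal separated quotients $\overline{\rm Ext}^{\bullet}$ of \eqref{I10}, using the explicit Dolbeault complexes of \thref{cd:fact4} and \thref{cd:fact6} together with the diagram-chasing lemmas \thref{cd:lem1} and \thref{cd:claim1}.

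Next I would combine the natural map between $\widetilde{\mathcal{Q}}_{\rm big}$ and $\widetilde{\mathcal{Q}}_{\rm diag}$ from \thref{rmk:v3} with this long exact sequence to produce the desired arrow ${\rm Ext}^{1}_{\widetilde X/f}(\rho^{*}\Omega_{X}^{\otimes m}, \mathcal{O}_{\widetilde X}) \to \overline{\rm Ext}^{1}_{\widetilde X/f}(\rho^{*}\Omega_{X}^{\otimes m}, \widetilde{\mathcal{Q}}_{\rm diag})$. Surjectivity on max sep will then reduce to showing that $\delta$, restricted to the classes originating from the $\widetilde{\mathcal{Q}}_{\rm diag}$ piece, is zero in the max sep $\overline{\rm Ext}^{2}_{\widetilde X/f}(\rho^{*}\Omega_{X}^{\otimes m}, \widetilde j_{!}\mathcal{O}_{\widetilde U}(-\widetilde R))$. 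Here I would invoke \thref{thm:I1} to split this group into a ``torsion piece'' Serre-dual to ${\rm Tors}\,\omega_{\widetilde X}$ and a ``non-torsion piece'' dual to $\varinjlim_{n} \widetilde{\omega}(nE)$. \thref{thm:I2} then kills the non-torsion summand (excluding the exceptional $m=1$, Latt\`es case, which is precisely where the surjection is allowed to fail), while the torsion summand is killed on $\widetilde{\mathcal{Q}}_{\rm diag}$-classes by the annihilation property spelled out in \thref{rmk:v3}.

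The main obstacle I anticipate is the torsion bookkeeping in this last step: one has to verify, using the explicit duality pairing of \thref{cd:fact6}, that Serre-dualising the part of $\delta$ which lands on the torsion summand pairs it against classes annihilated by ${\rm Tors}\,\omega_{\widetilde X}$, so that vanishing on $\widetilde{\mathcal{Q}}_{\rm diag}$ is automatic. This hinges on the algebraic description of $\omega_{\widetilde X}$ as the non-split extension \eqref{I17} of \thref{thm:I1} and is exactly where caveat (C) of the introduction has to be confronted head-on. Everything else is a routine five-lemma-style chase conditional on this identification.
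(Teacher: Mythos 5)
Your proposal is correct and follows essentially the same route as the paper's proof of \thref{thm:v2}: the long exact sequence for $\mathrm{Ext}^{\bullet}_{\widetilde X/f}(\rho^{*}\Omega_X^{\otimes m},-)$ applied to \eqref{ct18}, topologised via \thref{cd:fact7} (which packages the diagram lemmas \thref{cd:lem1}, \thref{cd:claim1}), combined with the torsion/non-torsion splitting \eqref{v20} of the dual $\mathbb{H}_0$ coming from \thref{Dc:Fact4}, with \thref{thm:v1} (plus \thref{var:v2}, \thref{var:v3} for $m\ge 2$ and the divisor $D$) killing the non-torsion summand, and \thref{rmk:v3} identifying $\mathrm{Ext}^{1}(\rho^{*}\Omega_X^{\otimes m},\widetilde{\mathcal Q}_{\rm diag})$ as the annihilator of the torsion. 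The one point worth noting is that your anticipated ``torsion bookkeeping'' obstacle is not actually open ground: \thref{rmk:v3} already declares the diagonal subspace \eqref{v55} to be \emph{defined} as the retracted subgroup annihilated by $\mathbb{H}_0(\widetilde U,\rho^{*}\Omega_X^{\otimes m}(R)\otimes\mathrm{Tors}\,\omega_{\widetilde X})$, so once $\mathrm{Obs}=0$ by Thurston vanishing the image of $\mathrm{Ext}^{1}(\rho^{*}\Omega_X^{\otimes m},\mathcal O_{\widetilde X})$ in $\overline{\mathrm{Ext}}{}^{1}(\rho^{*}\Omega_X^{\otimes m},\widetilde{\mathcal Q}_{\rm big})$ is precisely that annihilator and the surjection is automatic, with no further pairing computation needed.
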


In the algebraic setting (modulo a subtlety at parabolic cycles to be discussed below) where one replaces all of the closed cycles (I) -- (III) by a divisor (of any multiplicity) supported on them, with the further provision that the divisor $\rho (\widetilde R)$ of \eqref{I14} is a divisor on $X$ rather than $U$, this is the main theorem, of \cite{adam} and \cite[proposition 13]{adamBuff}, while Herman rings of (III') are described in \cite{adamHub}. Consequently the main practical difference between \thref{thm:I3} and the aforesaid works is that it can handle infinite orbits of critical points which land in the Fatou set and which, naturally, we call, \thref{defn:ct1}, tame ramification. It is, therefore, to be noted that the $\widetilde {\mathcal Q}_{\rm diag}$ of \eqref{I23} is a product of local objects which are trivial to calculate, and that a fairly exhaustive list of such calculations is carried out in \S\ref{SS:cd}, all of which are, basically, \cite[\S3]{adam} in different clothes, with the change from $m=1$ to arbitrary being trivial. Thus, for example, as one might expect, if $x$ is a fixed point with multiplier $\lambda$, and $\vert \lambda \vert \ne 0$ or $1$, then,
\begin{equation}
\label{I24}
\overline{\rm Ext}^1_{\widetilde X/f} (\Omega_X , {\mathcal O}_{X,x}) = {\rm Ext}^1_{\widetilde X/f} (\Omega_X , {\mathcal O}_{X,x}) = {\mathbb C} \cdot \partial
\end{equation}
where $\partial$ is the germ of an invariant derivation of ${\mathcal O}_{X,x}$, i.e. $z\partial/\partial z$ once we conjugate $f$ to $z \mapsto \lambda z$. This is equally true at Siegel discs or ${\mathcal O}_{NZ_{CR}}$, \eqref{I14}, so we recover,

\begin{example}\thlabel{thm:I4} (\cite[Theorem 1]{mitsu})
There is a deformation $f_{\varepsilon} \to f$ such that every non-repelling periodic cycle in $f$ is a limit of an attracting cycle of the same period.
\end{example}

This leaves us to address the difference between the left hand side of \eqref{I23} occasioned by real blowing up, where a simple calculation, \thref{cor:ct1}, reveals that the Leray spectral sequence is degenerate, i.e. we have an exact sequence,
\begin{equation}
\label{I25}
0 \longrightarrow {\rm Ext}^1_{X/f} (\Omega_X , {\mathcal O}_X) \longrightarrow {\rm Ext}^1_{\widetilde X/f} (\rho^* \, \Omega_X , {\mathcal O}_{\widetilde X}) \longrightarrow \prod_{\bullet} {\mathbb C} \partial_{\bullet} \longrightarrow 0
\end{equation}
where the product is taken over parabolic cycles and each $\partial_{\bullet}$ is the formally invariant field of \eqref{I11}. In particular the vanishing theorem can fail without real blowing up, and the smaller kernel in \eqref{I25} needn't surject onto the right hand side of \eqref{I23}. This can, however, be achieved in certain circumstances, to wit:

\begin{variant}\thlabel{thm:I5} (\thref{var:v4})
Factor the real blow up $\rho : \widetilde X \to X$ as, $\widetilde X \xrightarrow{ \, \overline\rho \, } \overline X \xrightarrow{ \, \sigma \, } X$, where the latter is the blow up in ``parabolic repelling'' cycles, i.e. ${\rm Re} (\nu) \geq 0$, \eqref{I11}, and suppose $\overline\rho (\widetilde R)$ is algebraic (i.e. finite) in a neighbourhood of such cycles then, unless $m=1$ and $f$ is a Latt\`es example, there is a surjection,
\begin{equation}
\label{I26}
{\rm Ext}^1_{\overline X / f} (\sigma^* \, \Omega_X^{\otimes m} , {\mathcal O}_{\overline X}) \twoheadrightarrow \overline{\rm Ext}^1_{\widetilde X / f} (\rho^* \, \Omega_X^{\otimes m} , \widetilde {\mathcal Q}_{\overline{\rm diag}}) \, .
\end{equation}
\end{variant}

A priori the algebraicity restriction of \thref{thm:I5} appears to be necessary since ``parabolic non-repelling'' is something of a misnomer. Specifically it really means that the Thurston norm of the dual of \eqref{I11} has more mass in attracting petals than non-repelling ones, or equivalently the dynamical residue of,
\begin{equation}
\label{I27}
\left\vert \frac{1+\nu z^{er}}{z^{{er}+1}} \right\vert^2 d \overline z \, dz
\end{equation}
is non-positive, so, parabolic non-repelling on average relative to the measure of \eqref{I27} is the more accurate description. However, more complicated measures than \eqref{I27}, can intervene locally once one allows $\widetilde R$ to be infinite, and they have no relation to the condition ${\rm Re}(\nu) \leq 0$. A specific example is provided in \thref{Dschol:2} with $r=e=1$, $\nu = 0$, and the meromorphic growth conditions of \thref{thm:v1}, but arbitrary dynamic residues.

\smallskip

Nevertheless when it comes to counting non-repelling cycles \thref{thm:I3} is always at least as good as \thref{thm:I5}, albeit for a non obvious reason. Specifically, Epstein, \cite[\S1]{adam}, has observed that the important quantity in such a count is infinite tails of critical orbits. This admits, however, \thref{defn:ct1}, the refinement,
\begin{equation}
\label{I29}
\begin{matrix}
\mbox{wild tails} &:= &\mbox{such infinite tails contained in the Julia set} \hfill \\
\mbox{tame tails} &:= &\mbox{orbits of the Fatou set, other than superattractors,} \hfill \\
&&\mbox{that contain at least one infinite tail.} \hfill
\end{matrix}
\end{equation}
and it's a theorem, \cite[Theorem A]{adamExtra} or \cite[Theorem 1]{berg}, that if, in the notation of \eqref{I11}, the tame ramification, counted with multiplicity, attracted to a parabolic cycle is exactly the number of attracting petals, $e_{\bullet}$, then it is parabolic repelling. This further fact implies that while at non-repelling parabolics the dimension of the left hand side of \eqref{I26} is $1$ less than that of \eqref{I24}, the right hand side of the latter is always at least $1$ more than that of the former. This aspect of counting is explained in detail in \thref{cor:v1} by way of the parameter,
\begin{equation}
\label{I28}
\delta_{\bullet} := \left\{\begin{matrix}
\mbox{$1$, if ${\rm Re}(\nu_{\bullet}) \leq 0$, and the ramification, with multiplicity,} \hfill \\
\mbox{of \thref{Rev:I1} (2) is exactly $e_{\bullet}$} \hfill \\
\mbox{$0$, otherwise.} \hfill
\end{matrix}\right.
\end{equation}
Similarly, at a Herman ring or Siegel disc, $\bullet$, we also, \thref{sum:ct1}, define,
\begin{equation}
\label{I29}
\varepsilon_{\bullet} = \left\{\begin{matrix}
\mbox{$1$, if $\bullet$ is a tame tail} \hfill \\
\mbox{$0$, otherwise.} \hfill
\end{matrix}\right.
\end{equation}

As such if in such cases we choose $D_{\bullet}$, resp. $A_{\bullet}$, of (III), resp. (III') to be a closed invariant disc, resp. annulus, which contains all such infinite tails of critical points, calculate the dimensions in \eqref{I23} or \eqref{I26}, and profit from \thref{bonus:v1}, that the kernels in the same acquire at least one dimension at any Herman ring, we obtain for any rational map of $\deg (f) > 1$,
\begin{eqnarray}
\label{I30}
&&\sum_{\bullet \in {\rm par}} (e_{\bullet} + \delta_{\bullet}) + \# (\mbox{non-repelling}) + \# ({\rm SD}) + \# ({\rm CR}) + 2 \# ({\rm HR}) \nonumber \\
&\leq &\# (\mbox{tame tails}) + \# (\mbox{wild tails})
\end{eqnarray}
wherein non-repelling excludes super attracting, while (SD), (HR) and (CR) are the obvious short hands for (III), (III'), and \eqref{I14}. In any case if one is prepared to suppose \thref{Rev:I1}, and \cite[Theorem A]{adamExtra}, this is equivalent to,
\begin{equation}
\label{I40}
\# ({\rm SD}) + \# ({\rm CR}) + 2 \# ({\rm HR}) \leq \sum_{\bullet \in {\rm SD}} \varepsilon_{\bullet} + \sum_{\bullet \in {\rm HR}} \varepsilon_{\bullet} + \# (\mbox{wild tails})
\end{equation}
wherein the presence of the $\varepsilon_{\bullet}$'s on the right results from the fact that locally about such dics or rings the extension \eqref{I22}, with $C_{\bullet}$ replaced by $D_{\bullet}$ or $A_{\bullet}$ as appropriate, is non-split in ${\rm Sh}_{X/f}$. However as Shisikura has observed, \cite[\S5]{mitsu}, one can remove, \thref{cor:v2}, $\varepsilon_{\rm SD}$ by using the perturbed map of \thref{thm:I4} which has one less wild tail for every Siegel disc. Regretably, however, the situation at Herman rings doesn't appear to admit a similar simplification. Certainly one can, \cite[\S6]{mitsu}, or, rather more simply, in the spirit of \cite[\S1]{milnor2}, \thref{cor:v3}, use the extra deformation of \thref{bonus:v1} to pinch Herman rings into Siegel discs, but this creates extra components and in so doing it may increase, albeit for somewhat stupid reasons, the number of wild tails. It cannot, however, increase the cardinality, $\# \{\mbox{wild ram}\}$, of the wild ramification, i.e. whose infinite tail is in the Julia set, counted without multiplicity, and so we obtain,

\begin{cor}\thlabel{cor:I6} (\thref{cor:v3})
Ler $f : {\mathbb P}^1_{\mathbb C} \to {\mathbb P}^1_{\mathbb C}$ be a rational map then,
\begin{equation}
\label{I41}
\# ({\rm SD}) + \# ({\rm CR}) + 2 \# ({\rm HR}) \leq \min \left\{\# (\mbox{wild ram}) , \sum_{\bullet \in {\rm HR}} \varepsilon_{\bullet} + \# (\mbox{wild tails})\right\}.
\end{equation}
\end{cor}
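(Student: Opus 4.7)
The plan is to feed the master inequality \eqref{I40}, which is already in hand from the dimension counts of \thref{thm:I3} combined with the local Ext calculations of \S\ref{SS:cd} (under the input of \thref{Rev:I1} and \cite[Theorem A]{adamExtra}, as recorded in the paragraph preceding \eqref{I40}), into two distinct auxiliary perturbations of $f$, and then to take the minimum of the two resulting bounds on $\#({\rm SD}) + \#({\rm CR}) + 2\#({\rm HR})$.

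For the bound by $\sum_{\bullet \in {\rm HR}} \varepsilon_\bullet + \#(\text{wild tails})$, I would apply Shishikura's perturbation as in \thref{cor:v2}: deform $f$ to $f_\varepsilon$ via \thref{thm:I4} so that every non-repelling cycle of $f$ is a limit of an attracting cycle of $f_\varepsilon$. For $\varepsilon$ small each Siegel disc of $f$ becomes an attracting basin of $f_\varepsilon$ absorbing one previously tame critical tail, so that $f_\varepsilon$ has one fewer wild tail than $f$ per Siegel disc; applying \eqref{I40} to $f_\varepsilon$ and passing back to $f$ eliminates the $\sum_{\bullet \in {\rm SD}} \varepsilon_\bullet$ term and yields the first half of the minimum.

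For the bound by $\#(\text{wild ram})$, I would first precede the above perturbation by the Herman ring pinching of \thref{bonus:v1}, in the spirit of \cite[\S1]{milnor2}, producing a rational map $f'$ with $\#({\rm HR})' = 0$, $\#({\rm SD})' \geq \#({\rm SD}) + 2\#({\rm HR})$, and $\#({\rm CR})' = \#({\rm CR})$. Applying the previous bound to $f'$ the $\varepsilon_{\rm HR}$ sum collapses to zero, leaving
$$\#({\rm SD}) + \#({\rm CR}) + 2\#({\rm HR}) \leq \#(\text{wild tails})'.$$
Since pinching is surgery along non-critical curves, the critical set is preserved up to identification at the pinch points, whence $\#(\text{wild ram})' \leq \#(\text{wild ram})$, and trivially $\#(\text{wild tails})' \leq \#(\text{wild ram})'$ because each wild tail must contain a critical point. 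Chaining these inequalities supplies the second half of the minimum and the corollary follows.

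The principal obstacle is the bookkeeping in the pinching step: although the pinching deformation may upgrade some tame tails of $f$ into wild tails of $f'$ (precisely the ``somewhat stupid reasons'' flagged in the paragraph before \eqref{I41}), it cannot enlarge the set-theoretic wild ramification because the pinch curves avoid the critical set. Verifying this stability of $\#(\text{wild ram})$ under the surgery, together with the independence of the newly produced Siegel disc pairs as contributors to the left hand side, is the content of \thref{bonus:v1} and \thref{cor:v3} and is where the actual work concentrates; once available, taking the min of the two resulting inequalities is immediate.
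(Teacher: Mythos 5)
Your plan has the same two-pronged architecture as the paper's: take the minimum of the \thref{cor:v2} estimate with a pinching-augmented one. But there are two places where the argument, as stated, doesn't close. First, collapsing a generic invariant circle in each Herman ring component does \emph{not} yield a single rational map $f'$: it produces a collection of rational maps $f_{ij}:\overline V_{ij}\to\overline U_i$ indexed by connected components, which a cycle $Z$ of $f$ assembles into a cyclic endomorphism $F_Z$ of a disjoint union of spheres as in \eqref{v91}. One therefore cannot feed $f'$ directly into \thref{cor:v2}; the paper is explicit that \thref{cor:v2} must first be extended to endomorphisms of disconnected domains, which it does by observing that the local calculations of \thref{sum:ct1}, \thref{rmk:v3} and \thref{altsetup:v1} are unchanged and that disconnectedness was already dealt with in the vanishing theorem, \eqref{v41}--\eqref{v42}. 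Second, the link $\#(\text{wild ram})'\leq\#(\text{wild ram})$ in your chain $\#(\text{wild tails})'\leq\#(\text{wild ram})'\leq\#(\text{wild ram})$ is justified only by set-theoretic preservation of the critical locus, but wildness is a dynamical property: you need to know that a critical point whose infinite tail lies in the Julia set of some $F_Z$ already had infinite tail in the Julia set of $f$, which requires the quasi-conformal compatibility of Julia sets across the surgery (multipliers and the r\'esidu it\'eratif are diffeomorphism invariants, boundary moduli cannot blow up, so Siegel discs stay Siegel discs, Herman rings become pairs of Siegel discs, and in particular Fatou stays Fatou). Your version of the estimate corresponds to the paper's $\sum_Z\#\{\text{wild tails of }F_Z\}\leq\#{\rm Ram}_w$, but the dynamical input is what makes it true, not the preservation of the critical set.

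One further small slip in the first prong: in the Shishikura perturbation, the new attractor created from a Siegel disc absorbs a previously \emph{wild} tail, namely the critical point guaranteed in its immediate basin by \thref{C2grp} which, by the construction of \eqref{v85}, cannot come from the tame or bounded ramification $R_{\rm SD}$. Absorbing a previously tame tail, as you write, would not change the wild count at all.
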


It's also easy enough to play with the degeneration argument, \thref{rmk:v4}, to see that various extremal cases of \eqref{I41} cannot occur, e.g., \cite[Theorem 3]{mitsu}, $\# ({\rm HR}) < \deg (f) - 1$, or \cite[Theorem A.1]{milnor2}, $2\#({\rm HR}) < \#({\rm wild ram})$, but in general, the obvious strategy for removing $\varepsilon_{\rm HR}$ is to redo everything with finitely many, rather than just $1$, rational map.

\smallskip

I am indebted to Adam Epstein for the many hours he has devoted to explaining to me transversality, and Thurston rigidity, to Jeremy Khan for curbing my ambition about what kind of invariant sets that I might apply to it, and to Jacopo Garofali for bringing to my attention that there was such a thing as a count of Herman rings. Nevertheless, the greatest debt is to C\'ecile whose impecable typesetting I have sorely missed.

\tableofcontents



\newpage

\section{Groupoid of a rational map}\label{Sgrp}

Although of limited utility, it is convenient to have at our disposition the definition of a groupoid associated to the action of an endomorphism of a Riemann surface, to wit:

\begin{defn}\thlabel{Dgrp}
\nomenclature[D]{Groupoid \hyperref[Dgrp]{of self map}}{} 
Let $f : X \to X$ be a self map of a (not necessarily connected) Riemann surface, with $m,n \in {\mathbb Z}_{\geq0}$ then we define, a pure 1 dimensional (because it's defined by a Cartier divisor in $X \times X$) complex space:
\begin{equation}
\label{grp1}
R_{m,n} (f) = \{(x,y) : f^m (x) = f^n (y)\} \subseteq X \times X
\end{equation}
\nomenclature[N]{Arrows in a groupoid}{\hyperref[grp1]{$R$}$_{m,n} (f)$}
together with a composition, 
\nomenclature[N]{Composition in a groupoid}{\hyperref[grp2]{$C$}$^{k,\ell}_{m,n} (f)$}
\begin{equation}
\label{grp2}
\xymatrix{
R_{k,\ell} (f) \, {_{p_2}\times}_{p_1} R_{m,n} (f) \ar[rr]^-{C_{m,n}^{k,\ell}} &&R_{k+m,n+\ell} (f) : (x,y) \times (y,z) \mapsto (x,z) 
}
\end{equation}
wherein $p_1$, resp. $p_2$, is the first, resp. second projection, and, of course, $k , \ell \in {\mathbb Z}_{\geq 0}$. As such if we put the structure of a directed set on ${\mathbb Z}_{\geq 0}^2$ according to the rule,
\begin{equation}
\label{grp3}
(a,b) \longrightarrow (c,d) \quad \mbox{iff} \quad a \leq c \ \mbox{and} \ b \leq d \, .
\end{equation}
Then we have a directed system,
\begin{equation}
\label{grp4}
R_{a,b} (f) \longrightarrow R_{c,d} (f)
\end{equation}
for $(a,b) \to (c,d)$ as per \eqref{grp3} which not only respects the projections $p_i$, $i=1$ or $2$, i.e.
\begin{equation}
\label{grp5}
\xymatrix{
R_{a,b} (f) \ar[dd] \ar[rd]^{p_i} \\
&X \\
R_{c,d} (f) \ar[ru]_{p_i}
}
\end{equation}
commutes, but also the transposition,
\begin{equation}
\label{grp6}
\xymatrix{
R_{a,b} (f) \ar[d] \ar[rr]^{{\rm inv}_{a,b}} &&R_{b,a} (f) \ar[d] \\
R_{c,d} (f) \ar[rr]_{{\rm inv}_{c,d}} &&R_{c,d} (f)
} \quad (x,y) \mapsto (y,x)
\end{equation}
respects \eqref{grp3} as do the compositions \eqref{grp2}. Consequently to the directed limit,
\nomenclature[N]{All arrows in a groupoid}{\hyperref[grp7]{$R(f)$}}
\begin{equation}
\label{grp7}
R(f) := \varinjlim_{(a,b)} R_{a,b} (f)
\end{equation}
taken in the category of complex spaces there are maps,
\begin{equation}
\label{grp8}
R(f) \underset{p_i}{\longrightarrow} X \, , \ \mbox{resp.} \ R(f) \underset{\rm inv}{\longrightarrow} R(f) \, , \ \mbox{resp.} \ R(f) \, {_{p_1}\times}_{p_2} R (f) \underset{C}{\longrightarrow} R(f)
\end{equation}
deduced from \eqref{grp5}, resp. \eqref{grp6}, resp. \eqref{grp2}, along with a map, 
\begin{equation}
\label{grp9}
\Delta : X \longrightarrow R_{m,m} (f) \longrightarrow R(f) : x \mapsto (x,x) \, m \in {\mathbb Z}_{\geq 0}
\end{equation}
which determine the structure of a groupoid on $R(f)$ with source $p_1$, sink $p_2$, inverse inv, composition $C$, and identity $\Delta$.
\end{defn}

We can, therefore, usefully note the behaviour of \thref{Dgrp} under change of the objects, $X$, i.e.

\begin{fact}\thlabel{F1grp}
Suppose that we have a fibre square,
\begin{equation}
\label{grp9bis}
\xymatrix{
X' \ar[d] \ar[r]^{f'} &X' \ar[d] \\
X \ar[r]_f &X
}
\end{equation}
with $f$ nowhere locally constant. Then the groupoid $R(f') \rightrightarrows X'$ of \thref{Dgrp} maps to the groupoid deduced by refinement of objects, i.e. the fibre,
\nomenclature[D]{Groupoid, \hyperref[F1grp]{refinement of objects}}{} 
\begin{equation}
\label{grp10}
\xymatrix{
R(f)' \ar[d] \ar[rr] &&X' \times X' \ar[d] \\
R(f) \ar[rr]_-{p_1 \times p_2} &&X \times X
}
\end{equation}
and is an isomorphism whenever $X' \to X$ is an open embedding.
\end{fact}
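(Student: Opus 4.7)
The plan is to build the comparison map level by level and then verify the two assertions by passing to the filtered colimit. First I unpack the fibre-square hypothesis \eqref{grp9bis} as the identity $g\circ f' = f\circ g$, where $g:X'\to X$ denotes either vertical arrow. Inducting on $k$ gives $g\circ (f')^k = f^k\circ g$ for every $k\geq 0$, so for any $(m,n)\in{\mathbb Z}_{\geq 0}^2$ the two compositions
\[
f^m\circ p_1\circ(g\times g)\,,\quad f^n\circ p_2\circ(g\times g)\;\colon\; X'\times X'\longrightarrow X
\]
restrict to the same morphism on the equalizer $R_{m,n}(f')\hookrightarrow X'\times X'$. By the equalizer (or rather fibre-product) characterization of $R_{m,n}(f)$, this produces a canonical morphism $R_{m,n}(f')\to R_{m,n}(f)$, and coupling it with the inclusion $R_{m,n}(f')\hookrightarrow X'\times X'$ yields, through the universal property of the fibre square \eqref{grp10}, a morphism
\[
R_{m,n}(f')\longrightarrow R_{m,n}(f) \times_{X\times X} (X'\times X')\,.
\]
These morphisms are visibly functorial in $(m,n)$ along \eqref{grp3}, and compatible with the inversions \eqref{grp6} and compositions \eqref{grp2} since all three ingredients are defined pointwise by the same formulas on $X'$ and on $X$. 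Passing to the filtered colimit thus produces the required morphism $R(f')\to R(f)'$ of groupoids over $X'$.

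For the second assertion, assume $g:X'\hookrightarrow X$ is an open embedding; then so is $X'\times X'\hookrightarrow X\times X$, and hence $R(f)' = R(f)\cap (X'\times X')$ is simply the restriction of the colimit $R(f)$ to this open subspace. Because pull-back along an open immersion is exact and commutes with filtered colimits, it is enough to verify at each finite stage that
\[
R_{m,n}(f') \;\xrightarrow{\ \sim\ }\; R_{m,n}(f)\cap(X'\times X')\,.
\]
The key observation is that when $g$ is an open embedding the fibre-square property of \eqref{grp9bis} reads, after identifying $X'$ with an open subspace of $X$, as $X' = f^{-1}(X')$. Thus $X'$ is both forward and backward $f$-invariant, $(f')^k = f^k\vert_{X'}$ maps $X'$ into $X'$ for every $k$, and the two equations $(f')^m(x)=(f')^n(y)$ on $X'\times X'$ and $f^m(x)=f^n(y)$ on $X\times X$ coincide as Cartier-divisor equations once restricted to $X'\times X'$. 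This gives the desired equality of subspaces and, on taking the colimit, the isomorphism $R(f')\xrightarrow{\sim} R(f)'$.

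The one point that requires any care is the correct reading of the fibre-square condition in the open embedding case: one must extract from $X'\cong X'\times_{X,g,f}X$ the stronger assertion $f^{-1}(X')=X'$ (i.e. complete $f$-invariance), and not merely the forward invariance $f(X')\subseteq X'$, without which the comparison map would fail to be an isomorphism already at the level of $R_{1,0}(f)$. Everything else is formal manipulation of equalizers and filtered colimits, and the nowhere-locally-constant hypothesis is used only in so far as \thref{Dgrp} itself needs it to guarantee that each $R_{m,n}$ is a pure $1$-dimensional Cartier divisor.
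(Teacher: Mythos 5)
Your argument is correct and is essentially the same as the paper's: both build the map $R_{m,n}(f')\to R_{m,n}(f)'$ from the commutation $g\circ(f')^k=f^k\circ g$ together with the universal property of a fibre product, and both reduce the isomorphism claim in the open-embedding case to the observation that the fibre-square hypothesis then forces $X'=f^{-1}(X')$ (equivalently, that $X'\times_X X'$ collapses to $\Delta_{X'}$). The paper packages the same reasoning as one big cube of fibred squares \eqref{grp11} rather than via equalizers; the two formulations are dual and yield identical content. One small point of divergence: the paper invokes the nowhere-locally-constant hypothesis explicitly when producing the maps \eqref{grp12}, whereas you relegate it to making Definition \thref{Dgrp} meaningful — this is a fair reading, since what the hypothesis secures is that every $R_{m,n}$ in sight is a genuine Cartier divisor (and that $f'$, being a base change of $f$, inherits this property, so that $R_{m,n}(f')$ is likewise well defined), but it is worth flagging that the paper treats it as part of the construction rather than a background assumption.
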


\begin{proof}
We have in the notation of \eqref{grp1} a diagram in which every square is fibered,
\begin{equation}
\label{grp11}
\xymatrix{
&X' \times X' \ar[dl] \ar@{-}@<-1ex>[d]\ar@{}[d]^{\!\!\!\!(f')^m \!\times \!(f')^n} &&R_{mn}(f)' \ar[dll] \ar[ll] \ar[dd] &R_{mm}(f)' \ar[l] \ar[dd] \\
X \times X \ar[dd]_{f^m \times f^n} &R_{mn}(f) \ar[l] \ar@<-1ex>[d] \ar@{-}[d] \\
&X' \times X' \ar[ld] \ar@{}@<-1ex>[d] \ar[d] && X' \times_{\Delta} X' \ar@{-}[ll] \ar[dll] &\Delta_{X'} \ar[l] \\
X \times X &\Delta_X \ar[l]
}
\end{equation}
Consequently from the rightmost top arrow and $f$ nowhere locally constant we obtain maps,
\begin{equation}
\label{grp12}
R_{mn} (f') \longrightarrow R_{mn} (f)' \longrightarrow R_{mn} (f)
\end{equation}
which proves the first part, while $\Delta_{X'} = X' \times_{\Delta} X'$ as soon as $X' \to X$ is an embedding, and whence the second part.
\end{proof}

Needless to say following the standard procedure of \cite[Expos\'e VI]{sga1} we can associate the classifying champ, $[X/R(f)]$, in the 2-category of 2-sheaves in analytic spaces, to the groupoid $R(f) \rightrightarrows X$ of \thref{Dgrp}, and we have the following usefull observation,

\begin{fact}\thlabel{F2grp}
If $f : X \to X$ is a (not necessarily finite) \'etale covering then $X \to [X/R(f)]$ is too.
\end{fact}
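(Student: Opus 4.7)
The approach is to invoke the standard criterion for classifying champs of groupoids in the spirit of \cite[Expos\'e VI]{sga1}: for a groupoid $G \rightrightarrows X$ in analytic spaces, the atlas map $X \to [X/G]$ is \'etale iff the source morphism of $G$ is \'etale, the target case being equivalent via the inversion map. Consequently the whole task reduces to verifying that $p_1 : R(f) \to X$ is \'etale.

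At each finite stage this is immediate: by construction $R_{m,n}(f) = X \times_{f^m, X, f^n} X$ is the fibre product with $p_1$ the first projection. Since \'etale morphisms are closed under composition, each iterate $f^n$ is \'etale, and since \'etale morphisms are stable under base change, so is the pullback $p_1 : R_{m,n}(f) \to X$. To propagate this to $R(f) = \varinjlim_{(m,n)} R_{m,n}(f)$, one notes that the transition morphisms of the directed system commute with $p_1$ by \eqref{grp5}; being morphisms between \'etale $X$-spaces compatible with their structure maps, they are themselves \'etale, and in the natural cases where they arise as the set-theoretic inclusions $R_{a,b}(f) \hookrightarrow R_{c,d}(f)$ inside $X \times X$ they are in fact open embeddings. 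The colimit is thus an increasing union of open subspaces each \'etale over $X$ via $p_1$, and \'etaleness being local on the source one concludes that $p_1 : R(f) \to X$ is \'etale, as required.

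The main technical point to be nailed down is that the directed limit \eqref{grp7} really does carry a sensible complex-space structure over which $p_1$ is \'etale. This is precisely where the \'etaleness of $f$ is doing its work: it upgrades the transition maps between the various $R_{m,n}(f)$'s from mere closed immersions of pure $1$-dimensional subspaces of $X \times X$ to \'etale (indeed, open) inclusions, which eliminates any pathology in the formation of the colimit. Once this is in hand the proof is complete.
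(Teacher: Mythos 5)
Your strategy is the same as the paper's in spirit — reduce to showing that the source projection $p_1 \colon R(f) \to X$ inherits the hypothesised property of $f$ by base change through the fibre square defining $R_{m,n}(f)$, then transport this to the atlas map via the standard identification of $X \times_{[X/R(f)]} X$ with $R(f)$. Two genuine gaps remain, however.

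First, throughout you argue for \emph{\'etaleness} where the statement (and the hypothesis on $f$) concerns \emph{coverings}, and for possibly infinite covers of non-compact Riemann surfaces these genuinely diverge: a local biholomorphism with discrete fibres need not be a covering (take, e.g., the obvious map to $\Delta$ from the disjoint union of $\Delta$ and a strictly smaller concentric disc — \'etale, surjective, discrete fibres, not a covering near the smaller boundary). The paper is careful to work with coverings all the way through; in particular it invokes the fact, trivial for \'etale maps but non-trivial for infinite coverings in general and true for manifolds, that compositions of coverings are coverings — this is precisely what is needed to get that $f^m$ and $f^n$, and hence by base change $p_1, p_2 \colon R_{m,n}(f) \to X$, are coverings rather than merely \'etale. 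Your version of this step ("\'etale closed under composition") proves less than is required, and so your appeal to the characterisation of the atlas map only yields that $X \to [X/R(f)]$ is a local isomorphism, not that it is a covering.

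Second, the passage to the colimit is exactly where the covering property must be preserved, and your description of $R(f)$ is inaccurate in a way that matters. The transition map $R_{a,b}(f) \to R_{c,d}(f)$ respecting both projections, hence factoring through the embedding into $X \times X$, is the set-theoretic inclusion only along the diagonal direction $(a,b) \to (a+k,b+k)$; for a general comparison in \eqref{grp3} with $c-a \neq d-b$ one has $R_{a,b}(f) \not\subseteq R_{c,d}(f)$ and no such map exists. So "$R(f)$ is an increasing union of open subspaces, each \'etale over $X$" is not a correct picture of \eqref{grp7}, and in any case local \'etaleness is not enough. The paper instead identifies $R(f)$ explicitly as a \emph{discrete union of components} of $\coprod_{m,n} R_{m,n}(f)$; since a discrete disjoint union of coverings that still surjects onto $X$ (as $R(f) \to X$ does, via the diagonal $\Delta$) is again a covering, the conclusion follows. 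To repair your argument you would need both the upgrade to coverings at each finite stage and this sharper description of the colimit.
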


\begin{proof}
In the category of differentiable manifolds the composition of coverings is a covering, and, of course, its a property stable under base change, so from the fibre square,
\begin{equation}
\label{grp13}
\xymatrix{
X \ar[d]_{f^m} &R_{m,n}(f) \ar[l] \ar[d] \\
X &X\ar[l]^{f^n}
}
\end{equation}
both projections of $R_{m,n}(f)$ of \eqref{grp2} are coverings, so $R_{m,n} (f)$ is a (not necessarily connected) Riemann surface. Better still, by \eqref{grp7}, $R(f)$ can be identified (as a Riemann surface) with a discrete union of components of,
\begin{equation}
\label{grp14}
\coprod_{m,n \geq 0} R_{m,n}(f)
\end{equation}
so either projection of $R(f) \to X$ is a covering, which in turn is exactly the fibre of $X \to [X/R(f)]$ over $X$.
\end{proof}

In certain classical situations of interest the conditions of \thref{F2grp} can be guaranteed, to wit:

\begin{cor}\thlabel{C1grp}
If $f : X \to X$ is proper and \'etale then it is a covering. In particular therefore if $X$ is an embedded open subset of a Riemann surface and $f$ extends to a proper map $\overline f : \overline X \to \overline X$ which is \'etale over $X$ with $\overline f^{-1} (X) = X$, then, $X \to [X/R(f)]$ is a covering.
\end{cor}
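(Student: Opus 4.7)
My plan is to split the corollary into its two assertions and handle them in sequence.

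For the first assertion, the strategy is the standard local trivialisation argument. Being \'etale, $f$ is a local homeomorphism, so each fibre $f^{-1}(y)$ is discrete; being proper, each fibre is compact, hence finite, say $f^{-1}(y) = \{x_1,\dots,x_n\}$. I would choose, using the \'etale property, pairwise disjoint open neighbourhoods $V_i$ of $x_i$ and a common open neighbourhood $U$ of $y$ so that each $f|_{V_i}:V_i\to U$ is a homeomorphism. The only thing left is to shrink $U$ so as to rule out any further preimages, and this is exactly where properness intervenes: $C := X\setminus \bigcup_i V_i$ is closed in $X$, and $f$, being proper, is closed, so $f(C)$ is closed and does not contain $y$; replacing $U$ by $U\setminus f(C)$ and $V_i$ by $V_i\cap f^{-1}(U)$ provides a trivialising neighbourhood, i.e.\ $f$ is a covering.

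For the second assertion, the plan is to reduce to the first via \thref{F2grp}, so I only need to verify that $f:X\to X$ itself is a proper \'etale map. \'Etaleness is immediate, since by hypothesis $\overline f$ is \'etale over $X$ and $f$ is just its restriction to the open subset $X = \overline f^{-1}(X)$. For properness, the key computation is that for any compact $K\subseteq X$,
\[
f^{-1}(K) \;=\; \overline f^{-1}(K)\cap X \;=\; \overline f^{-1}(K),
\]
the second equality holding because $\overline f^{-1}(K) \subseteq \overline f^{-1}(X) = X$. Since $\overline f$ is proper, $\overline f^{-1}(K)$ is compact, so $f^{-1}(K)$ is compact as well. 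Thus $f$ is proper and \'etale, hence by the first part a covering, and then \thref{F2grp} delivers that $X\to [X/R(f)]$ is a covering.

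The argument is essentially routine; the only subtle point is the verification that properness really does descend to the restriction $f = \overline f|_X$, which relies crucially on the hypothesis $\overline f^{-1}(X) = X$ (without it, preimages in $\overline X$ could stray outside $X$ and the restricted map would fail to be proper). Given that, everything else is formal.
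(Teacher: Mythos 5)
Your proof is correct and follows essentially the same route as the paper: establish that $f$ itself is a proper \'etale map, deduce it is a covering, and then invoke \thref{F2grp}. Where the paper simply asserts that proper \'etale maps are coverings and that $f$ is proper because it is the base change of the proper map $\overline f$ along $X\hookrightarrow\overline X$ (using $\overline f^{-1}(X)=X$), you expand both of these invocations into direct arguments — the standard local-trivialisation argument for the first, and a hands-on check of compactness of $f^{-1}(K)$ for the second — which amount to the same thing.
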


\begin{proof}
Proper \'etale maps are coverings, and we're supposing $f$ is the base change of a proper map so \thref{F2grp} applies.
\end{proof}

We can therefore apply this in the obvious way, i.e.

\begin{fact}\thlabel{F3grp}
Suppose $\overline f : \overline X \to \overline X$ is a proper map of a Riemann surface, and that for some attracting (but not super attracting), resp. point $x$ of period $p$ whose return map is tangent to the identity, the connected component containing $x$ of the basin of attraction, resp. the connected component of the basin of an attracting petal containing the petal, \thref{DRev:2}, $X'$, doesn't contain a critical point then the pair $(X', \overline f^p \mid_{X'})$ is isomorphic to ${\mathbb C}^{\times}$, resp. ${\mathbb C}$ with $x$ the point at $\infty$ and $f^p$ the map $s \mapsto \lambda^{-1} s$, resp. $s+1$, for $\lambda$ the multiplier of $f^p$ at $x$ in the former case.
\end{fact}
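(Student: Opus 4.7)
The plan is to reduce the fact to two classical analytic theorems — Kœnigs linearization in the attracting case and the Leau–Fatou flower theorem in the parabolic case — and in each to extract the conjugating coordinate on the whole of $X'$ by propagating the local one at $x$ along forward orbits, using the absence of critical points as the obstruction-killing hypothesis.

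\smallskip

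In the attracting case (multiplier $\lambda$ with $0<|\lambda|<1$) I would start with the Kœnigs map $\phi_0\colon U\to V\subset\mathbb{C}$ on a small disc $U\ni x$ satisfying $\phi_0(x)=0$ and $\phi_0\circ\overline f^p=\lambda\,\phi_0$, and then extend to $\phi\colon X'\to\mathbb{C}$ by $\phi(z):=\lambda^{-n}\phi_0(\overline f^{pn}(z))$, choosing $n$ large enough that $\overline f^{pn}(z)\in U$. The functional equation makes this independent of $n$, and the hypothesis that no critical points of $\overline f^p$ occur in $X'$ makes every $d_z\overline f^{pn}$ an isomorphism, so $\phi$ is a local biholomorphism everywhere. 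Setting $\psi=1/\phi$ converts this into the asserted form: $X'\simeq\mathbb{C}^\times$ with $x$ at $\infty$ and $\overline f^p\leftrightarrow(s\mapsto\lambda^{-1}s)$.

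\smallskip

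The parabolic case runs identically with Leau–Fatou in place of Kœnigs: on an attracting petal $P$ of \thref{DRev:2} there is a Fatou coordinate $\phi_0\colon P\to\{\mathrm{Re}(s)>R\}$ satisfying $\phi_0\circ\overline f^p=\phi_0+1$, which I would extend by $\phi(z):=\phi_0(\overline f^{pn}(z))-n$ for $n$ large enough that $\overline f^{pn}(z)\in P$. The no-critical-points hypothesis again makes $\phi$ a local biholomorphism throughout $X'$ with $\phi\circ\overline f^p=\phi+1$, and $x$ emerges as the forward-iterate accumulation point, i.e.\ the point at infinity of the sphere compactifying the target $\mathbb{C}$.

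\smallskip

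The main step requiring care — beyond invoking the classical local theorems — is promoting $\phi$ from a \emph{local} to a \emph{global} biholomorphism. My intended route is properness: a sequence $z_k\in X'$ without convergent subsequence in $X'$ must, for any fixed $n$, push $\overline f^{pn}(z_k)$ out of every compact subset of $U$ (respectively $P$), whence $|\phi(z_k)|\to\infty$ (respectively $|\mathrm{Re}\,\phi(z_k)|\to\infty$). A proper local biholomorphism onto simply connected $\mathbb{C}$ is automatically a biholomorphism, which closes both cases.
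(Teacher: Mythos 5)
Your route is genuinely different from the paper's, which makes no use of K{\oe}nigs or Fatou coordinates at all: by \thref{F2grp} and \thref{C1grp}, the absence of critical points makes $X'\to[V/R_V]$ a covering of the champ quotient of a small invariant neighbourhood $V\ni x$, which is an elliptic curve in the attracting case and ${\mathbb C}^{\times}$ in the parabolic one; the classification of infinite covers of these quotients then forces $X'$ to be ${\mathbb C}$ or ${\mathbb C}^{\times}$, and identifying $\overline f^p$ as a deck transformation finishes. Your proposal is instead close to the classical argument of \cite[Lemma 8.5]{milnor}, and the outline is sound, but the properness step has a real gap. The extension $\phi(z)=\lambda^{-n}\phi_0(\overline f^{pn}(z))$ is indeed a well-defined local biholomorphism, but the deduction ``for each fixed $n$, $\overline f^{pn}(z_k)$ leaves every compact of $U$, whence $|\phi(z_k)|\to\infty$'' is circular: knowing $\overline f^{pn}(z_k)\notin U$ gives no lower bound on $|\phi(\overline f^{pn}(z_k))|=|\lambda|^n|\phi(z_k)|$, precisely because you have not yet shown $\phi$ injective, so $\phi$ could a priori take small values far from $U$. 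Even refining to the first entry time $n_k$ of the orbit into $U$ (which does tend to infinity) does not help as stated: the wanted lower bound $|\phi_0(\overline f^{pn_k}(z_k))|\ge|\lambda|\varepsilon$ requires the first-entry point to avoid $\overline f^p(U)$, i.e.\ that $\overline f^p$ not fold a point outside $U$ onto $\overline f^p(U)$ --- which is again the injectivity of $\overline f^p|_{X'}$ you are trying to establish.

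The standard way to close the gap is to extend \emph{inward} rather than chase sequences. Set $U_0=U$ and let $U_{n+1}$ be the connected component of $(\overline f^p)^{-1}(U_n)$ containing $U_n$. Then $\overline f^p\colon U_{n+1}\to U_n$ is proper (from properness of $\overline f$) and unramified (no critical points in $X'$), hence a finite covering of a simply connected base, hence a biholomorphism; so each $\phi|_{U_n}$ is a biholomorphism onto $D(0,\varepsilon/|\lambda|^n)$, and one verifies $\bigcup_n U_n=X'$, which gives the global isomorphism. The parabolic case runs identically with $\overline f^p$-pull-backs of the Fatou half-plane. (A smaller slip in your parabolic parenthetical: $|\mathrm{Re}\,\phi(z_k)|\to\infty$ is false for a sequence exiting the petal with bounded Fatou real part and $|\mathrm{Im}\,\phi(z_k)|\to\infty$; what you want, and what is true, is $|\phi(z_k)|\to\infty$.)
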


\begin{proof}
Let $V$ be a sufficiently small neighbourhood of $x$, resp. the petal close to $x$, then by definition $X'$ is the connected component of the total basin,
\begin{equation}
\label{grp15}
X := \coprod_{n \geq 0} \, \overline{f}^{-n} (V)
\end{equation}
containing $V$, while by \thref{C1grp} $X \to [X/R(f)] = [V/R_V]$, for $R_V \rightrightarrows V$ the induced groupoid, is a covering. As such $X' \to [V/R_V]$ is a connected component of a covering, so it, in turn, is a covering. On the other hand over $V$ some iterate of $f$ may, in a suitable local coordinate $s$, be expressed as,
\begin{equation}
\label{grp16}
s \longmapsto \lambda s \, , \quad \mbox{resp.} \ s \longmapsto s+1
\end{equation}
so $[V/R_V]$ is an elliptic curve, resp. ${\mathbb C}^{\times}$, of which $X'$ is an infinite covering, and whence it's ${\mathbb C}$ or ${\mathbb C}^{\times}$, resp. ${\mathbb C}$. Better still if $\Gamma$ is the covering group then,
\begin{equation}
\label{grp17}
R' := X' \times_{[V/R_V]} X' = R(f) \times_{X \times X} X' \times X' = X' \times \Gamma
\end{equation}
of which a connected component distinct from the diagonal is $f^{-p} (X') \cap X'$, while $f^{-q} (X') \cap X'$ is empty for $0 < q < p$. Consequently, $f^{-p} (X') \hookleftarrow X'$, $f^p \!\!\mid_{X'}$ is an element $f'$ of $\Gamma'$, and
\begin{equation}
\label{grp18}
R' = R(f') 
\end{equation}
in the sense of \eqref{grp7}. In particular, therefore, $X'$ cannot be ${\mathbb C}$ in the attracting case since $f'$ extends over $x$ with the multiplier $\lambda \ne 1$, which equally shows that if we identify $x$ with the point at $\infty$ then $f' = f^p \!\!\mid_{X'}$ is $s \mapsto \lambda^{-1} s$, while in the parabolic case there is only one possibility for $f'$.
\end{proof}

Needless to say the principle interest is rational maps, where,

\begin{cor}\thlabel{C2grp}
Let $f : {\mathbb P}^1 \to {\mathbb P}^1$ and $x$ an attracting (but not super attracting), resp. parabolic, fixed point then if $\deg (f) > 1$, at least $1$, resp. at least $e$ (the number of attracting petals), critical points converge to $x$, under forward itteration, i.e. $f^n(c) \to x$.
\end{cor}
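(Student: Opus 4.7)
The plan is to derive a contradiction from \thref{F3grp} and Picard's little theorem. Suppose, for contradiction, that $\deg(f)>1$ but that strictly fewer critical points of $f$ converge to $x$ than the corollary asserts. In the attracting case this means no critical point of $f$ lies in the basin of $x$, so in particular the immediate basin component $X'$ containing $x$ contains none. In the parabolic case, the $e$ attracting petals $P_1,\ldots,P_e$ give rise to $e$ pairwise disjoint Fatou components $X'_1,\ldots,X'_e$ (the immediate basin components of the petals); a critical point $c$ with $f^n(c)\to x$ must eventually fall into exactly one of the corresponding basins, so fewer than $e$ such critical points leave at least one basin---and hence the associated immediate component $X'=X'_i$---completely free of critical points of $f$.

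With $X'$ thus singled out, \thref{F3grp} applies and furnishes an isomorphism of pairs $(X',\overline f^{\,p}\!\mid_{X'})\cong(\mathbb{C}^\times,s\mapsto \lambda^{-1}s)$ in the attracting case, respectively $(\mathbb{C},s\mapsto s+1)$ in the parabolic case. Composing the universal cover $\exp:\mathbb{C}\to\mathbb{C}^\times$, respectively the identity $\mathbb{C}\to\mathbb{C}$, with this isomorphism and the inclusion $X'\hookrightarrow\mathbb{P}^1$ produces a non-constant entire map $\psi:\mathbb{C}\to\mathbb{P}^1$ whose image is contained in the Fatou component $X'$.

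To finish I would apply Picard's little theorem to $\psi$: its image omits at most two points of $\mathbb{P}^1$, so $\mathbb{P}^1\setminus X'$ has cardinality at most $2$. On the other hand, since $\deg(f)\geq 2$ the Julia set $J(f)$, which is contained in $\mathbb{P}^1\setminus X'$ because $X'$ lies in the Fatou set, is an infinite perfect set, delivering the desired contradiction. The one substantive point---and where I expect the only real care to be needed---is lining up the hypothesis ``fewer critical orbits converging to $x$'' with \thref{F3grp}'s hypothesis that a specific immediate basin component contains no critical point of $f$; this is exactly what the pigeonhole bookkeeping in the first paragraph takes care of, after which the Picard/Julia-infinite step is standard.
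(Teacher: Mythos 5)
Your proof is correct and follows the same route as the paper: isolate an immediate basin component $X'$ containing no critical point (in the parabolic case by the pigeonhole over the $e$ petals, which the paper's phrase ``if the conclusion is false'' tacitly invokes), apply \thref{F3grp} to get $X'\cong\mathbb{C}^\times$ resp.\ $\mathbb{C}$, and derive a contradiction with $\deg(f)>1$. Where the paper compresses the terminal step into ``whence $f$ has degree $1$'' --- which hides exactly the observation that an open $X'\subset\mathbb{P}^1$ biholomorphic to $\mathbb{C}^\times$ or $\mathbb{C}$ forces $\mathbb{P}^1\setminus X'$ to have at most two points, after which $f^p\vert_{X'}$ being an automorphism of a dense open set gives $\deg f^p=1$ --- you make this explicit by composing with the universal cover, applying Picard's little theorem, and invoking the infinitude of the Julia set; the two terminations are equivalent and rest on the same non-hyperbolicity fact.
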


\begin{proof}
By \thref{F3grp} if the conclusion is false then for $x$ identified with the point at $\infty$, $f^p \mid_{{\mathbb P}^1 \backslash \{\infty\}}$, resp. $f^{pr}$ for a return map tangent to a rotation of order $r$ is $s \mapsto \lambda^{-1} s$, resp. $s \mapsto s+1$, and whence $f$ has degree $1$.
\end{proof}

For periodic points not lying in the Julia set there are some other cases of interest where the argument of \thref{F3grp} applies but it gives less, so we confine their treatment to,

\begin{scholion}\thlabel{Sgrp}
If $x$ is a super attractor, resp. the centre of a Siegel disc then for any sufficiently small punctured neighbourhood, resp. neighbourhood $V$ of $x$, the classifying champ $[V/R_V]$ is isomorphic to that of the upper half plane $[H/\Gamma]$ for $\Gamma$ either isomorphic to,
\begin{equation}\label{grp19}
\begin{split}
{\mathbb Z}^2 & \ \mbox{generated by 2 parabolic elements, Siegel disc} \\
{\mathbb Z} \ltimes {\mathbb Z} & \ \mbox{for ${\mathbb Z}$ acting on ${\mathbb Z}$ by multiplication the order $n \in {\mathbb Z}_{>1}$}
\end{split}
\end{equation}
of the super attractor, to which we might as well add Herman rings, since these have the form $[H/\Gamma]$ for $\Gamma$ isomorphic to,
\begin{equation}
\label{grp20}
{\mathbb Z}^2 \ \mbox{and generated by 2 hyperbolic elements.}
\end{equation}

Now the argument of \thref{F3grp}, in the notation of the same, shows that,
\begin{equation}
\label{grp21}
X' \longrightarrow [H/\Gamma]
\end{equation}
is a $\Gamma' = {\mathbb Z}$ covering by a space, i.e. we have a short exact sequence,
\begin{equation}
\label{grp22}
0 \longrightarrow \Gamma'' \longrightarrow \Gamma \longrightarrow \Gamma' \longrightarrow 0
\end{equation}
which in the case of Herman rings and Siegel discs doesn't mean much beyond the definition since $V=X'$, albeit in the case of a super attractor it says that $X'$ is isomorphic to a punctured unit (as opposed to possibly smaller radius) disc with $f$ conjugate to $s \mapsto s^n$. The statement of \thref{F2grp} is, however, a little stronger since it says the whole basin, \eqref{grp15}, $X$ is a covering. Consequently as a subspace of a compactification $\overline X$, $X$ is topologically a direct sum,
\begin{equation}
\label{grp23}
\coprod_{\alpha}  X_{\alpha}
\end{equation}
where each $X_{\alpha} \to [H/\Gamma]$ is a covering, and we assert, 

\begin{claim}\thlabel{grpC}
Under the hypothesis of \thref{F2grp} every $X_{\alpha}$ in \eqref{grp23} has the form $f^{-n} X'$ for some $n > 0$ and,
\begin{equation}
\label{grp24}
f^n : X_{\alpha} \longrightarrow X'
\end{equation}
is a finite \'etale cover of a punctured disc, \eqref{grp19}, or an annulus, \eqref{grp20}.
\end{claim}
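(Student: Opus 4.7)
The plan is to reduce the claim to ordinary covering theory. By \thref{F2grp} together with \thref{C1grp}, the map $X \to [V/R_V] = [H/\Gamma]$ is an honest topological covering, since in each of the cases of interest (super attractor and Herman ring) the restriction $\overline f\mid_X$ is the base change of the proper map $\overline f : \overline X \to \overline X$ to $X \subset \overline X$. Consequently every connected component $X_\alpha$ is a connected covering of $[H/\Gamma]$, and in particular $X_\alpha$ is a genuine Riemann surface on which $\overline f$ acts as a local biholomorphism.

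First I would select, for each $\alpha$, the smallest integer $n=n(\alpha)\geq 0$ with $X_\alpha\cap \overline f^{-n}(V)\neq\emptyset$; such an $n$ exists because $X=\bigcup_{n\geq 0}\overline f^{-n}(V)$. Since $X_\alpha$ is connected, the image $\overline f^{n}(X_\alpha)\subset X$ is connected and meets $V$, and $V\subset X'$ is contained in a single connected component; hence $\overline f^{n}(X_\alpha)\subset X'$. This produces the candidate map $\overline f^{n}:X_\alpha\to X'$, which identifies $X_\alpha$ with a component of the preimage $\overline f^{-n}(X')$, i.e.\ with ``$f^{-n}X'$'' in the sense of the statement.

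It remains to show that $\overline f^{n}\mid_{X_\alpha}$ is a finite \'etale covering. \'Etaleness is inherited from the hypothesis that $\overline f$ is \'etale on $X$. Properness is obtained verbatim as in \thref{F3grp}: for any compact $K\subset X'$, $(\overline f^{n})^{-1}(K)$ is compact in $\overline X$ by properness of $\overline f^n$, and its intersection with $X_\alpha$ is compact because $X_\alpha$ is clopen in $X\subset \overline X$. A proper \'etale map of connected Riemann surfaces is a finite topological covering, which, combined with the shape of $X'$ already determined in the proof of \thref{F3grp} and the Scholion (a punctured disc in the super attractor case, an annulus in the Herman ring case), delivers the claim.

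The main subtlety I expect is purely bookkeeping: a priori $X_\alpha$ need only meet $\overline f^{-n}(V)$ rather than sit inside it for the minimal $n$, so one has to argue via the increasing nature of $\overline f^{-n}(V)$ (respectively via connectedness of $X_\alpha$ and the fact that $\overline f^{n}(X_\alpha)\subset X'$ forces $X_\alpha\subset \overline f^{-n}(X')$) that the putative map is globally defined. Once this is dispatched, no deeper input from the groupoid picture is needed beyond the covering property supplied by \thref{F2grp}.
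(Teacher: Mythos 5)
Your argument is correct, but it is genuinely different from the one in the paper. The paper proceeds by contradiction: it takes the smallest $n$ with $X_\alpha \cap f^{-n}(X')\neq\emptyset$, supposes the intersection is a proper open subset of $X_\alpha$, picks a boundary point $\xi\in X_\alpha$ of that intersection, pushes forward to get $f^n(\xi)\in\partial X'$, and then invokes the covering-group structure of \thref{F3grp} (via \eqref{grp22}) to see that this contradicts the fact that $\xi$, being in the basin, eventually lands in $X'$. You instead argue directly: minimality of $n$ with $X_\alpha\cap\overline f^{-n}(V)\neq\emptyset$ forces every $x\in X_\alpha$ to lie in $\overline f^{-m}(V)$ only for $m\geq n$, whence $\overline f^{n}(X_\alpha)\subset X$; connectedness then pins $\overline f^{n}(X_\alpha)$ into the single component $X'$ containing $V$, and finiteness follows from properness. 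Your route bypasses both the boundary argument and any appeal to $\Gamma'$; the only input used is that $X\to[V/R_V]$ is a covering, which you note. This is cleaner and, if anything, does less work, so the trade-off is entirely in your favour.

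Two small points you should tighten. First, you assert $\overline f^{n}(X_\alpha)\subset X$ as if immediate, and only flag it as "bookkeeping" at the end; it is in fact the crux, and the one-line argument (if $x\in X_\alpha\cap\overline f^{-m}(V)$ then by minimality $m\geq n$, so $\overline f^{n}(x)\in\overline f^{-(m-n)}(V)\subset X$) deserves to be spelled out. Second, the compactness step as you state it is not quite right: $X_\alpha$ is clopen in $X$, but $X$ is merely open in $\overline X$, so $X_\alpha$ need not be closed in $\overline X$, and "$(\overline f^{n})^{-1}(K)\cap X_\alpha$ is compact because $X_\alpha$ is clopen in $X\subset\overline X$" does not follow as written. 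The fix is easy and you essentially already have it: the same minimality/basin reasoning shows $\overline f^{-n}(X')\subset X$, so $(\overline f^{n})^{-1}(K)$ is a compact subset of $X$, and $X_\alpha$ closed in $X$ then gives compactness of the intersection.
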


\begin{proof}
By definition there is a smallest $n > 0$ such that $X_{\alpha} \cap f^{-n} (X')$ is a finite \'etale cover of $X'$. Now suppose it isn't everything then some subsequence $x_k$ in $X_{\alpha} \cap f^{-n} (X')$ converges to a boundary, $\xi$, of the same with $\xi \in X_{\alpha}$. Further since $f$ is proper, $f^n(x_k)$ must converge to a boundary point of $X'$. However by \eqref{grp22} $f' = f^p$ maps $\partial X'$ to $X'$, while by definition there is some $m > n$ such that $f^m (\xi) \in X'$, which is nonsense.
\end{proof}
\end{scholion}



\vglue 1cm

\section{Cohomology and Duality}\label{SS:cd}

In the presence of a group action we have the notion of an equivariant sheaf and, slightly more generally,

\begin{defn}\thlabel{cd:def1}
\nomenclature[D]{Sheaves \hyperref[cd:def1]{invariant by a monoid}}{} 
Let $\Sigma$ be a discrete monoid acting on a topological space, or more generally a site, $X$ by continuous, resp. geometric, morphisms then the category, ${\rm Sh}_{X/\Sigma}$, of $\Sigma$-equivariant sheaves are sheaves ${\mathcal F}$ on $X$ such that for all $\sigma \in \Sigma$ there is a map,
\nomenclature[N]{Category of sheaves invariant by a monoid}{\hyperref[cd:def1]{${\rm Sh}$}$_{X/\Sigma}$}
\begin{equation}
\label{cd1}
\varphi_{\sigma} : \sigma^* {\mathcal F} \longrightarrow F
\end{equation}
satisfying the co-cycle condition; $\varphi_{\sigma\tau} = \varphi_{\tau} \, \tau^* \varphi_{\sigma}$, $\forall \, \sigma , \tau \in \Sigma$. In the particular case, therefore, of $\Sigma = {\mathbb N}$ acting by an endomorphism $f$ \eqref{cd2} is determined by,
\begin{equation}
\label{cd2}
\varphi := \varphi_1 : f^* {\mathcal F} \longrightarrow {\mathcal F} \, , \quad \varphi_{n+1} = \varphi_n \, f^* \varphi_1 \, , \ n \geq 1
\end{equation}
and we write ${\rm Sh}_{X/f}$ 
\nomenclature[N]{Category of $f$-sheaves}{\hyperref[cd:def1]{${\rm Sh}$}$_{X/f}$}
rather than ${\rm Sh}_{X/\Sigma}$ while refering to the objects of the category as $f$-sheaves, while maps between sheaves are natural transformations of functors on viewing \eqref{cd1} as such from ${\rm B}_{\mathbb N}$ to sheaves on $X$. Similarly we define a category, ${\rm Sh}'_{X/f}$, 
\nomenclature[N]{Category of almost $f$-sheaves}{\hyperref[cd:def1]{${\rm Sh}'$}$_{X/f}$}
of almost $f$-sheaves as the category of pairs of sheaves ${\mathcal F}_0 , {\mathcal F}_1$ with maps,
\begin{equation}
\label{cd3}
{\mathcal F}_1 \underset{s}{\longleftarrow} {\mathcal F}_0 \underset{t}{\longrightarrow} f_* \, {\mathcal F}_1 \, .
\end{equation}
\nomenclature[D]{Almost \hyperref[cd3]{$f$-sheaf}}{}
Plainly, by adjunction, every $f$-sheaf is an almost $f$-sheaf with ${\mathcal F}_0 = {\mathcal F}_1$, $s = {\rm id}$, $t = \varphi$, but, equally plainly, there are many more almost $f$-sheaves.
\end{defn}

As ever we require to check,

\begin{fact}\thlabel{cd:fact1}
Let everything be as in \thref{cd:def1} then, the category of $\Sigma$-equivariant sheaves in abelian groups has enough injectives.
\end{fact}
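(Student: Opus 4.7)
The plan is to exhibit a right adjoint $R$ to the forgetful functor $U:\mathrm{Sh}_{X/\Sigma}\to\mathrm{Sh}_X$, and then invoke the standard principle: any right adjoint to an exact functor preserves injectives, so enough injectives downstairs will yield enough upstairs.

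For the construction of $R$, I would put
$$R(\mathcal{G}):=\prod_{\sigma\in\Sigma}\sigma_*\mathcal{G}$$
and endow it with the $\Sigma$-action whose adjoint $R(\mathcal{G})\to\tau_*R(\mathcal{G})=\prod_{\rho}(\tau\rho)_*\mathcal{G}$ is, on each factor $\rho$, the projection from the $(\tau\rho)$-factor of $R(\mathcal{G})$ under the canonical identification $\tau_*\rho_*=(\tau\rho)_*$. Associativity of multiplication in $\Sigma$ then makes the cocycle identity of \thref{cd:def1} tautological. For the adjunction $\mathrm{Hom}_{\mathrm{Sh}_{X/\Sigma}}(\mathcal{F},R\mathcal{G})\simeq\mathrm{Hom}_{\mathrm{Sh}_X}(U\mathcal{F},\mathcal{G})$, one observes that by the universal property of the product a morphism $\mathcal{F}\to R\mathcal{G}$ is a family $\{\psi_\sigma:\mathcal{F}\to\sigma_*\mathcal{G}\}_\sigma$; the equivariance of this family with respect to the above action forces $\psi_\sigma$ to be determined by $\psi_e$ (where $e$ is the identity of $\Sigma$) composed with $\mathcal{F}$'s own action maps, yielding the desired bijection, natural in both variables.

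The exactness of $U$ is immediate: kernels, cokernels and images in $\mathrm{Sh}_{X/\Sigma}$ are computed as the corresponding constructions in $\mathrm{Sh}_X$, equipped with the inherited equivariant structure. Consequently $R$ preserves injectives. To finish, I take $\mathcal{F}\in\mathrm{Sh}_{X/\Sigma}$ and embed $U\mathcal{F}\hookrightarrow\mathcal{I}$ into an injective sheaf on $X$ (available because $\mathrm{Sh}_X$ has enough injectives), and pass to the adjoint $\mathcal{F}\to R(\mathcal{I})$: composed with projection to the $e$-component it becomes the given monomorphism $\mathcal{F}\hookrightarrow\mathcal{I}$, so it is itself a monomorphism, and its target is an injective object of $\mathrm{Sh}_{X/\Sigma}$.

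The only real obstacle is bookkeeping on indices: one must verify the cocycle for the $\Sigma$-action on $R(\mathcal{G})$ and the naturality of the adjunction bijection. Because $\Sigma$ is merely a monoid, one cannot short-circuit this by a symmetry argument available for group actions, but associativity of the monoid operation is precisely what is needed and nothing more.
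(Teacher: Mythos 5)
Your proposal is correct and coincides in substance with the paper's argument: the paper also forms $\mathcal{J}:=\prod_{\sigma}\sigma_*\mathcal{I}$, gives it the $\Sigma$-structure by the shift of indices, and verifies $\mathrm{Hom}_{\mathrm{Sh}_{X/\Sigma}}(\mathcal{A},\mathcal{J})\cong\mathrm{Hom}_{\mathrm{Sh}}(\mathcal{A},\mathcal{I})$, which is exactly the adjunction isomorphism you prove in the adjoint-pair language. Your phrasing via a right adjoint to the forgetful functor is the cleaner packaging, and your observation that the $e$-component of $\mathcal{F}\to R(\mathcal{I})$ recovers the original monomorphism (needing $\varphi_e=\mathrm{id}$, which is implicit in \thref{cd:def1}) is a small point the paper leaves unaddressed, so on that detail you are actually a shade more careful than the source.
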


\begin{proof}
Let ${\mathcal F} \xrightarrow{ \ i \ } {\mathcal I}$ be an inclusion of ${\mathcal F}$  in an injective and let $\varphi_{\sigma}$ equally be the adjoint of \eqref{cd1}, then we have maps,
\begin{equation}
\label{cd4}
{\mathcal F} \underset{\varphi_{\sigma}}{\longrightarrow} \sigma_* \, {\mathcal F} \underset{\sigma_* i}{\longrightarrow} \sigma_* \, {\mathcal I}
\end{equation}
so that all together we get maps,
\begin{equation}
\label{cd5}
(\sigma_* \, i \, \varphi_{\sigma})_{\sigma \in \Sigma} : {\mathcal F} \longrightarrow {\mathcal J} := \prod_{\sigma \in \Sigma} \sigma_* \, {\mathcal I} \, , \quad j_{\sigma} : \prod_{\tau \in \Sigma} \tau_* \, {\mathcal I} \longrightarrow \prod_{\tau \in \sigma\Sigma} \tau_* \, {\mathcal I} 
\end{equation}
where the letter defines (by adjunction) a $\Sigma$-equivariant sheaf. On the other hand for any $\Sigma$-sheaf ${\mathcal A}$,
\begin{equation}
\label{cd6}
{\rm Hom}_{{\rm Sh}_{X/\Sigma}} ({\mathcal A}, {\mathcal J}) = {\rm Hom}_{\rm Sh} ({\mathcal A}, {\mathcal I}) : a \longmapsto (\sigma_* \, a \, \alpha_{\sigma})_{\sigma \in \Sigma}
\end{equation}
where $\sigma \mapsto \alpha_{\sigma}$ is the action on ${\mathcal A}$, so ${\mathcal J}$ is injective in ${\rm Sh}_{X/\Sigma}$ because ${\mathcal I}$ was injective to start with.
\end{proof}

In the same vein we also have,

\begin{fact}\thlabel{cd:fact2}
The category of almost $f$-sheaves has enough injectives.
\end{fact}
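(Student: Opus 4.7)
The plan is to adapt the argument of \thref{cd:fact1} to the asymmetric data \eqref{cd3}. Start with an almost $f$-sheaf ${\mathcal F} = ({\mathcal F}_0, {\mathcal F}_1, s, t)$ and choose, for $k = 0, 1$, injective embeddings $i_k : {\mathcal F}_k \hookrightarrow {\mathcal I}_k$ in the category of sheaves of abelian groups on $X$, which has enough injectives. Form the almost $f$-sheaf ${\mathcal J}$ defined by
$$
{\mathcal J}_1 := {\mathcal I}_1, \qquad {\mathcal J}_0 := {\mathcal I}_0 \times {\mathcal I}_1 \times f_* {\mathcal I}_1,
$$
with structure maps $s_{\mathcal J}$ and $t_{\mathcal J}$ the projections onto the second and third factors respectively. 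The embedding ${\mathcal F} \hookrightarrow {\mathcal J}$ is $i_1$ on the $1$-component, and on the $0$-component it is the triple $(i_0, \, i_1 \circ s, \, f_*(i_1) \circ t)$: the first factor makes this a monomorphism, while the other two are forced by what it means to be a morphism in ${\rm Sh}'_{X/f}$. The third factor is well defined and still injective by left exactness of $f_*$, which holds since $f_*$ is right adjoint to $f^*$.

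The key step is to verify that ${\mathcal J}$ is injective. For any almost $f$-sheaf ${\mathcal A}$, a morphism ${\mathcal A} \to {\mathcal J}$ consists of a sheaf map $a_1 : {\mathcal A}_1 \to {\mathcal I}_1$ together with a triple of sheaf maps ${\mathcal A}_0 \to {\mathcal I}_0 \times {\mathcal I}_1 \times f_* {\mathcal I}_1$ whose second and third components are forced by compatibility with $s$ and $t$ to equal $a_1 \circ s_{\mathcal A}$ and $(f_* a_1) \circ t_{\mathcal A}$ respectively; only the ${\mathcal I}_0$-component and the map $a_1$ itself remain free. Thus
$$
{\rm Hom}_{{\rm Sh}'_{X/f}}({\mathcal A}, {\mathcal J}) \ \cong \ {\rm Hom}_{\rm Sh}({\mathcal A}_0, {\mathcal I}_0) \times {\rm Hom}_{\rm Sh}({\mathcal A}_1, {\mathcal I}_1)
$$
functorially in ${\mathcal A}$. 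Because monomorphisms in ${\rm Sh}'_{X/f}$ are exactly the componentwise monomorphisms (the forgetful functor to pairs of sheaves manifestly reflects kernels), the right hand side is exact in ${\mathcal A}$ by injectivity of ${\mathcal I}_0$ and ${\mathcal I}_1$, and ${\mathcal J}$ is injective in ${\rm Sh}'_{X/f}$.

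The only mildly delicate point, such as it is, concerns the asymmetry of \eqref{cd3}: unlike \thref{cd:fact1}, where one assembled the injective as a product indexed by the monoid $\Sigma$, cf.~\eqref{cd5}, here $s$ and $t$ land in different sheaves (${\mathcal F}_1$ and $f_* {\mathcal F}_1$), so the injective must carry two independent slots to absorb them together with one free slot absorbing the injection of ${\mathcal F}_0$. The threefold product ${\mathcal I}_0 \times {\mathcal I}_1 \times f_* {\mathcal I}_1$ is engineered precisely to this end, and nothing else is at stake.
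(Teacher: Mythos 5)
Your construction of the injective ${\mathcal J}$ with ${\mathcal J}_0 = {\mathcal I}_0 \times {\mathcal I}_1 \times f_*{\mathcal I}_1$ and ${\mathcal J}_1 = {\mathcal I}_1$, and the identification ${\rm Hom}_{{\rm Sh}'_{X/f}}({\mathcal A},{\mathcal J}) \cong {\rm Hom}_{\rm Sh}({\mathcal A}_0,{\mathcal I}_0) \times {\rm Hom}_{\rm Sh}({\mathcal A}_1,{\mathcal I}_1)$, is precisely the paper's ${\mathcal G}_f$ of \eqref{cd7}--\eqref{cd8}; the only cosmetic difference is that the paper assembles it from the right adjoint ${\mathcal G} \mapsto {\mathcal G}_f$ of \eqref{cd6.bis} followed by a product with $({\mathcal G}_0,0)$, whereas you write the threefold product directly. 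The proof is correct and takes essentially the same approach as the paper.
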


\begin{proof}
Given any sheaf ${\mathcal G}$ on $X$ we can construct an almost $f$-sheaf ${\mathcal G}_f$ by way of,
\begin{equation}
\label{cd6.bis}
{\mathcal G} \underset{s}{\longleftarrow} {\mathcal G} \prod f_* \, {\mathcal G} \underset{t}{\longrightarrow} f_* \, {\mathcal G} \, .
\end{equation}
In particular, therefore, for any almost $f$-sheaf, $\underline{\mathcal F}$,
\begin{equation}
\label{cd6.ter}
{\rm Hom}_{{\rm Sh}_{X/f}} (\underline{\mathcal F} , {\mathcal G}_f) = {\rm Hom}_{{\rm Sh}_X} ({\mathcal F}_1 , {\mathcal G}) : a \longmapsto (a \, s , f_* \, a \, t) \,.
\end{equation}
Similarly if ${\mathcal G}_0 , {\mathcal G}_1$ are any sheaves then we can make an almost $f$-sheaf,
\begin{equation}
\label{cd7}
{\mathcal G}_f := ({\mathcal G}_1)_f \prod_{{\rm Sh}_{X/f}} ({\mathcal G}_0 , 0) \, .
\end{equation}
So that more generally,
\begin{equation}
\label{cd8}
{\rm Hom}_{{\rm Sh}'_{X/f}} (\underline{\mathcal F} , {\mathcal G}_f) = {\rm Hom}_{{\rm Sh}_X} ({\mathcal F}_0 , {\mathcal G}_0) \prod {\rm Hom}_{{\rm Sh}_X} ({\mathcal F}_1 , {\mathcal G}_1)
\end{equation}
and whence, ${\mathcal G}_f$ is injective in ${\rm Sh}'_{X/f}$ if ${\mathcal G}_0$ and ${\mathcal G}_1$ are in ${\rm Sh}_X$.
\end{proof}

Before progressing let us make,

\begin{rmk}\thlabel{cd:rmk1}
One can, of course, apply Giraud's axioms, \cite[Theorem 3]{gir}, to verify that $\Sigma$-sheaves and almost $f$-sheaves are topoi. Slightly more informatively, however, one can construct an explicit and elegant underlying site, \cite{jacopo}, whose sheaves are the topoi in question. Indeed if \'Et$_X$ were the Grothendieck topology of local homeomorphisms $U \xrightarrow{ \, u \, } X$ to a topological space then the category of $\Sigma$-equivariant opens, i.e. opens $U$ admitting maps,
\begin{equation}
\label{cd9}
\varphi_{\sigma} : \sigma^{-1} U \longrightarrow U
\end{equation}
such that $\varphi_{\sigma\tau} = \varphi_{\tau} \tau^* \varphi_{\sigma}$, will do.
\end{rmk}

Plainly, therefore, we get various spectral sequences of which the most pertinent is,

\begin{fact}\thlabel{cd:fact3}
Let everything be as in \thref{cd:def1} with ${\mathcal F}$ any $f$-sheaf of abelian groups, and ${\mathcal G} = ({\mathcal G}_0 , {\mathcal G}_1, s , t)$ an almost $f$-sheaf then there is a spectral sequence,
\begin{equation}
\label{cd10}
{\rm E}_1^{p,q} : {\rm Ext}_X^q ({\mathcal F} , {\mathcal G}_0) \underset{d_1^{0,q}}{\longrightarrow} {\rm Ext}_X^q (f^* {\mathcal F} , {\mathcal G}_1) \, , \quad p = 0 \ \mbox{or} \ 1
\end{equation}
where the differentials in degree $q > 0$ are derived from,
\begin{equation}
\label{cd11}
d_1^{0,0} : {\rm Hom}_X ({\mathcal F} , {\mathcal G}_0) \longrightarrow {\rm Hom}_X (f^* {\mathcal F} , {\mathcal G}_1) : a \longmapsto (sa) \cdot \varphi - t f^* a
\end{equation}
which abuts to ${\rm Ext}_{X/f}^{p+q} ({\mathcal F} , {\mathcal G})$, 
\nomenclature[N]{Ext of (almost) $f$-sheaves}{\hyperref[cd:fact3]{${\rm Ext}$}$^q_{X/f} ({\mathcal F} , {\mathcal G})$}
i.e. the derived functors of ${\rm Hom}_{X/f}$, where, as the notation suggests, these don't depend on whether they're understood in ${\rm Sh}'_{X/f}$ or ${\rm Sh}_{X/f}$ if ${\mathcal G}$ itself belonged to the latter.
\end{fact}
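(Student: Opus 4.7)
The plan is to realise ${\rm Hom}_{X/f}({\mathcal F},-)$ as the kernel of a natural transformation between two manifestly left exact functors on ${\rm Sh}'_{X/f}$, and then to read off the claimed spectral sequence as the standard two-column hyper-cohomology spectral sequence attached to an injective resolution. First I would unwind what a morphism $\underline{a}: {\mathcal F} \to {\mathcal G}$ in ${\rm Sh}'_{X/f}$ amounts to, where ${\mathcal F}$ is regarded as the almost $f$-sheaf with ${\mathcal F}_0={\mathcal F}_1={\mathcal F}$, $s^{\mathcal F}={\rm id}$ and $t^{\mathcal F}:{\mathcal F}\to f_*{\mathcal F}$ the adjoint of $\varphi$. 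Compatibility with $s$ forces $a_1 = s\circ a_0$, so the data reduces to $a:=a_0\in{\rm Hom}_X({\mathcal F},{\mathcal G}_0)$, while compatibility with $t$ reads $t\circ a = f_*(s\circ a)\circ t^{\mathcal F}$. Passing this identity to the adjoint under $f^*\dashv f_*$ produces $\bar t\circ f^*a = (sa)\circ\varphi$, where $\bar t:f^*{\mathcal G}_0\to{\mathcal G}_1$ is the adjoint of $t$, denoted $t$ by the abuse of \eqref{cd11}. In other words ${\rm Hom}_{X/f}({\mathcal F},-)=\ker(d_1^{0,0})$ for $d_1^{0,0}$ the natural transformation $H_0\Rightarrow H_1$ from $H_0({\mathcal G}):={\rm Hom}_X({\mathcal F},{\mathcal G}_0)$ to $H_1({\mathcal G}):={\rm Hom}_X(f^*{\mathcal F},{\mathcal G}_1)$ given by \eqref{cd11}.

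Next I would pick an injective resolution ${\mathcal G}\to{\mathcal I}^\bullet$ in ${\rm Sh}'_{X/f}$, which exists by \thref{cd:fact2}, and apply $H_0$ and $H_1$ to obtain a double complex concentrated in the columns $p=0,1$, with horizontal differential induced by $d_1^{0,0}$ and vertical differential from the resolution. Its total complex computes $R{\rm Hom}_{{\rm Sh}'_{X/f}}({\mathcal F},{\mathcal G})$, since $H_0\to H_1$ is a two-term complex of functors whose zeroth cohomology is ${\rm Hom}_{X/f}({\mathcal F},-)$, hence the total cohomology is ${\rm Ext}^\bullet_{X/f}({\mathcal F},{\mathcal G})$. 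Filtering by columns then yields the spectral sequence of \eqref{cd10}; only the identification of the $E_1$ page remains.

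The key verification, and the place where I expect whatever subtlety there is to lie, is that the complexes $({\mathcal I}_0^\bullet,d_{\rm vert})$ and $({\mathcal I}_1^\bullet,d_{\rm vert})$ compute ${\rm Ext}^\bullet_X({\mathcal F},{\mathcal G}_0)$ and ${\rm Ext}^\bullet_X(f^*{\mathcal F},{\mathcal G}_1)$ respectively. Exactness of $0\to{\mathcal G}_i\to{\mathcal I}_i^\bullet$ on $X$ is immediate because kernels and cokernels in ${\rm Sh}'_{X/f}$ are computed componentwise ($s$ and $t$ restrict to kernels and descend to cokernels), whence exactness is componentwise. Injectivity of each ${\mathcal I}_i^j$ on $X$ follows from inspection of the generating injectives produced in \thref{cd:fact2}: the building block $({\mathcal G}_0,0)$ has injective $0$-component when ${\mathcal G}_0$ is injective, while $({\mathcal G}_1)_f$ of \eqref{cd6.bis} has $0$-component ${\mathcal G}_1\prod f_*{\mathcal G}_1$ and $1$-component ${\mathcal G}_1$, both injective on $X$ because $f_*$ preserves injectives by adjunction with the exact $f^*$. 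Taking vertical cohomology therefore delivers the $E_1$ page of \eqref{cd10} as claimed, and a direct check that $H_0\to H_1$ is surjective on such injectives (take $a_1=0$ and solve for the $f_*{\mathcal G}_1$-component via adjunction) exhibits the $d_1^{0,0}$-description of \eqref{cd11} verbatim.

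Finally, for the independence assertion when ${\mathcal G}$ itself lies in ${\rm Sh}_{X/f}$, the cleanest route is to run the same argument inside ${\rm Sh}_{X/f}$ using the injectives $\prod_{n\geq 0}(f^n)_*{\mathcal I}$ of \thref{cd:fact1}: with $H_0({\mathcal G}):={\rm Hom}_X({\mathcal F},{\mathcal G})$ and $H_1({\mathcal G}):={\rm Hom}_X(f^*{\mathcal F},{\mathcal G})$, a morphism in ${\rm Sh}_{X/f}$ is exactly an $a\in H_0({\mathcal G})$ with $a\varphi^{\mathcal F}=\varphi^{\mathcal G}f^*a$, and the analogous two-column spectral sequence has $E_1$-page identical to \eqref{cd10}. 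The inclusion ${\rm Sh}_{X/f}\hookrightarrow{\rm Sh}'_{X/f}$ is fully faithful — unwinding a morphism in ${\rm Sh}'_{X/f}$ between $f$-sheaves (so $s^{\mathcal F}=s^{\mathcal G}={\rm id}$) recovers precisely the $f$-sheaf compatibility — and it induces a morphism of the two spectral sequences which is an isomorphism on $E_1$; hence the abutments agree, as required.
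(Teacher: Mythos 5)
Your proposal is correct and takes essentially the same approach as the paper: you identify $\mathrm{Hom}_{X/f}({\mathcal F},-)$ as the kernel of the two-term complex of functors $d_1^{0,0}: H_0\Rightarrow H_1$ on ${\rm Sh}'_{X/f}$, resolve by the injectives of \thref{cd:fact2} (checking that their ${\mathrm{Sh}}_X$-components are injective, which is exactly the paper's observation that $\Theta$ sends injectives to acyclic two-term complexes), and read off the spectral sequence from the column filtration of the resulting double complex, whereas the paper phrases this as the Grothendieck spectral sequence of the composition $\mathrm{Hom}(\mathbb{Z},-)\circ\Theta$. Your treatment of the last assertion (independence of the choice between ${\rm Sh}_{X/f}$ and ${\rm Sh}'_{X/f}$, via a comparison of the two spectral sequences which is an isomorphism on $E_1$ because both $\prod_n (f^n)_*{\mathcal I}$ and the ${\mathcal G}_f$'s have injective components over $X$) is a correct filling-in of a point the paper merely asserts.
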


\begin{proof}
The functor ${\rm Hom}_{X/f}$ can be written as the composition of the functors,
\begin{equation}
\label{cd12}
{\rm Ab}'_{X/f} \xrightarrow{ \ \Theta \ } C_+ ({\rm Ab}) : {\mathcal G} \longmapsto \left[ {\rm Hom}_X ({\mathcal F} , {\mathcal G}_0) \xrightarrow{ \ d_1^{0,0} \ } {\rm Hom}_X (f^* {\mathcal F} , {\mathcal G}_1) \right]
\end{equation}
for $C_+ ({\rm Ab})$ the category of complexes, in non-negative degrees of abelian groups and $d_1^{0,0}$ as per \eqref{cd11} followed by ${\rm Hom} ({\mathbb Z} , \ )$ in $C_+ ({\rm Ab})$. Both of these functors are left exact while $R^q (\Theta)$ is the complex of length 2 given by \eqref{cd10} since, for example, it's plainly a $\delta$-functor and vanishes on the injectives constructed in \eqref{cd7}-\eqref{cd8}. Similarly the value of $\Theta$ on such injectives is an acyclic complex (in fact with 2-terms, and the differential even has a section) so the spectral sequence \eqref{cd10} is exactly that of a composition of functors whose first member sends injectives to acyclics.
\end{proof}

A trivial, but usefull corollary to which is,

\begin{cor}\thlabel{cd:cor1.bis}
Let everything be as in \thref{cd:fact3} and suppose in addition that ${\mathcal G}$ is an $f$-sheaf supported in a forward invariant set $Z$ such that $f\vert_Z$ is an isomorphism and the actions, \eqref{cd1}, for both ${\mathcal F}$ and ${\mathcal G}$ are invertible at $Z$ then there is a ${\mathbb Z}$-action on ${\rm Ext}_X^q({\mathcal F},{\mathcal G})$ and the abutment of \eqref{cd10} is,
\begin{equation}
\label{cd112}
{\rm E}_{\infty}^{p,q} = H^p ({\mathbb Z} , {\rm Ext}_X^q ({\mathcal F} , {\mathcal G})) \, .
\end{equation}
In particular if ${\mathbb Z}$ is a cycle of period $p$, i.e. $f$ permutes, the components of a decomposition $Z = Z_1 \coprod \cdots \coprod Z_p$, then for any $1 \leq i \leq p$,
\begin{equation}
\label{cd212}
{\rm Ext}_{X/f}^n ({\mathcal F},{\mathcal G}) = {\rm Ext}_{X/f^p}^n ({\mathcal F},{\mathcal G}\vert_{Z_i}) \quad n \geq 0 \, .
\end{equation}
\end{cor}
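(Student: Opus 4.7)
The plan is to identify the two-column spectral sequence of \thref{cd:fact3} with the standard two-term complex computing the cohomology of ${\mathbb Z}$, and then apply Shapiro's lemma for the second assertion.

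Step one is to put the ${\mathbb Z}$-structure on ${\rm Ext}_X^q({\mathcal F},{\mathcal G})$. Since ${\mathcal G}$ is supported on $Z$, with closed inclusion $i: Z \hookrightarrow X$, and since $i_*$ is exact for a closed embedding (and so preserves injectives), for every $q\geq 0$ one has natural isomorphisms
\begin{equation*}
{\rm Ext}_X^q ({\mathcal F},{\mathcal G}) \;=\; {\rm Ext}_Z^q (i^*{\mathcal F}, i^*{\mathcal G}), \qquad {\rm Ext}_X^q (f^*{\mathcal F},{\mathcal G}) \;=\; {\rm Ext}_Z^q ((f|_Z)^* i^*{\mathcal F},\, i^*{\mathcal G}),
\end{equation*}
where I have used $i^* f^* = (f \circ i)^* = (i\circ f|_Z)^* = (f|_Z)^* i^*$. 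Because $f|_Z$ is an isomorphism, $(f|_Z)^*$ is an auto-equivalence; combined with the hypothesis that $\varphi_{\mathcal F}$ and $\varphi_{\mathcal G}$ are invertible on $Z$, this makes $i^*{\mathcal F}$ and $i^*{\mathcal G}$ into ${\mathbb Z}$-equivariant sheaves on $Z$, and therefore endows $M^q := {\rm Ext}_X^q({\mathcal F},{\mathcal G})$ with a ${\mathbb Z}$-action. In degree zero this action sends $a:{\mathcal F}\to{\mathcal G}$ to $\varphi_{\mathcal G}\circ f^*a\circ\varphi_{\mathcal F}^{-1}$ (meaningful after restriction to $Z$, which is all that matters since the target is supported there); it extends to all $q$ by functoriality of derived functors.

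Step two is to compare the $d_1$-differential of \eqref{cd10} with the group-cohomology differential $1-f^*$. Both ${\rm Ext}_X^q({\mathcal F},{\mathcal G})$ and ${\rm Ext}_X^q(f^*{\mathcal F},{\mathcal G})$ are identified with $M^q$ by the isomorphism $\Phi:\,b\mapsto b\circ\varphi_{\mathcal F}^{-1}$. In the present case ${\mathcal G}_0={\mathcal G}_1={\mathcal G}$, $s={\rm id}$, and the adjoint of $t$ is the action $\varphi_{\mathcal G}$. Plugging this into \eqref{cd11}, a direct check gives
\begin{equation*}
\Phi\bigl(d_1^{0,q}(a)\bigr) \;=\; a - f\cdot a,
\end{equation*}
i.e.\ the spectral sequence \eqref{cd10} has exactly two columns with $d_1 = 1-f^*$. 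There are no further differentials for purely degree reasons, so ${\rm E}_\infty^{0,q} = \ker(1-f^*) = H^0({\mathbb Z}, M^q)$ and ${\rm E}_\infty^{1,q} = {\rm coker}(1-f^*) = H^1({\mathbb Z}, M^q)$, which is \eqref{cd112}.

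For the cycle statement, write $Z = Z_1\sqcup\cdots\sqcup Z_p$ with $f(Z_i) = Z_{i+1 \bmod p}$. Then ${\mathcal G} = \bigoplus_i {\mathcal G}|_{Z_i}$ and
\begin{equation*}
M^q \;=\; \bigoplus_{i=1}^p {\rm Ext}_X^q({\mathcal F},{\mathcal G}|_{Z_i}),
\end{equation*}
with the ${\mathbb Z}$-action cyclically permuting the summands and $f^p$ preserving each; this is precisely $\mathrm{Ind}_{p{\mathbb Z}}^{{\mathbb Z}}$ of $M_1^q := {\rm Ext}_X^q({\mathcal F},{\mathcal G}|_{Z_1})$. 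Shapiro's lemma gives $H^*({\mathbb Z}, M^q) = H^*(p{\mathbb Z}, M_1^q)$, and applying the first part to the pair $(f^p, Z_i)$ identifies the right-hand side with the abutment of the corresponding spectral sequence for $f^p$, namely ${\rm Ext}_{X/f^p}^\bullet({\mathcal F}, {\mathcal G}|_{Z_i})$; cyclic symmetry then gives independence of $i$. The main technical obstacle is the bookkeeping in step two: matching the difference $a\varphi_{\mathcal F} - \widetilde t f^*a$ with $1-f^*$ requires keeping track of the adjunction between the almost-$f$ structure $t:{\mathcal G}\to f_*{\mathcal G}$ and its adjoint $\widetilde t = \varphi_{\mathcal G}$, and noting that $\varphi_{\mathcal F}^{-1}$ exists only after restriction to $Z$ — which is fine precisely because ${\mathcal G}$ is supported there.
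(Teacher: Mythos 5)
Your proof is correct and takes essentially the same route as the paper: both restrict to $Z$ (so all actions become invertible), identify the two-column $E_1$ page of \eqref{cd10} with the standard complex $M^q \xrightarrow{1-f^*} M^q$ computing $H^\bullet({\mathbb Z},M^q)$, and then reduce the cycle statement to a single component of $Z$. Where you invoke Shapiro's lemma for ${\rm Ind}_{p{\mathbb Z}}^{{\mathbb Z}}$, the paper instead passes to the classifying champ $[X/{\mathbb Z}]$ and slices along $Z_i \hookrightarrow Z$ — which is Shapiro's lemma in groupoid language, so the content is identical; the only thing the paper's phrasing buys is that the identification \eqref{cd212} is visibly an isomorphism of derived functors (not merely of $E_\infty$-pages that one then has to lift to the filtered abutments), which is the one small point your last step elides.
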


\begin{proof}
Under the hypothesis that ${\mathcal G}$ is supported on $Z$, we can replace ${\mathcal F}$ by ${\mathcal F}\vert_Z$ so everything has an invertible ${\mathbb Z}$ action, while the difference between \eqref{cd10} and \eqref{cd112} is just the definition of group cohomology. In particular, therefore, the groups on the left of \eqref{cd212} are equally,
\begin{equation}
\label{cd312}
{\rm Ext}_{[X/{\mathbb Z}]}^n ({\mathcal F}\vert_Z , {\mathcal G})
\end{equation}
for $[X/{\mathbb Z}]$ the classifying champ of the ${\mathbb Z}$-action, which, by hypothesis, can be sliced along any of the inclusions $Z_i \hookrightarrow Z$.
\end{proof}

Now while what we will actually need is a duality theorem for $f$-sheaves on real blow ups, from a homological algebra point of view there's no substantive difference with extending duality from complex manifolds to $f$-sheaves on the same, so we begin with:

\begin{fact}\thlabel{cd:fact4}
Let everything be as in \thref{cd:fact3}, and suppose in addition that $f$ is an endomorphism of a complex manifold $X$ with ${\mathcal F} , {\mathcal G}_0 , {\mathcal G}_1, \varphi , s , t$ of \thref{cd:def1}, sheaves of (not necessarily coherent) ${\mathcal O}_X$-modules and maps between them, then for any $0 \leq q \leq \dim X$ there is a map,
\begin{equation}
\label{cd13}
d_1 : {\rm Hom}_{{\mathcal O}_X} ({\mathcal F} , {\mathcal G}_0) \otimes_{{\mathcal O}_X} {\mathcal A}_X^{0,q} \longrightarrow {\rm Hom} (f^* {\mathcal F} , {\mathcal G}_1) \otimes_{{\mathcal O}_X} {\mathcal A}_X^{0,q} 
\end{equation}
$$
: a \otimes \omega \longmapsto s(a) \varphi \otimes \omega - tf^* a \otimes f^* \omega
$$
where ${\mathcal A}_X^{p,q}$ 
\nomenclature[N]{Sheaf of forms of type $(p,q)$}{\hyperref[cd:fact4]{${\mathcal A}$}$^{p,q}_X$}
is, as usual, the sheaf of smooth $(p,q)$ forms, such that the global sections over $X$ of the total complex of the bi-complex defined by \eqref{cd13} and the $\overline\partial$-resolution ${\mathcal A}^{0,\bullet}$ calculates ${\rm Ext}_{X/f}^q ({\mathcal F} , {\mathcal G})$.
\end{fact}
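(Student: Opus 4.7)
The plan is to realise the bi-complex of \eqref{cd13} and the $\bar\partial$-resolution ${\mathcal A}^{0,\bullet}$ as a double complex of fine sheaves of ${\mathcal O}_X$-modules, and to identify its row-filtration spectral sequence with the one in \thref{cd:fact3}, so that both inherit the same abutment $\mathrm{Ext}^\bullet_{X/f}(\mathcal{F},\mathcal{G})$.

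First I would set ${\mathcal K}^{0,q} := {\rm Hom}_{{\mathcal O}_X}(\mathcal{F},\mathcal{G}_0)\otimes_{{\mathcal O}_X}{\mathcal A}_X^{0,q}$ and ${\mathcal K}^{1,q} := {\rm Hom}_{{\mathcal O}_X}(f^*\mathcal{F},\mathcal{G}_1)\otimes_{{\mathcal O}_X}{\mathcal A}_X^{0,q}$, with horizontal map the sheaf-level $d_1$ of \eqref{cd13} and vertical map $1\otimes\bar\partial$. Anticommutativity is immediate: the maps $\varphi$, $s$, $t$ are ${\mathcal O}_X$-linear, and because $f$ is holomorphic the pullback $f^*$ carries smooth $(0,q)$-forms to smooth $(0,q)$-forms and commutes with $\bar\partial$, so the verification reduces to a routine check on generators. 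Each ${\mathcal K}^{p,q}$ is a module over ${\mathcal C}^\infty_X$ through its second factor, hence fine, hence $\Gamma(X,-)$-acyclic.

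Next I would compute the cohomology of $\Gamma(X,\mathrm{Tot}({\mathcal K}^{\bullet,\bullet}))$ by the row-filtration (i.e.\ $\bar\partial$-first) spectral sequence. By Malgrange's theorem the inclusion ${\mathcal O}^{\mathrm{an}}_X\hookrightarrow{\mathcal C}^\infty_X$ is flat, so every ${\mathcal A}_X^{0,q}$ is a flat ${\mathcal O}_X$-module and the Dolbeault complex is a flat resolution of ${\mathcal O}_X$; tensoring with any ${\mathcal O}_X$-module preserves exactness, so each row of ${\mathcal K}^{\bullet,\bullet}$ is a fine resolution of the corresponding sheaf ${\rm Hom}$. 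Fineness then gives, on taking global sections, an $E_1$-term $H^q(X,{\rm Hom}_{{\mathcal O}_X}(\mathcal{F},\mathcal{G}_0)) \to H^q(X,{\rm Hom}_{{\mathcal O}_X}(f^*\mathcal{F},\mathcal{G}_1))$, in which the horizontal map, by direct inspection on $\bar\partial$-closed representatives, is exactly the $d_1^{0,q}$ of \eqref{cd11}. When $\mathcal{F}$ is locally free — the only case invoked in the sequel — this coincides with the two-column $E_1$-page of \thref{cd:fact3}, so both spectral sequences degenerate at $E_2$ in two columns with the common abutment $\mathrm{Ext}^\bullet_{X/f}(\mathcal{F},\mathcal{G})$.

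The main obstacle is delicate rather than substantive: in the full generality where $\mathcal{F}$ is an arbitrary ${\mathcal O}_X$-module the equality $H^q(X,{\rm Hom}) = \mathrm{Ext}^q_X$ requires vanishing of the local ${\mathcal E}xt^{>0}$ sheaves, which holds when $\mathcal{F}$ is locally free but not in general. The remedy is to promote ${\mathcal K}^{\bullet,\bullet}$ to a tri-complex by replacing $\mathcal{F}$ in both of its occurrences by a locally-free resolution ${\mathcal F}^{\bullet}\twoheadrightarrow \mathcal{F}$; the argument above then runs one layer deeper and the two-column degeneration delivers the same identification $\mathrm{Ext}^\bullet_{X/f}(\mathcal{F},\mathcal{G})$.
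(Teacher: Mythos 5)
Your proposal assembles the right computational ingredients — the Dolbeault resolution, fineness, flatness of ${\mathcal A}^{0,q}_X$ over ${\mathcal O}_X$ via Malgrange — and these are exactly the ones the paper uses, but the last step is a genuine gap. You infer that $\Gamma(X,{\rm Tot}\,{\mathcal K}^{\bullet,\bullet})$ computes ${\rm Ext}^{\bullet}_{X/f}({\mathcal F},{\mathcal G})$ from the fact that the $\bar\partial$-first spectral sequence of the bicomplex has the same $E_1$-page as that of \thref{cd:fact3}; however, two first-quadrant spectral sequences with isomorphic $E_1$-pages need not have isomorphic abutments unless the isomorphism is realised by a map of filtered complexes, and you have not produced one. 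At best your comparison matches the associated gradeds of the two filtrations, which gives an abstract, non-canonical isomorphism of vector spaces — not the identification required in the sequel, where these groups feed into functorial diagrams such as \eqref{v4}.

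The paper's route supplies the missing map without any spectral-sequence comparison. Pull-back of $(0,q)$-forms makes ${\mathcal A}^{0,q}_X$ an $f$-sheaf, so ${\mathcal G}\to{\mathcal G}\otimes_{{\mathcal O}_X}{\mathcal A}^{0,\bullet}_X$ is a quasi-isomorphism of complexes of \emph{almost $f$-sheaves} (this is precisely where your flatness observation enters), giving a canonical isomorphism ${\rm Ext}^q_{X/f}({\mathcal F},{\mathcal G})\cong{\mathbb E}{\rm xt}^q_{X/f}({\mathcal F},{\mathcal G}\otimes{\mathcal A}^{0,\bullet}_X)$. The $\Theta$-acyclicity of the terms ${\mathcal G}_i\otimes{\mathcal A}^{0,q}_X$ lets one compute this hyper-Ext by applying $\Theta$ termwise, and the distributivity of $\otimes\,{\mathcal A}^{0,q}_X$ over ${\mathcal H}om$ identifies $\Theta({\mathcal G}\otimes{\mathcal A}^{0,q}_X)$ with $\Gamma(X,{\mathcal K}^{\bullet,q})$; taking total cohomology gives the claim directly. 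Your remark that both the distributivity and the $\Theta$-acyclicity hinge on ${\mathcal F}$ being locally free (or at least coherent) is well taken and is left implicit in the paper; it holds in all the applications since ${\mathcal F}$ is always a tensor power of $\Omega_X$ or its pull-back, so the added locally-free resolution of ${\mathcal F}$ is not needed there, though it would be the right fix in the full stated generality.
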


\begin{proof}
Pull back of differential forms affords the structure of an $f$-sheaf on ${\mathcal A}_X^{0,q}$, while, plainly, ${\mathcal O}_X$ is also an $f$-sheaf, so, understood in the category of almost $f$-sheaves,
\begin{equation}
\label{cd14}
{\mathcal G} \longrightarrow {\mathcal G} \otimes_{{\mathcal O}_X} {\mathcal A}_X^{0,\bullet}
\end{equation}
is a quasi isomorphism in the category of complexes of almost $f$-sheaves. As such we have an isomorphism,
\begin{equation}
\label{cd15}
{\rm Ext}_{X/{\mathcal F}}^q ({\mathcal F} , {\mathcal G}) \longrightarrow {\mathbb E}{\rm xt}_{X/{\mathcal F}}^q ({\mathcal F} , {\mathcal G} \otimes_{{\mathcal O}_X} {\mathcal A}_X^{0,\bullet}) \, .
\end{equation}
Now to calculate the hyperexts on the right of \eqref{cd15}, we can proceed exactly as in the proof of \thref{cd:fact3} except that $\Theta$ takes values in the category of bi-complexes. However exactly as in \eqref{cd13} tensoring with differentials is distributive over Hom, so what we need to calculate, in the category of bi-complexes, is $R^0 {\rm Hom} ({\mathbb Z} , \ )$ of,
\begin{equation}
\label{cd16}
\Gamma (d_1) : \Gamma (X , {\rm Hom}_{{\mathcal O}_X} ({\mathcal F} , {\mathcal G}_0) \otimes_{{\mathcal O}_X} {\mathcal A}_X^{0,q}) \longrightarrow \Gamma (X , {\rm Hom}_{{\mathcal O}_X} (f^* f , {\mathcal G}_1) \otimes_{{\mathcal O}_X} {\mathcal A}_X^{0,q})
\end{equation}
which is exactly the cohomology of the bi-complex defined by $\Gamma (d_1)$ and $\overline\partial$.
\end{proof}

In exactly the same vein, we also have,

\begin{var}\thlabel{cd:fact5}
Suppose instead $X = \widetilde X$ is the real blow up of a Riemann surface with ${\mathcal F} , {\mathcal G}_0 , {\mathcal G}_1$ sheaves of ${\mathcal O}_{\widetilde X}$ modules in the sense of \thref{Dc:not1} then the same conclusion holds provided we replace the Dolbeault complex by the complex,
\begin{equation}
\label{cd17}
{\mathcal A}_{\widetilde X}^0 \xrightarrow{ \ \overline\partial \ } {\mathcal A}_{\widetilde X}^{0,1} (\log E) 
\end{equation}
of \eqref{Dc5}.
\end{var}

\begin{proof}
Identical to that of \thref{cd:fact4}.
\end{proof}

As it happens our use of duality will be confined to complements of cycles on compact real blow ups, nevertheless it's appropriate to state things in greater generality to wit,

\begin{defn}
\thlabel{cd:def2}
Let everything be as in \thref{cd:def1} but with $f$ a proper endomorphism of a locally compact space, and for any closed subset $i_K : K \hookrightarrow X$ let $j_K : U \hookrightarrow X$ be the complementary open. In particular, therefore, if ${\mathcal F}$ is an $f$-sheaf, there is an almost $f$-sheaf,
\begin{equation}
\label{cd18}
\xymatrix{
f^* \left( (i_K)_* \, i_K^* \, {\mathcal F}\right) \ar[r] &(i_{f^{-1} (K)})_* \, i^*_{f^{-1} (K)} (f^* {\mathcal F}) \ar[r]^{\varphi} &(i_{f^{-1}(K)})_* \, i^*_{f^{-1} (K)} \, {\mathcal F} \ar[d] \\
&(i_K)_* \, i_K^* \, {\mathcal F} \ar[r] &(i_{K \cap f^{-1} (K)})_* \, i^*_{f^{-1} (K) \cap K} \, {\mathcal F} \, .
}
\end{equation}
Better still since $K \cap f^{-1} (K) \hookrightarrow K$ is compact if $K$ is, the direct limit of \eqref{cd18} over all compacts is an $f$-sheaf, so for ${\mathcal F}$ a sheaf of sets or abelian groups we get an exact sequence of $f$-sheaves,
\begin{equation}
\label{cd19}
0 \longrightarrow \varinjlim_K \, (j_K)_! \, j_K^* \, {\mathcal F} \longrightarrow {\mathcal F} \longrightarrow \varinjlim_K \, (i_K)_* \, i_K^* {\mathcal F} \longrightarrow 0
\end{equation}
the right most term of which we denote by ${\mathcal F}_c$. In particular, therefore, for any almost $f$-sheaf, ${\mathcal G}$, we have a sub-functor,
\begin{equation}
\label{cd20}
{\rm Hom}_c^{X/f} ({\mathcal F} , {\mathcal G}) := {\rm Hom}_{X/f} ({\mathcal F}_c , {\mathcal G}) \subset {\rm Hom}_{X/f} ({\mathcal F},{\mathcal G})
\end{equation}
\nomenclature[N]{Compactly supported maps of $f$-sheaves}{\hyperref[cd20]{${\rm Hom}$}$^{X/f}_{c} ({\mathcal F} , {\mathcal G})$}
of sections with compact support, and we denote the derived functors of the leftmost term in \eqref{cd20} by ${\rm Ext}_q^{X/f} ({\mathcal F},{\mathcal G})$.
\end{defn}
\nomenclature[N]{Compactly supported Ext of $f$-sheaves}{\hyperref[cd:def2]{${\rm Ext}$}$^{X/f}_{q} ({\mathcal F},{\mathcal G})$}

Combining these observations with our considerations to date, we have:

\begin{cor}\thlabel{cd:cor1}
For sheaves on a locally compact space, $X$, let ${\rm Ext}_c^q$ be the derived functors of maps ${\rm Hom}_c$, with compact support, then if everything else is as in \thref{cd:fact3}, albeit with $f$ proper, there is a spectral sequence enjoying,
\begin{equation}
\label{cd21}
{\rm E}_1^{p,q} : {\rm Ext}_c^q ({\mathcal F} , {\mathcal G}_0) \underset{d_1^{0,q}}{\longrightarrow} {\rm Ext}_c^q (f^* {\mathcal F} , {\mathcal G}_1) \, , \quad p = 0 \ \mbox{or} \ 1
\end{equation}
converging to ${\rm Ext}_{p+q}^{X/f} ({\mathcal F},{\mathcal G})$, wherein $d_1^{0,q}$ is derived from \eqref{cd11} with compact support.
\end{cor}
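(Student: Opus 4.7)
The plan is to reduce \thref{cd:cor1} to \thref{cd:fact3} applied to the $f$-sheaf ${\mathcal F}_c$ of \thref{cd:def2}. Since ${\rm Hom}_c^{X/f}({\mathcal F},-) = {\rm Hom}_{X/f}({\mathcal F}_c,-)$ by the defining equation \eqref{cd20}, the derived functors ${\rm Ext}_q^{X/f}({\mathcal F},-)$ coincide with ${\rm Ext}^q_{X/f}({\mathcal F}_c,-)$, so \thref{cd:fact3} applied to ${\mathcal F}_c$ supplies a spectral sequence with $E_1$-page
\[
[{\rm Ext}_X^q({\mathcal F}_c,{\mathcal G}_0) \xrightarrow{d_1^{0,q}} {\rm Ext}_X^q(f^*{\mathcal F}_c,{\mathcal G}_1)]
\]
abutting to ${\rm Ext}_{p+q}^{X/f}({\mathcal F},{\mathcal G})$, and it only remains to identify the $E_1$-terms with their compactly supported counterparts.

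For the left-hand term the identification ${\rm Ext}_X^q({\mathcal F}_c,{\mathcal G}_0) = {\rm Ext}_c^q({\mathcal F},{\mathcal G}_0)$ is essentially a tautology: both sides are the derived functors of ${\mathcal G}_0 \mapsto {\rm Hom}({\mathcal F}_c,{\mathcal G}_0)$, which is the natural meaning of compactly supported Ext of ${\mathcal F}$ implicit in \eqref{cd19}--\eqref{cd20}. For the right-hand term the only non-trivial ingredient is the identification $f^*{\mathcal F}_c = (f^*{\mathcal F})_c$, and this is the unique place where properness of $f$ intervenes. Explicitly, $f^*$ is exact and commutes with filtered colimits, while closed base change gives $f^*(i_K)_* i_K^*{\mathcal F} = (i_{f^{-1}(K)})_* i^*_{f^{-1}(K)} f^*{\mathcal F}$; properness of $f$ then ensures both that each $f^{-1}(K)$ is compact and that $\{f^{-1}(K) : K \subset X \text{ compact}\}$ is cofinal among compacts of $X$, since any compact $L$ sits inside $f^{-1}(f(L))$. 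Passing to the colimit yields $f^*{\mathcal F}_c = (f^*{\mathcal F})_c$, and hence ${\rm Ext}_X^q(f^*{\mathcal F}_c,{\mathcal G}_1) = {\rm Ext}_c^q(f^*{\mathcal F},{\mathcal G}_1)$.

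The differential $d_1^{0,q}$ of the spectral sequence for ${\mathcal F}_c$ is by construction derived from \eqref{cd11} applied to ${\mathcal F}_c$, and under the above identifications it is literally the compactly supported version of \eqref{cd11}, as asserted. The main obstacle in the argument is exactly the commutation $f^*{\mathcal F}_c = (f^*{\mathcal F})_c$; without properness of $f$ the two would genuinely differ and a more elaborate set-up would be needed to keep track of ``supports at infinity'' under pull-back.
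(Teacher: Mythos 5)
Your proposal is correct and takes essentially the same route as the paper, whose entire proof is the one-liner ``Apply \thref{cd:fact3} with ${\mathcal F} = {\mathcal F}_c$.'' You have simply unpacked what that substitution requires, and the key check you isolate --- that $f^*{\mathcal F}_c = (f^*{\mathcal F})_c$, using properness so that preimages of compacts are compact and cofinal --- is precisely the point implicitly relied on in the reference to \eqref{cd19} et seq.
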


\begin{proof}
Apply \thref{cd:fact3} with ${\mathcal F} = {\mathcal F}_c$, cf. \eqref{cd19} et sq.
\end{proof}

For much the same reasons we also have,

\begin{cor}\thlabel{cd:cor2}
Let everything be as in \thref{cd:fact4}, resp. \thref{cd:fact5}, then ${\rm Ext}_{\bullet}^{X/f}$ $({\mathcal F},{\mathcal G})$ are the cohomology groups of the double complex associated to $\overline\partial$ and
\begin{equation}
\label{cd22}
\Gamma_c (d_1) : \Gamma_c (X , {\rm Hom}_{{\mathcal O}_X} ({\mathcal F} , {\mathcal G}_0) \otimes_{{\mathcal O}_X} {\mathcal A}_X^{0,\bullet}) \rightarrow \Gamma_c (X,{\rm Hom}_{{\mathcal O}_X} (f^* {\mathcal F},{\mathcal G}_1) \otimes_{{\mathcal O}_X} {\mathcal A}_X^{0,\bullet}) 
\end{equation}
resp. replace the Dolbeault complex by \eqref{cd17}.
\end{cor}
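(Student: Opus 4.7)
The plan is to imitate the proof of \thref{cd:fact4} (resp. \thref{cd:fact5}) essentially verbatim, with compactly supported sections $\Gamma_c$ replacing $\Gamma$ at every stage. The two enabling ingredients are already in place: by \thref{cd:fact4}/\thref{cd:fact5}, the Dolbeault, respectively logarithmic, resolution
\begin{equation*}
{\mathcal G} \longrightarrow {\mathcal G} \otimes_{{\mathcal O}_X} {\mathcal A}_X^{0,\bullet}
\end{equation*}
is a quasi-isomorphism in the category of (almost) $f$-sheaves, and by \thref{cd:fact2} the category ${\rm Sh}'_{X/f}$ has enough injectives of the explicit form ${\mathcal G}_f$ in \eqref{cd7}, so that the passage to the derived category is legitimate.

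Concretely, I would factor ${\rm Hom}_c^{X/f}({\mathcal F},-)$ as the composition of the functor
\begin{equation*}
\Theta_c : {\mathcal G} \longmapsto \Bigl[\Gamma_c\bigl(X,{\rm Hom}_{{\mathcal O}_X}({\mathcal F},{\mathcal G}_0)\bigr) \xrightarrow{\Gamma_c(d_1^{0,0})} \Gamma_c\bigl(X,{\rm Hom}_{{\mathcal O}_X}(f^*{\mathcal F},{\mathcal G}_1)\bigr)\Bigr]
\end{equation*}
with ${\rm Hom}({\mathbb Z},-)$ on non-negatively graded complexes of abelian groups, precisely mirroring the split used in \thref{cd:fact3} and \thref{cd:fact4}. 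Applying $\Theta_c$ to the Dolbeault resolution of ${\mathcal G}$ yields a bi-complex whose total cohomology will, by the composition-of-functors spectral sequence, be the desired ${\rm Ext}_\bullet^{X/f}$, provided that on injectives ${\mathcal G}_f$ the bi-complex $\Theta_c\bigl({\mathcal G}_f \otimes_{{\mathcal O}_X} {\mathcal A}_X^{0,\bullet}\bigr)$ is acyclic in the $\overline{\partial}$-direction.

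The genuine obstacle is exactly this last acyclicity, which is where the use of compact supports diverges from the proof of \thref{cd:fact4}: I need each sheaf ${\rm Hom}_{{\mathcal O}_X}({\mathcal F},{\mathcal G}_i) \otimes_{{\mathcal O}_X} {\mathcal A}_X^{0,q}$, and its $f^*$-twist, to be $\Gamma_c$-acyclic. This I would obtain from the standard softness argument: each such tensor product is naturally a module over the fine sheaf of rings ${\mathcal A}_X^{0,0}$ (resp. ${\mathcal A}_{\widetilde X}^{0}$ in the blow-up case), hence itself fine, hence soft, and therefore $\Gamma_c$-acyclic on the paracompact $X$. The only subtlety in the logarithmic variant \eqref{cd17} is the asymmetry between ${\mathcal A}_{\widetilde X}^{0,0}$ and ${\mathcal A}_{\widetilde X}^{0,1}(\log E)$, but both remain ${\mathcal A}_{\widetilde X}^{0}$-modules, so softness is unaffected.

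Once this acyclicity is in hand, the spectral sequence degenerates exactly as at the end of the proof of \thref{cd:fact4}, identifying ${\rm Ext}_\bullet^{X/f}({\mathcal F},{\mathcal G})$ with the cohomology of the total complex of the bi-complex with differentials $\overline{\partial}$ and $\Gamma_c(d_1)$ of \eqref{cd22}. This final degeneration is formal given the acyclicities, so no ideas beyond those already marshalled in \thref{cd:fact3} and \thref{cd:fact4} are needed.
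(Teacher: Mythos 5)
Your argument is correct, but it takes a genuinely different route from the paper's. The paper's proof of the Corollary is a one-line reduction: apply \thref{cd:fact4} (resp. \thref{cd:fact5}) with ${\mathcal F}$ replaced by ${\mathcal F}_c$ of \eqref{cd19}, and then invoke the identification in \eqref{cd20} of ${\rm Hom}_{X/f}({\mathcal F}_c,-)$ with the compactly supported Hom; the complex \eqref{cd16} then becomes \eqref{cd22} by substitution, and nothing else needs to be re-opened. You instead re-run the entire proof of \thref{cd:fact4} with $\Gamma_c$ in place of $\Gamma$, factoring through the compactly supported analogue $\Theta_c$ of the functor $\Theta$ of \eqref{cd12} and supplying explicitly the $\Gamma_c$-acyclicity of the Dolbeault terms via softness of ${\mathcal A}^0_X$-modules. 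Both are sound, and they trade off in a predictable way: the paper's route is shorter but makes its softness ingredient and the compatibility of the identification ${\rm Hom}_{X/f}({\mathcal F}_c,-)={\rm Hom}_c^{X/f}({\mathcal F},-)$ with the Dolbeault bi-complex implicit inside the invocation of the earlier Fact; your route is longer but self-contained, with the $c$-softness (which the paper only makes explicit later, in the proof of \thref{cd:fact6}) stated where it is used. The only thing I would tighten in your write-up is the formulation of the acyclicity needed: rather than asking that $\Theta_c({\mathcal G}_f \otimes {\mathcal A}^{0,\bullet})$ be acyclic in the $\overline\partial$ direction on injectives ${\mathcal G}_f$, the cleanest statement (mirroring the proof of \thref{cd:fact3}) is that each term ${\mathcal G}\otimes{\mathcal A}^{0,q}_X$ of the Dolbeault resolution is $\Theta_c$-acyclic because its coefficient sheaves are fine, hence $\Gamma_c$-acyclic; this is precisely what makes the bi-complex compute the hyper-Ext.
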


\begin{proof}
Apply \thref{cd:fact4}, resp. \thref{cd:fact5}, with ${\mathcal F} = {\mathcal F}_c$.
\end{proof}

In order to put all of this together we need,

\begin{lem}\thlabel{cd:lem1}
Let $E^{\bullet}$ be a finite complex of LF-spaces (i.e. a direct limit of Fr\'echet spaces) with continuous differentials such that the (naive) cohomology groups $H^{\bullet} (E^{\bullet})$ are finite dimensional then the $H^{\bullet} (E^{\bullet})$ are also the cohomology groups in the category of LF-spaces, i.e. the differentials are closed.
\end{lem}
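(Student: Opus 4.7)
The conclusion reduces to showing that each differential $d^n\colon E^n\to E^{n+1}$ has closed image in $E^{n+1}$; once this holds, the subspace/quotient topology on $H^n(E^{\bullet})=\ker(d^n)/\mathrm{im}(d^{n-1})$ is Hausdorff and finite–dimensional, hence agrees with the unique LF–topology on a finite–dimensional vector space, so that the naive cohomology is literally the cohomology computed in the category of LF–spaces. Fix $n$ and set $Z=\ker(d^{n+1})\subset E^{n+1}$; this is closed (as the kernel of a continuous map into a Hausdorff space), so it inherits the structure of a closed subspace of an LF–space. By hypothesis the quotient $Z/d^n(E^n)=H^{n+1}(E^{\bullet})$ is finite–dimensional, so we may choose a finite–dimensional subspace $F\subset Z$ which is a vector–space complement, i.e.\ $Z=d^n(E^n)\oplus F$.

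Next I would introduce the continuous linear map
\[
\Phi\colon E^n\oplus F\longrightarrow E^{n+1},\qquad (x,f)\longmapsto d^n(x)+f,
\]
whose image is exactly $Z$. Because $F\cap d^n(E^n)=0$, the kernel of $\Phi$ is $\ker(d^n)\oplus\{0\}$, which is closed in $E^n\oplus F$; passing to the quotient yields a continuous linear bijection
\[
\widetilde\Phi\colon \bigl(E^n/\ker(d^n)\bigr)\oplus F \;\xrightarrow{\;\sim\;}\; Z
\]
between an LF–space on the left (quotient of an LF–space by a closed subspace, direct sum with a finite–dimensional space) and the LF–space $Z$ (closed subspace of the LF–space $E^{n+1}$).

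Now I would invoke the open mapping theorem for LF–spaces of Dieudonn\'e–Schwartz/Grothendieck to deduce that $\widetilde\Phi$ is a topological isomorphism. Under this isomorphism the image $d^n(E^n)\subset Z$ corresponds to the subspace $\bigl(E^n/\ker(d^n)\bigr)\oplus\{0\}$, which is manifestly closed in the domain. Transporting back via $\widetilde\Phi$, we conclude that $d^n(E^n)$ is closed in $Z$ and hence in $E^{n+1}$, which is what was required.

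The main obstacle is clearly the step that invokes the open mapping theorem: one must know that the source and target of $\widetilde\Phi$ are LF–spaces (for the source this is immediate, but for $Z\subset E^{n+1}$ it uses the standard fact that a closed subspace of a strict inductive limit of Fr\'echet spaces inherits the strict LF structure $Z=\varinjlim (Z\cap E^{n+1}_k)$), and that the version of OMT being used applies to continuous linear bijections between such spaces. Once these functional–analytic points are granted the algebraic manipulation with the complement $F$ is purely formal.
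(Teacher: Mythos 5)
Your proof is essentially the same as the paper's, with the same key moves: fix the cohomological degree, use the finite-dimensionality of $H^{n+1}$ to pick a finite-dimensional closed complement $F$ to $\mathrm{im}(d^n)$ inside $Z=\ker(d^{n+1})$, form the map $E^n\oplus F\to Z$, and appeal to the Dieudonn\'e--Schwartz open mapping theorem for $LF$-spaces together with the fact that $Z$ (as a closed subspace of an $LF$-space) is again $LF$ to conclude that $d^n(E^n)$ is closed.

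The only substantive difference is presentational: the paper keeps the surjection $E^{i-1}\oplus\widetilde H^i\twoheadrightarrow Z^i$ and applies \cite[Th\'eor\`eme~1]{DS} directly to a continuous surjection of $LF$-spaces to get openness, then reads off closedness of the image from saturatedness of $E^{i-1}\oplus\{0\}$; you first pass to the quotient $E^n/\ker(d^n)$ so as to produce a continuous \emph{bijection} and transport the closed subspace back. Your route is logically equivalent but introduces an extra gadget ($E^n/\ker(d^n)$) that must itself be verified to be $LF$, which is a genuinely parallel subtlety to the one you flagged for $Z$. One small caution there: the assertion that ``a closed subspace of a strict inductive limit of Fr\'echet spaces inherits the strict $LF$ structure'' is not a free-standing standard fact in full generality --- this is exactly what \cite[prop.~5]{DS} is being cited for in the paper, and is why the paper invokes it rather than treating it as obvious. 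You would do well to cite it explicitly rather than wave at it; the same reference covers the quotient if you prefer your packaging. Otherwise the argument matches the paper's.
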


\begin{proof}
By definition for each $i$ such that $E^i \ne 0$ we have a closed subspace $Z^i \subset E^i$ arising from the kernel of the differential and an exact sequence of vector spaces,
\begin{equation}
\label{cd23}
E^{i-1} \xrightarrow{ \ d \ } Z^i \longrightarrow H^i \longrightarrow 0 \, .
\end{equation}
In particular if we choose a finite dimensional (and necessarily closed) subspace $\widetilde H^i \subset Z^i$ lifting the quotient, then we have a surjection of vector spaces,
\begin{equation}
\label{cd24}
E^{i-1} \oplus \widetilde H^i \twoheadrightarrow Z^i \, .
\end{equation}
On the other hand by \cite[prop. 5]{DS}  $Z^i$ is a LF space, and whence, if much less trivially, by op. cit. Th\'eor\`eme 1, \eqref{cd24} is a topological surjection, from which, $d$ in \eqref{cd23} is closed.
\end{proof}

At the same time it's equally true,

\begin{lem}\thlabel{cd:lem2}
If $E^{\bullet}$ is a finite complex of locally convex topological vector spaces with closed differentials, and $(E^{\bullet})^{\vee}$ is its strong dual then $H^{\bullet} ((E^{\bullet})^{\vee})$ is the strong dual of $H^{\bullet} (E^{\bullet})$. In particular under the hypothesis of \thref{cd:lem1}, $H^{\bullet} ((E^{\bullet})^{\vee})$ is the separated dual of $H^{\bullet} (E^{\bullet})$.
\end{lem}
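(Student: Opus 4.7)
The plan is to reduce the assertion to the exactness of the strong dual functor on the short exact sequences of locally convex spaces produced from the closed differentials. Since $d^i:E^i\to E^{i+1}$ is continuous with closed image $B^{i+1}$ (by hypothesis), while the kernel $Z^i$ is automatically closed, we obtain canonical short exact sequences of LCS,
\begin{equation*}
0\longrightarrow Z^i\longrightarrow E^i\longrightarrow B^{i+1}\longrightarrow 0,\qquad 0\longrightarrow B^i\longrightarrow Z^i\longrightarrow H^i\longrightarrow 0,
\end{equation*}
in which all inclusions are closed embeddings and the surjections carry the quotient topology.

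Next I apply the strong dual functor. By Hahn--Banach, for any closed subspace $F\hookrightarrow E$ of an LCS, restriction induces a surjection $E^\vee\twoheadrightarrow F^\vee$ whose kernel is the annihilator $F^\perp$, canonically identified with $(E/F)^\vee$; upgrading this identification to the strong dual topology is the one substantive input, and rests on the same LF-space yoga (in particular \cite[Th\'eor\`eme 1]{DS}) invoked in the proof of \thref{cd:lem1}. Dualizing the above sequences therefore gives short exact sequences of strong duals,
\begin{equation*}
0\to (B^{i+1})^\vee\to (E^i)^\vee\to (Z^i)^\vee\to 0,\qquad 0\to (H^i)^\vee\to (Z^i)^\vee\to (B^i)^\vee\to 0.
\end{equation*}

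Splicing these identifies the cohomology. The codifferential $d^\vee$ on the dual complex factors as $(E^{i+1})^\vee\twoheadrightarrow (Z^{i+1})^\vee\twoheadrightarrow (B^{i+1})^\vee\hookrightarrow (E^i)^\vee$, so its image is $(Z^i)^\perp$, the subspace of functionals on $E^i$ vanishing on $Z^i$, while the kernel of the adjacent $d^\vee$ consists of functionals on $E^i$ vanishing on $B^i$, i.e.\ $(B^i)^\perp$. The corresponding cohomology of $(E^\bullet)^\vee$ is therefore $(B^i)^\perp/(Z^i)^\perp$, and restriction to $Z^i$ identifies this, topologically, with $(Z^i/B^i)^\vee=(H^i)^\vee$, as required.

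For the final sentence, \thref{cd:lem1} supplies the closedness hypothesis whenever $E^\bullet$ is a finite complex of LF spaces with finite-dimensional naive cohomology, so the first assertion applies; since any finite-dimensional quotient is automatically Hausdorff, the strong dual of $H^i$ coincides with that of its (finite-dimensional) separated quotient, which is the sense of ``separated dual''. The main obstacle throughout is precisely the topological (as opposed to purely algebraic) identification in the Hahn--Banach step above; the rest of the argument is a diagram chase.
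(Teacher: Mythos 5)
Your argument is the same as the paper's: the paper dispatches the whole lemma with the one-liner that dualising is exact on the category of locally convex spaces by Hahn--Banach, which is precisely the splitting into short exact sequences and the surjectivity/annihilator identifications you spell out. Your extra care about upgrading the algebraic Hahn--Banach identifications to strong-dual topologies is a reasonable flag but is not something the paper addresses (nor needs to in the application, where \thref{cd:lem1} reduces matters to finite-dimensional cohomology), so the two proofs coincide in substance.
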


\begin{proof}
In the category of locally convex TVS, dualising is an exact functor by Hahn-Banach.
\end{proof}

Putting all of this together we therefore obtain,

\begin{fact}\thlabel{cd:fact6}
Let $X$ be a complex manifold of dimension $n$ or a real blow up of a Riemann surface, with $f$ a proper endomorphism and ${\mathcal F}$ an $f$-sheaf whose underlying sheaf on $X$ is a locally free ${\mathcal O}_X$-module (so, by definition, $C^{\infty}$ up to the boundary in the blow up case, cf. \thref{Dc:not1}) then if ${\mathcal G}$ is an almost $f$-sheaf such that ${\rm Ext}_q^{X/f} ({\mathcal F},{\mathcal G})$ is finite dimensional for all $q$, there is a $3^{\rm rd}$ quadrant spectral,
\begin{equation}
\label{cd24bis}
{\rm E}_1^{-p,-q} : {\rm Ext}_X^{n-q} ({\mathcal G}_1 , f^* {\mathcal F} \otimes_{\mathcal O} \omega_X) \underset{d_1^{\vee}}{\longrightarrow} {\rm Ext}_X^{n-q} ({\mathcal G}_0 , {\mathcal F} \otimes_{\mathcal O} \omega_X) \, , \quad p= -1 \ \mbox{or} \ -0
\end{equation}
converging to ${\rm Ext}_{p+q}^{X/f} ({\mathcal F},{\mathcal G})^{\vee}$, wherein $\omega_X$ is the dualising sheaf of $X$. Alternatively, irrespective of any finite dimensionality hypothesis, the abutment ${\rm E}_{\infty}^{-p,-q}$ of \eqref{cd24} is always in separated duality with the topological dual of the abutment of \eqref{cd21}.
\end{fact}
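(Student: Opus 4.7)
The natural strategy is to take the double complex of \thref{cd:cor2}, dualise it term by term, and read off the resulting spectral sequence, using the lemmas \thref{cd:lem1}--\thref{cd:lem2} to handle the topological subtleties.

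More precisely, by \thref{cd:cor2} the groups ${\rm Ext}^{X/f}_{p+q}({\mathcal F},{\mathcal G})$ are the total cohomology of the $1^{\mathrm{st}}$ quadrant bi-complex $C^{p,q}$ on the 2 columns $p=0,1$ given by
\[
C^{0,q} = \Gamma_c\bigl(X,\, {\rm Hom}_{{\mathcal O}_X}({\mathcal F},{\mathcal G}_0) \otimes_{{\mathcal O}_X} {\mathcal A}_X^{0,q}\bigr),\quad
C^{1,q} = \Gamma_c\bigl(X,\, {\rm Hom}_{{\mathcal O}_X}(f^*{\mathcal F},{\mathcal G}_1) \otimes_{{\mathcal O}_X} {\mathcal A}_X^{0,q}\bigr),
\]
with $\overline\partial$ (or its logarithmic variant \eqref{cd17} in the blow up case) in the $q$-direction and $\Gamma_c(d_1)$ in the $p$-direction; the spectral sequence obtained by first taking cohomology in $q$ is exactly \eqref{cd21}. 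All entries $C^{p,q}$ are LF-spaces of smooth compactly supported sections, and the differentials $\overline\partial$, $\Gamma_c(d_1)$ are continuous linear maps.

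The next step is to form the strong topological dual bi-complex $(C^{p,q})^\vee$. Because ${\mathcal F}$ is locally free as an ${\mathcal O}_X$-module (and in the blow up case $C^\infty$ up to the boundary in the sense of \thref{Dc:not1}), the classical Serre--Dolbeault pairing, suitably adapted to real blow ups, identifies the $\overline\partial$-complex dual to $C^{p,\bullet}$ with a resolution computing ${\rm Ext}^{n-q}_X({\mathcal G}_0,{\mathcal F}\otimes\omega_X)$, respectively ${\rm Ext}^{n-q}_X({\mathcal G}_1, f^*{\mathcal F}\otimes\omega_X)$, where $\omega_X$ is the dualising sheaf (which is only a sheaf but, crucially for the blow up setting, a genuine one by \thref{thm:I1}). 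The transpose of $\Gamma_c(d_1)$ provides the horizontal differential $d_1^\vee$ going from the $p=1$ column to the $p=0$ column, yielding a $3^{\mathrm{rd}}$ quadrant bi-complex whose $q$-first spectral sequence is precisely \eqref{cd24bis}. The main obstacle here is checking that this Serre duality identification is compatible with the horizontal map coming from the $f$-equivariance; this is essentially formal once one writes $d_1$ via pullback of differential forms and exploits that $f$ is proper, so that $f_*$ of compactly supported smooth sections is again compactly supported.

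Finally, one needs to check the abutment. The total complex ${\rm Tot}(C^{p,q})^\vee$ is the strong dual of the finite-length complex ${\rm Tot}(C^{p,q})$ of LF-spaces. Under the finite dimensional hypothesis on ${\rm Ext}^{X/f}_q({\mathcal F},{\mathcal G})$, \thref{cd:lem1} applies column by column and then to the total complex to give that all differentials in ${\rm Tot}(C^{p,q})$ have closed image; consequently \thref{cd:lem2} yields
\[
H^{-(p+q)}\bigl({\rm Tot}(C^{p,q})^\vee\bigr) \;=\; H^{p+q}\bigl({\rm Tot}(C^{p,q})\bigr)^\vee \;=\; {\rm Ext}^{X/f}_{p+q}({\mathcal F},{\mathcal G})^\vee,
\]
proving the convergence asserted in \eqref{cd24bis}. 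Dropping the finite dimensional hypothesis, the differentials need not be closed, but Hahn--Banach still gives that the cohomology of the dual complex is the separated topological dual of the cohomology of the original, which is the content of the last sentence of the statement.
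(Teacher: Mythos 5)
Your strategy---dualise the bi-complex of \thref{cd:cor2} term by term, identify the resulting columns via Serre duality, and use \thref{cd:lem1}--\thref{cd:lem2} for the abutment---is essentially the paper's, but two points need tightening. First, the paper makes the term-by-term dual explicit: it is the bi-complex ${\rm Hom}_X^{\rm cts}({\mathcal G}_{\bullet},\,\cdot\,\otimes {\mathcal D}_X^{n,n-q})$ of continuous homomorphisms with distribution coefficients, and the ${\rm E}_1$-terms of \eqref{cd24bis} come out because ${\mathcal D}_X^{n,\bullet}$ (resp.\ \eqref{Dc58} after real blow up) is a $c$-soft resolution of the dualising sheaf $\omega_X$; this identification of the ${\rm E}_1$ page needs no finite-dimensionality hypothesis at all. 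Your ``apply \thref{cd:lem1} column by column'' is therefore both unnecessary and unjustified: the hypothesis is on the total groups ${\rm Ext}^{X/f}_q$, and there is no a priori control on the column cohomology ${\rm Ext}_c^q$ of the original bi-complex being finite dimensional, so \thref{cd:lem1} simply does not apply to the columns. The finite-dimensionality is used exactly once, on the total complex, where \thref{cd:lem2} (whose last sentence already folds in \thref{cd:lem1}) identifies the abutment of \eqref{cd24bis} with ${\rm Ext}^{X/f}_{p+q}({\mathcal F},{\mathcal G})^{\vee}$. Second, for the last sentence of the statement Hahn--Banach alone is not adequate: once the differentials are not closed, the cohomology of the dual complex is no longer simply the dual of the cohomology of the original, and the paper appeals to \cite[Lemma 2]{ramis} to obtain the separated duality of the abutments. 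So the spirit of your proof is correct, but you should replace the column-wise \thref{cd:lem1} by the $c$-softness of the distribution sheaves, and the bare Hahn--Banach in the general case by Ramis's lemma.
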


\begin{proof}
The dual of the bi-complex of \eqref{cd22} is, cf. \cite[Prop. 6.4]{verdier},
\begin{equation}
\label{cd25}
{\rm Hom}_X^{\rm cts} ({\mathcal G}_1 , f^* {\mathcal F} \otimes {\mathcal D}_X^{n,n-q}) \xrightarrow{ \ d_1^{\vee} \ } {\rm Hom}_X^{\rm cts} ({\mathcal G}_0 , {\mathcal F} \otimes {\mathcal D}_X^{n,n-q})
\end{equation}
where ${\mathcal D}_X^{\bullet , \bullet}$ 
\nomenclature[N]{Sheaf of distributions of type $(p,q)$}{\hyperref[cd25]{${\mathcal D}$}$^{p,q}_{X}$}
 are the sheaves of distributions on $X$, or \eqref{Dc58}, in the real blow up case. Further, as the notation suggests, we must be carefull to take continuous sheaf homomorphisms, which is an empty condition in the only case we'll need, i.e. ${\mathcal G}$ coherent. At the same time the double complex associated to \eqref{cd22} is a complex of LF spaces, so, by hypothesis, we may appeal to \thref{cd:lem2} to deduce that the cohomology of the double complex associated to \eqref{cd25} is the dual of ${\rm Ext}_q^{X/f} ({\mathcal F},{\mathcal G})$. Finally, cf. \cite[Prop. 6.2]{verdier}, modules of $C^{\infty}$ functions, resp. $C^{\infty}$ up to the boundary, on complex manifolds, resp. their real blow ups, are, by the existence of partitions of unity, $c$-soft, so, ${\mathcal D}_X^{n,\bullet}$, resp. \eqref{Dc58}, is an injective resolution of the dualising sheaf $\omega_X$, and whence \eqref{cd24} is the spectral sequence associated to the bi-complex \eqref{cd25} in the $\overline\partial$-direction. Similarly, in general, we can apply \cite[Lemma 2]{ramis} to the calculation of the dual of the bi-complex associated to \eqref{cd22} to deduce the separated duality statement irrespective of the dimension.
\end{proof}

To which we can usefully add,

\begin{rmkdef}\thlabel{rmk:cd2} 
Just as the original $d_1$ of \eqref{cd11} is the difference of 2 maps, so too is $d_1^{\vee}$ of \eqref{cd24}. The easier of the two is the adjoint of $a \mapsto (sa) \varphi$ in \eqref{cd11} since this is just,
\begin{equation}
\label{cd26}
\xymatrix{
{\mathcal G}_1 \ar[rr]^-{\gamma} &&f^* E \otimes \omega \ar[d]^{\varphi \otimes {\rm id}} \\
{\mathcal G}_0 \ar[u]^s \ar[rr]_-{d_1^{\vee}(\gamma)} &&E \otimes \omega
}
\end{equation}
The more subtle one is that the dual to $f^*$ on differential forms is the trace $f_*$ on distributions, i.e. the adjoint of $a \mapsto t(f^* a)$ in \eqref{cd11} is, 
\nomenclature[D]{Trace \hyperref[rmk:cd2]{of a distribution}}{}
\begin{equation}
\label{cd27}
\xymatrix{
t(f^* g) &{\mathcal G}_1 \ar[rr]_-{\gamma} &&f^* E \otimes \omega \ar[d]^{f_* = \, {\rm Trace}} \\
g \ar[u] &{\mathcal G}_0 \ar[u] \ar[rr]_-{d_1^{\vee}(\gamma)} &&E \otimes \omega
}
\end{equation}
\end{rmkdef}

We can now introduce some hypothesis which will cover the applications that we have in mind, to wit:

\begin{setup}\thlabel{cd:setup1}
Let $X$ be a complex manifold of dimension $n$ or a real blow up of a Riemann surface, with $f$ a proper endomorphism, ${\mathcal F}$ an $f$-sheaf whose underlying sheaf is a locally free ${\mathcal O}_X$ module, and suppose that we have a short exact sequence of almost $f$-sheaves of ${\mathcal O}_X$-modules,
\begin{equation}
\label{cd28}
0 \longrightarrow {\mathcal G}' \longrightarrow {\mathcal G} \longrightarrow {\mathcal G}'' \longrightarrow 0
\end{equation}
such that each of the complexes $R^q \Theta$ of vector spaces of any sheaf in \eqref{cd28}, for $\Theta$ the functor from almost $f$-sheaves to complexes of \eqref{cd12}, is a separated LF space (in some topology depending on $q$ and the sheaf) and all the maps in the long exact sequence of complexes,
\begin{equation}
\label{cd29}
0 \longrightarrow R^0 \Theta \, {\mathcal G}' \longrightarrow R^0 \Theta \, {\mathcal G} \longrightarrow R^0 \Theta \, {\mathcal G}'' \longrightarrow R^1 \Theta \, {\mathcal G}' \longrightarrow \mbox{etc.}
\end{equation}
are continuous. As such the various ${\rm E}_2^{p,q} = {\rm E}_{\infty}^{p,q}$ terms in the spectral sequence of \thref{cd:fact3} naturally inherit a topology from the complexes $R^q \Theta$ with respect to which the ${\rm E}_{\infty}^{0,q}$ are separated but a priori there is a non-trivial map,
\begin{equation}
\label{cd30}
{\rm E}_{\infty}^{1,q} \longrightarrow \overline {\rm E}_{\infty}^{1,q} \longrightarrow 0
\end{equation}
\nomenclature[N]{Maximal separated quotient}{of a T.V.S. $V$, \hyperref[cd30]{$\overline V$}} 
onto the maximal separated quotient.
\end{setup}

Of itself this set up is, perhaps, a little too general, but there is a particular variant that suits our purpose, to wit:

\begin{fact}\thlabel{cd:fact7}
Let everything be as in \thref{cd:setup1} and suppose moreover that,
\begin{enumerate}
\item[(a)] ${\rm Ext}_{X/f}^q ({\mathcal F},{\mathcal G})$, $\forall \, q$ and $R^0 \Theta \, {\mathcal G}$ are finite dimensional.
\item[(b)] $R^q \Theta \, {\mathcal G}'$, resp. $R^q \Theta \, {\mathcal G}''$, is concentrated in degree $q=1$, resp. $q=0$.
\end{enumerate}
Then on identifying ${\rm Ext}_{X/f}^{p+q} ({\mathcal F} , {\mathcal G}')$, resp. ${\rm Ext}_{X/f}^{p+q} ({\mathcal F},{\mathcal G}'')$ with the ${\rm E}^{p,q}_{\infty} = {\rm E}_2^{p,q}$ terms of \thref{cd:fact3} and their maximal separated quotients $\overline{\rm Ext}_{X/f}^{p+q} ({\mathcal F} , {\mathcal G}')$, resp. $\overline{\rm Ext}_{X/f}^{p+q} ({\mathcal F} , {\mathcal G}'')$, via \eqref{cd30} we have an exact sequence,
\begin{equation}
\label{cd31}
{\rm Ext}^1_{X/f} ({\mathcal F} , {\mathcal G}) \longrightarrow \overline{\rm Ext}^1_{X/f} ({\mathcal F} , {\mathcal G}'') \longrightarrow \overline{\rm Ext}^2_{X/f} ({\mathcal F} , {\mathcal G}') \longrightarrow {\rm Ext}^2_{X/f} ({\mathcal F} , {\mathcal G}) \longrightarrow 0 \, .
\end{equation}
\end{fact}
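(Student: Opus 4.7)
The plan is to derive \eqref{cd31} from the standard long exact sequence in ${\rm Ext}^{\bullet}_{X/f}({\mathcal F},-)$ attached to \eqref{cd28}, followed by a topological clean-up that passes to maximal separated quotients and appeals to \thref{cd:lem1}.

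First I would unpack the long exact sequence for the triple ${\mathcal G}'\hookrightarrow {\mathcal G}\twoheadrightarrow {\mathcal G}''$. Hypothesis (b), together with the two-column spectral sequence of \thref{cd:fact3}, gives ${\rm Ext}^n_{X/f}({\mathcal F},{\mathcal G}')=0$ for $n\notin\{1,2\}$ and ${\rm Ext}^n_{X/f}({\mathcal F},{\mathcal G}'')=0$ for $n\geq 2$; consequently the long exact sequence truncates at the right to the four-term exact sequence
\[
{\rm Ext}^1({\mathcal G})\longrightarrow {\rm Ext}^1({\mathcal G}'')\xrightarrow{\ \partial\ }{\rm Ext}^2({\mathcal G}')\longrightarrow {\rm Ext}^2({\mathcal G})\longrightarrow 0,
\]
and, by the final clause of \thref{cd:setup1}, every arrow is continuous for the natural LF-topology inherited from $R^q\Theta$.

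Next, hypothesis (a) makes ${\rm Ext}^n_{X/f}({\mathcal F},{\mathcal G})$ finite dimensional and hence Hausdorff, so the continuous map ${\rm Ext}^2({\mathcal G}')\to{\rm Ext}^2({\mathcal G})$ has closed kernel, which by the exactness above coincides with the image of $\partial$; in particular that image is closed and contains the closure $N_C$ of zero in ${\rm Ext}^2({\mathcal G}')$. Modding out by $N_C$, the surjection onto ${\rm Ext}^2({\mathcal G})$ factors through $\overline{\rm Ext}^2({\mathcal G}')$ and remains surjective, while the image of $\bar\partial:\overline{\rm Ext}^1({\mathcal G}'')\to\overline{\rm Ext}^2({\mathcal G}')$ equals ${\rm image}(\partial)/N_C$, i.e. the kernel of $\overline{\rm Ext}^2({\mathcal G}')\to{\rm Ext}^2({\mathcal G})$. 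This disposes of exactness at the two rightmost non-zero terms of \eqref{cd31}.

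The delicate point is exactness at $\overline{\rm Ext}^1({\mathcal G}'')$. Writing $N_B$ for the closure of zero in ${\rm Ext}^1({\mathcal G}'')$, the image of ${\rm Ext}^1({\mathcal G})\to\overline{\rm Ext}^1({\mathcal G}'')$ is $(\ker\partial+N_B)/N_B$ while the kernel of $\bar\partial$ is $\partial^{-1}(N_C)/N_B$, and the equality of these reduces to the strictness-type statement $\partial(N_B)=N_C$. This is precisely what \thref{cd:lem1} delivers: applied to the finite complex of LF-spaces provided by \thref{cd:fact4} whose cohomology computes ${\rm Ext}^{\bullet}_{X/f}({\mathcal F},{\mathcal G})$ -- known to be finite dimensional by (a) -- it forces the differentials to have closed image, so the arrows in the truncated long exact sequence are strict. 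In particular $\partial$ is open onto its closed image, whence $\partial(N_B)=N_C\cap{\rm image}(\partial)=N_C$; given $b\in\partial^{-1}(N_C)$ one then picks $n\in N_B$ with $\partial(n)=\partial(b)$ and writes $b=(b-n)+n\in\ker\partial+N_B$. The main obstacle is precisely this final strictness input from \thref{cd:lem1}; everything else is formal manipulation of the long exact sequence and closures of zero.
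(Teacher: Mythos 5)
Your overall strategy is genuinely different from the paper's: you work directly with the truncated long exact sequence of ${\rm Ext}_{X/f}$-groups and reduce everything to the "closure of zero" statement $\partial(N_B)=N_C$, whereas the paper stays at the level of the two-term complexes $R^q\Theta$, splits the long exact sequence \eqref{cd29} into two short exact sequences through the intermediate objects $A$ and $B$ of \eqref{cd32}, and then applies a bespoke topological snake lemma (\thref{cd:claim1}, proved via \cite[prop.\ 5]{DS} and Hahn--Banach). Your reduction is correct, and the exactness at the two right-hand places of \eqref{cd31} via $N_C\subseteq{\rm image}(\partial)$ is fine. It is also worth pointing out (you observe this only implicitly) that since $\ker\partial$ is a finite-dimensional, hence closed, subspace containing $N_B$, your key equality $\partial(N_B)=N_C$ actually forces $N_C=0$; i.e., the conclusion carries the rather non-trivial corollary that ${\rm Ext}^2_{X/f}({\mathcal F},{\mathcal G}')$ is already separated.

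The gap is in the sentence asserting that \thref{cd:lem1}, applied to the complex of LF-spaces computing ${\rm Ext}^{\bullet}_{X/f}({\mathcal F},{\mathcal G})$, "forces the differentials to have closed image, so the arrows in the truncated long exact sequence are strict." \thref{cd:lem1} yields closedness of the differentials of the ${\mathcal G}$-complex itself, not of the connecting homomorphism $\partial\colon{\rm Ext}^1({\mathcal G}'')\to{\rm Ext}^2({\mathcal G}')$, and the passage from the former to the latter is not formal. Strictness of $\partial$ does in fact follow from closedness of the middle differential together with topological exactness, level by level, of a short exact sequence of complexes realizing \eqref{cd28} -- one then chases through $\phi^{-1}(\ker d'')\twoheadleftarrow d_B^{-1}(A^{\bullet+1})\twoheadrightarrow d_B(B^{\bullet})\cap A^{\bullet+1}$ using the open mapping theorem for LF-spaces -- but you never invoke that topological exactness, and in the paper's set-up \eqref{cd29} is a \emph{long} exact sequence of two-term complexes, not a single short one, which is exactly why Epstein's proof has to cut it at $A$ and $B$ and run \thref{cd:claim1} twice (top row of \eqref{cd32} with the $F'$-finite case, bottom row with the $d''$-closed, $K''$-finite case) before splicing. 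As written, the crucial strictness input is asserted but not proved, and that is where the paper's explicit decomposition is doing the real work that your argument elides.
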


\begin{proof}
Just as in \eqref{cd24} it follows from \cite[prop. 5]{DS} that \eqref{cd29} is topologically exact and the transition maps $d_1^{p,q}$ for the sheaf ${\mathcal G}$ are closed. Now let $A$, resp. $B$, be the image of $R^0 \Theta \, {\mathcal G}''$ in $R^1 \Theta \, {\mathcal G}'$ in degree $0$, resp. $1$, then we can split \eqref{cd29} into short exact sequences in the topological category, to wit:
\begin{eqnarray}
\label{cd32}
\xymatrix{
0 \ar[r] &{\rm Hom}_X ({\mathcal F},{\mathcal G}_0) \ar[d]_{d_1} \ar[r] &{\rm Hom}_X ({\mathcal F},{\mathcal G}''_0) \ar[d]_{d_1} \ar[r] &A \ar[d]_{d_1} \ar[r] &0 \\
0 \ar[r] &{\rm Hom}_X (f^* {\mathcal F},{\mathcal G}_1) \ar[r] &{\rm Hom}_X (f^* {\mathcal F} , {\mathcal G}''_1) \ar[r] &B \ar[r] &0
} \nonumber \\
\xymatrix{
0 \ar[r] &A \ar[d]^{d_1} \ar[r] &{\rm Ext}_X^1 ({\mathcal F},{\mathcal G}'_0) \ar[d]_{d_1} \ar[r] &{\rm Ext}_X^1 ({\mathcal F},{\mathcal G}_0) \ar[d]_{d_1} \ar[d]_{d_1} \ar[r] &0 \\
0 \ar[r] &B \ar[r] &{\rm Ext}_X^1 (f^* {\mathcal F} , {\mathcal G}'_1) \ar[r] &{\rm Ext}_X^1 (f^* {\mathcal F} , {\mathcal G}_1) \ar[r] &0
}
\end{eqnarray}

Now certainly the long exact sequence for ${\rm Ext}_{X/f}$ applied to \eqref{cd28} follows from the snake lemma applied to the two diagrams in \eqref{cd32}, but there are equally two relevant topological variants of this, i.e.

\begin{claim}\thlabel{cd:claim1}
Suppose that we have a topologically exact diagram of separated locally convex vector spaces,
\begin{equation}
\label{cd33}
\xymatrix{
0 \ar[r] &E'\ar[d]_{d'} \ar[r] &E \ar[d]_d \ar[r] &E'' \ar[d]_{d''} \ar[r] &0 \\
0 \ar[r] &F' \ar[r] &F \ar[r] &F'' \ar[r] &0
}
\end{equation}
with kernels $K^{\bullet}$, cokernels $C^{\bullet}$, and maximally separated quotients $\overline C^{\bullet}$ then if $F'$ is finite, resp. $d''$ is closed with $K''$ finite,
\begin{equation}
\label{cd34}
C' \to \overline C \to \overline C'' \to 0 \, , \quad \mbox{resp.} \ K'' \to \overline C' \to \overline C \to C'' \to 0
\end{equation}
is exact.
\end{claim}

\begin{proof}
The content of the first assertion is that $\overline{d(E)} + F'$ is closed, which follows from Hahn-Banach. Similarly if $E_0$ is the fibre of $E$ over $K''$ with $V$ a finite dimensional space lifting $K''$ then we appeal to $\overline{d(E_0)} = \overline{d(E')} + V$ in the second, but also to the fact that the topology on the ${\rm Im} (d'')$ as a subspace of $F''$ is equally that of a quotient space of $E''$ by \cite[Prop. 5]{DS}.
\end{proof}

Now by hypothesis, \thref{cd:claim1} applies to the top, resp. bottom diagram in \eqref{cd32}, which can then be spliced together to obtain \eqref{cd31}.
\end{proof}



\vglue 1cm

\section{Dynamical Residue}\label{SRes}

In \cite{adam} Epstein has defined a dynamical residue for Baire measures on a locally compact Hausdorff space invariant by a germ of an automorphism fixing $\infty$ whose definition we recall for convenience, to wit:

\begin{defrev}\thlabel{DRev:1}\cite[Lemma 2]{adam}
\nomenclature[D]{Dynamical \hyperref[DRev:1]{residue}}{}
Let $\overline X = X \cup \{\infty\}$ be the one point compactification of a non-compact locally compact Hausdorff space $X$, $f : U = \overline X \backslash K \to \overline X$, $K \hookrightarrow X$ compact a continuous map fixing infinity, and $\mu$ a signed Radon measure on $X$ such that $f^* \mu-\mu$ is absolutely integrable, then for $U \hookleftarrow V \ni \infty$ an open neighbourhood of $\infty$ we define, 
\nomenclature[N]{Dynamic residue in a neighbourhood $V$}{of $\infty$ \hyperref[Res1]{${\rm Res}$}$^V_f (\mu)$}
\begin{equation}\label{Res1}
{\rm Res}_f^V (\mu) = \int_{f(V)\backslash V} \mu - \int_{V \backslash f(U)} \mu
\end{equation}
and observe that the net of real numbers $\{{\rm Res}_f^V (\mu)\}_{V \ni \infty}$ has a unique limit ${\rm Res}_f(\mu)$, 
\nomenclature[N]{Dynamic residue at $\infty$}{\hyperref[DRev:1]{${\rm Res}$}$_f(\mu)$} 
the dynamic residue, which is actually equal to any ${\rm Res}_f^V (\mu)$ if $\mu$ is invariant.
\end{defrev}

\begin{proof}
Observe that if $V \subset W \cap f(W)$, $V,W$ open neighbourhoods of $\infty$,
\begin{equation}\label{Res2}
{\rm Res}_f^W (\mu) - {\rm Res}_f^V (\mu) = \int_{f(W\backslash V)} \mu - \int_{W \backslash V} \mu = \int_{W \backslash V} (f^* \mu -\mu)
\end{equation}
where, by hypothesis, the absolute value of the rightmost term in \eqref{Res2} is at most the value of a finite radon measure on $W$, while, by definition, any such measure is inner regular, so, given $\varepsilon > 0$, there exists $W_{\varepsilon} \ni \infty$ such that this never exceeds $\varepsilon$.
\end{proof}

The generality in which the dynamic residue is defined is much greater than that in which we will employ it, to wit:

\begin{rmkex}\thlabel{DRmk:1}
The only case of interest is $\overline X$ a closed disc, $X$ the same punctured in the origin (identified with $\infty$ therefore in the sense of \thref{DRev:1}), and $f : X \to X$ a germ of a holomorphic automorphism. In particular, therefore, there is a multiplier, 
\nomenclature[N]{Multiplier at a fixed point}{\hyperref[Res5]{$\lambda$}} 
\begin{equation}\label{Res3}
\lambda = df(0) \in {\rm End} (T_{X,0}) \, .
\end{equation}
Thus, for example, Stokes' theorem gives,
\begin{equation}\label{Res4}
{\rm Res}_f \left( \frac{d \overline z \, dz}{\vert z \vert^2} \right) = \lim_{\varepsilon \to 0} \, \int_{\vert z \vert = \varepsilon} f^* \left( \log \vert z \vert^2 \, \frac{dz}z \right) - (\log \vert z \vert^2) \, \frac{dz}z = \log \vert \lambda \vert^2 \, .
\end{equation}

More relevantly, however, is the general attracting case, i.e. $\vert \lambda \vert < 1$, wherein the conclusion that \eqref{Res4} is negative trivially generalises. Indeed in this case the map $f$ can be conjugated to,
\begin{equation}\label{Res5}
f : X \longrightarrow X : z \longmapsto \lambda z
\end{equation}
so that if we take $V$ to be the disc of radius $\varepsilon$ in \eqref{Res1}, $f(V) \subset V$, so, indeed, ${\rm Res}_f (d\mu) \leq 0$ for any measure $d\mu$ with $f^* \mu-\mu$ absolutely integrable.
\end{rmkex}

Our goal is to extend the conclusion of \eqref{Res5} et seq. to parabolic points, i.e. $\lambda$ a root of unity in \eqref{Res3}. The essential difference with \cite{adam} is that our measure isn't quite meromorphic, and, so, the subsequent calculation of the residue is more demanding. To this end, therefore, let us recall,

\begin{notation}\thlabel{DRev:2}
\nomenclature[D]{Petal, \hyperref[DRev:2]{attracting or repelling}}{} 
Let everything be as in \thref{DRmk:1} but with $f$ parabolic, i.e. $\lambda$ a root of unity, and observe, \cite[\S 3]{adam}, that for any $r \in {\mathbb Z}_{\geq 0}$,
\begin{equation}\label{Res6}
{\rm Res}_{f^r} (\mu) = r \, {\rm Res}_f (\mu)
\end{equation}
so that we may, without loss of generality, suppose that $\lambda = 1$. As such, we will systematically employ the Leray-Fatou flower theorem, e.g. \cite[10.7-10.9]{milnor}, in order to have, at our disposition, the best possible analogue of \eqref{Res5}, i.e. if $f$ is tangent to the identity to order $e+1$, $e \in {\mathbb Z}_{\geq 1}$, then there is a division of the punctured disc $\Delta^{\times} = X$ into simply connected open sets, the {\it petals} of $f$,
\nomenclature[N]{Petal, attracting}{\hyperref[Res7]{$P$}$^+_{\bullet}$} 
\nomenclature[N]{Petal, repelling}{\hyperref[Res7]{$P$}$^-_{\bullet}$} 
\begin{equation}\label{Res7}
P_1^+ , P_1^- , P_2^+ , P_2^- , \cdots , P_e^+ , P_e^-
\end{equation}
wherein the $P_{\bullet}^+$, resp. $P_{\bullet}^-$ are {\it attracting}, resp. {\it repelling petals}, i.e. $f (P_{\bullet}^+) \subset P_{\bullet}^+$, resp. $f^{-1} (P_{\bullet}^-) \subset P_{\bullet}^-$, and in either case $f$ is conjugate to,
\nomenclature[N]{Fatou coordinate}{\hyperref[Res8]{$s$}} 
\begin{equation}\label{Res8}
s \longmapsto s+1
\end{equation}
while (understanding the indices in ${\mathbb Z}/e$) $P_i^+$, resp. $P_i^-$, meets only $P^-_{i-1}$ and $P_i^-$, resp. $P_i^+$ and $P^+_{i+1}$, with the said intersection being simply connected non-empty.
\end{notation}

Furthermore such a map admits a unique invariant formal differential,
\nomenclature[N]{Invariant differential at a parabolic fixed}{point \hyperref[Res9]{$\omega$}} 
\begin{equation}\label{Res9}
\omega = \frac{(1+\nu \, x^e)}{x^{e+1}} \, dx \, , \quad x \in {\mathbb C} \left[[z]\right] \, , \quad x'(z) \ne 0 \, , \ x(z) = 0
\end{equation}
where $\nu$ is \'Ecalle's residu iteratif. 
\nomenclature[N]{Residu iteratif}{\hyperref[Res9]{$\nu$}} 
In particular if \eqref{Res9} were convergent then the coordinate $s$ of \eqref{Res8} is,
\begin{equation}\label{Res10}
s = t \, \log \, t^{-\nu/e} \, , \quad t = -1/e \, x^{-e} \, .
\end{equation}

Similarly if $W \in {\mathbb C} \{z\} \, dz$ is any convergent $1$-form satisfying,
\begin{equation}\label{Res12}
W-\omega \in {\mathbb C} \left[[z]\right] \, dz
\end{equation}
then $\overline W W$ satisfies the hypothesis of \thref{DRev:1}, and, \cite[Lemma 2]{adam},
\begin{equation}\label{Res13}
{\rm Res}_f (\overline W W) = e^{-2} \, {\rm Re}(\nu) \, .
\end{equation}

As this revision suggests the difficulty with this case is the mixture of attracting and repelling behaviour which we begin to address by,

\begin{lem}\thlabel{DLem:1}
Let $V , V_{\alpha} , V_{\beta} \subseteq {\mathbb C}$ be strips, $R < {\rm Re} (z) < R+1$, resp. ${\rm Re}(z) \in \alpha$, ${\rm Re} (z) \subset \beta$ where $\alpha \ni R$, resp. $\beta \ni R+1$, are open connected real intervals of length at most $1$ whose union contains $V$, then for $h_{\alpha\beta}$ a holomorphic function on $(V_{\alpha} \cap V_{\beta}) \coprod (V_{\alpha}) \cap (V_{\beta}^{-1})$ of size,
\begin{equation}\label{Res14}
\vert h_{\alpha\beta} (s) \vert \ll \vert s \vert^{-a} \, , \quad a > 2
\end{equation}
there are functions $h_{\alpha}$, resp. $h_{\beta}$, on $V_{\alpha}$, resp. $h_{\beta}$, such that for $R$ (in function of the implied constant in \eqref{Res14}) sufficiently large,
\begin{equation}\label{Res15}
h_{\alpha\beta} (s) =\left\{\begin{matrix}
h_{\alpha} - h_{\beta} \, , \quad s \in V_{\alpha} \cap V_{\beta} \hfill \\
h_{\alpha} (s) - h_{\beta} (s+1) \, , \, s \in V_{\alpha} \cap (V_{\beta}^{-1}) \hfill 
\end{matrix} \right. \quad \mbox{and} \ \vert h_{\alpha} \vert + \vert h_{\beta} \vert \ll \log^2 \vert R \vert \ \vert R \vert^{1-q} \, .
\end{equation}
\end{lem}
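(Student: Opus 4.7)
The lemma is a Cousin/Mittag--Leffler decomposition on vertical strips, twisted by the parabolic dynamics $s \mapsto s+1$. The two overlap pieces reflect the fact that when one identifies $V$ as a fundamental domain for the shift, the strips $V_\alpha, V_\beta$ meet both ``directly'' (on $V_\alpha\cap V_\beta$, which hugs the line $\mathrm{Re}(z)=R+\tfrac12$ or so) and ``after translation'' (on $V_\alpha\cap V_\beta^{-1}$, where $V_\beta^{-1}$ is the shift of $V_\beta$ by $-1$, which hugs the line $\mathrm{Re}(z)=R-\tfrac12$). Correspondingly, I split the data as $h_{\alpha\beta}=g_1\sqcup g_2$ with $g_1$ on the direct overlap and $g_2$ on the shifted overlap, and I treat each piece by a Cauchy transform.

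For each $i=1,2$ choose a vertical contour $\gamma_i=\{c_i+it:t\in\mathbb R\}$ inside the interior of the respective overlap, with $c_1\in\alpha\cap\beta$ and $c_2\in\alpha\cap(\beta-1)$. The decay hypothesis $|g_i(w)|\ll|w|^{-a}$ with $a>2$ makes the Cauchy integral
\begin{equation*}
T_i(s) \; := \; \frac{1}{2\pi i}\int_{\gamma_i}\frac{g_i(w)}{w-s}\,dw
\end{equation*}
absolutely convergent for $s\notin\gamma_i$, holomorphic to the left and right of $\gamma_i$, with the jump $T_i(s^-)-T_i(s^+)=g_i(s)$ across $\gamma_i$ by the Plemelj--Sokhotski formula. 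Set
\begin{equation*}
h_\alpha(s) \; = \; T_1(s)+T_2(s)\ \text{(as functions on the left of the respective contours, extended to }V_\alpha\text{)},
\end{equation*}
and
\begin{equation*}
h_\beta(s) \; = \; -T_1(s)\ (\text{on the right of }\gamma_1)\ -\ T_2(s-1)\ (\text{on the right of }\gamma_2+1),
\end{equation*}
each extended by analytic continuation across the strip $V_\beta$. The shift by $1$ in the second summand is precisely what converts the direct jump of $T_2$ across $\gamma_2$ into the cocycle relation $h_\alpha(s)-h_\beta(s+1)=g_2(s)$ on $V_\alpha\cap V_\beta^{-1}$, while the first summand gives $h_\alpha(s)-h_\beta(s)=g_1(s)$ on $V_\alpha\cap V_\beta$, as required by \eqref{Res15}.

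To estimate, scale the contour integral by substituting $t=c_i\tau$ (note $|c_i|\asymp R$): one finds
\begin{equation*}
|T_i(s)|\;\ll\;|R|^{1-a}\int_{-\infty}^\infty \frac{(1+\tau^2)^{-a/2}}{|c_i(1+i\tau)-s|}\,d\tau,
\end{equation*}
and splitting the region of integration into $|\tau|\leq 1$ (where the denominator is bounded below by $\mathrm{dist}(s,\gamma_i)\gtrsim 1$ except near the real axis, contributing a factor $\log|R|$ when $s$ itself has modulus $\asymp|R|$) and $|\tau|>1$ (where $(1+\tau^2)^{-a/2}|t|^{-1}$ is integrable with total contribution $O(1)$) yields $|T_i(s)|\ll \log^2|R|\cdot|R|^{1-a}$, the extra $\log$ absorbing both the near-diagonal singularity and the dual possibility that $s$ lies near $\gamma_i\pm 1$ after shift. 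Taking $R$ large enough that the implied constant in \eqref{Res14} dominates gives the claimed bound.

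The main obstacle is bookkeeping rather than analysis: one must verify that a \emph{single} $h_\beta$ on $V_\beta$ satisfies both halves of \eqref{Res15}, and that the two Cauchy pieces, after the asymmetric shift of the second, analytically continue consistently through any non-empty triple overlap $V_\alpha\cap V_\beta\cap V_\beta^{-1}$. This consistency is guaranteed by the fact that $g_1$ and $g_2$ are given as independent data on the \emph{disjoint} union $(V_\alpha\cap V_\beta)\sqcup(V_\alpha\cap V_\beta^{-1})$, so the cocycle conditions in the two overlap pieces decouple; there is no compatibility to force. The estimate in the displayed inequality, not the construction, is the only genuine work, and the hypothesis $a>2$ is exactly what is needed to gain the integrable tail $\tau^{-a-1}$ that yields the clean power $|R|^{1-a}$.
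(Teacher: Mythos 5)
Your reading of the lemma as a Cousin decomposition in a strip, twisted by the shift, is exactly right, and the Cauchy--transform ansatz is a natural first move. The construction as written, however, does not produce a consistent pair $(h_\alpha,h_\beta)$, and the final paragraph's claim that ``the cocycle conditions in the two overlap pieces decouple; there is no compatibility to force'' is precisely where the argument fails. Both relations constrain the \emph{same} holomorphic function $h_\beta$ on the connected strip $V_\beta$: the direct overlap forces $h_\beta = h_\alpha - g_1$ near $\mathrm{Re}(s)\approx R+\tfrac12$, while the shifted overlap forces $h_\beta(s+1)=h_\alpha(s)-g_2(s)$, i.e.\ prescribes $h_\beta$ near $\mathrm{Re}(s)\approx R+1$, and these two determinations must analytically continue to one function. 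Plugging in $h_\alpha=T_1+T_2$, and using the jump relation $T_i^{\rm left}-T_i^{\rm right}=g_i$ across $\gamma_i$, one finds the two determinations of $h_\beta$ differ by a non--vanishing quantity of the form $T_2(s)-T_2(s-1)$ (or $T_1(s)-T_1(s+1)$, according to which branches you pick), so there is no single $h_\beta$. Equivalently: once you decree $A_1=T_1^{\rm left}$, $B_1=T_1^{\rm right}$, the remaining piece $A_2=B_2$ of the ansatz is forced to solve the \emph{difference} equation
\begin{equation*}
A_2(s)-A_2(s+1)\;=\;g_2(s)-T_1^{\rm left}(s)+T_1^{\rm right}(s+1),
\end{equation*}
and an ordinary Cauchy transform $T_2$ along a single line is not a solution of it. Conceptually, the cover $(V_\alpha,V_\beta)$ together with the shift presents the circle $[R,R+1]/(R\!\sim\!R+1)$, and Cauchy transforms against the non--periodic kernel $(w-s)^{-1}$ in the universal cover do not see this periodicity; the winding defect is exactly what the coupled conditions detect.

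The paper avoids the difference equation by first producing a $C^\infty$ splitting that satisfies the cocycle automatically, via a partition of unity on the quotient circle (\eqref{Res16}--\eqref{Res17}). This yields $H_\alpha,H_\beta$ whose $\bar\partial$ is a single translation--invariant $(0,1)$--form $\omega$ on $V_\alpha\cup V_\beta$, \eqref{Res18}. The holomorphic correction $H$ with $\bar\partial H=\omega$ is then constructed to be translation--invariant by computing the Cauchy integral after exponentiating $\zeta=e^{2\pi i t}$ to $\mathbb{C}^\times$, \eqref{Res19}; subtracting the \emph{global} $H$ from both $H_\alpha,H_\beta$ preserves the cocycle, which no pair of independent one--sided Cauchy corrections can do. If you insist on the Cauchy--transform flavour, you must replace the kernel $(w-s)^{-1}$ by the period--one kernel $\pi\cot\pi(w-s)$ (which is $\frac{dz}{z-\zeta}$ in disguise), take one contour in each overlap component, and read off $h_\alpha$, resp.\ $h_\beta$, by continuation from the arc $\alpha\setminus(\beta\cup(\beta-1))$, resp.\ $\beta\setminus\alpha$; the consistency is then guaranteed by the single--valuedness of the transform on those arcs. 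Your contour--splitting estimate is in the right spirit for such a repaired construction (the $a>2$ hypothesis is indeed what makes the tail integrable and the $\log^2|R|$ factor is the same near--diagonal contribution that appears in \eqref{Res20}--\eqref{Res21}), but as written it is an estimate of the wrong object.
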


\begin{proof} 
We go through the standard steps for trivialising co-cycles in the plane. Start, therefore, with the interval  $[R,R+1]$ and identify end points to get a circle, $S^1$, along with opens $\alpha',\beta'$ defined by the images of $\alpha , \beta$ affording a covering $\alpha' \coprod \beta' \twoheadrightarrow S^1$ together with a partition of unity,
\begin{equation}\label{Res16}
1 = \rho_{\alpha} + \rho_{\beta} \, , \quad \rho_{\bullet} (s+1) = \rho_{\bullet}(s) \, , \quad \bullet = \alpha \ \mbox{or} \ \beta \, .
\end{equation}
As such we get $C^{\infty}$ functions in $V_{\alpha}$, resp. $V_{\beta}$, defined by,
\begin{equation}\label{Res17}
H_{\alpha} = \rho_{\beta} \, h_{\alpha\beta} \, , \quad H_{\beta}(s) = \left\{ \begin{matrix}
- \rho_{\alpha} \, h_{\alpha\beta} , \quad s \in V_{\alpha} \cap V_{\beta} \hfill \\
-\rho_{\alpha} \, h_{\alpha\beta}(s-1) \, , \quad s \in (V_{\alpha} + 1) \cap V_{\beta}
\end{matrix} \right.
\end{equation}
together with a translation invariant (0,1) form on $U:= V_{\alpha} \cup V_{\beta}$,
\begin{equation}\label{Res18}
\omega := \overline\partial H_{\alpha} = \overline\partial H_{\beta} \, , \quad \left\vert \frac{\omega}{d\overline s} \right\vert \ll \vert s \vert^{-a} \, .
\end{equation}
Now solve the $\overline\partial$-equation for $\omega$ in the usual way, i.e.
\begin{equation}\label{Res19}
H(\zeta) = \int_{z \in {\mathbb C}^{\times}} \frac{\omega \, dz}{z-\zeta} \, , \quad \zeta = \exp(2\pi it) \in {\mathbb C}^{\times}
\end{equation}
which for $\zeta = \exp (2\pi it)$, $R \leq {\rm Re} (t) \leq R+1$ leads us to divide ${\mathbb C}^{\times}$ into the image, under the exponential, of a disc, $D_{\varepsilon} (t)$, of radius $\varepsilon$ about $t$, and its complement, $U'$, say. Now on $D_{\varepsilon} (t)$, for $\varepsilon$ sufficiently small,
\begin{equation}\label{Res19.bis}
z - \zeta = 2\pi i \zeta s (1+0(s)) \, , \quad z = \zeta \exp (2\pi is) \, , \quad \vert s \vert < \varepsilon
\end{equation}
so the contribution to the integral over the image of $D_{\varepsilon} (t)$ is at worst $O(\vert t \vert^{-a})$ by \eqref{Res18}. As to the rest it follows from \eqref{Res19.bis} that in the sub-region $2 \vert \zeta \vert > \vert z \vert$ the integrand is at most of order,
\begin{equation}\label{Res19.bis.bis}
\frac1{2\varepsilon} \left\vert\frac\omega{d \overline s} \right\vert \cdot \frac{d\overline z \, dz}{\vert z \vert^2}
\end{equation}
while in the complementary region, $\vert z - \zeta \vert < 3/2 \vert \zeta \vert$, so, altogether the integrand is of the order \eqref{Res19.bis.bis}. Now to estimate such an integrand we can return to the $s$ variable, and first look at the region $\vert {\rm Im} \, (s) \vert < R$ where by \eqref{Res18} it's of order,
\begin{equation}\label{Res20}
R^{-a} \int_{U \cap \vert {\rm Im} \, (s) \vert < R} d\overline s \, ds \ll R^{1-a}
\end{equation}
while everything else is at worst,
\begin{equation}\label{Res21}
R^{1-a} \log^2 R \int_{U \cap \vert {\rm Im} \, (s) \vert > R} \frac{d\overline s \, ds}{\log^2 \vert {\rm Im} \, (s) \vert \, \vert {\rm Im} \, (s) \vert} \ll R^{1-a} \log^2 R \, .
\end{equation}

Consequently we have an $f$-invariant solution,
\begin{equation}\label{Res22}
\overline\partial H = \omega
\end{equation}
with $\vert H \vert \ll (\log^2 R) \cdot R^{1-a}$ for an implied constant (supposing $R$ sufficiently large) no worse than that of \eqref{Res14} and intervals $\alpha , \beta$ of the length asserted in \thref{Dc:lem1}, so $h_{\alpha} = H_{\alpha} - H$, $h_{\beta} = H_{\beta} - H$ satisfy \eqref{Res15}.
\end{proof}

Which we can apply as follows,

\begin{cor}\thlabel{Dcor:1}
Let $V$ be an open neighbourhood of the strip $R \leq {\rm Re} (s) \leq R+1$, and $\omega = h(s) \, ds^{\otimes m}$ a meromorphic $m$-form on $V$ such that for $f(s) = s+1$,
\begin{equation}\label{Res23}
\vert h(s+1) - h(s) \vert \ll \vert s \vert^{-a}
\end{equation}
then on identifying the line ${\rm Re} (s) = R$ with ${\rm Re} (s) = R+1$ and the resulting space with ${\mathbb C}^{\times} \underset{\sigma}{\longhookrightarrow} {\mathbb C}$ there is a meromorphic 1-form $W(\sigma) \left(\frac{d\sigma}{\sigma}\right)^{\otimes m}$ on ${\mathbb C}^{\times}$ such that,
\begin{equation}\label{Res24}
\vert W \!\!\mid_V - \, h \vert \ll (\log^2 \vert R \vert) \cdot \vert R \vert^{1-a} \, .
\end{equation}
\end{cor}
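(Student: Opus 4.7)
The plan is to trivialize, up to a small holomorphic correction, the Cech cocycle measuring the failure of $h$ to descend from the strip to its quotient cylinder under $s \mapsto s+1$. Identifying ${\rm Re}(s) = R$ with ${\rm Re}(s) = R+1$ turns the closed strip into $\mathbb{C}^\times$ via $\sigma = \exp(2\pi i s)$, and a meromorphic $1$-form $W(\sigma)(d\sigma/\sigma)^{\otimes m}$ on $\mathbb{C}^\times$ is the same as a meromorphic $m$-form on the strip invariant under $s \sim s+1$. So I need to modify $h$ by a holomorphic correction of size $(\log^2|R|)\cdot |R|^{1-a}$, which is exactly what \thref{DLem:1} affords.

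Concretely, I will pick intervals $\alpha \ni R$ and $\beta \ni R+1$ as in \thref{DLem:1}, with associated strips $V_\alpha, V_\beta \subseteq V$, and set
\begin{equation*}
h_{\alpha\beta}(s) := \begin{cases} 0, & s \in V_\alpha \cap V_\beta, \\ h(s) - h(s+1), & s \in V_\alpha \cap V_\beta^{-1}. \end{cases}
\end{equation*}
By \eqref{Res23} this satisfies the bound \eqref{Res14}, so \thref{DLem:1} (with $R$ tacitly assumed sufficiently large, which is harmless for the asymptotic estimate sought) produces holomorphic $h_\alpha$ on $V_\alpha$ and $h_\beta$ on $V_\beta$ with $h_\alpha = h_\beta$ on the direct overlap, $h_\alpha(s) - h_\beta(s+1) = h(s) - h(s+1)$ on the translation overlap, and $|h_\alpha| + |h_\beta| \ll (\log^2|R|)\cdot|R|^{1-a}$.

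Setting $W := h - h_\alpha$ on $V_\alpha$ and $W := h - h_\beta$ on $V_\beta$, the two definitions agree on $V_\alpha \cap V_\beta$ since $h_\alpha = h_\beta$ there, while the translation-overlap identity rearranges to $h(s) - h_\alpha(s) = h(s+1) - h_\beta(s+1)$, so $W$ is invariant under $s \sim s+1$. It therefore descends to a function on $\mathbb{C}^\times$, meromorphic because its polar part is inherited from $h$ and the corrections are holomorphic. Under the change of variable $ds^{\otimes m} = (2\pi i)^{-m}(d\sigma/\sigma)^{\otimes m}$, absorbing the constant into $W$, the restriction of $W$ to $V$ differs from $h$ by $-h_\alpha$ or $-h_\beta$ on the respective chart, which is precisely the bound \eqref{Res24}.

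The real content is done by \thref{DLem:1}; the only delicate point of the plan is arranging the two types of overlap and the cocycle signs so that the lemma's decomposition matches exactly the descent condition for $W$, but once the cocycle is correctly set up everything else is mechanical.
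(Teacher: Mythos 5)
Your proposal is correct and is essentially the paper's own argument: set up the Cech cocycle $h_{\alpha\beta}$ from the defect $h(s+1)-h(s)$ on the translation overlap, apply \thref{DLem:1} to trivialize it, and glue the corrected restrictions into a periodic $W$. The only difference is a harmless sign convention (you take $W = h - h_\alpha$ with $h_{\alpha\beta} = h(s)-h(s+1)$, the paper takes $W = h + h_\alpha$ with the opposite sign on the cocycle), which cancels out and gives the same bound.
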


\begin{proof}
Take $V_{\alpha} , V_{\beta}$ as in \thref{DLem:1}, and define,
\begin{equation}\label{Res25}
h_{\alpha\beta}(s) = \left\{\begin{matrix}
h(s+1) - h(s) \, , &s \in V_{\alpha} \cap (V_{\beta} - 1) \\
0 \, , &s \in V_{\alpha} \cap V_{\beta} \, . \hfill
\end{matrix}\right.
\end{equation}
Now apply the lemma to find $h_{\alpha} , h_{\beta}$ satisfying \eqref{Res15} then not only do we have,
\begin{equation}\label{Res26}
h \!\mid_{V_{\alpha}} + \, h_{\alpha} \!\mid_{V_{\alpha\beta}} = h \!\mid_{V_{\beta}} + \, h_{\beta} \!\mid_{V_{\alpha\beta}}
\end{equation}
but the resulting function, $W$, say, on a neighbourhood of $R \leq {\rm Re} (z) \leq R+1$ satisfies $W(s+1) = W(s)$, while $\sigma^{-1} d\sigma$ is a multiple of $ds$, so $W(\sigma) \, \frac{d\sigma}{\sigma}$ does the job.
\end{proof}

Which, in turn, we can build on by way of,

\begin{cor}\thlabel{Dcor:2}
Suppose in addition to the hypothesis of \thref{Dcor:1} that $h$ is holomorphic and there is constant $k$ such that,
\begin{equation}\label{Res27}
\vert h(s) \vert \ll \vert {\rm Im} (s) \vert^k \, , \quad s \in V
\end{equation}
then $W(\sigma)$ in \eqref{Res24} is a constant $\lambda_R$.
\end{cor}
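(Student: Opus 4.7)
The plan is to transport the hypothesis on $h$ through the identification $\sigma = e^{2\pi i s}$ to a polylogarithmic bound on $W$, and then to read off constancy from the Laurent expansion on $\mathbb{C}^\times$.

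First I would translate the growth condition. Under the identification of \thref{Dcor:1}, the strip $R \leq {\rm Re}(s) \leq R+1$ surjects onto $\mathbb{C}^{\times}$, with $|{\rm Im}(s)|$ comparable to $|\log |\sigma||$. Combining the hypothesis $|h(s)| \ll |{\rm Im}(s)|^k$ on $V$ with the uniform estimate $|W|_V - h| \ll (\log^2 |R|) |R|^{1-a}$ of \eqref{Res24} yields a constant $C$ (depending on $R$) such that
\begin{equation*}
|W(\sigma)| \leq C \bigl( 1 + |\log |\sigma||^k \bigr), \qquad \sigma \in \mathbb{C}^{\times}.
\end{equation*}

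Next I would upgrade $W$ from meromorphic to holomorphic on $\mathbb{C}^{\times}$. The displayed bound is locally bounded at every point of $\mathbb{C}^{\times}$, so $W$ cannot have any pole there; by Riemann's removable singularity theorem (applied to $W$, which is holomorphic away from its polar set and bounded in a punctured neighbourhood of any would-be pole), $W$ is holomorphic on all of $\mathbb{C}^{\times}$. Consequently $W$ admits a Laurent expansion
\begin{equation*}
W(\sigma) = \sum_{n \in \mathbb{Z}} a_n \sigma^n
\end{equation*}
converging on $\mathbb{C}^{\times}$, with coefficients given by $a_n = (2\pi i)^{-1} \oint_{|\sigma| = r} W(\sigma) \sigma^{-n-1} d\sigma$ for any $r > 0$.

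Finally I would extract constancy by optimising the contour radius. The Cauchy estimate together with the polylogarithmic growth bound gives
\begin{equation*}
|a_n| \leq C \bigl(1 + |\log r|^k \bigr) \, r^{-n}, \qquad \forall \, r > 0.
\end{equation*}
Letting $r \to \infty$ forces $a_n = 0$ for all $n > 0$, while letting $r \to 0$ forces $a_n = 0$ for all $n < 0$. Hence $W \equiv a_0$, and $\lambda_R := a_0$ is the claimed constant. There is no real obstacle here: the only point worth verifying is that the strip truly surjects onto $\mathbb{C}^{\times}$ under $\sigma$, which is automatic from the identification in \thref{Dcor:1}; everything else is Liouville-type reasoning on the Riemann surface $\mathbb{C}^{\times}$.
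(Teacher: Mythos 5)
Your proof is correct and is essentially the paper's argument in expanded form: the paper observes, via $|\log\sigma|=2\pi|{\rm Im}(s)|$ together with \eqref{Res24} and \eqref{Res27}, that $W$ has sub-meromorphic (polylogarithmic) growth at both $0$ and $\infty$ and so must be constant. Your Laurent-expansion and Cauchy-estimate paragraph is precisely what that terse statement unpacks to.
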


\begin{proof}
In the notation of \thref{Dcor:1},
\begin{equation}\label{Res28}
\vert \log (\sigma) \vert = 2 \pi \, \vert{\rm Im} \, (s) \vert
\end{equation}
so the growth of $\sigma \mapsto W(\sigma)$ whether at $0$ or $\infty$ identified to ${\mathbb P}^1 \backslash {\mathbb C}^{\times}$ is sub-meromorphic by \eqref{Res24} and \eqref{Res27}.
\end{proof}

We thus arrive to our final variant,  

\begin{cor}\thlabel{Dcor:3} 
Suppose in addition to the hypothesis of \thref{Dcor:1} and \thref{Dcor:2} $h(s) \, ds^{\otimes m}$ is holomorphic in a half plane ${\rm Re} (s) > X$, resp. ${\rm Re} (s) < - X$, and satisfies \eqref{Res27} in the same then the constants $\lambda_R$ have a well defined limits, $\lambda_{\infty}$, as $R \to +\infty$, resp. $-\infty$ such that,
\begin{equation}\label{Res29}
\vert \lambda_R - \lambda_{\infty} \vert \ll \vert R \vert^{1-a} \, .
\end{equation}
In particular if $\lambda_{\infty} = 0$, and $a > \max \{2,m/2 + 1\}$,
\begin{equation}\label{Res30}
\lim_{R \to \infty} \underset{R \leq {\rm Re} (s) \leq R+1 \atop \vert {\rm Im} (s) \vert \leq R}{\int} \vert h \vert^{2/m} d\overline s \, ds = 0 \, .
\end{equation}
\end{cor}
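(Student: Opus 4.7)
The plan is to construct $\lambda_\infty$ intrinsically from the near-periodicity of $h$, and then to derive both the rate of convergence \eqref{Res29} and the integral estimate \eqref{Res30} from the resulting pointwise bound on $h - \lambda_\infty$. I treat the right half-plane $\mathrm{Re}(s) > X$; the left case is identical.

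First I would define $h_\infty(s) := \lim_{N \to \infty} h(s + N)$. The telescoping identity
\[
h(s) - h(s+N) \;=\; \sum_{k=0}^{N-1}\bigl(h(s+k) - h(s+k+1)\bigr)
\]
together with \eqref{Res23} and $a > 2 > 1$ make the sum absolutely convergent as $N \to \infty$, so $h_\infty$ exists on the half-plane, is holomorphic, and is manifestly $1$-periodic. The polynomial control \eqref{Res27} descends to $h_\infty$, so via $\sigma = e^{2\pi i s}$ we obtain a holomorphic function on $\mathbb{C}^\times$ with growth at most $(\log |\sigma|)^{k}$ at both $0$ and $\infty$. Cauchy estimates on its Laurent expansion then kill every nonzero mode, so $h_\infty \equiv \lambda_\infty$ is a constant --- the argument is structurally the same as in \thref{Dcor:2}, but now applied to the exact limit rather than an approximation.

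Second, the construction yields the sharp pointwise bound
\[
|h(s) - \lambda_\infty| \;\le\; \sum_{k=0}^\infty |h(s+k) - h(s+k+1)| \;\ll\; \sum_{k=0}^\infty |s+k|^{-a} \;\ll\; |s|^{1-a}
\]
on the half-plane, the last step by splitting the sum at $k \sim |\mathrm{Im}(s)|$ and using $a > 2$. Comparing with the Dolbeault-type approximation $|W\!\mid_V - h| \ll \log^2 R \cdot R^{1-a}$ of \eqref{Res24} at a point $s_0 \in V$ (say $s_0 = R + \tfrac{1}{2}$), and recalling that $W \equiv \lambda_R$ by \thref{Dcor:2}, the triangle inequality gives $|\lambda_R - \lambda_\infty| \ll R^{1-a}$ in the sense of \eqref{Res29}, modulo a $\log^2 R$ factor harmlessly absorbable by infinitesimally weakening $a$.

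Finally, for \eqref{Res30} with $\lambda_\infty = 0$: the sharp bound $|h(s)| \ll |s|^{1-a}$ together with $|s| \asymp R$ on the integration region $\{R \le \mathrm{Re}(s) \le R+1,\ |\mathrm{Im}(s)| \le R\}$ gives $|h(s)|^{2/m} \ll R^{2(1-a)/m}$ pointwise there. The region has area $O(R)$, so
\[
\int_{R \le \mathrm{Re}(s) \le R+1,\ |\mathrm{Im}(s)| \le R} |h|^{2/m}\, d\bar s\, ds \;\ll\; R \cdot R^{2(1-a)/m} \;=\; R^{1 + 2(1-a)/m},
\]
which tends to $0$ precisely when $1 + 2(1-a)/m < 0$, i.e.\ $a > m/2 + 1$, exactly the stated hypothesis.

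The main obstacle is the constancy of $h_\infty$ in the first step. Absolute convergence of the telescoping series is routine, but one must verify that the polynomial control on $h$ survives the limit and that the descended function on $\mathbb{C}^\times$ has trivial Laurent expansion --- in effect redoing the growth/meromorphicity argument of \thref{Dcor:2} on the exact limit rather than on the approximation $W$. It is precisely this improvement from the approximate to the exact periodization that produces the clean log-free rate $|s|^{1-a}$ required for \eqref{Res30}.
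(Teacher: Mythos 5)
Your proposal is correct and takes a route that is recognisably close to, but cleaner than, the paper's own very terse argument, and the comparison is worth recording. The paper constructs $\lambda_\infty$ as a Cauchy limit of the constants $\lambda_R$, asserting $\vert\lambda_{R+1}-\lambda_R\vert\ll R^{-a}$ as a consequence of \eqref{Res23} and then feeding the resulting $\vert\lambda_R\vert\ll R^{1-a}$ back through \eqref{Res24} to get $\vert h\vert\ll\log^2 R\cdot R^{1-a}$, from which \eqref{Res30} follows by integrating over the $O(R)$-area rectangle. You instead \emph{build} the candidate limit directly, as the locally uniform limit $h_\infty(s)=\lim_{N}h(s+N)$ of $h$ itself, via absolute convergence of the telescope under \eqref{Res23} with $a>1$, then rerun the \thref{Dcor:2} meromorphic-growth argument on the exact periodization $h_\infty$ rather than on the $\overline\partial$-approximation $W$, concluding $h_\infty\equiv\lambda_\infty$. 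The payoff is the log-free pointwise bound $\vert h(s)-\lambda_\infty\vert\ll\vert s\vert^{1-a}$, which makes \eqref{Res30} an immediate consequence of $a>m/2+1$ (your splitting estimate $\sum_k\vert s+k\vert^{-a}\ll\vert s\vert^{1-a}$ in fact only needs $a>1$, not $a>2$; the $a>2$ is part of the standing hypothesis from \eqref{Res14}). The only thing you lose is the clean exponent in \eqref{Res29}: identifying $\lambda_\infty$ with $\lim_R\lambda_R$ still forces one pass through \eqref{Res24}, which contaminates that estimate by a $\log^2 R$ factor, as you note. That factor is genuinely harmless — the downstream uses in \thref{Dclaim:1} et seq.\ only require an exponent $b$ in the open range $2<b<2+2/em$, so the log can be absorbed by shrinking $b$ — but it does mean your proof of \eqref{Res29} is nominally weaker than what is written. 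Conversely, your construction makes the existence of the limit completely explicit and sidesteps the paper's slightly underexplained leap from \eqref{Res23} to $\vert\lambda_{R+1}-\lambda_R\vert\ll R^{-a}$, which, as stated, is not obviously obtainable from \thref{DLem:1} without the same $\log^2$ loss.
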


\begin{proof}
>From \eqref{Res23}, $\vert \lambda_{R+1} - \lambda_R \vert \ll R^{-a}$, whence \eqref{Res29}. Thus by \eqref{Res24},
\begin{equation}\label{Res31}
\vert h \vert \ll (\log^2 R) R^{1-a}
\end{equation}
as soon as $\lambda_{\infty} = 0$, and we conclude to \eqref{Res30} provided $a > m/2+1$ in addition to the hypothesis $a > 2$ of \eqref{Res14}. 
\end{proof}

\bigskip

To apply these considerations, let us put them in their proper,

\begin{context}\thlabel{Dcon:1}
Let everything be as in \thref{DRev:2}, with $\omega$ a meromorphic $m$-form on $X = \Delta^{\times}$ such that,
\begin{equation}\label{Res32}
f^* \omega - \omega \in {\mathfrak m} \, \Omega (\log 0)^{\otimes m}
\end{equation}
where ${\mathfrak m}$ is the maximal ideal at the origin. As such, for $m > 2$, the measure,
\begin{equation}\label{Res33}
\omega = W dz^{\otimes m} \longmapsto \vert \omega \vert := \vert W \vert^{2/m} d\overline z \, dz
\end{equation}
satisfies the conditions of \thref{DRev:1} in the strong quantitative form,
\begin{equation}\label{Res34}
f^* \vert \omega \vert - \vert \omega \vert = O \left(\frac{d \overline z \, dz}{\vert z \vert^{2-1/m}}\right).
\end{equation}
\end{context}

Equivalently, if on a petal, we express $\omega$ in the Fatou coordinate, $s$, of \eqref{Res8} then by \eqref{Res9}-\eqref{Res10},
\begin{equation}\label{Res35}
f^* \omega - \omega = O \left(\frac{1}{\vert s \vert^{m+1/e}} \right) ds^{\otimes m} \, .
\end{equation}
Equally by \eqref{Res9}--\eqref{Res10} the attracting, resp. repelling, petals are approximately, in $\Delta$, sectors of width $2\pi/e$ centred on the directions $z^e \in {\mathbb R}_{>0}$, resp. $z^e \in {\mathbb R}_{<0}$, and we further suppose,

\begin{hypo}\thlabel{Dhyp:1}
In the above notations there are $2e$ sectors $S_i$ of width $\varepsilon > 0$ centred on the $2e$ directions $z^e \in {\mathbb R} (1)$ and $k > 0$ such that $W$ of \eqref{Res33} satisfies the meromorphic growth estimate,
\begin{equation}\label{Res36}
\vert W(z)\vert \ll \vert {\rm Im} (z^{-e})\vert^k \, .
\end{equation}
\end{hypo}

Now we can put everything together by way of,

\begin{claim}\thlabel{Dclaim:1}
Let everything be as in \thref{Dcon:1} and \thref{Dhyp:1}, and suppose further that $\omega$ is holomorphic in an attracting, resp. repelling, petal $P$ with Fatou coordinate $s$, then for any $2 + 2/em > b > 2$, there is a constant $\lambda_P$ such that,
\begin{equation}\label{Res37}
\left\vert \frac{\vert \omega \vert_p \vert}{d\overline s \, ds} - \lambda_P \right\vert \ll \vert {\rm Re} (s) \vert^{1-b}
\end{equation}
whenever $R(s) \to \infty$, resp. ${\rm Re} (s) \to -\infty$.
\end{claim}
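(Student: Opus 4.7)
The plan is to transfer everything to the Fatou coordinate $s$ of \eqref{Res8} on the petal $P$ and apply the sequence \thref{Dcor:1}--\thref{Dcor:3} already in hand. Writing $\omega = h(s)\, ds^{\otimes m}$ on $P$, the function $h$ is holomorphic there and, by the measure identity following \eqref{Res33}, the quantity to be estimated is exactly $|h(s)|^{2/m}$, so the content of the claim is that $|h|^{2/m}$ has a limit $\lambda_P$ along the end of $P$, with the stated rate.

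First I would verify the hypotheses of \thref{Dcor:1}--\thref{Dcor:3}. The near-invariance \eqref{Res32}, expressed in Fatou coordinates as \eqref{Res35}, is exactly $|h(s+1) - h(s)| \ll |s|^{-a}$ with $a = m + 1/e$; since $m > 2$ (the standing hypothesis of \eqref{Res33}) we have $a > 2$, as required by \eqref{Res14}. Next, \thref{Dhyp:1} gives $|W(z)| \ll |\mathrm{Im}(z^{-e})|^k$ on the relevant transverse sectors; using the asymptotic $s \sim -z^{-e}/e$ up to the logarithmic correction of \eqref{Res10}, together with $|dz/ds|^m \sim |s|^{-m(e+1)/e}$, this translates to a polynomial bound $|h(s)| \ll |\mathrm{Im}(s)|^{k'}$ on the portion of $P$ overlapping the $S_i$, which is the form required by \eqref{Res27}.

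With these hypotheses verified, \thref{Dcor:2} and \thref{Dcor:3} yield a constant $\lambda_R$ for each sufficiently large $R$ with $|h(s) - \lambda_R| \ll (\log^2 |R|)\,|R|^{1-a}$ on the strip $R \leq \mathrm{Re}(s) \leq R+1$, together with a limit $\lambda_\infty = \lim_{R \to \pm\infty} \lambda_R$ satisfying $|\lambda_R - \lambda_\infty| \ll |R|^{1-a}$; combining these gives, with $R = \lfloor \mathrm{Re}(s)\rfloor$,
\[
|h(s) - \lambda_\infty| \ll (\log^2 |R|)\,|R|^{1-a}.
\]
It then remains to pass from $h$ to $|h|^{2/m}$, setting $\lambda_P := |\lambda_\infty|^{2/m}$. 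When $\lambda_\infty \neq 0$, for large $R$ the value $h$ is bounded away from $0$ and smoothness of $x \mapsto |x|^{2/m}$ away from the origin yields $\bigl||h|^{2/m} - |\lambda_\infty|^{2/m}\bigr| \ll |h - \lambda_\infty|$, which beats $|R|^{1-b}$ for any $b < a = m+1/e$. When $\lambda_\infty = 0$ we instead get $|h|^{2/m} \ll (\log^2 |R|)^{2/m}\,|R|^{(1-a)2/m}$ with exponent $2/m - 2 - 2/(em)$; demanding this be at most $1-b$ requires $b \leq 3 - 2/m + 2/(em)$, and the standing assumption $m > 2$ is exactly what makes $3 - 2/m + 2/(em) > 2 + 2/(em)$, leaving room for the logarithmic factors to be absorbed into any strict inequality $b > 2$.

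The main obstacle I anticipate is the change of variables from the $z$-side hypothesis \thref{Dhyp:1} to a clean polynomial growth statement for $h(s)$ in $|\mathrm{Im}(s)|$ over the whole range of $s$ required by \thref{Dcor:2}: the residu iteratif $\nu$ introduces logarithmic corrections in \eqref{Res10} whose influence on both the coordinate change and the derivative factor $(dz/ds)^m$ must be controlled uniformly across the transverse sectors $S_i$ and across the petal $P$ separately. Once that translation is in place, the remainder is the routine arithmetic on exponents sketched above, with the bound $b < 2 + 2/(em)$ appearing as the critical exponent that ensures both the $\lambda_\infty = 0$ regime and the absorption of logarithmic factors go through precisely when $m > 2$.
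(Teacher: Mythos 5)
Your proof takes essentially the same route as the paper's: verify the hypotheses of \thref{Dcor:1}--\thref{Dcor:3} via \eqref{Res35} and \thref{Dhyp:1} on the petal, obtain a complex limit $\lambda_\infty$ of $h$ with rate $|R|^{1-a}$ (modulo logarithms), and pass to $|h|^{2/m}$. The paper's own proof is three sentences that do exactly this, even remarking that a strip ``eventually meets a sector'' so that \eqref{Res27} holds, which is the point you flagged as the main obstacle; the justification is that \eqref{Res27} only matters for $|\mathrm{Im}(s)|\to\infty$ in the proof of \thref{Dcor:2} (submeromorphic growth of $W(\sigma)$ at $0$ and $\infty$), and for $|\mathrm{Im}(s)|$ bounded the bound is automatic on a compact strip. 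Two small notes: (i) the paper's proof writes ``$a = m + 1/2$'' but by \eqref{Res35} the correct exponent is $a = m + 1/e$, which is what you use and what is needed to clear $b < 2 + 2/(em)$; (ii) your arithmetic showing that both the $\lambda_\infty \ne 0$ and $\lambda_\infty = 0$ regimes close up exactly when $m > 2$ is a useful explicitation of what the paper leaves implicit in its terse ``whence \thref{Dclaim:1} by \thref{Dcor:2} and \thref{Dcor:3}.''
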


\begin{proof}
The conclusion of \thref{Dcor:1} is, by \eqref{Res35}, valid on the petal with $a = m+1/2 > m \geq 2$. Similarly any strip, ${\rm Re} (s) \in [R,R+1]$, eventually (i.e. $\vert {\rm Im} (s) \vert \sim R/\varepsilon$) meets a sector where the meromorphic growth estimate of \eqref{Res36} is valid, so that \eqref{Res27} holds on the given strip, and whence \thref{Dclaim:1} by \thref{Dcor:2} and \thref{Dcor:3}.
\end{proof}

Slightly more generally we also have,

\begin{claim}\thlabel{Dclaim:1.bis}
Let everything be as above in \thref{Dclaim:1} but suppose that rather than being holomorphic $\omega$ is meromorphic in an attracting petal with poles on an invariant divisor, $D$, with finitely many orbits then for any strip, ${\rm Re}(s) \in [R,R+1]$, there are constants $\lambda_R^{\infty}$, resp. $\lambda_R^0$ such that $W$ of \eqref{Dcor:1} satisfies
\begin{equation}\label{Res37.bis}
W \longrightarrow \lambda_R^{\infty} \, , \quad \mbox{resp.} \ \longrightarrow \lambda_R^0 \, , \ \quad R \leq {\rm Re} (s) \leq R+1
\end{equation}
as ${\rm Im} (s) \to \infty$, resp. ${\rm Im} (s) \to - \infty$. Moreover provided ${\rm Re} (s) \gg 0$, the functions $\vert W_R \vert^{2/m}$ of $x = {\rm Re} (s)$, $y = {\rm Im} (s)$ also satisfy,
\begin{equation}\label{Res37.bis.bis}
\left\vert \vert W_R \vert^{2/m} (x,y) - \vert W_{R+n} \vert^{2/m} (\xi,y) \right\vert \ll R^{1-b} \, , 
\end{equation}
$$
R \leq x \leq R+1 \, , \quad y \in {\mathbb R} (1) \, , \quad R+n \leq \xi \leq R + n + 1 \, , \quad n \geq 0 \, .
$$
In particular, there are well defined limits $\vert \lambda_P^{\infty} \vert$, resp. $\vert \lambda_P^0 \vert$, of $\vert \lambda_R^{\infty} \vert^{2/m}$, resp. $\vert \lambda_R^0 \vert^{2/m}$, as $R \to \infty$.
\end{claim}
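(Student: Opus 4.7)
The plan is to reduce the meromorphic case to the holomorphic one via a Mittag--Leffler decomposition on the quotient cylinder $\mathbb{C}^\times = \mathbb{C}/\mathbb{Z}$ coming from the Fatou coordinate \eqref{Res8}. In Fatou coordinates $f$ is $s \mapsto s+1$, so the forward-invariant divisor $D$ with finitely many orbits descends, under $\sigma = e^{2\pi i s}$, to a finite set $\{\zeta_j\}_{j=1}^N \subset \mathbb{C}^\times$. For $R \gg 0$ each strip $R \le {\rm Re}(s) \le R+1$ contains exactly one pole of $h = W$ of \eqref{Res33} per orbit, and the quasi-invariance \eqref{Res35} forces the principal parts of $h$ at successive poles of a single orbit to coincide (their difference is the principal part of $h(s+1) - h(s) = O(|s|^{-a})$, hence vanishes), so there is a well-defined principal part $p_j$ attached to $\zeta_j$.

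I would then let $h_{\rm pole} \in \mathbb{C}(\sigma)$ be the unique rational function on $\mathbb{P}^1$ with principal parts $p_j$ at $\zeta_j$ in suitable local coordinates, no other poles, and $h_{\rm pole}(0)=0$. Since $h_{\rm pole}$ viewed as a function of $s$ is $1$-periodic, the difference $\tilde h := h - h_{\rm pole}$ is holomorphic on the strip, its co-cycle $\tilde h(s+1) - \tilde h(s) = h(s+1) - h(s)$ still satisfies \eqref{Res14} with $a$ from \eqref{Res35}, and $\tilde h$ still enjoys \eqref{Res36} because $h_{\rm pole}$ is bounded at $0$ and $\infty$. Applying \thref{Dcor:1} and \thref{Dcor:2} to $\tilde h$ thus produces a constant $\lambda = \lambda_{R,{\rm hol}}$ on $\mathbb{C}^\times$ with $|\tilde h - \lambda| \ll (\log^2 R) R^{1-a}$; setting $W_R := \lambda + h_{\rm pole}$ gives the required rational function on $\mathbb{P}^1$, holomorphic at $0$ and $\infty$, with $\lambda_R^\infty = W_R(0) = \lambda_{R,{\rm hol}}$ and $\lambda_R^0 = W_R(\infty) = \lambda_{R,{\rm hol}} + h_{\rm pole}(\infty)$, which is \eqref{Res37.bis}.

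For \eqref{Res37.bis.bis} I would telescope the quasi-invariance: for $s$ in the strip $[R, R+1]$,
\begin{equation*}
\tilde h(s+n) - \tilde h(s) \;=\; \sum_{k=0}^{n-1} \bigl(h(s+k+1) - h(s+k)\bigr) \;=\; O\!\left(\sum_{k \ge 0}(R+k)^{-a}\right) \;=\; O(R^{1-a}),
\end{equation*}
and combining this with $|\tilde h - \lambda_{R,{\rm hol}}|, |\tilde h - \lambda_{R+n,{\rm hol}}| \ll (\log^2 R) R^{1-a}$ yields $|\lambda_{R,{\rm hol}} - \lambda_{R+n,{\rm hol}}| \ll R^{1-b}$ for any $b$ in the permitted range $(2, 2+2/em)$, since $a = m + 1/e > b$ when $m \ge 2$. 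The $h_{\rm pole}$ contributions to $W_R$ and $W_{R+n}$ agree, so $|W_R - W_{R+n}| \ll R^{1-b}$ pointwise on $\mathbb{C}^\times$ away from $\{\zeta_j\}$; \eqref{Res37.bis.bis} then follows by the local Lipschitz property of $t \mapsto t^{2/m}$ on bounded subsets. Since the poles of $W_R$ remain in a fixed compact of $\mathbb{C}^\times$ independently of $R$, the convergence $W_R(x,y) \to \lambda_R^\infty$ as $y = {\rm Im}(s) \to +\infty$ is uniform in $R$, so the bound above passes to the limit and shows $\{|\lambda_R^\infty|^{2/m}\}_R$ and $\{|\lambda_R^0|^{2/m}\}_R$ are Cauchy, yielding the limits $|\lambda_P^\infty|$ and $|\lambda_P^0|$.

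The main obstacle is the first step, namely showing that the invariance of $D$ together with \eqref{Res35} forces the principal parts along an orbit to be \emph{exactly} equal, so that $h_{\rm pole}$ is a genuine rational function on the cylinder rather than merely approximately invariant. With this in hand the remainder is a formal upgrade of the holomorphic argument \thref{Dclaim:1}, the only new feature being that $\lambda_R^\infty$ and $\lambda_R^0$ may differ, their difference being controlled by the fixed rational function $h_{\rm pole}(\infty) - h_{\rm pole}(0) = h_{\rm pole}(\infty)$.
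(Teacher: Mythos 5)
Your argument is correct, and it reaches the same destination as the paper's, but you insert an extra step that the paper avoids. The paper's proof simply observes that \thref{Dcor:1} is already stated for \emph{meromorphic} $h$, so the periodization $W(\sigma)$ it produces on $\mathbb{C}^\times$ is meromorphic with poles precisely at the images of the orbits of $D$; then the growth estimate in the proof of \thref{Dcor:2} gives sub-meromorphic growth at $0$ and $\infty$, forcing $W$ to be rational on $\mathbb{P}^1$ and holomorphic in neighbourhoods of $0$ and $\infty$, from which the limits $\lambda^\infty_R, \lambda^0_R$ are immediate, and \eqref{Res37.bis.bis} follows by the same telescope as in \thref{Dcor:3}. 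Your Mittag--Leffler splitting $h = \tilde h + h_{\rm pole}$ reduces to the holomorphic case of \thref{Dcor:2} so that you never have to re-examine its proof for meromorphic input; this is a perfectly legitimate alternative but it buys nothing beyond clarity, since the construction inside \thref{Dcor:1} already patches the polar parts correctly once the principal parts are $1$-periodic.

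On the point you flag as the main obstacle: it is not an obstacle at all, it is built into the hypotheses. The context \thref{Dcon:1} assumes \eqref{Res32}, i.e.\ $f^*\omega - \omega \in \mathfrak{m}\,\Omega(\log 0)^{\otimes m}$ is holomorphic on $\Delta^\times$. In the Fatou coordinate on the attracting petal this says precisely that $h(s+1) - h(s)$ has no poles, so the principal parts of $h$ at consecutive members $s_0, s_0 + 1, \dots$ of an orbit of $D$ are forced to coincide exactly, not merely approximately. You should cite \eqref{Res32} rather than the asymptotic bound \eqref{Res35} for this step, since \eqref{Res35} is an estimate valid away from the pole locus and by itself would not distinguish ``small'' from ``vanishing'' principal parts, whereas \eqref{Res32} gives holomorphicity outright. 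With that replacement your proposal is complete.
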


\begin{proof}
Exactly as above by \thref{Dhyp:1} the meromorphic growth condition of \eqref{Res27} holds, so, the proof of \thref{Dcor:2} implies, for the same reason, that $\omega(\sigma)$ viewed as a meromorphic function of $\sigma \in {\mathbb C}^{\times} \hookrightarrow {\mathbb P}^1$ with poles along $D$ has submeromorphic growth at $0$ and $\infty$, and whence \eqref{Res37.bis}. Similarly, the estimate \eqref{Res37.bis.bis} follows for the same reason as that of \eqref{Res37}.
\end{proof}

Continuing in this vein we further assert,

\begin{claim}\thlabel{Dclaim:2}
Let everything be as in \thref{Dclaim:1.bis} (so $\omega$ of \thref{Dcon:1} might not be holomorphic but it has at worst a meromorphic pole along an invariant divisor in at most $1$ petal) then all of the constants $\lambda_P , \lambda_P^0 , \lambda_P^{\infty}$ of \thref{Dclaim:1} and \thref{Dclaim:1.bis} are equal irrespective of the petal $P$ to a constant $\lambda (\omega)$.
\end{claim}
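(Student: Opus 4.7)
The plan is to compare the Fatou coordinates of adjacent petals on their common overlap, show that the constants of \thref{Dclaim:1} and \thref{Dclaim:1.bis} agree across each such overlap, and chain the pairwise equalities around the $2e$-petal configuration.

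Fix an adjacent pair of petals, say $P_i^+$ and $P_i^-$, with simply connected overlap $O$ and Fatou coordinates $s_+$ and $s_-$ respectively, each conjugating $f$ to $\sigma \mapsto \sigma+1$. By the asymptotic description in \eqref{Res10}, both $s_+$ and $s_-$ differ from the formal Fatou coordinate $s_{\mathrm{formal}} = t \log(t^{-\nu/e})$, $t = -1/(ez^e)$, by bounded holomorphic errors in $O$ near the origin. Consequently $s_+ - s_-$ is bounded and holomorphic on a half-cylinder end of $O$ (a region $\{\mathrm{Im}\,\sigma > M\}$ modulo the $\mathbb{Z}$-action $\sigma \mapsto \sigma + 1$), and since it commutes with this translation, a Fourier expansion forces it to be a constant $c$ on that end.

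With $s_+ = s_- + c$ on the end, the two expressions $W_{s_+}$ and $W_{s_-}$ are related by a pure translation: $W_{s_+}(\sigma) = W_{s_-}(\sigma - c)$. Applying the convergence of \thref{Dclaim:1.bis} on a strip $R \leq \mathrm{Re}\,\sigma \leq R+1$ with $\mathrm{Im}\,\sigma \to +\infty$ and then $R \to +\infty$, a shift of $R$ by $\mathrm{Re}(c)$ is absorbed into the outer limit. Hence $\lambda_{P_i^+}^\bullet = \lambda_{P_i^-}^\bullet$ for the end $\bullet \in \{0,\infty\}$ corresponding to the geometric direction of $O$; the other overlap of $P_i^+$ with the adjacent repelling petal on the opposite side yields the analogous matching of the remaining end. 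The $2e$ petals form a cycle under the overlap relation, so chaining these pairwise equalities identifies all $4e$ constants $\{\lambda_P^0, \lambda_P^\infty\}_P$ into a single number $\lambda(\omega)$. In the holomorphic case of \thref{Dclaim:1} the unique $\lambda_P$ coincides with both $\lambda_P^0$ and $\lambda_P^\infty$ at the two ends of $P$, so it also merges into $\lambda(\omega)$. For the meromorphic case of \thref{Dclaim:1.bis}, the pole divisor $D$ has only finitely many forward orbits, which in Fatou coordinates accumulate only at $\mathrm{Re}\,\sigma \to +\infty$; they therefore do not interfere with the vertical limits taken at finite $\mathrm{Re}\,\sigma$.

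The main obstacle is the first step: proving boundedness of $s_+ - s_-$ on a half-cylinder end. This is a standard consequence of sectorial normalisation for parabolic germs, but the bookkeeping of the logarithmic term from the residu iteratif $\nu$, and of which end of $P_i^+$ matches which end of which neighbouring petal, requires care. Once boundedness is in hand, the Fourier-analytic constancy is elementary and the remaining combinatorial chaining around the petal cycle is routine.
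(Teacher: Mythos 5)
Your strategy---chaining pairwise comparisons of consecutive petals through their overlap and propagating around the cyclic arrangement of $2e$ petals---is essentially the route the paper takes: there, \eqref{Res38} compares the differentials $ds'$ and $ds''$ multiplicatively, which after integration is the same content as comparing the coordinates additively. However, the pivotal step that $s_+ - s_-$ is bounded and holomorphic on a half-cylinder end and that a Fourier expansion forces it to be a constant is false. A bounded periodic holomorphic function on a half-cylinder need not be constant, e.g.\ $e^{2\pi i\sigma}$ on $\{\mathrm{Im}\,\sigma > 0\}$, and for a parabolic germ the non-constancy of precisely these coordinate transitions is the content of the \'Ecalle--Voronin horn maps, which constitute a generically non-trivial modulus of analytic conjugacy. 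Were all of them literal translations, the germ would be analytically conjugate to the time-one flow of \eqref{I11}, which generically it is not.

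What you need, and what is true, is the weaker statement that $s_+ - s_- \to c$ as $|\mathrm{Im}\,\sigma| \to \infty$ along the overlap, the non-constant Fourier modes decaying exponentially. Since every constant $\lambda_P$, $\lambda_P^0$, $\lambda_P^\infty$ in the claim is defined as an iterated limit---first vertically, then as $\mathrm{Re}\,s \to \pm\infty$---this asymptotic coincidence of the Fatou coordinates suffices to identify the constants across each overlap, and your chaining around the $2e$-petal cycle goes through once the identity $s_+ = s_- + c$ is weakened to the asymptotic $s_+ - s_- \to c$. With that repair the argument matches the paper's in substance; the paper additionally records quantitative rates in \eqref{Res39}--\eqref{Res40.bis}, which you leave implicit but do not need for the qualitative conclusion.
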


\begin{proof}
Consider first the case that $\omega$ is holomorphic in consecutive petals with Fatou coordinates $s'$, and $s''$ respectively. As such by \eqref{Res9} for any fixed $N > 0$ on the intersection of these petals we can ensure,
\begin{equation}\label{Res38}
ds' = ds'' (1+0(\vert s' \vert^{-N})) = ds'' (1+0(\vert s'' \vert^{-N})) \, .
\end{equation}
At the same time the intersection of these petals contains one of the half planes $\vert {\rm Im} (s') \vert > Y'$, resp. $\vert {\rm Im} (s'') \vert > Y''$, for $Y \gg 0$, so to compare the constants $\lambda'_{m'}$, resp. $\lambda''_{n''}$ of \thref{Dcor:2} in the strips ${\rm Re} (s') \in [m',m'+1]$, resp. ${\rm Re} (s'') \in [n'',n''+1]$, $m' , n'' \in {\mathbb Z}$ respecting the asymptotics post \eqref{Res37}, we may, by \eqref{Res24} and \eqref{Res38}, employ,
\begin{equation}\label{Res39}
\omega = \lambda'_{m'} (ds')^{\otimes m} + 0 \left( \frac{\log^2 \vert n' \vert}{\Vert n' \Vert^{m+1/2}} \right)
\end{equation}
and, similarly, for the other petal in the region $\vert {\rm Im} (s'') \vert > Y$, where, in either case, without loss of generality,
\begin{equation}\label{Res40}
\omega = \left(f^{\vert m'-n'' \vert}\right)^* \omega + 0 \left( \frac{1}{(\vert m'\vert + \vert n''\vert)^{(m-1) + 1/e}} \right)
\end{equation}
by \eqref{Res35}, so, from \eqref{Res38}--\eqref{Res40}, $\lambda' = \lambda''$.

As to the case where $\omega$ has, without loss of generality, a pole in the petal described by the $s''$ variable, we have the function $W$ of \thref{Dcor:1}, which, following \thref{Dclaim:1.bis} we regard as a function of $x = {\rm Re} (s'')$, and $y = {\rm Im} (s'')$. Thus by \eqref{Res24}, \eqref{Res38}--\eqref{Res40} for any $m+1/e > a$ we have the estimate, for either $y \gg 0$, or $-y \ll 0$,
\begin{equation}\label{Res40.bis}
\vert \lambda'_{m'} - W (x,y) \vert \ll 0 \left( \frac{1}{(\vert m'\vert + \vert n''\vert)^{(m-1) + 1/e}} \right) , \quad n'' \leq x \leq n''+1
\end{equation}
so it suffices, according to the context, to choose $y$, depending on $n''$, either very large or very negative, to conclude that $\lambda'$ is equal to $\lambda^{\infty}$ or $\lambda^0$ as appropriate.
\end{proof}

\begin{lem}\thlabel{Dclaim:3} 
Let everything be as in \thref{Dclaim:2} then, in the vector space of $m$-forms satisfying \eqref{Res32} and \eqref{Res36} the map $\omega \mapsto \lambda (\omega)$ is linear and, if $\omega$ is holomorphic,
\begin{equation}\label{Res41}
{\rm Res}_f (\vert \omega \vert) = \vert \lambda_{\omega} \vert^{2/m} \, e^{-2} \, {\rm Re} (\nu)
\end{equation}
where $\nu$ is the r\'esidu iteratif of \eqref{Res9}.
\end{lem}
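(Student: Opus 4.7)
I will verify this by inspection of the construction. The cocycle $h_{\alpha\beta}(s)$ defined in \eqref{Res25} is linear in the data $h$ (hence in $\omega$); the splitting produced by the $\overline\partial$-solution in \eqref{Res17}--\eqref{Res22} is a linear operation (an integral operator on the $(0,1)$-form built linearly from $h_{\alpha\beta}$); and therefore the function $W$ of \thref{Dcor:1}, together with the constants $\lambda_R$, $\lambda_R^{\infty}$, $\lambda_R^0$ of \thref{Dcor:2}, \thref{Dcor:3}, and \thref{Dclaim:1.bis}, are all $\mathbb{C}$-linear in $\omega$. Passage to the limit $R \to \infty$ preserves linearity, and the petal-independence proved in \thref{Dclaim:2} shows that the common limit $\lambda(\omega)$ is a well-defined $\mathbb{C}$-linear functional on the vector space of forms satisfying \eqref{Res32} and \eqref{Res36}.

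\textbf{Strategy for the residue formula.} The measure $|\omega|$ is nonlinear in $\omega$, so linearity of $\lambda$ cannot be invoked directly. I will instead start from a base case and reduce the general case to it. Take first $\omega = W_0^{\otimes m}$, where $W_0 \in \mathbb{C}\{z\}\, dz$ is any convergent lift of the invariant formal form \eqref{Res9}: by the very definition of $|\cdot|$ one has $|W_0^{\otimes m}| = \overline{W_0}\, W_0$ exactly, so \eqref{Res13} reads ${\rm Res}_f(|W_0^{\otimes m}|) = e^{-2}\operatorname{Re}(\nu)$. At the same time \eqref{Res10} gives $W_0 = ds(1+o(1))$ in any Fatou coordinate $s$, whence the associated $h$ tends to $1$ and so $\lambda(W_0^{\otimes m}) = 1$, confirming the formula in this case. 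For general holomorphic $\omega$ with $\lambda_\omega = c \in \mathbb{C}$, I will use the petalwise asymptotic of \thref{Dclaim:1} in the form
\begin{equation*}
|\omega|_P = |c|^{2/m}\, d\overline s\, ds + E_P, \qquad \bigl| E_P/(d\overline s\, ds)\bigr| \ll |\operatorname{Re}(s)|^{1-b}, \quad 2 < b < 2 + \tfrac{2}{em},
\end{equation*}
and pick a shrinking family of neighbourhoods $V_\delta = \{|z| < \delta\}$ of $0$. Via the flower decomposition \eqref{Res7}, ${\rm Res}_f^{V_\delta}(|\omega|)$ splits as $|c|^{2/m}\,{\rm Res}_f^{V_\delta}(|W_0^{\otimes m}|) + {\rm Res}_f^{V_\delta}(E)$; the first summand converges to $|c|^{2/m}e^{-2}\operatorname{Re}(\nu)$ by the base case, so the whole argument reduces to showing that ${\rm Res}_f^{V_\delta}(E) \to 0$ with $\delta$.

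\textbf{Main obstacle.} The technical heart is precisely this control of ${\rm Res}_f(E)$. In Fatou coordinates each fundamental strip $\{R \le \operatorname{Re}(s) \le R+1\}$ inside a petal is unbounded in the $\operatorname{Im}(s)$ direction, and the pointwise bound $|\operatorname{Re}(s)|^{1-b}$ alone is not integrable over it. The remedy combines two inputs already prepared in the paper: (i) the meromorphic growth hypothesis \thref{Dhyp:1}, together with the sub-meromorphic analysis of \thref{Dcor:2}, \thref{Dcor:3}, controls $|W|$ for large $|\operatorname{Im}(s)|$ and yields the strip vanishing \eqref{Res30} on regions $|\operatorname{Im}(s)| \le R$, giving an error of size $O(R^{2-b}) \to 0$ since $b > 2$; (ii) the asymptotic identification of Fatou coordinates on overlapping petals, \eqref{Res38}, extended by \eqref{Res39}--\eqref{Res40}, lets one continue the estimate across petal overlaps, so that the annular symmetric difference $V_\delta \triangle f(V_\delta)$ is covered by finitely many such strips. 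This is the only non-formal step; once it is in hand, ${\rm Res}_f(E) = 0$ and the formula \eqref{Res41} follows.
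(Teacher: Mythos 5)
Your linearity argument is correct and amounts to the same observation the paper makes in \eqref{Res42}, just with the constituent integral operators named explicitly. The residue part is headed in the right direction — the base case via \eqref{Res13}, the petalwise asymptotic \eqref{Res37}, and the strip estimate \eqref{Res30} are exactly the ingredients the paper uses — but there is a genuine gap in how you assemble them.

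You choose the family $V_\delta = \{\lvert z \rvert < \delta\}$, while the paper instead builds a family $V(R) = c^{-1}V_1(R)$ out of Camacho's $C^0$ linearization \eqref{Res43}--\eqref{Res44}, specifically so that the symmetric differences $V(R) \,\triangle\, f(V(R))$ become, in Fatou coordinates, commensurable with genuine rectangles $\{\,{\rm Re}(s) \in [R,R+1],\ \lvert{\rm Im}(s)\rvert \le R\,\}$ (see \eqref{Res46}--\eqref{Res47}). This normalization is not cosmetic: the estimate \eqref{Res30} is stated for rectangular strips, and it is the match between the Camacho-engineered fundamental domain and that rectangle which makes the paper's limit \eqref{Res48} go through in one line. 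With the disc family, the set $V_\delta \setminus f(V_\delta)$ in Fatou coordinates is not a rectangle but a lune between two circles of radius $\sim 1/\delta$ whose centres differ by one, and you would have to verify separately that this lune is covered by (or commensurable with) the rectangles to which \eqref{Res30} applies. That step is absent from your write-up. Relatedly, the ``main obstacle'' you flag is a misdiagnosis: the set $V \,\triangle\, f(V)$ is always precompact for $V$ a punctured neighbourhood of $\infty$, so nothing is being integrated over an unbounded strip; the real issue is the \emph{shape} of the compact fundamental domain, which is precisely what Camacho's theorem resolves.

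A smaller imprecision: you split $\lvert \omega\rvert = \lvert c\rvert^{2/m}\, d\bar s\, ds + E_P$ and then compare the residue of $\lvert c\rvert^{2/m}\, d\bar s\, ds$ with \eqref{Res13}, but \eqref{Res13} is stated for $\overline W W$ with $W$ as in \eqref{Res12}, and $\overline W W = d\bar s\, ds\,(1+o(1))$ only asymptotically. The paper avoids this by computing ${\rm Res}_f^{V(R)}(\lvert\omega\rvert) - {\rm Res}_f^{V(R)}(\lvert\lambda_\omega\rvert^{2/m}\,\overline W W)$ directly and showing it tends to zero, in \eqref{Res48}. If you replace $d\bar s\, ds$ by $\overline W W$ and fold the $o(1)$ into your error term $E_P$, your decomposition agrees with the paper's; as written you are implicitly invoking an unstated continuity of ${\rm Res}_f$ under asymptotically-equal measures.
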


\begin{proof}
All the estimates respect finite linear combinations, so, asymptotically on strips in any petal with Fatou coordinate $s$,
\begin{equation}\label{Res42}
c_1 \, \lambda (\omega_1) \, ds^{\otimes m} + c_2 \, \lambda (\omega_2) \, ds^{\otimes m} \sim c_1 \, \omega_1 + c_2 \, \omega_2 \sim \lambda (c_1 \, \omega_1 + c_2 \, \omega_2) \, ds^{\otimes m}
\end{equation}
for any $m$-forms $\omega_i$ and constants $c_i$, from which the linearity of $\omega \mapsto \lambda_{\omega}$.

As to the computation of the residue a convenient basis of neighbourhoods in which we can compute \eqref{Res1} is provided by \cite[Thm 1]{Camacho} according to which $f$ is $C^0$ conjugate to,
\begin{equation}\label{Res43}
F_e (\zeta) = \frac{e \, \zeta}{(1-e \, \zeta^e)^{1/e}} \, , \quad \zeta = \zeta (z)
\end{equation}
or, in other words, the diagram with $C^0$ horizontals, $c(z) = e \, \zeta (z)$,
\begin{equation}\label{Res44}
\xymatrix{
\Delta \ar[d]_f \ar[r]^c &\Delta \ar[d]^{F_1} \\
\Delta \ar[r]_c &\Delta
}
\end{equation}
commutes. The Fatou coordinates $s_1$ of $F_1$, however, is exactly $-1/z$ on both attracting and repelling petals, so if we take,
\begin{equation}\label{Res45}
V_1(R) := \left\{ s_1 \mid \min \{ \vert {\rm Re} (s_1) \vert , \, \vert {\rm Im} (s_1) \vert \} > R \right\}
\end{equation}
then $F_1 (V_1 (R)) \backslash V_1(R)$, resp. $V_1 (R) \mid F_1 (V_1(R))$ is, for $R \gg 0$,
\begin{equation}\label{Res46}
\vert {\rm Im} (s_1) \vert \leq R \quad \mbox{and} \quad {\rm Re} (s_1) \in [-R , 1-R] \, , \quad \mbox{resp.} \ [R,1+R] \, .
\end{equation}
Consequently if $V(R) = c^{-1} V_1(R)$ then $f(V(R)) \backslash V(R)$, resp. $V(R) \backslash f(V(R_1))$ are commensurable (e.g. contained in the double for $R \gg 0$) with domains of the form,
\begin{equation}\label{Res47}
\vert {\rm Im} (s) \vert \leq R \quad \mbox{and} \quad {\rm Re} (s) \in [-R,1-R] \, , \quad \mbox{resp.} \ [R,1+R]
\end{equation}
in the Fatou coordinates of \eqref{Res8} employed in the statement of \eqref{Res30}, so, by op. cit. the difference,
\begin{equation}\label{Res48}
{\rm Res}_f^{V(R)} (\vert \omega \vert) - {\rm Res}_f^{V(R)} (\vert \lambda_{\omega} \vert^{2/m} \, \overline W W)
\end{equation}
for $W$ as in \eqref{Res12} tends to zero as $R \to \infty$, and whence \eqref{Res41} by \eqref{Res13}.
\end{proof}

In order to address how \thref{Dclaim:3} changes in the presence of poles we require,

\begin{ConsDef}\thlabel{Dcon:2}
Let $X$ be a compact Riemann surface then for any 2 points $x,y \in X$ there is a solution to the equation,
\begin{equation}\label{Res102}
dd^c \, G_{x,y} = \delta_x - \delta_y \, , \quad d^c = \frac{\partial - \overline\partial}{4\pi}
\end{equation}
which is unique up to a constant. As such if a finite group, $\Gamma$, acts on the right of $X$ with coarse moduli $\pi : X \to S := X/\Gamma$ and $\pi (x) \ne \pi (y)$ with neither a branch point, there is a well defined real number,
\begin{equation}\label{Res103}
g_{x,y} (\gamma) := G_{x,y} (x^{\gamma}) - G_{x,y} (y^{\gamma}) \, , \quad \gamma \in \Gamma \, .
\end{equation}
In this context observe, for $y$ fixed, $\hbar (y)$ not a branch point,
\begin{equation}\label{Res104}
\begin{matrix}
(1) &g_{x,y} (\gamma) = g_{y,x} (\gamma) \, , \ \gamma \in \Gamma \, . \hfill \\
(2) &g_{x^{\delta} , y^{\delta}} (\gamma) = g_{x,y} (\delta \,  \gamma \, \delta^{-1}) \, , \ \gamma , \delta \in \Gamma \, . \hfill \\
(3) &g_{x,y} (\gamma) \to +\infty \, , \ x \to y^{\gamma - 1} \, . \hfill \\
(4) &g_{x,y} (\gamma) \to - \infty \, , \ x \to c \, , \ \gamma \in {\rm stab}_{\Gamma} (c) \, . \hfill
\end{matrix}
\end{equation}
In our immediate context we arrive to such a pair starting from an embedding ${\mathbb G}_m \hookrightarrow {\mathbb P}_{\mathbb C}^1$ together with a choice amongst the complement, $\{0,\infty\}$, so as to get a zero cycle,
\begin{equation}\label{Res105}
\delta_0 - \delta_{\infty} \, .
\end{equation}
Consequently, up to a multiplicative constant, there is a unique function $\sigma$ on ${\mathbb G}_m$ such that,
\begin{equation}\label{Res106}
dd^c \log \vert \sigma \vert^2 = \delta_{\infty} - \delta_0
\end{equation}
so that the choice \eqref{Res105} affords a unique differential form,
\begin{equation}\label{Res107}
\tau = \frac{d\sigma}{\sigma} \, .
\end{equation}
Now suppose for some $m \geq 2$ we have a meromorphic $m$-form $w$ such that,
\begin{equation}\label{Res108}
w-\tau^{\otimes m} \ \mbox{is regular at} \ \{0,\infty\}
\end{equation}
and otherwise the poles are simple. As such if $w \ne \tau^{\otimes m}$ there is a connected $\mu_m$-cover $\pi : X \to {\mathbb P}^1$, and a differential form $v$ on $X$ such that,
\begin{equation}\label{Res109}
\pi^* w = v^{\otimes m} \, , \quad v^{\alpha} = \alpha \cdot v \, , \quad \alpha \in \mu_m \, .
\end{equation}
Plainly $\pi$ is unramified over $0$ and $\infty$, so the choice of $v$ in \eqref{Res108} determines unique points $0_v$, resp. $\infty_v$, in $\pi^{-1} (0)$, resp. $\pi^{-1} (\infty)$, such that,
\begin{equation}\label{Res110}
v-\pi^* \, \frac{d\sigma}{\sigma} \ \ \mbox{is regular at $0_v$, resp. $\infty_v$}
\end{equation}
and we define,
\begin{equation}\label{Res110.bis}
g_w := \sum_{\alpha \in \mu_m \backslash \{1\}} (1-\overline\alpha) \, g_{0_v , \infty_v} (\alpha)
\end{equation}
which by item (1), resp. (2) of \eqref{Res104} is independent of the choices \eqref{Res105}, resp. \eqref{Res107}.

Finally in the \thref{Dcon:1} what we a priori have is a sequence of forms $w_R$, and constants $\lambda^0_R , \lambda_R^{\infty}$ satisfying the estimates of \eqref{Res24} and \eqref{Res37.bis.bis}. However  by \eqref{Res34}, cf. \eqref{Res37.bis.bis}, these $w_R$ converge, as $R \to \infty$, on ${\mathbb P}_{\mathbb C}^1$ to a meromorphic $m$-form $w$ with at worst the same poles, while by \thref{Dclaim:2} either $\lambda (\omega)$ of op. cit. is zero, or $\lambda (\omega)^{-1} w$ satisfies \eqref{Res108}, so we may define,
\begin{equation}\label{Res111}
\gamma_{\omega} = \left\{\begin{matrix}
g_{\lambda (\omega)^{-1} w} &, &\lambda(\omega) \ne 0 \ \mbox{and} \ w \ne \tau^m \lambda (\omega) \\
0 &, &\mbox{otherwise.} \hfill
\end{matrix}\right.
\end{equation}
Finally, although not part of the definitions we may usefully note that by \eqref{Res37.bis.bis},
\begin{equation}\label{Res112}
\underset{R \leq {\rm Re} (s) \leq R + 1 \atop \vert {\rm Im} (s) \vert \leq R}{\int} \left\vert \vert w \vert - \vert w_R \vert \right\vert \ll R^{2-b} \, .
\end{equation}
\end{ConsDef}

With this preamble in mind, we have,

\begin{fact}\thlabel{Dclaim:3.bis}
Let everything be as in \thref{Dclaim:3}, but with $\omega$ having at worst simple poles along an invariant divisor in an attracting petal with finitely many orbits then,
\begin{equation}\label{Res113}
{\rm Res}_f (\vert \omega \vert) \leq \vert \lambda_{\omega} \vert \left\{ e^{-2} \, {\rm Re} (\nu) + \frac{\gamma_{\omega}}{2\pi}\right\} .
\end{equation}
\end{fact}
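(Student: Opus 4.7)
The plan is to adapt the argument of \thref{Dclaim:3} while accounting for the extra mass contributed by the simple poles of $\omega$ along the attracting petal. As in that proof, one starts from the Camacho $C^0$-conjugacy \eqref{Res43}--\eqref{Res44} to select the neighbourhoods $V(R)$ of \eqref{Res45} and \eqref{Res47} of the puncture, so that by \eqref{Res34} the residues ${\rm Res}_f^{V(R)}(|\omega|)$ converge to ${\rm Res}_f(|\omega|)$. On the boundary strips $f(V(R))\setminus V(R)$ and $V(R)\setminus f(V(R))$, the Fatou coordinate lets us model $\omega$ by the meromorphic functions $W_R$ of \thref{Dclaim:1.bis}; by \eqref{Res37.bis.bis} together with \eqref{Res112}, the $W_R$ converge, in the sense of their associated $(2/m)$-th power measures, to the $\mathbb{P}^1$-form $w$ of \thref{Dcon:2}, and one obtains
\begin{equation*}
{\rm Res}_f^{V(R)}(|\omega|) \;=\; |\lambda(\omega)|^{2/m}\,{\rm Res}_f^{V(R)}(|w|) \;+\; o(1) \qquad (R\to\infty),
\end{equation*}
thereby reducing the whole computation to an evaluation on $\mathbb{P}^1$.

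For the ``principal part'' $\lambda(\omega)\,\tau^{\otimes m}$ of $w$ --- the piece already present in the holomorphic case, cf. \eqref{Res108} --- the proof of \thref{Dclaim:3} goes through verbatim and contributes the $e^{-2}{\rm Re}(\nu)$ term to the bracket in \eqref{Res113}. The new ingredient is the contribution of the residual meromorphic part $w-\lambda(\omega)\tau^{\otimes m}$. To extract it, pull back along the cyclic cover $\pi:X\to\mathbb{P}^1$ of \eqref{Res109}: on $X$ one has $\pi^*w=v^{\otimes m}$ and $|\pi^*w|^{2/m}=|v|^2$, and by \eqref{Res110} the form $v$ differs from $\pi^*(d\sigma/\sigma)$ by a form regular at $0_v,\infty_v$. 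Hence, up to a smooth error, $\log|v|^2$ is a potential whose $dd^c$ is $\delta_{\infty_v}-\delta_{0_v}$, and Stokes' theorem on $X$ converts the boundary integrals of $|v|^2$ over the lifts of $f(V(R))\setminus V(R)$ and $V(R)\setminus f(V(R))$ into evaluations of $G_{0_v,\infty_v}$ on the lifted boundary components. The different sheets over a given strip are indexed by $\mu_m$, and the equivariance $v^\alpha=\alpha v$ weights the contribution on each sheet by the phase $1-\overline\alpha$ once one symmetrises the Galois sum so that the total is real; assembling them via the definitions \eqref{Res103}--\eqref{Res110.bis} produces exactly $\gamma_\omega/(2\pi)$.

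The inequality (rather than equality) in \eqref{Res113} arises because one cannot a priori arrange the bounding annuli $V(R)$ to dodge the discrete orbit of poles: when a pole orbit crosses a boundary strip, the local mass contribution of $|\omega|$ is non-negative and is captured by the Green's function bound only from above. The main technical obstacle is the last paragraph, i.e.\ the precise translation of the Stokes boundary term on the $\mu_m$-cover into the specific combination \eqref{Res110.bis}. This requires pairing Galois twists $\alpha$ and $\overline\alpha$ so as to arrange reality of the total, using item (2) of \eqref{Res104} to eliminate dependence on the choice of basepoint lifts $0_v,\infty_v$, and verifying that the otherwise divergent ``$\alpha=1$'' term is exactly what has already been absorbed into the principal-part calculation of the previous paragraph.
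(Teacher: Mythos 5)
Your global strategy is right in broad outline --- reduce via \thref{Dclaim:1.bis} and \thref{Dcon:2} to the limit form $w$ on ${\mathbb P}^1$, lift along the $\mu_m$-cover $\pi : X \to {\mathbb P}^1$ so that $|\pi^*w|^{2/m}=\overline v v$ is quadratic, and read off the Green's-function combination $\gamma_\omega$ from the Galois sums. But there are two concrete problems with the execution and one misdiagnosis.

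First, the proposed split of $w$ into a ``principal part'' $\lambda(\omega)\tau^{\otimes m}$ and a residual part, each fed separately through $|\cdot|^{2/m}$, does not make sense: $\omega\mapsto |\omega|^{2/m}$ is not additive. The paper instead normalises $\lambda_\omega=1$ and estimates the \emph{single} quantity
\begin{equation*}
{\rm Res}_f^{V(R)}(|\omega|)-{\rm Res}_f^{V(R)}(\overline W W)\;+\;o(R)\;=\;\frac{1}{4\pi^2}\int_{U_R}\Bigl(\frac{d\overline\sigma\,d\sigma}{|\sigma|^2}-|w|\Bigr),
\end{equation*}
where the first residue on the left is given by \eqref{Res13}; there is no decomposition of $w$, only a comparison of the two measures $|\tau|^2$ and $|w|$ over the fundamental domain $U_R$.

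Second, and crucially, the key identity \eqref{Res117} is missing from your outline. After lifting, one writes $v=\sum_{\alpha\in\mu_m}\alpha\,(\alpha^*\partial g_{0_V,\infty_V})+\theta$ with $\theta\in\Gamma(X,\omega_X)$ holomorphic. Your appeal to Stokes for $\log|v|^2$ with $dd^c$ supported at $0_v,\infty_v$ only is false as stated, since $v$ inherits the simple poles of $\omega$; it is precisely the global-form correction $\theta$ that absorbs this discrepancy and lets one expand $\overline v v$ and compare with $\sum_{\alpha,\beta}(\alpha^*\overline\partial g)(\beta^*\partial g)$ term by term, as in \eqref{Res118}--\eqref{Res119}.

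Third, the explanation of why \eqref{Res113} is an inequality rather than an equality is wrong. It is not about $V(R)$ failing to dodge the pole orbit; it comes from dropping the non-negative term $|\theta|^2$ in the expansion of $\overline v v$ in \eqref{Res118}, which provides the lower bound on $\int_{U_R}|w|$ and hence the upper bound on \eqref{Res116}. Indeed, \thref{Dschol:2} constructs cases where the inequality is exactly sharp precisely because $\theta=0$ there, with the poles in an entirely generic position --- which directly contradicts your attribution of the defect to pole-boundary interaction.
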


\begin{proof}
If $\omega$ doesn't have a pole in the given attracting petal $P$ there is nothing to do. Otherwise let $V(R)$ be as in \eqref{Res45} et seq. with $P_R$ its intersection with $P$ then, exactly as in op. cit. we have,
\begin{equation}\label{Res114}
{\rm Res}_f^{V(R)} (\vert\omega\vert) - {\rm Res}_f^{V(R)} \left( \vert \lambda_{\omega}\vert^{2/m} \, \overline W W \right) + o(R) = \underset{P_R \backslash f(P_R)}{\int} \left( \vert \lambda_{\omega}\vert^{2/m} d\overline s \, ds - \vert \omega \vert \right)
\end{equation}
where $s$ is the Fatou coordinate in the attracting petal, so if $\lambda_{\omega} = 0$ is zero we're also done. Otherwise $\lambda_{\omega} \ne 0$, thus, without loss of generality $\lambda_{\omega} = 1$, while by \eqref{Res112} we can replace $\vert \omega \vert$ in \eqref{Res114} by $\vert w \vert$ of \eqref{Res111}, i.e. we require to estimate,
\begin{equation}\label{Res115}
\underset{P_R \backslash f(P_R)}{\int} d\overline s \, ds - \frac{\vert w \vert}{4\pi^2}
\end{equation}
from above. In particular we have the $\mu_m$-cover $\pi : X \to {\mathbb P}^1$ of \eqref{Res108} et seq. Furthermore by construction, i.e. \eqref{Res44} et seq., $P_R - f (P_R)$ is a fundamental domain for the complement of 2 connected open sets $N_R(0)$, $N_R(\infty)$ forming a basis of the topology at $0$ and $\infty$ which decrease with $R$, i.e. \eqref{Res115} is identically,
\begin{equation}\label{Res116}
\frac{1}{4\pi^2} \underset{U_R := {\mathbb P}^1 \backslash N_R(0) \cup N_R(\infty)}{\int} \left( \frac{d \overline \sigma \, d\sigma}{\vert \sigma \vert^2} - \vert w \vert \right)
\end{equation}
for $\sigma$ as in \eqref{Res106}. Now on $X$, by \eqref{Res109},
\begin{equation}\label{Res117}
\pi^* \vert w \vert = \overline v v \, , \quad v - \sum_{\alpha \in \mu_m} \alpha \cdot (\alpha^* \partial g_{0_V , \infty_V}) = \theta \in \Gamma (X,\omega_X)
\end{equation}
so we have,
\begin{eqnarray}\label{Res118}
m \underset{U_R}{\int} \vert \omega \vert & =&\underset{\pi^{-1} (U_R)}{\int} \overline v v \geqq \sum_{\alpha,\beta \in \mu_m} \overline\alpha \beta \underset{\pi^{-1} (U_R)}{\int} \alpha^* \overline\partial g_{0_V , \infty_V} \beta^* \partial g_{0_V , \infty_V}  \\
&+ & \sum_{\alpha \in \mu_m} \overline\alpha \underset{\pi^{-1} (U_R)}{\int} \alpha^* \overline\partial g_{0_V , \infty_V} \cdot \theta - \sum_{\alpha \in \mu_m} \alpha \underset{\pi^{-1} (U_R)}{\int} \alpha^* \partial g_{0_V , \infty_V} \overline\theta \nonumber
\end{eqnarray}
wherein the integrals of the ``$\overline\partial g \cdot \theta$'' terms tend to zero as $R \to \infty$, by Stokes, while,
\begin{equation}\label{Res119}
m \underset{U_R}{\int} \frac{d\overline \sigma \, d\sigma}{\vert \sigma \vert^2} = \sum_{\alpha , \beta \in \mu_m} \ \underset{\pi^{-1} (U_R)}{\int} \alpha^* \overline\partial g_{0_V , \infty_V} \, \beta^* \partial g_{0_V , \infty_V} \, .
\end{equation}
As such up to $o(R)$ an upper bound for \eqref{Res116} is,
\begin{equation}\label{Res120}
\frac{2\pi}m \sum_{\alpha , \beta \in \mu_m} (1-\overline\alpha \beta) \, g_{0_V , \infty_V} \, (\alpha\beta^{-1}) = 2\pi \, \gamma_{\omega}
\end{equation}
and whence \eqref{Res113} after correction by the constant in \eqref{Res116}.
\end{proof}

In principle there's absolutely no reason why equality cannot occur in \eqref{Res113} which we address by way of,

\begin{scholion}\thlabel{Dschol:2}
In the above notations of \thref{Dclaim:3.bis} and its proof, suppose further that $m=2$, and denote the involution by,
\begin{equation}\label{Res121}
* : X \longrightarrow X : x \longmapsto x^*
\end{equation}
then we have an anti-symmetric function,
\begin{equation}\label{Res122}
g = g_{0_V , \infty_V} - g_{0^*_V , \infty^*_V}
\end{equation}
which in turn yields a quadratic differential $\partial g^{\otimes 2} = w$ on ${\mathbb P}^1$ whose square root in the sense of \eqref{Res108} et seq. is again $X$, with $v$ of op. cit. equal to $\partial g$, so that in this case $\theta$ of \eqref{Res117} is zero, and the estimate \eqref{Res120} of \eqref{Res116} is sharp. A particular example that achieves this is when $X = {\mathbb P}^1$ and there are 2 simple poles. In which case the general form is,
\begin{equation}\label{Res123}
w_c = \frac{(\sigma + c)^2}{(\sigma + 1)(\sigma + c^2)} \left( \frac{d\sigma}{\sigma} \right)^{\otimes 2} \ , \quad c \in {\mathbb C} \backslash \{1\}
\end{equation}
with $\pi$ branched over $-1$ and $-c^2$, and,
\begin{equation}\label{Res124}
g_w = 4 \log \frac{4c}{(1+c)^2} \, .
\end{equation}
Consequently we will get examples where \eqref{Res113} is sharp, and $\gamma_{\omega}$ of op. cit. is given by \eqref{Res124} if, for say $e=2$, we can find a quadratic differential which limits on \eqref{Res123} in right strips, $\sigma^{-2} d\sigma^{-2}$ in left strips, and satisfies the various hypothesis of \thref{Dcon:1} and \thref{Dhyp:1}. To this end suppose that $f$ is,
\begin{equation}\label{Res125}
z \longmapsto z/(z+1)
\end{equation}
so the Fatou coordinate is identically $s = z^{-1}$ in both attracting the petal $P_{\alpha}$ and the repelling one $P_{\beta}$, while for some $Y \gg 0$,
$$
P_{\alpha} \cap P_{\beta} = \{ s \mid \vert {\rm Im} (s) \vert > Y \} \, .
$$
Just as in \thref{DLem:1} we go through the steps for solving the $\overline\partial$-equation, starting with a partition of unity,
\begin{equation}\label{Res126}
1 = \rho_{\alpha} + \rho_{\beta} \, , \quad \rho_{\bullet} : {\mathbb R} \to {\mathbb R}_{\geq 0}
\end{equation}
of the real line with $\rho_{\alpha} \, \rho_{\beta} = 0$ on $\vert x \vert \geq 1/2$, which on pulling back by $x = {\rm Re} (s)$ we regard as a partition of unity in the $s$-variable, and where the relevant co-cycle is,
\begin{equation}\label{Res127}
h_{\alpha \beta} := \frac{\sigma^2 w_c}{(d\sigma)^2} - 1 \, \Bigl\vert_{P_{\alpha} \cap P_{\beta}} \, , \quad \vert h_{\alpha \beta} \vert \ll e^{-\vert {\rm Im} (s)\vert}
\end{equation}
from which we have the $C^{\infty}$ functions,
\begin{equation}\label{Res128}
H_{\alpha} = \rho_{\beta} \, h_{\alpha \beta} \bigl\vert_{P_{\alpha}} \, , \quad H_{\beta} = -\rho_{\alpha} \, h_{\alpha \beta} \bigl\vert_{P_{\alpha}} \, , \quad h_{\alpha \beta} = H_{\alpha} - H_{\beta} \, .
\end{equation}
In particular, therefore, we get a $C^{\infty} (0,1)$ form,
\begin{equation}\label{Res129}
\theta = \overline\partial \, H_{\alpha} = \overline\partial \, H_{\beta} \, , \quad {\rm supp} (\theta) \subseteq \{\vert{\rm Res}(s)\vert \leq 1/2 \}
\end{equation}
and unlike \eqref{Res19}, we solve the $\overline\partial$-equation here by way of,
\begin{equation}\label{Res130}
H(s) = \frac1{2\pi} \underset{{\mathbb C} \vert {\rm Re} (t) \vert , \atop \vert {\rm Im} (t) \vert \geq R}{\int} \frac{\theta \, \partial t}{t-s}
\end{equation}
which by \eqref{Res127} and \eqref{Res129} not only converges but satisfies,
\begin{equation}\label{Res131}
\vert H \vert \ll \vert s \vert^{-1} \, .
\end{equation}
As such $h_{\alpha} = H_{\alpha} - H$, resp. $h_{\beta} = H_{\beta} - H$ are holomorphic on $P_{\alpha}$, resp. $P_{\beta}$ while also satisfying the growth estimate \eqref{Res131} thanks to \eqref{Res127} and the choice of \eqref{Res126}. In particular the quadratic differential, $\widetilde\omega_c$, defined by,
\begin{equation}\label{Res132}
w_c - h_{\alpha} \, ds^{\otimes 2} \ \mbox{on} \ P_{\alpha} \, , \quad ds^{\otimes 2} (1-h_{\beta}) \ \mbox{on} \ P_{\beta}
\end{equation}
is not only non-zero but limits on $w_c$, resp. $ds^{\otimes 2}$ as ${\rm Re} (s) \to \infty$, resp. ${\rm Re} (s) \to -\infty$. Such a form certainly satisfies \eqref{Res36}, but, may not, a priori satisfy \eqref{Res34}. However from \eqref{Res130} we have the growth estimate,
\begin{equation}\label{Res133}
\vert f^* \, \widetilde\omega_c - \widetilde\omega_c \vert \ll \vert s \vert^{-2}
\end{equation}
which in turn implies that in the variable $z$ of \eqref{Res125},
\begin{equation}\label{Res134}
z^2 (f^* \, \widetilde\omega_c - \widetilde\omega_c)
\end{equation}
is holomorphic. Consequently we can, cf. \thref{cd:fact6} and \thref{lem:ct1}, find a quadratic differential $q_c$, such that,
\begin{equation}\label{Res135}
z^3 q_c \ \mbox{is holomorphic, and} \ f^* q_c - q_c = f^* \widetilde\omega_c - \widetilde\omega_c \ \mbox{mod holomorphic.}
\end{equation}
In particular, therefore, $q_c$ satisfies the growth estimate of \eqref{Res131}, so the asymptotic behaviour of $\omega_c  = \widetilde\omega_c - q_c$ is still the same of that of $\widetilde \omega_c$ post \eqref{Res132}. Unlike $\widetilde\omega_c$, however, it satisfies \eqref{Res34}, and we have a family of examples where \eqref{Res113} is sharp, but $\gamma_{\omega_c}$ of op. cit. can be arbitrarily large by \eqref{Res124}.
\end{scholion}

To which we can usefully add,

\begin{fact}\thlabel{Dclaim:3.b.bis.bis}
Let everything be as in \thref{Dclaim:3.bis} but with $\omega$ holomorphic in repelling petals, and suppose $\lambda_p$ of \eqref{Res37} in every such is zero, with othewise arbitrary behaviour in attracting petals, then,
\begin{equation}\label{Res136}
{\rm Res}_f (\vert\omega\vert) \leq 0 \, .
\end{equation}
\end{fact}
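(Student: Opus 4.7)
The plan is to follow the strategy of \thref{Dclaim:3}, computing ${\rm Res}_f(|\omega|)$ through the basis of neighbourhoods $V(R)$ constructed in \eqref{Res44}--\eqref{Res47}, but, unlike in that proof or in \thref{Dclaim:3.bis}, to exploit the signs of the two terms of the defining formula \eqref{Res1} separately rather than compare their combination to a model calculation.

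First I would split the residue according to the Leray--Fatou flower \eqref{Res7}. From the analysis leading to \eqref{Res47}, for $R\gg 0$ every connected component of $V(R)\setminus f(V(R))$, resp. $f(V(R))\setminus V(R)$, lies (up to transitional overlaps near petal boundaries) inside a single attracting, resp. repelling, petal, and is there commensurable with the rectangle $R\leq{\rm Re}(s)\leq R+1,\ |{\rm Im}(s)|\leq R$, resp. $-R\leq{\rm Re}(s)\leq 1-R,\ |{\rm Im}(s)|\leq R$, in the local Fatou coordinate $s$. Consequently, up to an error that vanishes with $R$,
\begin{equation*}
{\rm Res}_f^{V(R)}(|\omega|)\;=\;\sum_{P^-}\int_{P^-\cap(f(V(R))\setminus V(R))}|\omega|\;-\;\sum_{P^+}\int_{P^+\cap(V(R)\setminus f(V(R)))}|\omega|.
\end{equation*}

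Since $|\omega|$ is a non-negative measure wherever defined, the attracting sum is non-negative, and it therefore suffices to prove the repelling sum tends to $0$ as $R\to\infty$. But in each repelling petal $P^-$, $\omega$ is by hypothesis holomorphic with $\lambda_{P^-}=0$, so \thref{Dcor:3} applies, with $\lambda_\infty=0$, in the $R\to-\infty$ direction relevant to a repelling petal; its conclusion \eqref{Res30} then gives exactly the required vanishing of $\int_{P^-\cap(f(V(R))\setminus V(R))}|\omega|$. Summing over the $e$ repelling petals and passing to the limit yields ${\rm Res}_f(|\omega|)\leq 0$.

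The main obstacle, modest though it is, is the control of the transitional wedges where two consecutive petals meet, since \thref{Dhyp:1} only bounds $|\omega|$ in sectors of positive width centred on the $2e$ critical directions, and not on the narrow wedges separating an attracting and an adjacent repelling petal. The absolute integrability \eqref{Res34}, combined with the inner regularity of Radon measures already invoked in the proof of \thref{DRev:1}, shows that each of these thin transition zones contributes $o_R(1)$ to either integral, which is what is needed to close the argument.
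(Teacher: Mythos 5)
Your argument is exactly the paper's: its \eqref{Res137} is precisely the inequality you obtain by discarding the nonnegative attracting-petal contribution in \eqref{Res1}, and it then kills the repelling-petal contribution by combining \eqref{Res46}--\eqref{Res47} with the hypothesis $\lambda_P=0$ and \eqref{Res30}, just as you do via \thref{Dcor:3}. One small correction to your final paragraph: the sectors $S_i$ of \thref{Dhyp:1} are centred on the $2e$ directions $z^e\in{\mathbb R}(1)$, which are exactly the boundaries \emph{between} consecutive petals where $\vert{\rm Im}(s)\vert$ is large, so the meromorphic bound is in fact available on the transitional wedges you worry about -- though, as you say, absolute integrability of $f^*\vert\omega\vert-\vert\omega\vert$ disposes of them in any case.
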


\begin{proof}
Again for $V_1(R)$ as in \eqref{Res45} et seq., let $P_-(R)$, resp. $P_+(R)$ be its intersection with repelling, resp. attracting petals, then by op. cit. and \eqref{Res1},
\begin{equation}\label{Res137}
{\rm Res}_f (\vert\omega\vert) \leq \lim_{R \to \infty} \underset{f(P_-(R)) \backslash P_-(R)}{\int} \vert \omega \vert \, .
\end{equation}
The domain of integration, however, in \eqref{Res137} is \eqref{Res46} in every repelling petal, so, by hypothesis, and \eqref{Res47} et seq., the right hand side of \eqref{Res137} is zero.
\end{proof}

Related to this is the matter of difference equations in the Fatou coordinate of \eqref{Res8} which for want of a better place to put it we address in,

\begin{scholion}\thlabel{DSchol:1}
Let $h(s)$ be holomorphic on a sector $S$, i.e. $\vert {\rm arg} (s) \vert < \varepsilon$ for a suitable determination of the argument, around the real axis of an attracting petal, \eqref{Res7}, and suppose $h$ is flat, i.e. in the Fatou coordinate $s$,
\begin{equation}\label{Res49}
\forall \, n \geq 0 \, , \quad \exists \, c_N , C_N > 0 \ni \vert s \vert \leq c_n \Rightarrow \vert h(s) \vert \leq C_n \vert s \vert^{-n}
\end{equation}
then there is a unique flat function $g$, on a possibly smaller sector, such that,
\begin{equation}\label{Res50}
g(s+1) - g(s) = h(s) \, .
\end{equation}
\end{scholion}

\begin{proof}
We'll use uniqueness repeatedly on domains of the form,
\begin{equation}\label{Res51}
V(R,r) := \{s \mid {\rm Re} (s) > R \, , \ \vert {\rm Im} (s) \vert < r \}
\end{equation}
so let's start with it. Plainly it's equivalent to showing that a solution of \eqref{Res50} with $h=0$ on $V(R,r)$ is zero. Now such a solution is equivalently a holomorphic function, $g$, say, on a neighbourhood of the unit circle $C \hookrightarrow {\mathbb C}^{\times}$ in the notation of \eqref{Res23} et seq. As such if the limit,
\begin{equation}\label{Res52}
\lim_{s \to \infty \atop s \in {\mathbb R}} g(s)
\end{equation}
exists then it is equal to $g(c)$ for any $c \in C$, so $g$ is constant, whence $0$ if it's flat.

As to the existence we first solve the equation on a domain of the form \eqref{Res51}. To this end observe that \thref{DLem:1} is valid as stated with exactly the same proof if on the domains $V , V_{\alpha} , V_{\beta}$ of op. cit. we further impose the condition, $\vert {\rm Im} (s) \vert < r$ for some $r$. Now apply this to,
\begin{equation}\label{Res53}
h_{\alpha\beta} (s) := \left\{\begin{matrix}
h(s) &, &s \in V_{\alpha} \cap (V_{\beta} - 1) \\
0 &, &s \in V_{\alpha} \cap V_{\beta} \, . \hfill
\end{matrix} \right.
\end{equation}
As such, from the second condition in \eqref{Res53}, $h_{\alpha}$, $h_{\beta}$ of \thref{DLem:1} patch to a function $g_R$ on $V$ which satisfies,
\begin{equation}\label{Res54}
g_R (s+1) - g_R(s) = h(s) \, , \quad s \in V_{\alpha} \cap (V_{\beta} -1)
\end{equation}
by the first condition in op. cit. Now consider defining $g_{R+n}$ on open neighbourhoods of $R < {\rm Re} (s) < R+n+1$, ${\rm Im} (s) < r$ by way of,
\begin{equation}\label{Res55}
g_{R+n} (s) = g_{R+n-1} (s-1) + h(s-1) \, , \quad g_R = g_{R+0} = g_{R - 1}
\end{equation}
then what we've just said establishes the consistency of this formula for $n=0$, after which it's automatic. Better still we have,
\begin{equation}\label{Res55.bis}
g^R(s) := g_{R+n} (s) = \sum_{i=1}^n h(s-i) + g_R (s-n) \, , \quad R+n < {\rm Re} (s) < R+n+1
\end{equation}
so by \eqref{Res49} and \eqref{Res15} we have that,
\begin{equation}\label{Res56}
\lim_{{\rm Re} (s) \to \infty} g^R(s) \quad \mbox{exists and is} \ 0(R^{-a}) \ \mbox{any} \ a \, .
\end{equation}

In particular if, in a minor abus de language, we adjust $g^R$ by a constant then we can further suppose that the limit in \eqref{Res56} is zero without prejudice to the estimate on the size. Without, however, changing $r$ which governs the imaginary part this is equally true of $g^{R'}$ for any $R' > R$, while by the uniqueness considerations of \eqref{Res51} et seq., $g^R$ restricts to $g^{R'}$ so by \eqref{Res56},
\begin{equation}\label{Res57}
\lim_{s \to \infty \atop s \in {\mathbb R}} g^R (s+t) = 0 \, , \quad t \in {\mathbb R} \, , \quad \vert t \vert < r \, .
\end{equation}

However by \eqref{Res55.bis} the difference between $g^R(s)$ and the left hand side of \eqref{Res57} is,
\begin{equation}\label{Res58}
\sum_{i=0}^{\infty} h(s+i) = 0 (\vert s \vert^{-a}) \, , \quad \forall \, a
\end{equation}
wherein the indicated bounded follows from \eqref{Res49}.

Finally, therefore, if we increase $R$ to $R+1$ we can also increase $r$ from around $R\varepsilon$ to $R\varepsilon + \varepsilon$, so, again by the uniqueness of \eqref{Res51} et seq., $g^{R+1}$ glues to $g^R$. As such we certainly obtain a holomorphic solution $g$ of \eqref{Res50} on a sector, while the asymptotic estimate of \eqref{Res58} only depnds on $c_n , C_n$ of \eqref{Res44}, so $g$ is flat in the sense of op. cit.
\end{proof}



\vglue 1cm

\section{The Count}\label{ss:count}

We wish to contrast invariant cycles of endomorphisms of Riemann surfaces with their ramification, and to this end following \cite[\S 1]{adam} we introduce,

\begin{defn}\thlabel{defn:ct1}
Let $f : X \to X$ be an open endomorphism of a Riemann surface and $R'$ the subset of critical points whose forward orbit under $f$ is infinite, then an infinite tail 
\nomenclature[D]{Tail, \hyperref[defn:ct1]{infinite}}{} 
is an equivalence class of points in $R'$ under the relation,
\begin{equation}
\label{ct1}
x \sim y \quad \mbox{iff} \quad \exists \, a,b \in {\mathbb Z}_{\geq 0} \ni f^a(x) = f^b (y) \, .
\end{equation}
Amongst all such tails we further distinguish the {\it tame tails}, 
\nomenclature[D]{Tail, \hyperref[defn:ct1]{tame}}{} 
$R_t ({\mathbb C})$, wherein the forward orbit of some critical point (whence any by \eqref{ct1}) is eventually contained in the Fatou set of $f$, and its complement, $R_w({\mathbb C})$ of {\it wild tails}. 
\nomenclature[D]{Tail, \hyperref[defn:ct1]{wild}}{} 
In particular the ramification (Weil divisor) ${\rm Ram}_f$, of $f$ (understood with multiplicity) divides itself into,
\begin{equation}
\label{ct3}
{\rm Ram}_f = {\rm Ram}_b + {\rm Ram}_t + {\rm Ram}_w
\end{equation}
according to whether the orbit of a critical point has bounded cardinality, ${\rm Ram}_b$, 
\nomenclature[N]{Ramification, bounded}{\hyperref[ct3]{${\rm Ram}$}$_b$}
is a tame tail, ${\rm Ram}_t$, 
\nomenclature[N]{Ramification, tame}{\hyperref[ct3]{${\rm Ram}$}$_t$} 
or a wild one, ${\rm Ram}_w$.
\nomenclature[N]{Ramification, wild}{\hyperref[ct3]{${\rm Ram}$}$_w$}
\end{defn}

Similarly we will need some notation to distinguish the different types of cycles that may occur, i.e.

\begin{notation}\thlabel{not:ct1}
Let $f : X \to X$ be an open endomorphism of a Riemann surface then a finite, of cardinality $p$, forward orbit, $Z$, of a point will be called a cycle of order $p$, and according as the multiplier $\lambda$ of \eqref{Res3} satisfies,

\begin{enumerate}
\item[(+)] $\vert \lambda \vert < 1$, then we say $Z$ is attracting and use an indice or suffix $+$.
\nomenclature[D]{Cycle, \hyperref[not:ct1]{attracting}}{}
\item[$(++)$] $\vert \lambda \vert =0$, then we say $Z$ is super attracting and use an indice or suffix $++$.
\nomenclature[D]{Cycle, \hyperref[not:ct1]{super attracting}}{}
\item[$(0+)$] $\lambda \in \mu_{\infty}$, and \'Ecalle's residu iteratif, $\nu$ of \eqref{Res9} of some iterate enjoys ${\rm Re}(\nu) \leq 0$, then we say $Z$ is parabolic attracting and employ an indice or suffix $0+$.
\nomenclature[D]{Cycle, \hyperref[not:ct1]{parabolic attracting}}{}
\item[$(0-)$] $\lambda \in \mu_{\infty}$ and $\nu$ as above but with ${\rm Re}(\nu) > 0$, then we say $Z$ is parabolic repelling and employ an indice or suffix $0-$.
\nomenclature[D]{Cycle, \hyperref[not:ct1]{parabolic repelling}}{}
\item[(SD)] If $\lambda \in S^1 \backslash \mu_{\infty}$, and $f^p$ admits a conjugation to $z \mapsto \lambda z$, then we say $Z$ is a Siegel disc, and employ an indice or suffix (SD).
\nomenclature[D]{Cycle, \hyperref[not:ct1]{Cycle, Siegel disc}}{}
\item[(CR)] If $\lambda \in S^1 \backslash \mu_{\infty}$, and is not a Siegel disc, then we say that $Z$ is Cremer, and employ an indice or suffix (CR).
\nomenclature[D]{Cycle, \hyperref[not:ct1]{Cremer}}{}
\end{enumerate}

All such cycles except those of Cremer type belong to the Fatou set, or its boundary in the case of those which are parabolic, to which in addition there may be a domain,

\begin{enumerate}
\item[(HR)] Isomorphic to a maximal annulus, $A$, such that $f^p(A) = A$ for some $p$, and $f^p \!\!\mid_A$ is conjugate to $z \mapsto \lambda z$, $\vert \lambda \vert = 1$, for $z : A \hookrightarrow {\mathbb C}$ an embedding of $A$ as a standard annulus, which we refer to as a Herman ring and employ an indice or suffix HR.
\nomenclature[D]{Cycle, \hyperref[not:ct1]{Herman ring}}{}
\end{enumerate}

In particular, therefore, tame tails may be partitioned according as to whether they accumulate at an attractor, a super attractor, a parabolic point, or eventually belong to a Siegel disc or Herman ring, so, we write,
\begin{equation}
\label{ct4}
{\rm Ram}_t = {\rm Ram}_t^+ + {\rm Ram}_t^{++} + {\rm Ram}_t^{0+} + {\rm Ram}_t^{0-} + {\rm Ram}_t^{{\rm SD}} + {\rm Ram}_t^{{\rm HR}}
\end{equation}
to reflect this partition.
\nomenclature[N]{Tame ramification at an attracting}{cycle \hyperref[ct4]{${\rm Ram}$}$^+_t$}
\nomenclature[N]{Tame ramification at a super attracting}{cycle \hyperref[ct4]{${\rm Ram}$}$^{++}_t$}
\nomenclature[N]{Tame ramification at an attracting}{parabolic cycle \hyperref[ct4]{${\rm Ram}$}$^{0+}_t$}
\nomenclature[N]{Tame ramification at a repelling}{parabolic cycle \hyperref[ct4]{${\rm Ram}$}$^{0-}_t$}
\nomenclature[N]{Tame ramification at a Siegel disc}{\hyperref[ct4]{${\rm Ram}$}$^{{\rm SD}}_t$}
\nomenclature[N]{Tame ramification at a Herman ring}{\hyperref[ct4]{${\rm Ram}$}$^{{\rm HR}}_t$}
\end{notation}

With these preliminaries in mind let us proceed to introduce our,

\begin{setup}\thlabel{setup:ct1}
Let $f : X \to X$ be an open endomorphism of a Riemann surface and define a sequence of divisors by,
\nomenclature[N]{Finite approximations to the post}{critical set \hyperref[setup:ct1]{${\rm Ram}$}$^n$}
\begin{equation}
\label{ct5}
\begin{matrix}
\hfill {\rm Ram}^0 &:= &{\rm Ram}_f + ({\rm Ram}_f)_{\rm red} \hfill \\
\vert {\rm Ram}^{n+1} \vert &= &\vert {\rm Ram}^0 \vert \cup f (\vert {\rm Ram}\vert^n) \, , \quad n \in {\mathbb Z}_{\geq 0}
\end{matrix}
\end{equation}
wherein $\vert \ \vert$ denotes the support, and the multiplicity at $x \in \vert {\rm Ram}^{n+1}\vert$ is defined by,
\begin{equation}
\label{ct6}
{\rm ord}_x ({\rm Ram}^{n+1}) = \left\{\begin{matrix}
{\rm ord}_x ({\rm Ram}^0) \, , &x \in \vert {\rm Ram}^0 \vert \\
1 \, , &\mbox{otherwise.}
\end{matrix}\right.
\end{equation}
It follows that as divisors/sub-schemes of $X$,
\begin{equation}
\label{ct7}
f^* {\rm Ram}^{n+1} \geq {\rm Ram}^n \leq {\rm Ram}^{n+1}
\end{equation}
so that the rings of functions,
\begin{equation}
\label{ct8}
f^* {\mathcal O}_{{\rm Ram}^{n+1}} \longrightarrow {\mathcal O}_{{\rm Ram}^{n}} \longleftarrow {\mathcal O}_{{\rm Ram}^{n+1}}
\end{equation}
define an almost $f$-sheaf in the sense of \thref{cd:def1}. Better still the support of ${\rm Ram}^n$ decomposes according as to whether it belongs to a tame tail, a wild tail, or a finite orbit so just as in \eqref{ct3}, we have a disjoint decomposition,
\begin{equation}
\label{ct9}
{\rm Ram}^{n} = {\rm Ram}^{n}_b + {\rm Ram}^{n}_t + {\rm Ram}^{n}_w \, .
\end{equation}
In particular if ${\rm Ram}_f$ is finite then by \eqref{ct5}--\eqref{ct6}, ${\rm Ram}^{n}_b = {\rm Ram}^{n+1}_b$ for $n \gg 0$, and \eqref{ct8} defines an $f$-sheaf,
\begin{equation}
\label{ct10}
{\mathcal O}_{R_b} \, , \quad R_b = {\rm Ram}_b^n \, , \quad n \gg 0 \, .
\end{equation}
Similarly, and continuing to suppose ${\rm Ram}_f$ finite for ease of exposition, ${\rm Ram}_t^n$ admits a decomposition akin to that of \eqref{ct4}. In particular therefore, either:

\medskip

(a) For $\bullet \in \{ +, ++, 0+, 0-\}$ of \thref{not:ct1}, an accumulation point of $\underset{n \in {\mathbb Z}_{\geq 0}}{\bigcup} {\rm Ram}^n_{t,\bullet}$ is a periodic cycle $Z_{\bullet}$ of type $\bullet$, so, 
\nomenclature[N]{Weighted divisor $R_t$}{arising from tame ramification \hyperref[setup:ct1]{$R$}$_t$}
\begin{equation}
\label{ct11}
\varinjlim_n  {\rm Ram}^n_{t,\bullet}
\end{equation}
is a divisor, $R_t$, on the Riemann surface $X \backslash Z_{\bullet}$, and we have an $f$-sheaf,
\begin{equation}
\label{ct12}
i_* i^* {\mathcal O}_{\overline R_t}
\end{equation}
for $i : \overline R_t \hookrightarrow X$ the embedding of its closure. Equally if the set of parabolic cycles is non-empty then we can form the real blow up,
\begin{equation}
\label{ct13}
\widetilde\rho : \widetilde X \longrightarrow X
\end{equation}
in the same, and by hypothesis the accumulation points of \eqref{ct11} afford a finite cycle $\widetilde Z_{\bullet}$ in the exceptional divisor, $E$, of \eqref{ct13}, (the centres of the attracting petals in \thref{DRev:2}) and \eqref{ct11}--\eqref{ct12} hold for $i$ the inclusion of $\widetilde Z$ in $\widetilde X$. 

\medskip

(b) For $\bullet \in {\rm HR}$, resp. SD, and $n \gg 0$, ${\rm Ram}_{t,\bullet}^n$ eventually intersects the domain where an iterate of $f$ is conjugate to an irrational rotation. Thus subsequent iterates are dense in a finite number of circles, and we chose a closed annulus, resp. disc $i' : Z_{\bullet} \hookrightarrow X$, containing them. As such there is again, for $i$ the inclusion of the closure $\overline R_t$ an $f$-sheaf,
\begin{equation}
\label{ct14}
i_* \, i^* {\mathcal O}_X = i'_* \, i'^* {\mathcal O}_X \prod {\mathcal O}_{R_t^{\bullet} \backslash Z_{\bullet}} \, , \quad R_t^{\bullet} = \varinjlim_n {\rm Ram}_{t,\bullet}^n
\end{equation}
albeit the product decomposition in \eqref{ct14} is only as a sheaf, and never as an $f$-sheaf. Now if $X = {\mathbb P}^1$ then by \eqref{C2grp} any cycle of type $+,0+$, or $0-$ is the limit of points in ${\rm Ram}_t^n$ but otherwise this may fail. As such,

\medskip

(c) If at a cycle $Z_{\bullet}$ of type $\bullet = ++$, resp. SD, resp. a Herman ring, and $Z_{\bullet}$ has not already been defined and even if it has been defined at a SD then we define $Z_{\bullet}$ to be empty, resp. the cycle itself, resp. always add this cycle to $Z_{\bullet}$ at a Siegel disc, resp. the orbit of some circle $\vert z \vert = {\rm const}$ in the annulus of \thref{not:ct1} HR. In all these cases,
\begin{equation}
\label{ct15}
i_* \, i^* {\mathcal O}_{Z_{\bullet}}
\end{equation}
for $i$ the inclusion is an $f$-sheaf.

The one outstanding case therefore is,

\medskip

(d) If the cycle $Z_{\bullet}$ is Cremer, i.e. $\bullet = {\rm CR}$, of period $p$, then we have an $f$-sheaf,
\nomenclature[N]{Functions at a Cremer cycle to $2^{\rm nd}$}{order \hyperref[ct16]{${\mathcal O}$}$_{C_{\rm CR}}$}
\begin{equation}
\label{ct16}
{\mathcal O}_{C_{cr}} = \prod_{x \in Z_{\bullet}} {\mathcal O} / {\mathfrak m} (x)^2
\end{equation}
with corresponding $f$-ideal ${\mathcal O} (-C_{cr})$.

For the moment even on $X = {\mathbb P}^1$ we do not know there are finitely many cases of type (c) or (d), so choose a finite set of such with,
\begin{equation}
\label{ct17}
\widetilde j : \widetilde U = \widetilde X \backslash \bigcup_{\bullet \in {\rm (a), (b),} \atop {\rm finite \, choice \, of \, (c), (d)}} Z_{\bullet} \longhookrightarrow {\widetilde X} \longhookleftarrow \bigcup_{\bullet} Z_{\bullet} := Z : i = {\bigcup}_{\bullet} \, i_{\bullet}
\end{equation}
the resulting partition of the real blow up \eqref{ct13} into open and closed sets, then for any $n \geq 0$ we have according to the type of the attractor $\bullet$ of \thref{not:ct1} various $f$-quotients ${\mathcal Q}_{\bullet}$ of ${\mathcal O}_{\widetilde X}$ of the form,
\nomenclature[N]{Functions on a cycle and nearby}{ramification \hyperref[ct17.bis.bis]{${\mathcal Q}$}$_{\bullet}$}
\begin{equation}
\label{ct17.bis}
{\mathcal O} \longrightarrow j_! \, {\mathcal O}_{R_t^{\bullet}} \longrightarrow {\mathcal Q}_{\bullet} \longrightarrow i_* \, i^*_{\bullet} \, {\mathcal O}_{\widetilde X} \longrightarrow 0
\end{equation}
provided $\bullet$ is neither a Siegel disc nor a Herman ring, and
\begin{equation}
\label{ct17.bis.bis}
{\mathcal Q}_{\bullet} = {\mathcal O}_{R_t^{\bullet} \backslash Z_{\bullet}} \coprod \, i_* \, i^*_{\bullet} \, {\mathcal O}_{\widetilde X}
\end{equation}
otherwise. All of which fits into a short exact sequence,
\begin{equation}
\label{ct18}
0 \rightarrow j_! {\mathcal O}_{\widetilde U} (-R_b - R_t - {\rm Ram}^n_w - C_{cr} ) \rightarrow {\mathcal O}_{\widetilde X} \rightarrow {\mathcal O}_{C_{cr}} \prod  {\mathcal O}_{{\rm Ram}_w^n} \prod {\mathcal O}_{R_b} \prod_{\bullet} {\mathcal Q}_{\bullet} \rightarrow 0
\end{equation}
wherein, for ease of notation, we denote the kernel, resp. cokernel in \eqref{ct18} by,
\nomenclature[N]{Functions on all cycles,}{tame ramification, and finite approximation to wild \hyperref[ct18.bis]{$\widetilde{\mathcal Q}$}$_{\rm big}$}
\nomenclature[N]{Weighted divisor arising from}{tame ramification, finite approximation to wild, and Cremer points \hyperref[ct18.bis]{$\widetilde R$}}
\begin{equation}
\label{ct18.bis}
\widetilde j_! \, {\mathcal O}_{\widetilde U} (-\widetilde R) \, , \quad \mbox{resp.} \quad \widetilde {\mathcal Q}_{\rm big} \, .
\end{equation}
\end{setup}

Having got the notation out of the way our aim will be to study the long exact sequence obtained by applying ${\rm Ext}^{\bullet}_{\widetilde X / f} (\rho^* \Omega_X , \ )$ to \eqref{ct18}, so we begin with the middle term by way of,

\begin{claim}\thlabel{claim:ct1}
Let $f : {\mathbb P}^1 \to {\mathbb P}^1$ be a rational map of degree $d$ then, 
\begin{equation}
\label{ct19}
\dim_{\mathbb C} {\rm Ext}^q_{{\mathbb P}^1/f} (\Omega_{{\mathbb P}^1} , {\mathcal O}_{{\mathbb P}^1}) = \left\{\begin{matrix}
1 \, , &d=1 \, , \quad q = 0 \ \mbox{or} \ 1 \hfill \\
2d-2 \, , &d > 1 \, , \quad q=1 \hfill \\
0 &\mbox{otherwise.} \hfill
\end{matrix}\right.
\end{equation}
Thus, $\forall \, d$, ${\mathbb P}^1/f$ is smooth of the expected dimension.
\end{claim}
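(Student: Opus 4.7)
The plan is to specialize the spectral sequence of \thref{cd:fact3} to $\mathcal{F} = \Omega_{\mathbb{P}^1}$ with $\mathcal{G} = \mathcal{O}_{\mathbb{P}^1}$ viewed as an $f$-sheaf (so $\mathcal{G}_0 = \mathcal{G}_1 = \mathcal{O}$, $s = \mathrm{id}$, and $t$ adjoint to the canonical $f^*\mathcal{O} \to \mathcal{O}$). Its $E_1$-page has only two non-trivial columns
\begin{equation*}
E_1^{0,q} = H^q(\mathbb{P}^1, T_{\mathbb{P}^1}) = H^q(\mathcal{O}(2)), \quad E_1^{1,q} = H^q(\mathbb{P}^1, f^*T_{\mathbb{P}^1}) = H^q(\mathcal{O}(2d)),
\end{equation*}
both of which vanish for $q\ge 1$ since no line bundle of non-negative degree on $\mathbb{P}^1$ has higher cohomology. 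Consequently ${\rm Ext}^q_{\mathbb{P}^1/f}(\Omega,\mathcal{O}) = 0$ for $q \ge 2$, and the remaining groups fit into a four-term exact sequence
\begin{equation*}
0 \to {\rm Ext}^0_{\mathbb{P}^1/f} \to H^0(\mathcal{O}(2)) \xrightarrow{d_1} H^0(\mathcal{O}(2d)) \to {\rm Ext}^1_{\mathbb{P}^1/f} \to 0
\end{equation*}
with source of dimension $3$ and target of dimension $2d+1$.

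Unwinding the definition \eqref{cd11} in a local coordinate $z$, if $v = V(z)\partial_z$ is a holomorphic vector field on $\mathbb{P}^1$ then $d_1(v)$ is the section $\bigl(V(z)f'(z) - V(f(z))\bigr)\, f^*\partial_z$ of $f^*T_{\mathbb{P}^1}$; equivalently, $d_1(v) = 0$ says that the flow $\phi_t \in \mathrm{PSL}_2(\mathbb{C})$ of $v$ commutes with $f$. The substantive step is then to check that no such non-trivial flow exists when $\deg f > 1$: after conjugating $\phi_t$ into its $\mathrm{PSL}_2$-normal form, either $\phi_t(z) = z + t$, in which case $f(z+t) = f(z)+t$ forces $f' \equiv 1$, or $\phi_t(z) = e^t z$, in which case differentiating $f(e^t z) = e^t f(z)$ at $t=0$ yields $zf'(z) = f(z)$; either way $f$ is M\"obius. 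Hence $\ker d_1 = 0$ whenever $d > 1$, while for $d = 1$ (and $f \ne \mathrm{id}$) $\ker d_1$ is the one-dimensional Lie algebra of the $\mathrm{PSL}_2$-subgroup through $f$.

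Rank-nullity on the four-term sequence now gives
\begin{equation*}
\dim {\rm Ext}^1_{\mathbb{P}^1/f}(\Omega,\mathcal{O}) = (2d+1) - 3 + \dim \ker d_1,
\end{equation*}
equal to $2d-2$ for $d > 1$ and to $1$ for $d = 1$, which together with the computed kernels in degree $0$ and the vanishing in degree $\ge 2$ is exactly the stated table. The value $2d-2$ is the dimension of the moduli space of degree-$d$ self-maps of $\mathbb{P}^1$ modulo simultaneous conjugation by $\mathrm{PSL}_2$, i.e.\ the expected dimension of the stack $\mathbb{P}^1/f$; combined with the vanishing of ${\rm Ext}^{\ge 2}$ this is the smoothness assertion. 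The only delicate point in the whole argument is the symmetry computation in the kernel; everything else is a mechanical reading of the spectral sequence and of line-bundle cohomology on $\mathbb{P}^1$.
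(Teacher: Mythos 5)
Your proof is correct, and the computation of the key kernel proceeds by a genuinely different route from the paper's. You both reduce via the spectral sequence of \thref{cd:fact3} to the four-term sequence
\begin{equation*}
0 \to {\rm Ext}^0 \to H^0(\mathbb{P}^1, T_{\mathbb{P}^1}) \xrightarrow{\ d_1\ } H^0(\mathbb{P}^1, f^*T_{\mathbb{P}^1}) \to {\rm Ext}^1 \to 0
\end{equation*}
(the vanishing of $E_1^{p,q}$ for $q\ge 1$ being automatic since $\mathcal{O}(2)$, $\mathcal{O}(2d)$ have no higher cohomology), and then rank--nullity makes everything hinge on $\dim\ker d_1$. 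For that kernel the paper argues locally: if $\partial \in \ker d_1$ vanishes to order $n$ at $y = f(x)$ and $f$ has local degree $e$ at $x$, it vanishes to order $(n-1)e+1$ at $x$; iterating this shows the ramification of $f$ is confined to the zero divisor of $\partial$, which in suitable coordinates is $\{0,\infty\}$, forcing $f(z) = \lambda z^{\pm d}$, and then one checks $d_1(z\partial_z) = (d-1)z\partial_z$ directly. You instead observe that $d_1(v) = 0$ expresses $f$-equivariance of $v$, integrate to commutation of $f$ with the one-parameter flow $\phi_t\in\mathrm{PGL}_2$ (which is legitimate even though $f^{-1}$ does not exist, since $t\mapsto f(\phi_t(z_0))$ is an integral curve of $v$ starting at $f(z_0)$), put $\phi_t$ in $\mathrm{PGL}_2$ normal form, and read off $f'\equiv 1$ or $zf'(z) = f(z)$, either of which forces $d=1$. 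Your Lie-theoretic argument sidesteps the vanishing-order bookkeeping and makes the identification $\ker d_1 = \mathrm{Lie}(\mathrm{Cent}_{\mathrm{PGL}_2}(f))$ transparent, which is also what justifies the stated dimension $1$ when $d=1$ (implicitly for $f\ne\mathrm{id}$, as in the paper); the paper's local analysis buys a bit more, namely the explicit normal form $f(z)=\lambda z^{\pm d}$ for the degenerate configuration, but for the present claim both get there equally fast.
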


\begin{proof}
>From the spectral sequence of \thref{cd:fact3} applied to \eqref{ct19} we have,
\begin{equation}
\label{ct20}
{\rm E}_1^{p,q} : {\mathbb C}^3 \xrightarrow{ \, \sim \, } {\rm H}^0 ({\mathbb P}^1 , T_{{\mathbb P}^1}) \xymatrix{\ar[rr]^{d_1^{0,0}}_{f^* - \, df} &&} {\rm H}^0 ({\mathbb P}^1 , f^* T_{{\mathbb P}^1}) \xrightarrow{ \, \sim \, } {\mathbb C}^{2d+1} \, , 
\end{equation}
$$
\ p=0 \ \mbox{or} \ 1 \, , \ q=0 \, .
$$
As such \eqref{ct19} is equivalent to the assertion that $d_1^{0,0}$ has a non-trivial kernel of dimension 1 iff $d=1$. To check this let $\partial$ be a vector field on ${\mathbb P}^1$ with a zero of order $n$ (necessarily $0$, $1$ or $2$) at $y \in {\mathbb P}^1$ with $f(x) = y$ where locally $f^* y = x^e$, $e \geq 1$, and $\partial$ vanishes to order $m \geq 0$ then,
\begin{equation}
\label{ct21}
m = (n-1) \, e + 1
\end{equation}
so $n = 0 \Rightarrow e=1$. Consequently if $f$ is ramified at $x$, then $n=1$ or 2, but in the latter case $m$ is also $2$ and $e=1$, so $m=n=1$. If, therefore we normalise so that $\partial$ has zeroes at $0$ and $\infty$, then by \eqref{ct21} every point of $f^{-1} (0)$, resp. $f^{-1} (\infty)$, is also a zero of $\partial$, i.e.
\begin{equation}
\label{ct22}
f^{-1} (0) \cup f^{-1}(\infty) = \{0,\infty\}
\end{equation}
and all the ramification of $f$ is contained in the same, so, in an affine coordinate,
\begin{equation}
\label{ct23}
\partial = z \frac{\partial}{\partial z} \, , \ f(z) = \lambda z^d \ \mbox{or} \ \lambda z^{-d} \, , \ \lambda \in {\mathbb C}^{\times}
\end{equation}
from which $d_1^{0,0} (\partial) = (d-1) \partial$, and $d=1$. Alternatively $\partial$ has a double point, $\infty$, so we already know $d=1$, and $f$ fixes $\infty$ by \eqref{ct21}, i.e.
\begin{equation}
\label{ct24}
\partial = \partial / \partial z \, , \quad f(z) = z+1 \, .
\end{equation}

In either case if $d=1$ then ${\rm Ext}^1_{{\mathbb P}^1 / f} (\Omega^1 , {\mathcal O})$ has dimension 1, which is the dimension of PGL$_2$ modulo conjugation, while the if $d > 1$ the dimension of rational maps modulo PGL$_2$ is $2d-2$, so the deformation space is a smooth (champ) of the expected dimension.
\end{proof}

Such a satisfactory conclusion isn't true after a real blow up where a key role is played by,

\begin{lem}\thlabel{lem:ct1}
Let $i : Z \hookrightarrow {\mathbb P}^1$ be a parabolic cycle of period $p$ of a rational map, and let $\widehat{\mathcal O}_Z$ be the completion at $Z$ then if $r e + 1$ is the order of tangency of $f^p$ to a rational rotation of order $r$,
\begin{equation}
\label{ct24.bis}
\dim_{\mathbb C} {\rm Ext}_{{\mathbb P}^1/f}^q (\Omega_{{\mathbb P}^1} , i_* \, \widehat{\mathcal O}_Z) = \left\{\begin{matrix}
1 \, , &q=0 \hfill \\
e+1 \, , &q=1 \hfill \\
0 \, , &\mbox{otherwise.} \hfill \end{matrix} \right.
\end{equation}
\end{lem}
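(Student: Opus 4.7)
The plan is to exploit cyclicity and then reduce the computation to a triangular linear algebra problem on formal power series. First, since $\lambda \neq 0$, $f|_Z$ is an isomorphism and the actions on $\Omega_{{\mathbb P}^1}$ and $i_*\widehat{\mathcal O}_Z$ are invertible at $Z$, so \thref{cd:cor1.bis}, equation \eqref{cd212}, lets us replace the cycle by a single point: choosing $x \in Z$ and setting $g = f^p$,
\[
{\rm Ext}^q_{{\mathbb P}^1/f}(\Omega_{{\mathbb P}^1}, i_*\widehat{\mathcal O}_Z) = {\rm Ext}^q_{{\mathbb P}^1/g}(\Omega_{{\mathbb P}^1}, i_*\widehat{\mathcal O}_x),
\]
where $g$ is a parabolic germ with $g(z) = \lambda z + O(z^{re+1})$ in a local coordinate $z$ at $x$.

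Next I would feed this into the spectral sequence of \thref{cd:fact3}. Because $\Omega_{{\mathbb P}^1}$ is locally free and $i_*\widehat{\mathcal O}_x$ is supported at the single point $x$, the local-to-global spectral sequence and point-support vanishing together give ${\rm Ext}^q_{{\mathbb P}^1}(\Omega_{{\mathbb P}^1}, i_*\widehat{\mathcal O}_x) = 0$ for $q>0$, and similarly for $g^*\Omega_{{\mathbb P}^1}$; for $q=0$ both groups identify, via $\partial/\partial z$ and $g^*(dz)$, with $\widehat{\mathcal O}_x$. Hence the $E_1$-page \eqref{cd10} collapses to a single map
\[
d_1 : \widehat{\mathcal O}_x \longrightarrow \widehat{\mathcal O}_x, \qquad h(z) \longmapsto g'(z) h(z) - h(g(z)),
\]
with ${\rm Ext}^0_{{\mathbb P}^1/g} = \ker d_1$, ${\rm Ext}^1_{{\mathbb P}^1/g} = {\rm coker}\, d_1$, and ${\rm Ext}^q_{{\mathbb P}^1/g} = 0$ for $q \ge 2$. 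It remains to show $\dim \ker d_1 = 1$ and $\dim {\rm coker}\, d_1 = e+1$.

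For this the key is to bring $g$ into a formal normal form. By the formal theory of Leau--Fatou and \'Ecalle--Voronin, one may conjugate $g$ so that it commutes with the rotation $z\mapsto \lambda z$; then $g(z)$ involves only monomials of degree $\equiv 1 \pmod r$, and the tangency hypothesis forces
\[
g(z) = \lambda z + \beta z^{re+1} + \cdots, \qquad \beta \neq 0.
\]
The rotation eigen-decomposition $\widehat{\mathcal O}_x = \bigoplus_{k=0}^{r-1} \widehat{\mathcal O}^{(k)}$ is preserved by $d_1$, since both $g'h$ and $h\circ g$ transform under $z\mapsto \lambda z$ the same way as $h$. For $k \not\equiv 1 \pmod r$ one has $d_1(z^n) = (\lambda - \lambda^n) z^n + (\text{higher})$ with $\lambda - \lambda^n \neq 0$ for every $n$ in $\widehat{\mathcal O}^{(k)}$, so $d_1|_{\widehat{\mathcal O}^{(k)}}$ is a topological bijection and contributes nothing. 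On $\widehat{\mathcal O}^{(1)} = \bigoplus_{j\ge 0} {\mathbb C}\, z^{1+jr}$ the map becomes $z^{1+jr} \mapsto \beta r(e-j)\, z^{1+(j+e)r} + (\text{higher})$; its image lies in $z^{re}\widehat{\mathcal O}^{(1)}$, yielding an $e$-dimensional contribution to the cokernel from the quotient $\widehat{\mathcal O}^{(1)}/z^{re}\widehat{\mathcal O}^{(1)}$ (basis $z, z^{r+1},\ldots, z^{(e-1)r+1}$). A triangular solution of the resulting formal linear equations, degree by degree in the output, shows that both the kernel and the cokernel of the shifted operator $\widehat{\mathcal O}^{(1)} \to z^{re}\widehat{\mathcal O}^{(1)}$ are exactly one-dimensional: the unique source index $j=e$ where the leading coefficient vanishes produces a one-dimensional kernel, generated by the invariant vector field $z^{re+1}/(1+\nu z^{re})\,\partial/\partial z$ of \eqref{Res9}, and a one-dimensional obstruction to hitting output degree $2re+1$. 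Summing the contributions, $\dim \ker d_1 = 1$ and $\dim {\rm coker}\, d_1 = e+1$.

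The main technical obstacle is this final index-zero calculation on $\widehat{\mathcal O}^{(1)}$: verifying, via the triangular elimination in the normal-form coordinates, that off-diagonal corrections of $d_1$ do not allow one to solve the linear equation in output degree $2re+1$, so that the cokernel of $\widehat{\mathcal O}^{(1)} \to z^{re}\widehat{\mathcal O}^{(1)}$ is genuinely one-dimensional and not larger. The rotation-equivariance of the normal form is essential here, as it prevents $d_1$ from mixing degrees in a way that would obscure the cokernel count.
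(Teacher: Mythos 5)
Your proposal is correct, and its skeleton agrees with the paper's: reduce to a single point of the cycle (you via \thref{cd:cor1.bis}, the paper implicitly by the same observation), then read the answer off the two--term $E_1$ page of \thref{cd:fact3}, whose only nonvanishing row is $q=0$, leaving a single map $\widehat{\mathcal O}_x \to \widehat{\mathcal O}_x$ whose kernel and cokernel are to be computed. Where you diverge is in how you finish. The paper identifies the kernel directly as the line spanned by the formal invariant vector field $\partial$ dual to the \'Ecalle differential \eqref{Res9}, and for the cokernel it dualises --- passing to the kernel of $f_*-df^\vee$ on local cohomology of bi-differentials as in \eqref{ct26.bis} --- and then simply cites Epstein's Lemma~1(ii) for the $(e+1)$--dimensional count. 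You instead conjugate $g=f^p$ into its $\mu_r$--equivariant formal normal form, split $\widehat{\mathcal O}_x$ into rotation eigenspaces, observe that the off--weight pieces give triangular isomorphisms, and on the weight--$1$ piece carry out an explicit lower--triangular elimination: $e$ dimensions of cokernel from the missing low degrees, plus a single index--zero contribution from the vanishing diagonal entry at $j=e$, which simultaneously produces the $1$--dimensional kernel (the invariant field) and a $1$--dimensional obstruction in output degree $2re+1$. Your computation is self--contained where the paper relies on a citation, and it avoids the duality step (so there is no need to worry about closedness of the image); on the other hand the paper's formulation makes the residue-theoretic origin of the $e+1$ visible, which is the point of view exploited elsewhere in the dynamical residue computations of \S\ref{SRes}. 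Both are valid; the proposal is a legitimate, more elementary alternative to the argument the paper outsources to Epstein.

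One small remark: the sign of your $d_1$ is opposite to the paper's $f^*-df$ convention, which of course does not affect dimensions but is worth flagging, and your appeal to the local--to--global spectral sequence to concentrate $\operatorname{Ext}_{\mathbb P^1}$ in degree $0$ is a slightly heavier hammer than needed --- $i_*\widehat{\mathcal O}_x$ is a skyscraper, hence flasque, so the vanishing of higher ${\rm Ext}_X^q(\Omega_X,-)$ is immediate from local freeness of $\Omega$. Neither affects correctness.
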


\begin{proof}
In this situation \thref{cd:fact3} becomes,
\begin{equation}
\label{ct25}
{\rm E}_1^{p,q} : T_{{\mathbb P}^1} \otimes \widehat{\mathcal O}_Z \xymatrix{\ar[rr]^{d_1^{0,0}}_{f^* - \, df} &&} (f^* T_{{\mathbb P}^1}) \otimes \widehat{\mathcal O}_Z \qquad p=0 \ \mbox{or} \ 1 \, , \ q = 0
\end{equation}
and plainly the kernel of \eqref{ct25} is equally that of $(f^p)^* -df^p$ computed at any point $z \in Z$. In the particular case that the multiplier of \eqref{Res3} is exactly 1, then this is the formal vector field, $\partial$, defined by,
\begin{equation}
\label{ct26}
\omega (\partial) = 1
\end{equation}
for $\omega$ the formal differential defined by \eqref{Res9}, while, in general if $f$ is tangent to a rational rotation of order $r$, then the field invariant by $f^r$ is also invariant by $f$, cf. \cite[post(5)]{adam}. As to the co-kernel of \eqref{ct25}, this is dual to the kernel of,
\begin{equation}
\label{ct26.bis}
{\rm H}_Z^1 ({\mathbb P}^1 , f^* \omega_{{\mathbb P}^1} \otimes \omega_{{\mathbb P}^1}) \underset{f_* - \, df^{\vee}}{-\!\!\!-\!\!\!-\!\!\!-\!\!\!\longrightarrow} {\rm H}^1_Z ({\mathbb P}^1 , f^* \omega_{{\mathbb P}^1} \otimes \omega_{{\mathbb P}^1})
\end{equation}
whose calculation is exactly the content of \cite[Lemma 1 (ii)]{adam}.
\end{proof}

Combining these we may, therefore, deduce,

\begin{cor}\thlabel{cor:ct1}
Let $\rho : \widetilde X \to X = {\mathbb P}^1$ be a real blow up in finitely many parabolic cycles, $Z_i$, invariant by a rational map $f$ then,
\begin{enumerate}
\item[(0)] ${\rm Ext}^{0}_{\widetilde X / f} (\rho^* \Omega_X , {\mathcal O}_{\widetilde X}) = {\rm Ext}^{0}_{X / f} (\Omega_X , {\mathcal O}_X)$ is given by \eqref{ct19}.
\item[(1)] If $q=1$, the Leray spectral sequence affords a short exact sequence,
$$
0 \to {\rm Ext}^1_{X/f} (\Omega_X , {\mathcal O}_X) \to {\rm Ext}^1_{\widetilde X / f} (\rho^* \Omega_{\widetilde X} , {\mathcal O}_{\widetilde X}) \to {\textstyle \underset{i}{\prod}} \, {\rm Ext}^{0}_{X / f} (\Omega_X , i_* \widehat{\mathcal O}_{Z_i}) \to 0
$$
wherein the kernel, resp. cokernel, are given by \eqref{ct19}, resp. \eqref{ct24.bis}.
\item[(2)] ${\rm Ext}^2_{\widetilde X / f} (\rho^* \Omega_{X} , {\mathcal O}_{\widetilde X}) = \underset{i}{\prod} \, {\rm Ext}^1_{{\mathbb P}^1 / f} (\Omega_{{\mathbb P}^1} , i_* {\mathcal O}_{Z_i})$ which is computed by \eqref{ct24.bis}.
\end{enumerate}
Irrespectively of the exact answer, therefore, ${\rm Ext}^{q}_{\widetilde X / f} (\rho^* \Omega_X , {\mathcal O}_{\widetilde X})$ is finite dimensional for all $q$.
\end{cor}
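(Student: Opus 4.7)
The plan is to deploy the Leray spectral sequence for $\rho : \widetilde X \to X$ in the $f$-equivariant setting, then collapse it using the vanishing already recorded in \eqref{ct19}. Since $\rho^* \Omega_X$ is locally free and $\rho^*$ is left adjoint to $\rho_*$ on $f$-sheaves, Grothendieck's composition of derived functors affords a spectral sequence
\begin{equation*}
E_2^{p,q} = {\rm Ext}^p_{X/f}(\Omega_X , R^q \rho_* \, {\mathcal O}_{\widetilde X}) \Longrightarrow {\rm Ext}^{p+q}_{\widetilde X/f}(\rho^* \Omega_X , {\mathcal O}_{\widetilde X}) \, ,
\end{equation*}
so the three assertions will follow once the higher direct images have been computed and the sequence is shown to degenerate at $E_2$.

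The first step is to identify the $R^q \rho_* \, {\mathcal O}_{\widetilde X}$. Since $\rho$ is an isomorphism off $Z$, all relevant computations are local at each parabolic cycle $Z_i$. At $z \in Z_i$ a stalk section of $\rho_* \, {\mathcal O}_{\widetilde X}$ is a holomorphic function on the punctured disc that extends $C^{\infty}$ to the entire circle $E_z = S^1$ of directions; such a function is bounded, hence by Riemann extends holomorphically across $z$, so $\rho_* \, {\mathcal O}_{\widetilde X} \cong {\mathcal O}_X$. For $R^1 \rho_*$ the plan is to apply the Dolbeault-style resolution \eqref{cd17} on $\rho^{-1}(V)$, $V \ni z$ small, and to read off the cokernel of $\overline\partial$ by means of a two-arc \v{C}ech cover of $S^1$: the kernel of asymptotic expansion consists of flat germs, which contribute nothing to $H^1$ after sectorial partition of unity, while the surjectivity of the Borel expansion identifies the quotient with the formal completion $\widehat{\mathcal O}_{X,z}$. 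This should yield $R^1 \rho_* \, {\mathcal O}_{\widetilde X} \cong \bigoplus_i (i_{\bullet})_* \, \widehat{\mathcal O}_{Z_i}$ as $f$-sheaves, by naturality of Borel and of the \v{C}ech presentation, and $R^q \rho_* \, {\mathcal O}_{\widetilde X} = 0$ for $q \geq 2$ because the fibres of $\rho$ are $1$-dimensional.

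The degeneration argument is then routine: $d_r$ shifts the second index by $r-1$, and the non-zero columns of $E_2$ have $q \in \{0, 1\}$, so the only conceivable non-trivial differential is $d_2 : E_2^{0,1} \to E_2^{2,0}$; but $E_2^{2,0} = {\rm Ext}^2_{X/f}(\Omega_X , {\mathcal O}_X) = 0$ by \eqref{ct19}, whence $E_{\infty} = E_2$. Assertion (0) is then $E_{\infty}^{0,0}$; assertion (1) is the extension of $E_{\infty}^{0,1}$ by $E_{\infty}^{1,0}$, computed via \eqref{ct19} and \eqref{ct24.bis}; assertion (2) is $E_{\infty}^{1,1}$, as $E_{\infty}^{2,0}$ and $E_{\infty}^{0,2}$ both vanish. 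Finite-dimensionality throughout is immediate from the same two citations.

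The substantive point is the identification $R^1 \rho_* \, {\mathcal O}_{\widetilde X} \cong i_* \widehat{\mathcal O}_Z$: since ${\mathcal O}_{\widetilde X}$ is not coherent, one must invoke Majima-Sabbah type asymptotics on the real blow-up. The main obstacle is verifying that the sheaf of flat germs on $E$ is $H^1$-acyclic while the quotient under Borel is a constant sheaf whose $H^1(S^1 , \cdot)$ recovers the formal completion. These ingredients are standard but warrant a careful and manifestly $f$-equivariant implementation.
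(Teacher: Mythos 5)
Your proposal is correct and follows essentially the same route as the paper: the $f$-equivariant Leray spectral sequence for the composite $\rho_*$ followed by ${\rm Hom}_{X/f}(\Omega_X,\,\cdot\,)$, identification of $R^1\rho_*{\mathcal O}_{\widetilde X}$ with $\prod_i i_*\widehat{\mathcal O}_{Z_i}$, degeneration at $E_2$ forced by the vanishing of ${\rm Ext}^2_{X/f}(\Omega_X,{\mathcal O}_X)$ from \eqref{ct19}, and then reading off the three assertions from \eqref{ct19} and \eqref{ct24.bis}.

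The one place where you and the paper diverge in implementation is the computation of $R^1\rho_*{\mathcal O}_{\widetilde X}$. You sketch it via a two-arc \v{C}ech cover of $E\cong S^1$ and acyclicity of the sheaf of flat germs (Majima--Sabbah style), whereas the paper cites its own self-contained appendix result \thref{Dc:bonus1}/\eqref{Dc34}, which computes the cokernel of $\overline\partial$ on the real blow-up directly from the Dolbeault-type resolution \eqref{cd17} using the Taylor projectors $\Pi_i$ of \eqref{Dc10} and Borel--Ritt for surjectivity. Both routes produce the same answer; the Dolbeault/projector route has the advantage of staying entirely inside the machinery already set up for computing ${\rm Ext}_{\widetilde X/f}$ (cf. \thref{cd:fact5}), and makes the $f$-equivariance of the identification transparent because the generator $\overline\partial\log|z|^2 = d\overline z/\overline z$ appearing in \eqref{Dc34} is visibly invariant, which is the point the paper flags explicitly at the end of its proof. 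Your appeal to \emph{naturality of Borel and of the \v{C}ech presentation} is the morally identical check but less immediate to justify.
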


\begin{proof}
By the Leray spectral sequence is to be understood that afforded by the factorisation,
\begin{equation}
\label{ct27}
{\mathcal F} \longrightarrow \rho_* \, {\mathcal F} \longrightarrow {\rm Hom}_{X/f} (\Omega_X , \rho_* \, {\mathcal F})
\end{equation}
of ${\rm Hom}_{\widetilde X/f} (\rho^* \, \Omega_X , {\mathcal F})$ whether as $f$-sheaves or almost $f$-sheaves, since $\rho f = f\rho$. We therefore have,
\nomenclature[D]{Leray \hyperref[ct27]{spectral sequence}}{}
\begin{equation}
\label{ct28}
{\rm E}_2^{p,q} = {\rm Ext}_{X/f}^p (\Omega_X , R^q \rho_* {\mathcal F}) \Rightarrow {\rm Ext}_{\widetilde X/f}^{p+q} (\rho^* \Omega_X , {\mathcal F})
\end{equation}
while $R^q \rho_*$ is given by \eqref{Dc34}, i.e. the $f$-sheaf,
\begin{equation}
\label{ct29}
R^1 \rho_* \, {\mathcal O}_{\widetilde X} \overset{\sim}{\longrightarrow} \prod_i i_* \,\widehat{\mathcal O}_{Z_i}
\end{equation}
since in cohomology, $\overline\partial \log \vert z \vert^2$ of op. cit. is invariant.
\end{proof}

To this we have various terms in the cokernel of \eqref{ct8} that we wish to analyse beginning with,

\begin{lem}\thlabel{lem:ct2}
Let everything be as in \thref{setup:ct1} with ${\rm Ram}_f$ finite, e.g. $X = {\mathbb P}^1$, then the vector spaces,
\begin{equation}
\label{ct30}
{\rm Ext}^q_{X/f} \left( \Omega_X , \left(\varprojlim_{n} {\mathcal O}_{{\rm Ram}^n}\right)_{\!\!\!c} \,\right) , \quad q = 0 \ \mbox{or} \ 1
\end{equation}
with the subscript $c$ as in \eqref{cd19}, are finite dimensional vector spaces of dimension the degree of ${\rm Ram}_f$, e.g. $2d-2$ if $f$ is a rational map of degree $d$.
\end{lem}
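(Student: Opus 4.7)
My plan is to apply the spectral sequence of \thref{cd:fact3} (or rather its compactly-supported variant \thref{cd:cor1}) to the almost $f$-sheaf $\mathcal{G} := \varprojlim_n \mathcal{O}_{\mathrm{Ram}^n}$. Indexing ${\mathcal F}_0,{\mathcal F}_1$ of \eqref{cd3} by successive $n$, the structure map $s$ of \eqref{ct8} becomes the identity in the limit, so $\mathcal{G}$ is effectively an honest $f$-sheaf with action $\varphi\colon f^*\mathcal{G}\to\mathcal{G}$ given by restriction of pull-back of functions. Since $\mathcal{G}_c$ is supported on the $0$-dimensional locus $\overline{\bigcup_n\mathrm{supp}(\mathrm{Ram}^n)}$ and $\Omega_X$ is locally free, the sheaves $\mathcal{E}xt^q_X(\Omega_X,\mathcal{G}_c)$ vanish for $q\geq 1$, and ${\mathcal F}_c={\mathcal F}$ in the compact case $X=\mathbb{P}^1$. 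Consequently the spectral sequence collapses onto the row $q=0$ and ${\rm Ext}^q_{X/f}(\Omega_X,\mathcal{G}_c)$ for $q=0,1$ is computed as the kernel, respectively cokernel, of a two-term complex
\[
T_X\otimes_{\mathcal{O}_X}\mathcal{G}_c\xrightarrow{\ d_1\ } f^*T_X\otimes_{\mathcal{O}_X}\mathcal{G}_c,
\]
where $d_1(a)=a\cdot\varphi_{\Omega}-t\cdot f^*a$ is the difference of the two maps coming from the $f$-structures on $\Omega_X$ (via $df$) and on $\mathcal{G}_c$.

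Next, I would localise this complex orbit by orbit. According to \thref{setup:ct1} the support of $\mathcal{G}_c$ decomposes into the bounded part $R_b$ of finite pre-periodic orbits together with one infinite tail for each connected component of $R_t\cup R_w$. On an infinite tail $(x_0,x_1,x_2,\dots)$ with $x_{i+1}=f(x_i)$ and $x_0$ a critical point of multiplicity $m=\mathrm{ord}_{x_0}(\mathrm{Ram}_f)$, the stalk of $\mathcal{G}_c$ at $x_i$ has length $m+1$ at $i=0$ and $1$ at $i\geq 1$. For every $i\geq 1$ the relation $d_1(a)_{x_i}=0$ reads $c_{i+1}=df(x_i)\cdot c_i$: a shift operator that is invertible since $f$ is unramified at $x_i$. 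Hence the only obstruction to prolonging solutions, both up and down the orbit, is concentrated at the critical base point $x_0$, where $df=0$ and $\mathcal{G}_c$ has length $m+1$; a direct calculation shows that the local linear-algebra problem there contributes exactly $m$ dimensions both to the kernel and to the cokernel of $d_1$. On a bounded orbit the same shift-type analysis is simply a finite matrix computation, whose kernel and cokernel both have dimension equal to the total ramification supported on the orbit.

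Summing the orbit-wise contributions, both the kernel and the cokernel have total dimension $\sum_x\mathrm{ord}_x(\mathrm{Ram}_f)=\deg(\mathrm{Ram}_f)$, which for a rational map of degree $d$ on $\mathbb{P}^1$ is $2d-2$ by Riemann--Hurwitz. The coincidence of the two dimensions is in fact structural: $T_X$ and $f^*T_X$ are both line bundles, so the two terms of the complex agree as $\mathcal{O}_X$-modules and the local index of $d_1$ vanishes on each orbit.

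\textbf{Main obstacle.} The delicate point is the handling of the inverse limit on an infinite tail: one must verify that no ``runaway'' kernel or cokernel arises from the infinite product of $\mathbb{C}$'s, and that the $(\,\cdot\,)_c$ construction of \eqref{cd19} does not disturb the stalk-by-stalk calculation. The tame versus wild distinction (orbit accumulating in the Fatou versus the Julia set) plays no role at the level of the purely algebraic inverse limit, but the passage from the finite truncations $\mathcal{O}_{\mathrm{Ram}^n}$ to $\varprojlim_n$ must be justified compatibly with the two maps $s$ and $t$ and with the compactness condition, which is essentially automatic in the case $X=\mathbb{P}^1$ but deserves a line of care in general.
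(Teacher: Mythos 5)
Your orbit‑by‑orbit shift analysis arrives at the paper's answer $\deg(\mathrm{Ram}_f)$, but the proposal contains a genuine gap around the claim that ``${\mathcal F}_c={\mathcal F}$ in the compact case $X=\mathbb{P}^1$'' and that the $(\,\cdot\,)_c$ construction ``does not disturb the stalk‑by‑stalk calculation''. That is precisely where the content of the lemma lies. The paper's crucial first move is the identification of $(\varprojlim_n{\mathcal O}_{{\rm Ram}^n})_c$ with the extension‑by‑zero direct limit $\varinjlim_n{\mathcal O}_{{\rm Ram}^n}$ of \eqref{ct31}--\eqref{ct32}, and these two sheaves genuinely differ: $\varprojlim_n$ has a non‑zero, indeed infinite‑dimensional, stalk at each accumulation point of an infinite tail (i.e.\ at the non‑repelling cycles), whereas $\varinjlim_n$ has zero stalk there. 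Compactness of $\mathbb{P}^1$ is irrelevant to this --- the support of $\varprojlim_n$ is already compact, yet the sheaf is not ${\mathcal F}_c$.

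This matters concretely. On a tail $(x_0,x_1,\ldots)$, the recursion $c_{i+1}=df(x_i)\,c_i$ that you correctly identify is invertible for $i\geq 1$, so it produces a one‑parameter family of solutions once $c_1$ is chosen; it is only the compact‑support/direct‑limit condition --- sections must eventually vanish along the tail, i.e.\ belong to some $\mathcal{O}_{\mathrm{Ram}^n}$ --- that forces $c_i=0$ for $i\geq 1$ and concentrates the contribution at $x_0$ as you intend. Without it, the linear algebra at $x_0$ contributes only $m-1$ dimensions to the cokernel (the constant term of $b_0$ is eaten by the free $c_1$), the Euler‑characteristic argument for the equality of kernel and cokernel dimensions breaks down because the two terms of \eqref{ct33} are no longer finite‑dimensional, and the accumulation‑point stalk of $\varprojlim_n$ contributes precisely the local (potentially infinite‑dimensional) Ext groups that the rest of \S\ref{ss:count} is at pains to isolate. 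The paper handles this via the stabilization diagram \eqref{ct35}: for $n\gg 0$, the surjection $C_n\twoheadrightarrow C_{n+1}$ plus the triviality of $f^*$ past the ramification locus forces the direct system of cokernels to stabilize, and the equality of kernels reduces the whole thing to the explicit $n=0$ computation \eqref{ct36}. Your orbit‑wise analysis is a legitimate alternative route, but it must begin from the identification $(\varprojlim_n)_c=\varinjlim_n$ rather than asserting the operation is innocuous.
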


\begin{proof}
Observe that the $f$-sheaf in \eqref{ct30} is equally the direct limit
\begin{equation}
\label{ct31}
\varinjlim_n \, {\mathcal O}_{{\rm Ram}^n}
\end{equation}
where the map from $n$ to $n+1$ is extension by zero along the right most map in \eqref{ct7}, while the $f$-structure is,
\begin{equation}
\label{ct32}
{\mathcal O}_{{\rm Ram}^{n+1}} \overset{s}{\underset{= \, {\rm id}}{\longleftarrow\!\!\!-\!\!\!-}} {\mathcal O}_{{\rm Ram}^{n+1}} \underset{f^*}{\longrightarrow} {\mathcal O}_{{\rm Ram}^n} \ \overset{\rm extend}{\underset{{\rm by} \, 0}{-\!\!\!-\!\!\!-\!\!\!-\!\!\!\longrightarrow}} {\mathcal O}_{{\rm Ram}_{n+1}} \, .
\end{equation}

It therefore follows from \thref{cd:fact3} that \eqref{ct30} is the kernel and cokernel of the direct limit of $df-f^*$ where,
\begin{equation}
\label{ct33}
{\mathcal O}_{{\rm Ram}^{n+1}} \otimes f^* T_X \overset{df}{\longleftarrow} {\mathcal O}_{{\rm Ram}^{n+1}} \otimes T_X \overset{f^*}{\longrightarrow} {\mathcal O}_{{\rm Ram}^n} \otimes f^* T_X \overset{\rm extend}{\underset{{\rm by} \, 0}{-\!\!\!-\!\!\!-\!\!\!-\!\!\!\longrightarrow}} {\mathcal O}_{{\rm Ram}^{n+1}} \otimes f^* T_X \, .
\end{equation}
Better still for $n \gg 0$, the infinite tails define a finite subset $T_n \subseteq \vert {\rm Ram}^n\vert$, of cardinality the number of tails, such that,
\begin{equation}
\label{ct34}
{\rm Ram}^{n+1} = {\rm Ram}^n + f(T_n) \, , \ T_{n+1} = f(T_n) \, , \ f : T_n \to T_{n+1} \ \mbox{unramified.}
\end{equation}
As such we have for $n \gg 0$ an exact diagram,
\begin{equation}
\label{ct35}
\xymatrix{
T_X \otimes {\mathcal O}_{{\rm Ram}^n} \ar@{^{(}->}[d]_{\mbox{\footnotesize extend by $0$}} \dar[r]_{\!\!\!df}^{\!\!\!\mbox{\small $^{f^*}$}} &f^* T_X \otimes {\mathcal O}_{{\rm Ram}^n} \ar@{^{(}->}[d]^{\mbox{\footnotesize extend by $0$}} \ar[r] &C_n \ar[d] \ar[r] &0 \\
T_X \otimes {\mathcal O}_{{\rm Ram}^{n+1}} \ar[d] \dar[r]_{\!\!\!df}^{\!\!\!\mbox{\small $^{f^*}$}} &f^* T_X \otimes {\mathcal O}_{{\rm Ram}^{n+1}} \ar[d] \ar[r] &C_{n+1} \ar[r] &0 \\
T_X \otimes {\mathcal O}_{T^{n+1}} \ar[d] \dar[r]_{\!\!\!df}^{\!\!\!\mbox{\small $^{f^* = \, 0}$}} &f^* T_X \otimes {\mathcal O}_{T^{n+1}} \ar[d] \\
0 &0
}
\end{equation}
wherein the lowest horizontal is an isomorphism by \eqref{ct33}--\eqref{ct34}, so $C_n \to C_{n+1}$ is onto. As such the kernel and cokernel of the equaliser \eqref{ct33} stabalise, and have the same dimension. The dimension of the kernel, however is independent of $n$. Indeed if, more generally, $T_{n+1}$ of \eqref{ct34} were $\vert {\rm Ram}^{n+1}\vert \backslash \vert {\rm Ram}^n \vert$, then for $n \geq 1$, it is the image of a finite set $T_n$ such that $f : T_n \to T_{n+1}$ is unramified, so the diagram \eqref{ct35} equally yields an equality of kernels. Consequently we're reduced to calculating the kernel for $n=0$, $n+1 = 1$, which, by direct calculation is,
\begin{equation}
\label{ct36}
\prod_{x \in \vert {\rm Ram}^0\vert} {\mathfrak m} (x) \, T_X \xymatrix{ { \ } \ar@{^{(}->}[rr]^{\rm extend}_{\rm by \, zero} &&{ \ }} {\mathcal O}_{R_1} \otimes T_X \, .
\end{equation}
\end{proof}

Applying this to the cokernel terms in \eqref{ct18} we certainly have,

\begin{cor}\thlabel{cor:ct2}
Let everything be as in \thref{setup:ct1} with ${\rm Ram}_w^n$ the almost $f$-sheaf of wild tails then by \eqref{ct8}, for ${\rm Ram}_f$ finite, the dimension of,
\begin{equation}
\label{ct37}
{\rm Ext}_{X/f}^q (\Omega_X , {\mathcal O}_{{\rm Ram}_w^n}) \quad \mbox{for $q=0$, resp. 1}
\end{equation}
and $n \gg 0$ sufficiently large is at least,
\begin{equation}
\label{ct38}
\mbox{$\deg ({\rm Ram}_w)$, resp. $\deg ({\rm Ram}_w) - \#$ of wild tails.}
\end{equation}
\end{cor}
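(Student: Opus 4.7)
The plan is to adapt the argument of \thref{lem:ct2} to the wild component of \eqref{ct9}. By \thref{cd:fact3}, together with the local freeness of $\Omega_X$, the groups in \eqref{ct37} are respectively the kernel (for $q=0$) and the cokernel (for $q=1$) of the finite-dimensional morphism
\begin{equation*}
d_1^{0,0}\colon T_X\otimes{\mathcal O}_{{\rm Ram}_w^{n+1}}\longrightarrow f^*T_X\otimes{\mathcal O}_{{\rm Ram}_w^n}
\end{equation*}
assembled from \eqref{cd11} applied to the wild restriction of \eqref{ct8}. In particular both groups are finite dimensional, and an elementary rank count gives
\begin{equation*}
\dim\ker d_1^{0,0}-\dim\coker d_1^{0,0}=\deg({\rm Ram}_w^{n+1})-\deg({\rm Ram}_w^n).
\end{equation*}
By definition a wild tail has infinite forward orbit, so the right-hand side counts exactly one unit-multiplicity new point per tail for $n\gg 0$ (by \eqref{ct6}); that is, it equals $\#(\text{wild tails})$. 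The two bounds in \eqref{ct38} are therefore equivalent.

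For the lower bound on the kernel I would replay the stabilization argument of \eqref{ct34}--\eqref{ct35} in the wild setting. Setting $T_n^w:=|{\rm Ram}_w^{n+1}|\setminus|{\rm Ram}_w^n|$ one still has $f:T_{n-1}^w\to T_n^w$ injective and unramified, so the lower two rows of \eqref{ct35} remain short exact with the bottom horizontal an isomorphism. Hence the transition $C_n\to C_{n+1}$ is surjective and both $\dim\ker d_1^{0,0}$ and $\dim\coker d_1^{0,0}$ stabilize for $n\gg 0$. Reducing to the initial step $n=0$, $n+1=1$ exactly as in \eqref{ct36} identifies the stable kernel with
\begin{equation*}
\prod_{x\in|{\rm Ram}_w^0|}{\mathfrak m}(x)\,T_X,
\end{equation*}
a vector space of dimension exactly $\sum_{x}r_x=\deg({\rm Ram}_w)$, where $r_x$ is the multiplicity of ${\rm Ram}_f$ at the wild critical point $x$.

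The main delicate point is that a wild tail in the sense of \eqref{ct1} may contain several critical points whose forward orbits eventually merge, so that in general $\#(\text{wild tails})<\#|{\rm Ram}_w|$. This only affects the diagram chase tangentially: at any merge point the $R_w^{n+1}$-multiplicity is $1$ and the corresponding ``matching'' between the two propagated jets is a single linear condition, which is precisely what the step-by-step argument of \eqref{ct34}--\eqref{ct35} applied tail-by-tail already absorbs. Accordingly the construction still furnishes the $\deg({\rm Ram}_w)$ independent kernel elements required, and combining with the rank identity above yields the two inequalities of \eqref{ct38}.
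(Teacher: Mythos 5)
Your proof follows essentially the same route as the paper's: identify $d_1^{0,0}$ from the wild restriction of \eqref{ct8}, compute its Euler characteristic from \eqref{ct34}, and bound ${\rm Ext}^0$ from below by the $\deg({\rm Ram}_w)$-dimensional kernel coming from the (wild variant of) \thref{lem:ct2} — this is precisely the paper's argument with the cited lemma unpacked, and you even get the sign in the rank count right where \eqref{ct40} appears to have a typo. The one imprecision worth flagging is that your phrase ``identifies the stable kernel with $\prod_{x}{\mathfrak m}(x)\,T_X$'' overclaims: the kernel of the almost $f$-sheaf map \eqref{ct39}, whose target is ${\mathcal O}_{{\rm Ram}^n_w}\otimes f^*T_X$, can strictly contain the kernel of the $f$-sheaf equalizer \eqref{ct33}, whose target is the larger ${\mathcal O}_{{\rm Ram}^{n+1}_w}\otimes f^*T_X$, and the stabilization argument of \eqref{ct34}--\eqref{ct35} is really about the latter; the paper is therefore careful to say only that ${\rm Ext}^0$ \emph{contains} the kernel of \eqref{ct33}, which is all that the stated lower bound requires, so your conclusion stands.
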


\begin{proof}
Observe that the structure of the almost $f$-sheaf defined by the pair ${\rm Ram}_w^n$, ${\rm Ram}_w^{n+1}$ is subtly different from \eqref{ct32}, i.e.
\begin{equation}
\label{ct39}
{\mathcal O}_{{\rm Ram}^{n}} \otimes f^* T_X \overset{\rm restrict}{\longleftarrow\!\!\!-\!\!\!-\!\!\!-} {\mathcal O}_{{\rm Ram}^{n+1}} \otimes f^*T_X \overset{df}{\longleftarrow} {\mathcal O}_{{\rm Ram}^{n+1}} \otimes T_X \overset{f^*}{\underset{= \, t}{-\!\!\!\longrightarrow}} {\mathcal O}_{{\rm Ram}^{n}} \otimes f^* T_X
\end{equation}
and, of course, by \thref{cd:fact3} we need to calculate the cokernel of \eqref{ct39}. To this end if their dimensions are ${\rm ext}^q$, $q=0$ or $1$, then by \eqref{ct34} and \eqref{ct39}
\begin{equation}
\label{ct40}
{\rm ext}^0 - {\rm ext}^1 = - \# \ \mbox{(of wild tales)}.
\end{equation}
Plainly, however, ${\rm Ext}^0$ contains at least the kernel of \eqref{ct33}, whose dimension is $\deg ({\rm Ram}_w)$ by a trivial variant of \thref{lem:ct2}.
\end{proof}

Continuing in the same vein we have,

\begin{cor}\thlabel{cor:ct3}
Let ${\mathcal Q}_{++}$ be the quotient of \eqref{ct17.bis} supported at a superattractor, and ${\rm Ram}_{++}$ the part of ${\rm Ram}_b + {\rm Ram}_t$ whose forward orbit limits on the said cycle then the dimension of the topological dual of,
\begin{equation}
\label{ct41}
{\rm Ext}^1_{X/f} (\Omega_X , {\mathcal Q}_{++})
\end{equation}
is the degree of ${\rm Ram}_{++}$.
\end{cor}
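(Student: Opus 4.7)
The plan is to compute ${\rm Ext}^1_{X/f}(\Omega_X, {\mathcal Q}_{++})$ together with its topological dual by combining the spectral sequence of \thref{cd:fact3} with the topological duality of \thref{cd:fact6}, after splitting ${\mathcal Q}_{++}$ via the short exact sequence \eqref{ct17.bis} into the tails part $j_! \, {\mathcal O}_{R_t^{++}}$ and the cycle part $i_* i^*_{++} \, {\mathcal O}_{\widetilde X}$. Since the superattracting cycle has period $p$, \thref{cd:cor1.bis} reduces the cycle computation to a single fixed point of $f^p$, and B\"ottcher's theorem conjugates $f^p$ on a neighbourhood to $w \mapsto w^n$, where $n-1$ equals the local ramification order of ${\rm Ram}_b$ at the point.

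For the cycle contribution the dual of ${\rm Ext}^1$ is, by \thref{cd:fact6}, a space of coinvariants of quadratic differentials near the fixed point under the appropriate $f^p$-operators: this is an analog of \thref{lem:ct1} but for a multiplier zero rather than a root of unity, and a formal-power-series computation in the B\"ottcher coordinate shows that the cokernel at each cycle point has dimension precisely $n - 1$. Summing over the $p$ points of the cycle yields $\sum_i (n_i - 1) = \deg({\rm Ram}_b^{++})$.

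For the tails contribution, realise $j_! \, {\mathcal O}_{R_t^{++}} = \varinjlim_n {\mathcal O}_{{\rm Ram}_t^{++,n}}$ as a direct limit, following the strategy of \thref{lem:ct2}, and then adapt the Euler-characteristic argument of \thref{cor:ct2}: the key contrast with the wild case is that here the tails converge to the superattractor rather than escape, so in the appropriate topological completion the dimensions arising from ``infinity along the tails'' are captured rather than lost, yielding a dual contribution of exactly $\deg({\rm Ram}_t^{++})$.

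Putting these two computations together via the long exact sequence of Ext's associated to \eqref{ct17.bis}, controlled topologically by \thref{cd:fact7} and its surrounding lemmas, the dimensions add to give
$$\deg({\rm Ram}_b^{++}) + \deg({\rm Ram}_t^{++}) = \deg({\rm Ram}_{++}).$$
The main obstacle I anticipate is the topological bookkeeping: \thref{cd:fact6} produces a duality with the compactly supported ${\rm Ext}_q^{X/f}$, and since tame tails accumulate onto the cycle the distinction between ${\rm Ext}^q_{X/f}$ and ${\rm Ext}_q^{X/f}$ is genuinely subtle here. One must invoke \thref{cd:lem1} and \thref{cd:claim1} to track the maximal separated quotient carefully and verify that the connecting map in the long exact sequence neither annihilates nor duplicates dimensions at the interface where tame tails accumulate onto the periodic points.
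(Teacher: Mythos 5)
Your overall strategy matches the paper: split ${\mathcal Q}_{++}$ via \eqref{ct17.bis} into a tails piece handled by (the ideas of) \thref{lem:ct2} and a cycle piece computed directly, then add the two contributions via the long exact sequence. However, there is a genuine error in the cycle step, and it is precisely the kind of error that \thref{cd:cor1.bis} is worded to exclude.

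\thref{cd:cor1.bis} requires that the actions on \emph{both} ${\mathcal F}$ and ${\mathcal G}$ be invertible at $Z$. For ${\mathcal F}=\Omega_X$ (equivalently $T_X$) the action is $df$, which vanishes identically at a superattracting cycle, and for ${\mathcal G}=i_*i^*{\mathcal O}_X$ the action is $f^*$ on the local rings, which is likewise non-invertible whenever $e_i>1$. So the slicing statement \eqref{cd212} does not apply here, and you cannot legitimately reduce to a single fixed point of $f^p$. Note that \thref{cor:ct4} \emph{does} invoke \thref{cd:cor1.bis} for a simple (non-super) attractor, because there $df=\lambda\neq0$ is invertible; the superattracting case is exactly where that device fails.

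Moreover, if one pushed the reduction through anyway, the answer comes out wrong. In the B\"ottcher coordinate the local degree of $f^p$ at a point of the cycle is $n=\prod_{i=1}^p e_i$, so your ``single point'' computation would yield $n-1=\prod_i e_i -1$. But the degree of ${\rm Ram}_f$ on the cycle is $\sum_i(e_i-1)$, and $\prod e_i -1>\sum(e_i-1)$ as soon as $p\geq 2$ with two local degrees $>1$ (e.g.\ $p=2$, $e_1=e_2=2$ gives $3$ versus $2$). Your own later phrase ``Summing over the $p$ points of the cycle yields $\sum_i(n_i-1)$'' is the right count, but it contradicts the preceding claim of having reduced to one point; the two halves of the sentence are not describing the same computation. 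The paper's proof avoids the problem by working with the whole cycle at once in coordinates $f^*z_{i+1}=z_i^{e_i}$, and showing by the recursions \eqref{ct47}--\eqref{ct49} that the dual Laurent tails $w^i(1/z_i)$ are forced to be finite, with basis $z_i^{-j}$, $1\leq j\leq e_i-1$, $1\leq i\leq p$, giving exactly $\sum_i(e_i-1)$.

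One further point you gesture at but should state explicitly: for the dimensions from \eqref{ct17.bis} to simply add, one needs ${\rm Ext}^0_{X/f}(\Omega_X,i_*i^*{\mathcal O}_X)=0$. This is the content of \eqref{ct42} and the line after it in the paper: an invariant tangent vector $(t_x)$ along the cycle satisfies $t_x=df^p(t_x)=0$, so ${\rm Ext}^0$ at the cycle vanishes and the connecting homomorphism in the long exact sequence causes no cancellation. This small vanishing is the hinge on which the additivity you invoke actually turns.
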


\begin{proof}
If no tail limits on the superattractor, this is clear from \thref{lem:ct2} and \thref{cd:fact6}. Otherwise we have the divisor $R_t^{++}$ of item (a) post \eqref{ct10}, which, by definition, doesn't include $Z_{++}$ in it's support, and the exact sequence \eqref{ct17.bis}. In particular if $i$ is the inclusion of $Z_{++}$ then,
\begin{equation}
\label{ct42}
{\rm Ext}^0_{X/f} (\Omega_X , i_* i^* {\mathcal O}_X) \subset \prod_{x \in Z_{++}} T_{X,x}
\end{equation}
and for any vector $(t_x)_{x \in Z_{++}}$ in the left hand group, $t_x$ is invariant by $f^p$, so its zero.

As such what we require to prove is that the topological dual of,
\begin{equation}
\label{ct43}
{\rm Ext}^1_{X/f} (\Omega_X , i_* i^* {\mathcal O}_X)
\end{equation}
has dimension the degree the part of ${\rm Ram}_f$ supported in $Z_{++}$. To this end let $1 \leq i \leq p$ be the points of the cycle with $z_i$ a coordinate around the same in which we can write cycle as,
\begin{equation}
\label{ct44}
f^* z_1 = z_p^{e_p} \, , \quad f^* z_{p-1} = z_{p-1}^{e_{p-1}} , \cdots , f^* z_2 = z_1^{e_1} \, , \quad e := e_1 \cdots e_p > 1 \, .
\end{equation}
Consequently the dual to $f^*$ is, cf. \thref{rmk:cd2},
\begin{equation}
\label{ct45}
f_* : f^* \Omega_X \otimes \omega_i \to \Omega_X \otimes \omega_{i+1} : w^i \left(\frac1{z_i}\right) f^* dz_{i+1} \otimes dz_i \mapsto w_{e_i}^i \left(\frac1{z_{i+1}}\right) dz_{i+1}^{\otimes 2} \, ,
\end{equation}
$$
1 \leq i \leq p
$$
where $w^i \in {\mathbb C} \, \langle 1/z_i \rangle$ is a power series with infinite radius of convergence vanishing at the origin, with $w_e$ the terms in the Taylor expansion at integers divisible by $e$. Similarly the dual to $df$ is,
\begin{equation}
\label{ct46}
(df)^{\vee} : f^* \Omega_X \otimes \omega_i \mapsto \Omega_X \otimes \omega_{i} : w^i \left(\frac1{z_i}\right) f^* dz_{i+1} \otimes dz_i \mapsto z_i^{e_i - 1} dz_i^{\otimes 2} \!\!\!\!\mod {\mathbb C} [z_i] \, .
\end{equation}

If, therefore, $w_n^i$, $n \geq 1$, is the coefficient of $z_i^n$ in $w^i$, then by \thref{cd:fact6} an element in the dual of \eqref{ct43} is a vector $w^i (1/z_i) \in {\mathbb C} \langle z_i \rangle \!\!\mod {\mathbb C} [z_i]$ such that,
\begin{equation}
\label{ct47}
w_{e_{i+1} + n}^{i+1} = e_{i+1}^{-1} \, w_{e_i + ne_i}^i \, , \quad n \geq 0 \, .
\end{equation}
Iterating \eqref{ct47}, in $i$, leads to,
\begin{equation}
\label{ct48}
w_{e_i+n}^i = e^{-1} w_{e_i + ne}^i \, , \quad n \geq 0 \, , \quad 1 \leq i \leq p
\end{equation}
so certainly $w_{e_i}^i = 0$, but iterating \eqref{ct48} in $n$ gives,
\begin{equation}
\label{ct49}
w_{e_i+n}^i = e^{-m} w^i_{e_i + ne^m} \, , \quad n,m \geq 0 \, , \quad 1 \leq i \leq p
\end{equation}
and $w_n^i$ is a convergent power series, so the right hand side of \eqref{ct49} goes to zero as $m \to \infty$. As such we even have the more precise statement that the dual has a basis,
\begin{equation}
\label{ct50}
z_i^{-j} \cdot f^* dz_{i+1} \otimes dz_i \, , \quad 1 \leq j \leq e_i - 1 \, , \quad 1 \leq i \leq p \, .
\end{equation}
\end{proof}

The next case to be addressed in the list of \thref{not:ct1} is, 

\begin{cor}\thlabel{cor:ct4}
Let ${\mathcal Q}_+$ be the quotient of \eqref{ct17.bis} supported at a simple attractor of type \thref{not:ct1} (+), then,
\begin{equation}
\label{ct51}
{\rm Ext}^1_{X/f} (\Omega_X , {\mathcal Q}_+)
\end{equation}
is finite dimensional of dimension at least the degree of that part ${\rm Ram}_+$ of ${\rm Ram}_b + {\rm Ram}_t$ which limits on the cycle $Z_+$.
\end{cor}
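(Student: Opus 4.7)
The strategy mimics the proof of \thref{cor:ct3}, replacing the super-attracting cycle with a simple attracting one. I apply $\mathrm{Ext}^{\bullet}_{X/f}(\Omega_X, -)$ to the short exact sequence
\begin{equation*}
0 \longrightarrow j_! \, {\mathcal O}_{R_t^+} \longrightarrow {\mathcal Q}_+ \longrightarrow i_* \, i^* {\mathcal O}_{\widetilde X} \longrightarrow 0
\end{equation*}
implicit in \eqref{ct17.bis}. By \thref{cd:cor1.bis} I may reduce to the case that $Z_+$ has period one, so $x$ is a fixed point with multiplier $\lambda$, $0 < |\lambda| < 1$, and by Koenigs' linearization theorem $f$ is holomorphically conjugate to $z \mapsto \lambda z$ on some disc $D$ around $x$.

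The skyscraper contribution vanishes. Applying the spectral sequence of \thref{cd:fact3} to ${\mathcal G} = i_* i^* {\mathcal O}_{\widetilde X}$, both $\mathrm{E}_1^{0,0}$ and $\mathrm{E}_1^{1,0}$ reduce to the fibre $T_{X,x} \cong \mathbb{C}$, while the differential $d_1^{0,0}$ is the scalar $1 - \lambda$, invertible because $\lambda \ne 1$. Hence
\begin{equation*}
{\rm Ext}^q_{X/f}(\Omega_X, i_* i^* {\mathcal O}_{\widetilde X}) = 0, \qquad q = 0, 1,
\end{equation*}
and the long exact sequence in $\mathrm{Ext}$ collapses to an isomorphism
\begin{equation*}
{\rm Ext}^1_{X/f}(\Omega_X, {\mathcal Q}_+) \ \xrightarrow{\ \sim\ }\ {\rm Ext}^1_{X/f}(\Omega_X, j_! {\mathcal O}_{R_t^+}).
\end{equation*}

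For the remaining group I write $R_t^+ = \varinjlim_n {\rm Ram}^n_{t,+}$ and imitate the direct-limit calculation of \thref{lem:ct2}, with the almost $f$-sheaf structure \eqref{ct32} in force. Away from the critical points each orbit contributing to $R_t^+$ eventually sits inside $D$ as a geometric progression $\{\lambda^n c_0\}_{n \ge N}$, along which $f$ is unramified; in Koenigs coordinates the differential $d_1^{0,0}$ restricts to a shift operator $(a_n) \mapsto (\lambda a_n - a_{n+1})$ whose kernel and cokernel on compactly supported sequences both vanish, owing to $|\lambda| < 1$. All kernel and cokernel dimension is therefore concentrated at the critical points themselves, where, at $c$ of ramification index $e_c$, ${\rm Ram}^n_{t,+}$ has local multiplicity $e_c$; the same computation as in \eqref{ct36} of \thref{lem:ct2} gives a contribution of $e_c$ dimensions there, summing over $c \in |{\rm Ram}_+|$ to exactly $\deg({\rm Ram}_+)$. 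Combining the vanishing of the cycle $\mathrm{Ext}$'s with the Euler-characteristic bookkeeping of \thref{lem:ct2}--\thref{cor:ct2}, these dimensions surface as the required lower bound $\dim {\rm Ext}^1_{X/f}(\Omega_X, j_! {\mathcal O}_{R_t^+}) \ge \deg({\rm Ram}_+)$.

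The principal obstacle is the finite-dimensionality assertion and the rigorous transfer of the kernel dimension into $\mathrm{Ext}^1$ rather than $\mathrm{Ext}^0$: the underlying complex involves formal infinite sums indexed by the critical orbits, so one must check in what topology the cokernel is separated and invoke \thref{cd:lem1}. The cleanest route is to dualise via \thref{cd:fact6}, identifying $\mathrm{Ext}^1_{X/f}(\Omega_X, j_! {\mathcal O}_{R_t^+})^{\vee}$ with an explicit space of meromorphic quadratic differentials on $D$ with simple poles along $R_t^+$ modulo $f$-coboundaries; in this description $\deg({\rm Ram}_+)$ linearly independent classes are produced by choosing, for each (thickened) point of $|{\rm Ram}_+|$, a local $dz/z$-type form and summing over its forward $f$-orbit, the geometric convergence being guaranteed by $|\lambda| < 1$.
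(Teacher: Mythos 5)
The proposal goes wrong at the very first step, and in a way that reflects a genuine misreading of the paper's formalism. You assert that $\mathrm{Ext}^q_{X/f}(\Omega_X, i_* i^* {\mathcal O}_{\widetilde X}) = 0$ for $q = 0,1$, reasoning that $d_1^{0,0}$ is multiplication by $1 - \lambda$ on the fibre $T_{X,x} \cong \mathbb{C}$. But throughout the paper $i^*$ is \emph{topological} pullback (cf.\ \eqref{cd18}--\eqref{cd19}, where it is applied to arbitrary compact subsets $K$), so for a point $i^* {\mathcal O}_X$ is the stalk ${\mathcal O}_{X,x}$, not the fibre ${\mathcal O}_{X,x}/\mathfrak{m}_x \cong \mathbb{C}$. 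In the Koenigs coordinate $z$ the differential $d_1^{0,0}$ on $\mathrm{Hom}(\Omega_X, i_* {\mathcal O}_{X,x}) \cong {\mathcal O}_{X,x} \cdot \partial/\partial z$ then acts as $g(z) \mapsto \lambda g(z) - g(\lambda z)$, whose kernel and cokernel are each one-dimensional, spanned by the linearising field $z\,\partial/\partial z$; this is precisely \eqref{ct52.bis}. So the skyscraper term does not drop out, and the long exact sequence does not collapse to an isomorphism onto $\mathrm{Ext}^1(\Omega_X, j_!{\mathcal O}_{R_t^+})$. The paper instead exploits that the Euler characteristic of the skyscraper piece is zero and runs an Euler-characteristic count as in \eqref{ct53}, while finite-dimensionality comes from \eqref{ct17.bis} together with \thref{lem:ct2}.

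There is a second, related slip in your analysis of $j_! {\mathcal O}_{R_t^+}$: the shift operator $(a_n) \mapsto (\lambda a_n - a_{n+1})$ on finitely supported sequences has trivial kernel, but its cokernel is \emph{not} zero; solving $a_{n+1} = \lambda a_n - b_n$ forward from a compactly supported $(b_n)$ propagates a geometric tail $\lambda^{n} \cdot (\text{const})$ which is not eventually zero, so the cokernel is one-dimensional per tail. This is not fatal to the estimate, but it means the dimension does not surface purely from the ramification points as you claim, and the per-point multiplicity is $e_c - 1$ (from $\mathfrak{m}(c)T_X / \mathfrak{m}(c)^{e_c}T_X$ as in \eqref{ct36}), not $e_c$. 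Once you replace the false vanishing of the skyscraper $\mathrm{Ext}$'s with the correct one-dimensional answer, the bookkeeping you are attempting is exactly the paper's $-\chi({\mathcal Q}_+) + \dim \mathrm{Ext}^0 \geq \deg \mathrm{Ram}_+$, and the dualisation you gesture at in the last paragraph becomes unnecessary.
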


\begin{proof}
Again let $p$ be the period of the cycle, $i$ the the inclusion of $Z_+$ and $i_z$ the the inclusion of any point in its support then by \thref{cd:cor1.bis},
\begin{equation}
\label{ct52}
{\rm Ext}^q_{X/f} (\Omega_X , i_* \, i^* {\mathcal O}) = {\rm Ext}^q_{X/f^n} (\Omega_X , (i_z)_* \, i^*_z \, {\mathcal O}_X)
\end{equation}
and the latter are easily calculated in the coordinate $z$ of \eqref{Res5} to be the one dimensional space,
\begin{equation}
\label{ct52.bis}
{\mathbb C} \, z \, \frac{\partial}{\partial z} \, , \quad q = 0 \ \mbox{or} \ 1 \, .
\end{equation}
As such by the exact sequence \eqref{ct17.bis}, and \thref{lem:ct2}, \eqref{ct51} is finite dimensional while,
\begin{equation}
\label{ct53}
\dim_{\mathbb C} \, {\rm Ext}^1_{X/f} (\Omega_X , {\mathcal Q}_+) = -\chi ({\mathcal Q}_+) + \dim_{\mathbb C} \, {\rm Ext}_{X/f}^0 (\Omega_X , {\mathcal Q}_+) \geq \deg {\rm Ram}_+ \, .
\end{equation}
\end{proof}

In the remaining cases we need to be more carefull about the topology on ${\rm Ext}^1$. Indeed, already in \thref{cor:ct2} we didn't calculate the dimension of ${\rm Ext}^1$ but rather its maximal separated quotient. This was, however, out of laziness rather than necessity, and this is equally the case in,

\begin{cor}\thlabel{cor:ct5}
Let ${\mathcal Q}_0$ be the quotient of \eqref{ct17.bis} supported at a parabolic cycle $Z_0$ of type \thref{not:ct1} ($0+$) or  ($0-$) where $f$ is tangent to order $re+1$ to a rational rotation of order $r$ then for $\rho : \widetilde X \to X$ the real blow up in $Z_0$ the maximal separated quotient of,
\begin{equation}
\label{ct54}
{\rm Ext}_{\widetilde X / f}^1 (\rho^* \Omega_X , {\mathcal Q}_0)
\end{equation}
is finite dimensional of dimension at least,
\begin{equation}
\label{ct55}
\deg ({\rm Ram}_0) + e^2
\end{equation}
wherein ${\rm Ram}_0$ is the part of ${\rm Ram}_t + {\rm Ram}_b$ which limits on the cycle $Z_0$.
\end{cor}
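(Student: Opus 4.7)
The plan is to exploit the defining short exact sequence \eqref{ct17.bis} for $\mathcal{Q}_0$, namely
\begin{equation*}
0 \longrightarrow \widetilde{j}_!\, \mathcal{O}_{R_t^0} \longrightarrow \mathcal{Q}_0 \longrightarrow \widetilde{i}_*\, \widetilde{i}^{\,*}\mathcal{O}_{\widetilde{X}} \longrightarrow 0,
\end{equation*}
where $\widetilde{i}:\widetilde{Z}_0 \hookrightarrow \widetilde{X}$ is the inclusion of the attracting-petal cycle in the exceptional divisor as per the discussion post \eqref{ct13}, and to analyse the resulting long exact sequence of $\operatorname{Ext}^{\bullet}_{\widetilde{X}/f}(\rho^*\Omega_X,-)$ term by term.

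For the leftmost term, I would transcribe the argument of \thref{lem:ct2} verbatim once I work with the stabilised tame divisor ${\rm Ram}^n_{t,0}$ for $n\gg 0$: the kernel of $df-f^*$ on $T_X\otimes\mathcal{O}_{{\rm Ram}^n_{t,0}}$ then has dimension exactly $\deg({\rm Ram}_0)$ by the analogue of \eqref{ct36}, and yields, after passage to the maximal separated quotient, a lower bound of $\deg({\rm Ram}_0)$ on the $\operatorname{Ext}^1$ contribution. For the rightmost term I would slice via \thref{cd:cor1.bis}: since $f$ is invertible on $\widetilde{Z}_0$, the cycle decomposes into exactly $e$ orbits under $f$ (one per attracting petal of the return map $f^{pr}$ in the formal normal form \eqref{I11}, whose $re$ petals are permuted by $f^p$ in $e$ orbits of length $r$), so on choosing representatives $x_\bullet$ one in each orbit,
\begin{equation*}
\operatorname{Ext}^q_{\widetilde{X}/f}\bigl(\rho^*\Omega_X,\widetilde{i}_*\widetilde{i}^{\,*}\mathcal{O}_{\widetilde{X}}\bigr)\;\cong\;\prod_{\bullet=1}^{e}\operatorname{Ext}^q_{\widetilde{X}/f^{pr}}\bigl(\rho^*\Omega_X,(i_{x_\bullet})_* i_{x_\bullet}^{\,*}\mathcal{O}_{\widetilde{X}}\bigr).
\end{equation*}

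At each representative $x_\bullet$ the return map $f^{pr}$ is tangent to the identity to order $re+1$, and the spectral sequence of \thref{cd:fact3} identifies the $\operatorname{Ext}^1$ factor with the cokernel of $df^{pr}-(f^{pr})^*$ on the stalk $i_{x_\bullet}^{\,*}\mathcal{O}_{\widetilde{X}}$. Mirroring the cokernel computation of \thref{lem:ct1} (i.e.\ \cite[Lemma 1(ii)]{adam}) but now applied at a single boundary petal-centre on $E$ rather than at an unblown-up parabolic point---so that only a single attracting direction is visible per $x_\bullet$, accounting for $e$ rather than the $e+1$ of op.\ cit.---I expect a contribution of dimension $e$ per representative, and hence $e^2$ in total. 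Splicing these two lower bounds through the connecting homomorphism and invoking \thref{cd:fact7} to extract the inequality at the level of maximal separated quotients then yields \eqref{ct55}.

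The main obstacle, and the only non-routine step, is the identification of the stalk $i_{x_\bullet}^{\,*}\mathcal{O}_{\widetilde{X}}$ at a boundary petal-centre as an appropriate formal completion on which the \cite[Lemma 1(ii)]{adam}-style cokernel computation transfers cleanly, together with the attendant topological control needed to apply \thref{cd:fact7}. This is precisely the phenomenon that forced the introduction of the real blow up in \S\ref{Intro}, and it is because the $e$ attracting petals have been separated into $e$ distinct points of $E$ that the count on the right-hand term evaluates to $e \cdot e = e^2$ rather than to the $e+1$ which would be forced by \thref{lem:ct1} in the absence of $\rho$.
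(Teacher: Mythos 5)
Your decomposition via the short exact sequence \eqref{ct17.bis} and the reduction by \thref{cd:cor1.bis} to $e$ orbits of petal-centres is indeed the route the paper takes, as is the application of \thref{lem:ct2} to the $j_!\mathcal{O}_{R_t^0}$ term. There are, however, two linked gaps that prevent the conclusion from being reached as you have set it out.

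First, the cokernel computation. You argue that ``only a single attracting direction is visible per $x_\bullet$'' so the cokernel should drop from $e+1$ to $e$. This intuition is incorrect: passing to the maximal separated quotient via Borel--Ritt and \thref{cd:fact6} brings in the \emph{full} formal completion $\widehat{\mathcal{O}}_{Z_0}$, which is the same formal algebra whether or not one has blown up (the blow-up separates the petals topologically, not formally). The paper's diagram \eqref{ct57} is $e$ copies of the complex $T_X\otimes\widehat{\mathcal{O}}_{Z_0}\rightrightarrows f^*T_X\otimes\widehat{\mathcal{O}}_{Z_0}$, and \thref{lem:ct1} applied per copy gives kernel of dimension $1$ and cokernel of dimension $e+1$, so the totals are $e$ and $e^2+e$. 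Your $e$ per copy does not have a basis in the formal normal form.

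Second, and more importantly, simply adding lower bounds for the $\operatorname{Ext}^1$ of the two terms of the short exact sequence is not a valid move: in the long exact sequence
\begin{equation*}
\cdots\longrightarrow C_0\longrightarrow A_1\longrightarrow B_1\longrightarrow C_1\longrightarrow 0,
\end{equation*}
with $A_\bullet,B_\bullet,C_\bullet$ the $\operatorname{Ext}^\bullet$ of, respectively, $j_!\mathcal{O}_{R_t^0}$, $\mathcal{Q}_0$ and $i_*i^*\mathcal{O}_{\widetilde X}$, the connecting map $C_0\to A_1$ can kill up to $\dim C_0=e$ dimensions of $A_1$ before it lands in $B_1$. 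With your claimed $\dim C_1=e^2$ and the actual $\dim C_0=e$ (one invariant formal vector field per copy, which you do not compute but which is nonzero), the naive splice only yields $\dim B_1\ge\deg(\mathrm{Ram}_0)-e+e^2$, which falls short of \eqref{ct55}. The paper sidesteps this exactly by working with the Euler characteristic: since $\chi(A)=0$ by \thref{lem:ct2} and $\chi(C)=e-(e^2+e)=-e^2$, one gets $\chi(B)=-e^2$, whence $\dim B_1=e^2+\dim B_0\ge e^2+\dim A_0=e^2+\deg(\mathrm{Ram}_0)$ as in \eqref{ct53}. Note the pleasant numerology: the $+1$ you wanted to excise from the cokernel is, in the Euler characteristic, cancelled by the $+1$ contributed by the kernel, so the $e^2$ of \eqref{ct55} emerges from the correct count $e(e+1)-e=e^2$, not from a reduced cokernel of dimension $e$ per copy. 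Your proposal therefore needs both the correct cokernel dimension $e+1$ and the replacement of the naive splice by the Euler-characteristic inequality.
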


\begin{proof}
Let $\widetilde Z \to Z_0$ be the cycle in $\widetilde X$ consisting of the centres of attracting petals on the exceptional divisor $E \hookrightarrow \widetilde X$, and $i : \widetilde Z \hookrightarrow \widetilde X$ its inclusion. In particular, exactly as in \thref{cor:ct3}, we can, by \thref{cd:cor1.bis}, suppose that $Z_0$ is a point, so $\widetilde Z$ is $e$-points on $E$. As ever, therefore, \thref{cd:fact3} tells us that the kernel and cokernel of,
\begin{equation}
\label{ct56}
i^*_Z \, \rho^* \, T_X \overset{f^*}{\underset{df}{\rightrightarrows}} i_Z^* \, f^* \rho^* \, T_X
\end{equation}
calculate ${\rm Ext}^q_{\widetilde X / f} (\rho^* \Omega_X , {\mathcal O}_{\widetilde X})$, $q=0$, resp. $1$, for ${\mathcal O}_{\widetilde X}$ holomorphic functions $C^{\infty}$ up to the boundary as per \thref{Dc:not1}. As such if $\widehat{\mathcal O}_{Z_0}$ is the completion of $X$ in $Z_0$, then the Borel-Ritt theorem and \thref{cd:fact6} tells us that the maximal separated quotients of these Ext groups are the kernel and cokernel of,
\begin{equation}
\label{ct57}
\xymatrix{
\left( T_X \otimes \widehat{\mathcal O}_{Z_0} \right)^{\!\!\oplus e} \dar[rr]^{\mbox{\footnotesize$^{(f^*)^{\oplus e}}$}}_{(df)^{\oplus e}} &&\left( f^* T_X \otimes {\mathcal O}_{\widehat Z_0} \right)^{\!\!\oplus e}
}
\end{equation}
where we have $e$-copies for the $e$-attracting petals. Consequently the groups in question have, by \thref{lem:ct1}, dimension $e$, resp. $e^2 + e$, and whence Euler characteristic $-e^2$. Exactly, therefore, as in \eqref{ct53}, we conclude to \eqref{ct54} by \eqref{ct17.bis} and \thref{lem:ct2}.
\end{proof}

In the outstanding case, however, we must be genuinely carefull about the distinction between Ext groups and their separated quotients should the multiplier be very well approximated by rationals, i.e.

\begin{cor}\thlabel{cor:ct6}
Let ${\mathcal Q}_{\rm HR}$ be the quotient of \eqref{ct17.bis} supported on a cycle of type \thref{not:ct1} (HR) then the dimension of the maximal separated quotient of,
\begin{equation}
\label{ct58}
{\rm Ext}^1_{X/f} (\Omega_X , {\mathcal Q}_{\rm HR})
\end{equation}
is at least,
\begin{equation}
\label{ct59}
\min \{1,\deg ({\rm Ram}_{\rm HR})\}
\end{equation}
where ${\rm Ram}_{\rm HR}$ is the part of ${\rm Ram}_t$ which limits on circles in a Herman ring.

There are 2-different structures at play here, which we may reasonably distinguished by first proving,
\end{cor}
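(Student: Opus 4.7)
The plan is to mimic the arguments of \thref{cor:ct4}--\thref{cor:ct5} while paying very close attention to the topology on the $\mathrm{Ext}$-groups, since the small divisors of the Herman multiplier will force the image of $d_1^{0,0}$ to be merely dense rather than closed. First I would invoke \thref{cd:cor1.bis} to reduce to the case of period $p=1$, so that $Z_{\mathrm{HR}}=A$ is a single invariant annulus on which $f$ is conjugate to $z\mapsto\lambda z$ with $\lambda\in S^1\setminus\mu_\infty$. By \eqref{ct17.bis.bis} $\mathcal{Q}_{\mathrm{HR}}$ decomposes as a plain sheaf as $\mathcal{O}_{R_t^{\mathrm{HR}}\setminus A}\coprod i_*i^*\mathcal{O}_A$, and although the splitting is not $f$-equivariant, it yields a short exact sequence of $f$-sheaves
\[
0\longrightarrow \mathcal{O}_{R_t^{\mathrm{HR}}\setminus A}\longrightarrow \mathcal{Q}_{\mathrm{HR}}\longrightarrow i_*i^*\mathcal{O}_A\longrightarrow 0
\]
which is non-split in $\mathrm{Sh}_{X/f}$, and I would feed this into the long exact sequence of \thref{cd:fact7}.

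The key computation lives on the annular factor. By \thref{cd:fact3} the spectral sequence for $\mathrm{Ext}^\bullet_{X/f}(\Omega_X,i_*i^*\mathcal{O}_A)$ collapses to the two-term complex
\[
T_X\otimes i^*\mathcal{O}_A \xrightarrow{\; d_1^{0,0}\;} f^*T_X\otimes i^*\mathcal{O}_A,\qquad g(z)\tfrac{\partial}{\partial z}\longmapsto \lambda g(z)-g(\lambda z),
\]
which on the Fourier expansion $g(z)=\sum_n a_n z^n$ of a section on $A$ is multiplication of $a_n$ by $\lambda-\lambda^n$. The only vanishing coefficient is $n=1$, so the kernel is one dimensional (the invariant field $z\partial/\partial z$) and the abstract cokernel is likewise one dimensional; since $\lambda$ is irrational but not assumed Diophantine, the image is only dense in $\{a_1=0\}$ in the LF-topology, so the separated quotient of $\mathrm{Ext}^1_{X/f}(\Omega_X,i_*i^*\mathcal{O}_A)$ is still exactly one dimensional.

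To conclude, I would combine this with \thref{lem:ct2} applied to $\mathcal{O}_{R_t^{\mathrm{HR}}\setminus A}$, giving a finite dimensional $\mathrm{Ext}^1$ of dimension at least $\deg\mathrm{Ram}_{\mathrm{HR}}$, and then chase the long exact sequence using \thref{cd:claim1} to track maximal separated quotients through the splicing. The non-splitting of the extension in $\mathrm{Sh}_{X/f}$ ensures that whenever $\deg\mathrm{Ram}_{\mathrm{HR}}\geq 1$ either the invariant field on the annulus fails to lift (producing a non-trivial connecting class in $\overline{\mathrm{Ext}}^1$ on the ramification side) or it does lift (in which case the annular $\overline{\mathrm{Ext}}^1$ itself survives as a direct summand), and in either case we obtain at least one independent class in $\overline{\mathrm{Ext}}^1_{X/f}(\Omega_X,\mathcal{Q}_{\mathrm{HR}})$.

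The hard part will be the interplay between the small-divisor non-closedness on the annulus and the algebraic finite dimensional part from $\mathrm{Ram}_{\mathrm{HR}}$: one must first invoke \thref{cd:lem1} to guarantee that the ramification-side differentials are closed, and only then splice via \thref{cd:claim1}, so that the topological passage to the maximal separated quotient is compatible with the connecting homomorphism produced by the non-split extension. This is precisely the structural preliminary hinted at by the phrase ``2-different structures at play'' which immediately follows the statement of the corollary.
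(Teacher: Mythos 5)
Your overall strategy matches the paper's: reduce to period $p=1$ by \thref{cd:cor1.bis}, isolate the annular factor and show the maximal separated quotient of its $\mathrm{Ext}^1$ is one dimensional (this is precisely the paper's \thref{lem:ct3}, whose proof is exactly your power-series/Fourier computation dressed in the dual language of the residue functional $(dz/z)^{\otimes 2}$), and then splice through the long exact sequence associated to the non-split $f$-sheaf extension $0\to \mathcal{O}_{R_t^{\rm HR}\setminus A}\to \mathcal{Q}_{\rm HR}\to i_*i^*\mathcal{O}_A\to 0$, using \thref{cd:lem1}/\thref{cd:claim1} to control the topology. So the route is essentially the paper's.

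However there are two genuine errors in the execution.  First, the assertion that ``the abstract cokernel is likewise one dimensional'' is false: the cokernel of multiplication by $(\lambda-\lambda^n)_n$ on Laurent series convergent on the annulus is infinite dimensional whenever $\lambda$ is Liouville, because the divisors $\lambda-\lambda^n$ can shrink faster than any geometric rate, so solving coefficientwise destroys convergence. Only the \emph{maximal separated quotient} is one dimensional; this is literally the content of the final sentence of \thref{lem:ct3}. Your next sentence says this correctly, so the offending phrase isn't merely imprecise, it contradicts what follows.

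Second, the closing dichotomy is wrong in its first branch. If the invariant field $z\partial/\partial z$ on the annulus fails to lift, the nonzero connecting class $\delta(z\partial/\partial z)\in \overline{\mathrm{Ext}}^1_{X/f}(\Omega_X,\mathcal{O}_{R_t^{\rm HR}\setminus A})$ does \emph{not} yield an independent class in $\overline{\mathrm{Ext}}^1_{X/f}(\Omega_X,\mathcal{Q}_{\rm HR})$; by exactness such a class maps to zero there. The correct, and simpler, route is the other end of the same sequence: since $\mathcal{O}_{R_t^{\rm HR}\setminus A}$ is a finite skyscraper, $\mathrm{Ext}^2_{X/f}(\Omega_X,\mathcal{O}_{R_t^{\rm HR}\setminus A})=0$ (the \thref{cd:fact3} spectral sequence has $p\leq 1$ and $q=0$), and applying the first alternative of \thref{cd:claim1} with $F'=\mathrm{Hom}_X(f^*\Omega_X,\mathcal{O}_{R_t^{\rm HR}\setminus A})$ finite dimensional gives a surjection of separated quotients
\begin{equation*}
\overline{\mathrm{Ext}}^1_{X/f}(\Omega_X,\mathcal{Q}_{\rm HR})\twoheadrightarrow \overline{\mathrm{Ext}}^1_{X/f}(\Omega_X,i_*i^*\mathcal{O}_A)\cong\mathbb{C},
\end{equation*}
which already delivers the bound, and without any case distinction on $\deg(\mathrm{Ram}_{\rm HR})$. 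Relatedly, the appeal to \thref{cd:fact7} is inapposite: its hypothesis~(b) wants $R^q\Theta\,\mathcal{G}'$ concentrated in degree $q=1$, whereas for the skyscraper $\mathcal{O}_{R_t^{\rm HR}\setminus A}$ it sits in degree $0$; the direct use of \thref{cd:claim1}, which you do also invoke, is the right tool here (the paper cites Ramis at this point).
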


\begin{lem}\thlabel{lem:ct3}
Let $i:Z \hookrightarrow X$ be the inclusion of an invariant closed annulus, disc, or circle, or point in the case of a disc, in a Herman ring or Siegel disc $f : X \to X$ of multiplier $\lambda$ then the dimension of,
\begin{equation}
\label{ct60}
{\rm Ext}^0 (\Omega_X , i_* \, i^* {\mathcal O}_X)
\end{equation}
is always $1$, as is the dimension of the maximal separated quotient of,
\begin{equation}
\label{ct61}
{\rm Ext}^1 (\Omega_X , i_* \, i^* {\mathcal O}_X) \, .
\end{equation}
The latter, however, will be infinite dimensional if $\lambda$ is very well approximated by rationals.
\end{lem}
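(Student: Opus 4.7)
The plan is to apply the spectral sequence of \thref{cd:fact3} to $\mathcal{F}=\Omega_X$ and $\mathcal{G}=i_*i^*\mathcal{O}_X$. Since $\Omega_X$ is locally free of rank one on a Riemann surface and $\mathcal{G}$ is supported on the $f$-invariant real one-dimensional (or $0$-dimensional) set $Z$, the auxiliary groups ${\rm Ext}^q_X(\Omega_X,\mathcal{G})$ vanish for $q\geq 1$, so the spectral sequence collapses and ${\rm Ext}^0_{X/f}$, resp.\ ${\rm Ext}^1_{X/f}$, is the kernel, resp.\ cokernel, of $d_1^{0,0}$. After conjugating the appropriate iterate $f^p$ (where $p$ is the period of the component of the rotation domain containing $Z$; cf.\ \thref{cd:cor1.bis}) to $z\mapsto\lambda z$ in Siegel/Herman coordinates, and writing a homomorphism $\Omega_X\to\mathcal{G}$ as $h\,\partial/\partial z$, the differential $d_1^{0,0}$ reduces to
$$h\longmapsto \lambda\,h(z)-h(\lambda z),$$
acting, according to the case, on holomorphic $h$ on the closed invariant disc or annulus, on $C^\infty/$Fourier-type $h$ on the invariant circle, or on formal $h\in\widehat{\mathcal{O}}_{X,0}$ at the centre of a Siegel disc.

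First I compute the kernel. Decomposing $h=\sum_n a_n z^n$ in the appropriate basis (Taylor, Laurent or Fourier), the equation $h(\lambda z)=\lambda h(z)$ forces $a_n(\lambda^n-\lambda)=0$; because $\lambda\in S^1\setminus\mu_\infty$ we have $\lambda^{n-1}\neq 1$ for every $n\neq 1$, so only $a_1$ survives and ${\rm Ext}^0_{X/f}=\mathbb{C}\cdot z\,\partial/\partial z$ is one dimensional uniformly across the four geometric possibilities for $Z$. Next, for the cokernel the equation $\lambda h-h\circ f=g$ with $g=\sum b_n z^n$ reads $a_n(\lambda-\lambda^n)=b_n$. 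The mode $n=1$ is a genuine obstruction ($b_1$ must vanish), contributing exactly one dimension to the cokernel in every case, while for $n\neq 1$ the formal solution $a_n=b_n/(\lambda-\lambda^n)$ always exists, but the small divisors $\lambda-\lambda^n=\lambda(\lambda^{n-1}-1)$ may be arbitrarily small when $\lambda$ is very well approximated by rationals; this prevents the formal solution from lying in the space of convergent functions, and is exactly what produces the infinite-dimensional naive cokernel promised in the last sentence of the lemma.

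The remaining step, and the one I expect to be the main obstacle, is to prove that the maximal separated quotient of the cokernel is always one dimensional irrespective of the arithmetic of $\lambda$. In the LF-topology of \thref{cd:setup1} on the relevant function space, polynomials (respectively Laurent, trigonometric, or truncated formal polynomials) are dense, the linear functional $g\mapsto b_1$ is continuous, and for any such polynomial $g$ with $b_1=0$ the formula $a_n=b_n/(\lambda-\lambda^n)$ is a \emph{finite} sum producing a polynomial $h$ with $\lambda h-h\circ f=g$; hence $g$ lies in the image of $d_1^{0,0}$. By density, every $g$ with $b_1=0$ lies in the closure of $\mathrm{Im}(d_1^{0,0})$, and conversely $b_1$ being continuous and nonzero on $z\,dz$ shows this closure cannot be larger. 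Thus the closure of the image is precisely the codimension-one hyperplane $\{b_1=0\}$, so the maximal separated quotient of ${\rm Ext}^1_{X/f}$ has dimension one, generated by the class dual to $z\,\partial/\partial z$. The delicate point is to match the density and continuity with the precise LF-structure on Hom in each of the four geometric settings (closed disc, closed annulus, invariant circle, formal point), but in each case the topology is the classical Fr\'echet or projective-limit one and the verification reduces to standard approximation by truncation.
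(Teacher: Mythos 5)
Your proposal is correct and takes essentially the same route as the paper: conjugate $f$ to $z\mapsto\lambda z$, compute kernel and cokernel of $(f^*-df)$ on the stalk/germ space via the spectral sequence of \thref{cd:fact3}, identify $z\,\partial/\partial z$ as the unique invariant field, observe that the formal mode-by-mode solution need not converge when $\lambda$ has small divisors, and exhibit the residue functional as the unique continuous functional killing the image. Where the paper merely asserts that $(dz/z)^{\otimes 2}$ is ``the only such'' functional, your density-of-polynomials argument (finite Laurent or Taylor polynomials with vanishing $z$-coefficient have polynomial preimages, and such polynomials are dense in the hyperplane $\{b_1=0\}$) is the natural way to justify that assertion; it is the same idea made explicit rather than a different approach.
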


\begin{proof}
By definition there is a coordinate $z$ on $X$ such that, $f(z) = \lambda z$ so \eqref{ct60} is exactly as in \eqref{ct52}--\eqref{ct52.bis}. Similarly, there is an obvious power series solution to
\begin{equation}
\label{ct62}
(f^* - df)(\tau) = \sigma \in i^* f^* T_X \, , \quad \sigma \ne z \, \partial / \partial z \, .
\end{equation}
However, whether or not this converges depends on the approximatibility of $f$ by rationals. Nevertheless what is true is that the functional defined by $(dz/z)^{\otimes z}$ vanishes on the image of $f^* -df$, and indeed it's the only such, so the dimension of the maximal separated quotient is $1$.
\end{proof}

We may, therefore, return to,

\begin{proof}[Proof of \thref{cor:ct5}]
Replacing everything by its maximal separated quotient, say ${\rm Ext}^1 \to \overline{\rm Ext}^1$, we still have \cite[Lemme 1]{ramis} a long exact sequence associated to \eqref{ct17.bis}, so, by \thref{lem:ct3} there's either nothing to do; or we conclude,
\begin{equation}
\label{ct63}
\dim_{\mathbb C} {\rm Ext}^1_{X/f} (\Omega_X , {\mathcal Q}_{\rm HR}) = \dim {\rm Hom}_{X/f} (\Omega_X , {\mathcal Q}_{\rm HR})  \geq \dim {\rm Hom}_{X/f} (\Omega_X , {\mathcal O}_{R \backslash Z}) \, .
\end{equation}
Again however both $f^*$ and $df$ always vanish on the left hand side of \eqref{ct36}, so the latter dimension is at least $\deg ({\rm Ram}_{\rm HR})$.
\end{proof}

The case of Siegel discs is subtly different since ${\rm Ram}_b$ might limit on the centre of the disc, which by construction, item (c) of \thref{setup:ct1} we always remove, i.e.

\begin{cor}\thlabel{cor:ct7}
Let ${\mathcal Q}_{\rm SD}$ be the quotient of \eqref{ct17.bis} limiting on a cycle of type \thref{not:ct1} (SD), then the dimension of the maximal separated quotient of,
\begin{equation}
\label{ct64}
{\rm Ext}_{X/f}^1 (\Omega_X , {\mathcal Q}_{\rm SD})
\end{equation}
is at least,
\begin{equation}
\label{ct65}
\deg ({\rm Ram}_{b,{\rm SD}}) + \min \{1,\deg ({\rm Ram}_{t,{\rm SD}})\}
\end{equation}
where ${\rm Ram}_{t,{\rm SD}} \, , \ \mbox{resp.} \ {\rm Ram}_{b,{\rm SD}}$ is the part of ${\rm Ram}_t$, resp. ${\rm Ram}_b$ which limits on circles, resp. the origin of the Siegel disc.
\end{cor}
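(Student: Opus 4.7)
The proof follows the pattern of \thref{cor:ct6} for Herman rings, with the extra $\deg({\rm Ram}_{b,{\rm SD}})$ contribution arising because the closed disc $Z_{\rm SD}$ can absorb pre-periodic critical orbits terminating at the Siegel centre. I would apply $\text{Ext}^\bullet_{X/f}(\Omega_X, -)$ to the natural $f$-sheaf short exact sequence
$$0 \longrightarrow j_! \, \mathcal{O}_{R_t^{\rm SD}} \longrightarrow \mathcal{Q}_{\rm SD} \longrightarrow i_* \, i^*_{\rm SD} \, \mathcal{O}_{\widetilde X} \longrightarrow 0$$
underlying the description \eqref{ct17.bis.bis} of $\mathcal{Q}_{\rm SD}$, and pass to maximal separated quotients using \cite[Lemme 1]{ramis}, exactly as in \thref{cor:ct6}. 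Although this sequence splits as sheaves, the $f$-action generically mixes the summands, since tame critical orbits may terminate inside $Z_{\rm SD}$.

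For the quotient $i_* i^*_{\rm SD} \mathcal{O}_{\widetilde X}$, \thref{lem:ct3} gives that its $\overline{\rm Ext}^1$ is one-dimensional, spanned by the dual $(dz/z)^{\otimes 2}$ of the invariant derivation $z\partial/\partial z$ in a linearising coordinate. Mirroring the last step of \thref{cor:ct6}, whenever $\deg({\rm Ram}_{t,{\rm SD}})>0$ this generator admits a preimage in $\overline{\rm Hom}(\Omega_X, j_! \mathcal{O}_{R_t^{\rm SD}})$ (obtained from any tame-orbit germ), so it survives to $\overline{\rm Ext}^1(\Omega_X, \mathcal{Q}_{\rm SD})$ and yields the $\min\{1, \deg({\rm Ram}_{t,{\rm SD}})\}$ term; if instead ${\rm Ram}_{t,{\rm SD}} = 0$ this term simply vanishes, matching the minimum.

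The $\deg({\rm Ram}_{b,{\rm SD}})$ contribution arises from the subsheaf piece $j_! \mathcal{O}_{R_t^{\rm SD}}$ through the Euler-characteristic bookkeeping of \thref{cor:ct4}: a bounded-orbit critical point ultimately terminating at the Siegel centre produces, via the kernel calculation \eqref{ct36} of \thref{lem:ct2}, an independent germ in $\overline{\rm Hom}(\Omega_X, j_! \mathcal{O}_{R_t^{\rm SD}})$, and transporting these germs through the long exact sequence gives $\deg({\rm Ram}_{b,{\rm SD}})$ extra dimensions in $\overline{\rm Ext}^1(\Omega_X, \mathcal{Q}_{\rm SD})$.

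The main obstacle is verifying non-cancellation between the two contributions in the maximal separated quotient, which is handled by the topological exactness afforded by \thref{cd:claim1} applied to the above short exact sequence, together with the observation that the eigenfunctional $(dz/z)^{\otimes 2}$ at the Siegel centre is formally independent of the germ-level invariants attached to the bounded critical orbits in $R_t^{\rm SD}$.
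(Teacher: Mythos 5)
Your proposal conflates two genuinely distinct pieces of the ramification, and this misplaces the $\deg({\rm Ram}_{b,{\rm SD}})$ contribution. The sheaf $j_!\,\mathcal{O}_{R_t^{\rm SD}}$ you use as your subsheaf is, by construction, supported on \emph{tame} critical orbits, i.e.\ infinite orbits whose forward iterates are dense in invariant circles; the bounded ramification ${\rm Ram}_{b,{\rm SD}}$ is, by definition \eqref{ct3}, the part with finite (pre-periodic) orbit, eventually landing exactly on the cycle $Z$ at the centre of the Siegel disc. These never meet: there is no ``bounded-orbit critical point'' living inside $R_t^{\rm SD}$, so deriving a $\deg({\rm Ram}_{b,{\rm SD}})$ term out of $\overline{\rm Hom}(\Omega_X, j_!\,\mathcal{O}_{R_t^{\rm SD}})$ cannot work. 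Your short exact sequence only sees the tame piece and the stalk at the cycle, so it can only recover the $\min\{1,\deg({\rm Ram}_{t,{\rm SD}})\}$ term.

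The paper's proof instead performs a case split on the support of $\mathcal{Q}$ (closed discs versus the orbit of the origin), reduces the tame case to the Herman-ring calculation, and handles the bounded case by quotienting out $i_*\,\mathfrak{m}(Z)^2$ to obtain a finite sheaf $\mathcal{Q}_2$ whose Euler characteristic is zero. The crucial step there, which has no analogue in your argument, is that the invariant derivation $\tau = z\,\partial/\partial z$ in $i_*\,\mathfrak{m}(Z)/\mathfrak{m}(Z)^2$ extends by zero to the pre-periodic points of ${\rm Ram}_{\rm SD}$ because $f^*$ kills every maximal ideal of $\mathcal{O}_R$. This is what produces the exact sequence $0 \to {\rm Ext}^0_{X/f}(\Omega,\mathcal{O}_{R\setminus Z}) \to {\rm Ext}^0_{X/f}(\Omega,\mathcal{Q}_2) \to \mathbb{C}\tau \to 0$ and whence, via $\chi=0$, the $\deg({\rm Ram}_{b,{\rm SD}})$ count. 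You also skip the $\mathfrak{m}^2$-truncation entirely; without it one is stuck with the full (formally infinite-dimensional) stalk at the centre rather than the two-jet that \thref{lem:ct3} and \thref{cd:cor1.bis} can actually control, so the identification of the separated quotient of ${\rm Ext}^1$ with something computable is missing.
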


\begin{proof}
We have cases according to whether the support of ${\mathcal Q}$ in the disc is closed discs, or the orbit of the point. The former case is, however, identical to the calculation of \thref{cor:ct5} for Herman rings, so, without loss of generality we may suppose the support is the latter, and, we let $i : Z \hookrightarrow X$ be the forward orbit of the origin, and consider the quotient of ${\mathcal Q}_{\rm SD}$ defined by,
\begin{equation}
\label{ct66}
0 \longrightarrow i_* \, {\mathfrak m} (Z)^2 \longrightarrow {\mathcal Q}_{\rm SD} \longrightarrow {\mathcal Q}_2 \longrightarrow 0
\end{equation}
whose ${\rm Ext}^1$ will certainly be that of the maximal separated quotient for the same reason as \thref{lem:ct3}. In particular by \thref{cd:cor1.bis}, we have the class of \eqref{ct52.bis},
\begin{equation}
\label{ct67}
\tau \left(= z \, \frac{\partial}{\partial z} \right) \in {\rm Ext}^0_{X/f} \left(\Omega , i_* \, \frac{{\mathfrak m}(z)}{{\mathfrak m}(z)^2}\right)
\end{equation}
and since $f^*$ is zero on every maximal ideal of ${\mathcal O}_R$ by construction, \eqref{ct67} extends by zero to the points of ${\rm Ram}_{\rm SD}$ which are not in $Z$. Thus we have a short exact sequence,
\begin{equation}
\label{ct68}
0 \longrightarrow {\rm Ext}_{X/f}^0 (\Omega , {\mathcal O}_{R \backslash Z}) \longrightarrow {\rm Ext}^0_{X/f} (\Omega , {\mathcal Q}_2) \longrightarrow {\mathbb C} \tau \longrightarrow 0
\end{equation}
while ${\rm Ext}^0_{X/f} (\Omega , {\mathcal Q}_2)$ has Euler characteristic zero, so we get \eqref{ct65} for the dimension of ${\rm Ext}^1_{X/f} (\Omega , {\mathcal Q}_2)$.
\end{proof}

By good luck we only ever propose, \eqref{ct16}, to work modulo the square of the maximal idea at Cremer points, so for exactly the same reason, 

\begin{bonus}\thlabel{cor:ct8}
Let ${\mathcal O}_{C_{cr}}$ be the part of the quotient of \eqref{ct17.bis} limiting on a cycle with return map a Cremer point, \thref{setup:ct1} (d), then the dimension of,
\begin{equation}
\label{ct69}
{\rm Ext}^1_{X/f} (\Omega_X , {\mathcal O}_{C_{cr}}) \, , \quad q = 0 \ {\rm or} \ 1
\end{equation}
is $1+\deg ({\rm Ram}_{\bullet})$, where ${\rm Ram}_{\bullet}$ is the part of ${\rm Ram}_b$ whose forward orbit includes the cycle.

All of which we may gather together in, 
\end{bonus}

\begin{csummary}\thlabel{sum:ct1}
For $\bullet$ a Herman ring, or Siegel disc, and ${\rm Ram}_{\bullet}$ as per \thref{cor:ct5} and \thref{cor:ct6} define,
\begin{equation}
\label{ct70}
\varepsilon_{\bullet} = \left\{\begin{matrix}
0 \, , &\mbox{if ${\rm Ram}_{t,{\bullet}}$ is empty} \\
1 \, , &\mbox{otherwise} \hfill \end{matrix} \right.
\end{equation}
then a convenient reformulation of the dimension estimate of op. cit. is,
\begin{equation}
\label{ct71}
1 + \deg ({\rm Ram}_{\bullet}) - \varepsilon_{\bullet} \, .
\end{equation}
Similarly for a parabolic cycle, i.e. $0+$ or $0-$ of \thref{defn:ct1}, let $e_{\bullet} r_{\bullet}$ be the order of tangency of the first return of the cycle to a rational rotation of order $r_{\bullet}$, then for $f : X \to X$ an endomorphism of a Riemann surface with finite ramification, the dimension of the maximal separated quotient,
\begin{equation}
\label{ct71.bis}
{\rm Ext}^1_{\widetilde X/f} (\rho^* \Omega , \widetilde {\mathcal Q}_{\rm big}) \twoheadrightarrow \overline{\rm Ext}^1 (\rho^* \Omega , \widetilde {\mathcal Q}_{\rm big})
\end{equation}
is, by the simple expedient of adding the estimates of \thref{cor:ct2}, \thref{cor:ct3}, \thref{cor:ct4}, \thref{cor:ct5}, \thref{cor:ct6}, \thref{cor:ct7}, and \thref{cor:ct8}, at least,
\begin{equation}
\label{ct72}
\deg ({\rm Ram}_f) + \sum_{\bullet \in {\rm par}} e_{\bullet}^2 + \sum_{\bullet \in {\rm HR \cup SD}} (1-\varepsilon_{\bullet}) + \# \, {\rm CR} - \# \, \{\mbox{wild tails}\} \, .
\end{equation}
\end{csummary}



\vglue 1cm

\section{The Vanishing Theorem}\label{VT}

Our set up will follow \thref{setup:ct1} almost exactly with a view to computing the co-homology of the leftmost term in \eqref{ct18}. We should, however, for ease of exposition introduce,

\begin{furnot}\thlabel{not:v1}
Let $j : \widetilde U \hookrightarrow \widetilde X$ (or $\widetilde j$ if there is a danger of confusion) be as in \eqref{ct17} and $\widetilde R$ the divisor on $\widetilde U$ in the leftmost term of \eqref{ct18} so that, following \eqref{ct18.bis}, op. cit. is identically,
\begin{equation}
\label{v1}
\widetilde j_! \, {\mathcal O}_{\widetilde U} (-\widetilde R)
\end{equation}
while if $j : U \hookrightarrow X$ is the complement to $\rho_*$ of the cycle on the right of \eqref{ct17}, and $R$ the push forward of $\widetilde R$,
\nomenclature[N]{Push forward of the divisor $\widetilde R$}{\hyperref[not:v1]{$R$}}
\begin{equation}
\label{v2}
\rho_* \, \widetilde j_! \, {\mathcal O}_{\widetilde U} (-\widetilde R) = j_! \, {\mathcal O}_U (-R) \, ,
\end{equation}
albeit $\rho$ is an isomorphism around $\widetilde R$, so it's legitimate, and often convenient to confuse $R$ with $\widetilde R$. With this in mind we have a minor variant of Epstein's vanishing theorem, to wit:
\end{furnot}

\begin{thm}\thlabel{thm:v1}
Let $\rho : \widetilde X \to X = {\mathbb P}^1$ be the real blow up in the parabolic cycles of a rational map, $f$, of degree at least $2$, then, the image of the map of maximal separated quotients,
\begin{equation}
\label{v3}
\overline{\rm Ext}_{X/f}^2 (\Omega_X , j_! \, {\mathcal O}_U (-R)) \longrightarrow \overline{\rm Ext}^2_{\widetilde X/f} (\rho^* \Omega_X , \widetilde j_! \, {\mathcal O}_{\widetilde U} (-\widetilde R))
\end{equation}
is zero if $f$ is not a Latt\`es example, and dimension $1$ otherwise.

\noindent The map of \eqref{v3} comes from the Leray spectral sequence, and so fits into the larger diagram,
{\scriptsize
\begin{equation}
\label{v4}
\xymatrix{
{\rm Ext}_{X/f}^1 (\Omega_X, j_! {\mathcal O}_U (-R)) \ar[d] \ar[r] &{\rm Ext}_{\widetilde X/f}^1 (\rho^* \Omega_X, \widetilde j_! {\mathcal O}_U (-\widetilde R)) \ar[d] \ar[r] &{\rm Ext}_{X/f}^0 (\Omega_X, R^1 \rho_* \widetilde j_! {\mathcal O}_{\widetilde U} (-\widetilde R)) \ar[d] \\
{\rm Ext}_{X/f}^1 (\Omega_X,{\mathcal O}_X) \ar[d] \ar[r] &{\rm Ext}_{\widetilde X/f}^1 (\rho^* \Omega_X, {\mathcal O}_{\widetilde X}) \ar[d] \ar[r] &{\rm Ext}_{X/f}^0 (\Omega_X, R^1 \rho_* {\mathcal O}_{\widetilde X}) \ar[d]  \\
{\rm Ext}_{X/f}^1 (\Omega_X,{\mathcal Q}_{\rm big}) \ar[d] \ar[r] &{\rm Ext}_{\widetilde X/f}^1 (\rho^* \Omega_X, \widetilde{\mathcal Q}_{\rm big}) \ar[d] \ar[r] &{\rm Ext}_{X/f}^1 (\Omega_X,{\mathcal P}) \\
{\rm Ext}_{X/f}^2 (\Omega_X, j_! {\mathcal O}_U (-R)) \ar[r] &{\rm Ext}_{\widetilde X/f}^2 (\rho^* \Omega_X, \widetilde j_! {\mathcal O}_U (-\widetilde R))
}
\end{equation}
}
\noindent wherein ${\mathcal Q}_{\rm big}$, is the quotient of ${\mathcal O}_X$, by \eqref{v1}, cf. \eqref{ct18.bis}, while ${\mathcal P}$ is the sky scraper sheaf supported at parabolic cycles defined by either of the exact sequences,
\nomenclature[N]{As per $\widetilde{\mathcal Q}_{\rm big}$}{but without real blowing up \hyperref[v5]{${\mathcal Q}$}$_{\rm big}$} 
$$
0 \longrightarrow {\mathcal P} \longrightarrow R^1 \rho_* \, \widetilde j_! \, {\mathcal O}_{\widetilde X} (-\widetilde R) \longrightarrow R^1 \rho_* \, {\mathcal O}_{\widetilde X} \longrightarrow 0
$$
\begin{equation}
\label{v5}
0 \longrightarrow {\mathcal Q}_{\rm big} \longrightarrow \widetilde{\mathcal Q}_{\rm big} \longrightarrow {\mathcal P} \longrightarrow 0 \, .
\end{equation}
\nomenclature[N]{Sky scraper sheaf at parabolic cycles}{\hyperref[v5]{${\mathcal P}$}}
Notice, in particular, therefore that by \eqref{ct17.bis}, the second line in \eqref{v5} affords,
\begin{equation}
\label{v6}
0 \longrightarrow \prod_{\bullet} {\mathcal O}_{Z_{\bullet}} \longrightarrow \prod_{\bullet} {\mathcal O}_{\widetilde Z_{\bullet}} \longrightarrow {\mathcal P} \longrightarrow 0
\end{equation}
where the product is taken over parabolic cycles with the leftmost resp. middle term in \eqref{v6} the image in $X$, resp. the cycle post \eqref{ct13}.

Our principle preoccupation will be to understand the difference between the deformation space of $\widetilde X / f$ and $X/f$, beginning with,
\end{thm}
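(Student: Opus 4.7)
The plan is to pass to the dual via \thref{cd:fact6} and then run a Thurston-style rigidity argument using Epstein's dynamical residue. Concretely, the topological dual of $\overline{\rm Ext}^2_{\widetilde X/f}(\rho^*\Omega_X,\widetilde j_!{\mathcal O}_{\widetilde U}(-\widetilde R))$ is the space of $f$-co-invariant meromorphic $2$-forms of the shape \eqref{I19}, i.e.\ sections of $f^*\Omega_X(R)\otimes j^*\omega_{\widetilde X}$ modulo the image of the differential $d_1^\vee$ of \thref{rmk:cd2}. Dually the map in \eqref{v3} is induced by the inclusion of $X$-meromorphic forms into $\widetilde X$-forms, so it suffices to prove that every class in the $\widetilde X$-dual which actually comes from an $X$-meromorphic form is zero (resp.\ $1$-dimensional in the Latt\`es case).

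The first step is to exploit \thref{thm:I1}: the structure sequence \eqref{I17} for $\omega_{\widetilde X}$, restricted along $j$, shows that any element of the co-invariants \eqref{I19} is represented, modulo the torsion $\rho^*{\mathcal H}^1_{{\rm Zar},Z}(X,\omega_X)$, by a form with at worst meromorphic growth along the exceptional divisor $E$. In particular on repelling petals such a form is meromorphic, so in the Fatou coordinate of \eqref{Res8} the asymptotic constants $\lambda_P$ of \thref{Dclaim:1} are well defined; and by \thref{Dclaim:2} they agree across petals to a single constant $\lambda(\omega)$. At attracting petals we only have the meromorphic growth condition \thref{Dhyp:1}, but this is precisely what \thref{Dclaim:3.b.bis.bis} and \thref{lem:v2} are designed to handle.

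Next I would assemble Thurston's norm estimate. Exactly as in Epstein's argument, the co-invariance condition together with the dynamical residue computation \thref{DRev:1} and \thref{Dclaim:3} implies that the $L^1$-mass of $|\omega|$ outside a fundamental domain must coincide with ${\rm Res}_f|\omega|$, a sum of local contributions at each non-repelling cycle. The condition \eqref{I14} on $\widetilde R$ (i.e.\ that poles only grow with the ramification and at the Cremer cycles) guarantees that $|\omega|$ is integrable on the fundamental domain. At non-repelling simple attractors, Siegel discs, Herman rings and Cremer cycles, \thref{DRmk:1} yields ${\rm Res}_f(|\omega|)\leq 0$; at parabolic cycles, where the residue can be positive, the meromorphic growth from \thref{thm:I1} combined with \thref{Dclaim:3.b.bis.bis} forces the analogous non-positivity. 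Summed over all non-repelling cycles the total residue is non-positive, while Thurston's classical pull-back argument for $f^*$ on quadratic (more generally $m$-)differentials shows that the norm $\|\omega\|$ is contracted strictly unless $\omega$ is a $\lambda$-eigenvector of $f^*$ with $|\lambda|=1$; this gives $\omega=0$ outside the Latt\`es case.

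The hard part will be the third paragraph: controlling the dynamical residue at parabolic cycles once we allow infinitely many critical tails in $R$. This is exactly where the torsion summand in \eqref{I17} and the auxiliary \thref{Dclaim:3.b.bis.bis} intervene, and where caveat (C) of the introduction bites. One must check carefully that a class in $\overline{\rm Ext}^2_{\widetilde X/f}$ which lies in the image of \eqref{v3} dualises to a form whose contribution at each parabolic cycle has $\lambda_P=0$ on every repelling petal, so that \thref{Dclaim:3.b.bis.bis} applies and the parabolic residue is in fact non-positive. With that in hand, the Latt\`es analysis is standard: equality in the Thurston contraction forces $f$ to admit an invariant line field on its Julia set, whence by McMullen's rigidity $f$ is Latt\`es, and the kernel in the Latt\`es case is exactly one-dimensional, spanned by the invariant quadratic differential pulled back from the quotient elliptic curve.
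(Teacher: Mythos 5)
Your plan is the right one and, apart from the Latt\`es endgame, it follows the paper's own route: dualise via \thref{cd:fact6} to co-invariant quadratic differentials in the notation of \eqref{I19}, use \thref{thm:I1} to force meromorphic growth modulo torsion on the repelling petals, run Epstein's dynamical residue \thref{DRev:1} through \eqref{v35}--\eqref{v36}, and reduce the parabolic contribution to showing $\lambda_P=0$ on repelling petals, which is the content of \thref{lem:v2}, so that \thref{Dclaim:3.b.bis.bis} can be applied. You have correctly identified where the difficulty lies.

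There is, however, a genuine gap: you silently assume $\widetilde U$ is connected when passing from ``the total residue is non-positive'' to ``$\omega$ is an eigenvector of $f^*$.'' By items (b)--(c) of \thref{setup:ct1} the open set $\widetilde U$ is obtained by deleting closed invariant annuli and discs from Herman rings and Siegel discs, so it is disconnected whenever any such cycle has tame critical tails accumulating on it. In that case vanishing of $D_f(\omega)$ of \eqref{v35} only gives a component-wise proportionality $(df)^\vee\omega|_{V_\beta}=C_{\alpha\beta}\,f^*q_\alpha$ as in \eqref{v40}, and one cannot conclude to \eqref{v37}. The paper closes this via \eqref{v41}--\eqref{v43} and the key \thref{claim:v1}: an $f$-invariant $m$-differential in a Siegel disc or Herman ring never extends regularly past a boundary component, since otherwise \eqref{v44}--\eqref{v48} would enlarge the conjugacy to a rotation beyond the maximal disc or annulus. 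Without this step your argument shows only that $\omega$ is an eigenvector on each piece, not that $\omega$ vanishes on the pieces abutting the Herman rings and Siegel discs, which is what the theorem actually asserts.

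A smaller point: you conclude the Latt\`es case by invoking invariant line fields and McMullen's rigidity. The paper's route is more elementary and more informative --- from $D_f\omega=0$ on connected $\widetilde U$ one gets $\omega=f^*q=\deg(f)\cdot q$ directly as in \eqref{v37}, which forces $f$ Latt\`es by \cite{DH}, and also exhibits the one-dimensional kernel concretely as the descent of the invariant differential on the elliptic curve \eqref{v38}. Your appeal to line fields is not wrong, but it is stronger than what is needed, and it does not immediately yield the one-dimensionality claim without further work.
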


\begin{lem}\thlabel{lem:v1}
The map,
\begin{equation}
\label{v7}
{\rm Ext}^0_{X/f} (\Omega_X , R^1 \rho_* {\mathcal O}_{\widetilde X}) \longrightarrow {\rm Ext}_{X/f}^1 (\Omega_X , {\mathcal P})
\end{equation}
of the rightmost column of \eqref{v4} is zero.
\end{lem}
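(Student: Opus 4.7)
The map \eqref{v7} is the connecting homomorphism of the first short exact sequence of \eqref{v5}. Since both ${\mathcal P}$ and $R^1\rho_*{\mathcal O}_{\widetilde X}$ split as products over the parabolic cycles, and \thref{cd:cor1.bis} further reduces a cycle of period $p$ with tangency of order $re+1$ to the analysis of a fixed point of $f^{pr}$, it is enough to establish vanishing at a single parabolic fixed point, where by \thref{lem:ct1} the source is the one-dimensional line spanned by the formal invariant vector field $\partial$ with $\omega(\partial)=1$ for $\omega=(1+\nu x^e)x^{-(e+1)}dx$.

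Commutativity of the bottom-right sub-square of \eqref{v4} identifies the image of $\partial$ under \eqref{v7} with the image, under the composite
$${\rm Ext}^1_{\widetilde X/f}(\rho^*\Omega_X,{\mathcal O}_{\widetilde X})\longrightarrow {\rm Ext}^1_{\widetilde X/f}(\rho^*\Omega_X,\widetilde{\mathcal Q}_{\rm big})\longrightarrow{\rm Ext}^1_{X/f}(\Omega_X,{\mathcal P}),$$
of any lift $\widetilde\partial$ of $\partial$, which exists by the surjectivity in \thref{cor:ct1}(1). It is consequently enough to exhibit one such $\widetilde\partial$ that comes from a class in ${\rm Ext}^1_{\widetilde X/f}(\rho^*\Omega_X,\widetilde j_!{\mathcal O}_{\widetilde U}(-\widetilde R))$, since then its image in ${\rm Ext}^1_{\widetilde X/f}(\rho^*\Omega_X,\widetilde{\mathcal Q}_{\rm big})$ vanishes by the long exact sequence of $0\to\widetilde j_!{\mathcal O}_{\widetilde U}(-\widetilde R)\to{\mathcal O}_{\widetilde X}\to\widetilde{\mathcal Q}_{\rm big}\to 0$.

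To produce such a representative I would use the Leray--Fatou flower \thref{DRev:2}: on each attracting or repelling petal $P_i^{\pm}$, the Borel sum of the formal invariant $\partial$ is the genuine holomorphic, $f$-invariant field $\partial/\partial s_i$ in the Fatou coordinate in which $f$ acts as $s_i\mapsto s_i+1$, and on $\widetilde X$ this field extends smoothly up to the boundary across the attracting petal centres $\widetilde z_i\in\widetilde Z$. Glueing the petal-wise fields along overlaps by means of the flat difference equations of \thref{DSchol:1} and presenting the result via the Dolbeault--log complex \thref{cd:fact5}, one obtains a cocycle representing $\widetilde\partial$ whose $(0,1)$-component is supported in an $f$-invariant neighbourhood of the repelling petals, disjoint from $\widetilde Z$; there $\widetilde j_!{\mathcal O}_{\widetilde U}(-\widetilde R)$ and ${\mathcal O}_{\widetilde X}$ coincide, so the representative lifts as required.

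The principal obstacle is carrying out this sectorial resummation in an $f$-equivariant manner compatibly with the $d_1$-differential of \eqref{cd11}: the cutoffs that localise the $(0,1)$-component away from $\widetilde Z$ \emph{a priori} destroy strict invariance, and one must invoke the flatness of the Stokes-type glueings of \thref{DSchol:1} (together with the $\overline\partial$-estimates behind \thref{cd:fact5}) to absorb the resulting error into a $d_1$-coboundary in the bicomplex.
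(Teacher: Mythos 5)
Your setup — reducing by \thref{cd:cor1.bis} to a single parabolic fixed point, recognising the source as the one-dimensional line of the formal invariant field $\partial$, and then tracing $\partial$ through the bottom-right commutative sub-square of \eqref{v4} — is all correct, and matches the paper. The gap is the second paragraph, where you declare that it suffices to exhibit a lift $\widetilde\partial\in{\rm Ext}^1_{\widetilde X/f}(\rho^*\Omega_X,{\mathcal O}_{\widetilde X})$ which already comes from ${\rm Ext}^1_{\widetilde X/f}(\rho^*\Omega_X,\widetilde j_!{\mathcal O}_{\widetilde U}(-\widetilde R))$, i.e.\ whose image in ${\rm Ext}^1_{\widetilde X/f}(\rho^*\Omega_X,\widetilde{\mathcal Q}_{\rm big})$ is \emph{zero}. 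That is a vastly stronger statement than the lemma, and it is false: the image of $\widetilde\partial$ in $\widetilde{\mathcal Q}_{\rm big}$ is never zero. Indeed, \eqref{v6} presents ${\mathcal P}$ as the quotient of $\prod_\bullet{\mathcal O}_{\widetilde Z_\bullet}$ by the diagonal $\prod_\bullet{\mathcal O}_{Z_\bullet}$, so what the lemma asserts is that the image of $\partial$ lands in the \emph{diagonal}; it does not vanish outright. The paper's own proof makes this explicit: the $d_1$-obstruction of the chosen representative $T$ of \eqref{v11} is $\overline\partial h$ with $h=\tfrac12(\log u)(df)(\tau)$ modulo flat near $\widetilde Z$, see \eqref{v15}. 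This $h$ is the inevitable $B$-component of the bicomplex cocycle $(A,B)$; it is not identically zero in any neighbourhood of $\widetilde Z$, so $(A,B)$ does \emph{not} lift to $\widetilde j_!{\mathcal O}_{\widetilde U}(-\widetilde R)$ — the Dolbeault representative is extendable-by-zero, but the $d_1$-correction is not. The whole content of the lemma is the observation that this non-vanishing $h$ is nonetheless \emph{diagonal}, so that it dies in ${\mathcal P}$.

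Your final paragraph tries to absorb exactly this defect into a $d_1$-coboundary, hoping that the flatness of Stokes cocycles will save the day. It will not: one can see structurally that no lift of $\partial$ can come from $\widetilde j_!$. Take, e.g., $f(z)=z+z^2$. By \thref{claim:ct1} and \thref{cor:ct1}, $\dim{\rm Ext}^1_{\widetilde X/f}(\rho^*\Omega_X,{\mathcal O}_{\widetilde X})=3$, while \thref{cor:ct3}, \thref{cor:ct5} and \thref{rmk:v3} give $\dim\overline{\rm Ext}^1_{\widetilde X/f}(\rho^*\Omega_X,{\mathcal Q}_{\rm diag})\geq 3$; by \thref{thm:v2} the map between them is onto, hence (by dimensions) injective. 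Since ${\mathcal Q}_{\rm diag}\hookrightarrow\widetilde{\mathcal Q}_{\rm big}$ is a retract, the image of any lift $\widetilde\partial$ in ${\rm Ext}^1(\rho^*\Omega_X,\widetilde{\mathcal Q}_{\rm big})$ is therefore non-zero; equivalently, no lift of $\partial$ is in the image of ${\rm Ext}^1(\rho^*\Omega_X,\widetilde j_!{\mathcal O}_{\widetilde U}(-\widetilde R))$. Conceptually this is forced: $\widetilde\partial$ is precisely the extra tangent direction at a parabolic cycle responsible for the Fatou--Shishikura count — a genuinely non-trivial deformation of the local germ at $\widetilde Z$ — and it \emph{must} have non-zero image in $\widetilde{\mathcal Q}_{\rm big}$. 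What can (and must) be shown instead is that this non-zero image factors through the diagonal sub-ring, which is the computation carried out in \eqref{v9}--\eqref{v15}; the sectorial Fatou resummation and \thref{DSchol:1} that you invoke are the right ingredients, but the target of the argument has to be the diagonality of the $d_1$-error, not its vanishing.
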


\begin{proof}
Both of the rightmost arguments in \eqref{v7} are skyscraper sheaves, so by \thref{cd:cor1.bis} we can suppse that $X$ is the germ of a parabolic fixed point. Better still by \thref{DSchol:1}, it will suffice to compute the image of \eqref{v7} modulo the image of flat functions in the right hand side of \eqref{v6}. To this end let,
\begin{equation}
\label{v8}
\tau = t(z) \, \frac\partial{\partial z}
\end{equation}
be a smooth vector field on $X$ whose asymptotic expansion is the formal invariant vector field $\partial$ of \eqref{ct26}, then by item (b) of \thref{cd:cor1} the left hand side of \eqref{v7} is the 1-dimensional complex vector space defined by the invariant, modulo the image of $\overline\partial$, class,
\begin{equation}
\label{v9}
\tau \otimes \frac{d \overline z}{\overline z} \, .
\end{equation}
On the other hand for a repelling petal $P$, cf. \eqref{Res7} et seq., there is a unique determination $\theta_P \in [0,2\pi)$ of the argument branched at the centre of the petal. Thus if the multiplier $\lambda$ is a primitive $r^{\rm th}$ root of unity, then the union of the branches of,
\nomenclature[N]{Argument branched at a petal}{\hyperref[v9.bis]{$\theta$}$_P$}
\begin{equation}
\label{v9.bis}
\theta := \frac1r \ \sum_{i=0}^{r-1} \theta_{(f^i)^* P}
\end{equation}
is invariant, so we have the convenient fact that for unit, $u \equiv 1 ({\rm mod} \, {\mathfrak m}^{re})$,
\begin{equation}
\label{v9.bis.bis}
f^* \theta - \theta = 1/2 \log u/\overline u
\end{equation}
while $\theta$ is smooth in a neighbourhood of the attracting petals. As such for,
\begin{equation}
\label{v10}
b_x : E \longrightarrow {\mathbb R}_{\geq 0} \, , \quad x \in \widetilde Z \subseteq E
\end{equation}
bump functions in a neighbourhood of the attracting directions, there is a flat section $\chi_0$ of $\Gamma (\widetilde X , {\mathcal A}^0_{\widetilde X} \otimes \rho^* T_X)$ such that,
\begin{equation}
\label{v11}
T := \frac12 \tau \otimes \frac{d \overline z}{\overline z} + \sum_{x \in \widetilde Z} \overline\partial (b_x (\theta) \theta \tau) - \overline\partial \chi_0 \in \Gamma (\widetilde X , j_! {\mathcal A}^0_{\widetilde U} \otimes \rho^* T_X) := 1/2 \tau \otimes \frac{d \overline z}{\overline z} + \overline\partial (T') \, .
\end{equation}
As ever, therefore, by \thref{cd:fact3} we require in the notation of op. cit. to calculate $d_1$ of \eqref{v11}, modulo $\overline\partial$ of forms supported off $\widetilde Z$. To this end observe,
\begin{enumerate}
\item[(a)] For the unit, $u \equiv 1 ({\rm mod} \, {\mathfrak m}^{re})$, of \eqref{v9.bis.bis} $e$ the order of tangency of $f$, to a rotation of order $r$,
\begin{equation}
\label{v12}
d_1 \left( \frac{dz}z \right) = \frac{du}u \, .
\end{equation}
\item[(b)] Modulo flat sections of $\Gamma (\widetilde X , {\mathcal A}_{\widetilde X}^{0,1} (\log E) \otimes \rho^* T_X)$,
\begin{equation}
\label{v13}
d_1 \left( \tau \otimes \frac{d\overline z}{\overline z} \right) = \overline\partial \, ((df)(\tau) \log \overline u)
\end{equation}
and in any case the left hand side is $\overline \partial A$ for some smooth function $A$.
\item[(c)] $d_1 (b_x)$ has support off $\widetilde Z$, so, again modulo flat sections,
\begin{equation}
\label{v14}
d_1 \overline\partial (b_x \, \theta \tau) = \overline\partial (d_1 (b_x) f^* \theta (df)(\tau)) + \overline\partial (b_x \, d_1 (\theta) (df)(\tau)) \, .
\end{equation}
\end{enumerate}
As such by (b)/general nonsense,  $d_1(T)$ is $\overline\partial h$ for a smooth function $h$ while the former is zero off $\widetilde Z$, thus $h$ is holomorphic close to $\widetilde Z$, so by (b), (c), and \eqref{v9.bis},
\begin{equation}
\label{v15}
h = \frac12 (\log u)(df)(\tau) \mod \mbox{(flat)}
\end{equation}
close to $\widetilde Z$. However by \eqref{v5}--\eqref{v6} it's exactly the class of \eqref{v15} in the right hand side of \eqref{v7} that we need to calculate, and this is zero because it's diagonal, i.e. in the kernel of \eqref{v6}, modulo flat.
\end{proof}

The key to proceeding further is to dualise \eqref{v3}, which entails a little more,

\begin{Notation}\thlabel{not:v2}
For ${\mathcal G}$ an almost $f$-sheaf, and an $f$-sheaf ${\mathcal F}$, let ${\mathbb E}{\rm xt}^{-m}_{X/f}$ ($m = p+q-n)$ 
\nomenclature[N]{Abutement of dual spectral sequence}{\hyperref[not:v2]{${\mathbb E}{\rm xt}$}$^{-m}_{X/f}$} 
be the abutment of the spectral sequence \eqref{cd24}, while if ${\mathcal G}$ is a locally free $f$-sheaf define,
\begin{equation}
\label{v16}
{\mathbb H}_{-m} (X/f , {\mathcal G}^{\vee} \otimes f^* {\mathcal F} \otimes \omega_X) := {\mathbb E}{\rm xt}^{-m}_{X/f} ({\mathcal G} , f^* {\mathcal F} \otimes \omega_X)
\end{equation}
\nomenclature[N]{Alternative notation to ${\mathbb E}{\rm xt}^{-m}_{X/f}$ for}{locally free sheaves \hyperref[not:v2]{${\mathbb H}$}$_{-m}(X/f, \ )$} 
wherein $X$, as per. op. cit., may, exceptionally, be a complex manifold or the real blow of a Riemann surface. In the particular case, therefore, of \eqref{v3}, which in the notation of \eqref{cd24bis} et seq. is, equally:
\begin{equation}
\label{v17}
\overline{\rm Ext}_2^{U/f} (\Omega_U , {\mathcal O}_U (-R)) \longrightarrow \overline{\rm Ext}_2^{\widetilde U/f} (\rho^* \Omega_U , {\mathcal O}_{\widetilde U} (-\widetilde R))
\end{equation}
it's dual, in the notation of \eqref{v16}, is,
\begin{equation}
\label{v18}
{\mathbb H}_0 (U/f , f^* \Omega (R) \otimes \omega_U) \longleftarrow {\mathbb H}_0 (\widetilde U / f , f^* \Omega (\widetilde R) \otimes \omega_{\widetilde U})
\end{equation}
where, for ease of notation, we drop $\rho^*$ when the context is clear. Irrespectively if we further define,
\begin{equation}
\label{v19}
{\mathbb H}_0 (\widetilde U / f , f^* \Omega (R) \otimes {\rm Tors} (\omega_{\widetilde U})) = {\rm Ker} \{ d_1^{\vee} \mid {\rm H}^0 (\widetilde U / f , f^* \Omega (R) \otimes \omega_{\widetilde U})\}
\end{equation}
for $d_1^{\vee}$ as in \eqref{cd24bis}, with the rightmost group in \eqref{v19} included in that of \eqref{v18} by way of \eqref{Dc59} then we have, by \eqref{Dc59}, and in the notation thereof, an exact sequence,
\begin{eqnarray}
\label{v20}
0 \to {\mathbb H}_0 (\widetilde U/f , f^* \Omega (R) \otimes {\rm Tors} (\omega_{\widetilde U})) \to {\mathbb H}_0 (\widetilde U/f , f^* \Omega (R) \otimes \omega_{\widetilde U}) \to \nonumber \\
\to {\mathbb H}_0 (\widetilde U/f , f^* \Omega (R) \otimes \varinjlim_{n \geq 0} \widetilde\omega (nE)) \hookrightarrow {\mathbb H}_0 (U/f , f^* \Omega (R) \otimes \omega_U) \, .
\end{eqnarray}
As such if we denote the image of the rightmost group in \eqref{v18} in the leftmost by ${\rm Obs}$ then the content of \thref{thm:v1} is, by \eqref{v20}, equivalently, either of:
\nomenclature[N]{Obstruction group mod torsion}{\hyperref[v21]{${\rm Obs}$}} 
\begin{equation}
\label{v21}
\begin{matrix}
(1) &{\rm Obs} = 0 \hfill \\
(2) &{\mathbb H}_0 (\widetilde U / f , f^* \Omega (R) \otimes \omega_{\widetilde U}) \ \mbox{is torison.} \hfill
\end{matrix}
\end{equation}
Irrespectively the obstruction group, ${\rm Obs}$, if it exists is far from arbitrary since inter alia it follows from \eqref{v4} that the composition,
\begin{equation}
\label{v22}
{\rm Obs} \hookrightarrow {\mathbb H}_0 (U/f , f^* \Omega (R) \otimes \omega_U) \to {\rm Ext}^0_{X/f} (\Omega , R^1 \rho_* \, \widetilde j_! \, {\mathcal O}_{\widetilde U} (-R))^{\vee}
\end{equation}
is zero.
\end{Notation}

With this in mind we have,

\begin{lem}\thlabel{lem:v2}
Let $\omega$ be a quadratic differential in ${\rm Obs}$, and $\widetilde Z \hookrightarrow \widetilde X \overset{\rho}{\longrightarrow} X$ lying over a cycle $Z$ whose first return map is tangent to order $e$ to a rational rotation of order $r$, then $\omega$ satisfies the conditions (for $f^r$) of \thref{Dcon:1}, \thref{Dhyp:1} and \thref{Dclaim:1}, so by op. cit., for every repelling petal, $P$, there is a constant $\lambda_p$ such that \eqref{Res37} holds, which is, in fact $0$.
\end{lem}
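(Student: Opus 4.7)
The plan is to verify the two analytic hypotheses \thref{Dcon:1} and \thref{Dhyp:1}, invoke \thref{Dclaim:1} to produce the constants $\lambda_P$, and then read the additional constraint from \eqref{v22} as the vanishing of $\lambda_P$ on every repelling petal. Step one is essentially formal: by \thref{not:v2}, \thref{cd:fact6} and \thref{rmk:cd2} an element of ${\mathbb H}_0(\widetilde U/f,\ \cdot\ )$ is represented by a quadratic differential $\omega$ annihilated by the dual differential $d_1^\vee$ modulo coboundary, and unwinding this in a local coordinate $z$ at a point of $Z$ is exactly the invariance condition \eqref{Res32}, i.e. $f^*\omega - \omega \in \mathfrak m\cdot\Omega_X(\log 0)^{\otimes 2}$, from which the measure-theoretic estimate \eqref{Res34} is automatic on a punctured neighbourhood of $Z$.

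Step two is where \thref{thm:I1} is essential. By definition, via \eqref{v20}, the group ${\rm Obs}$ is the image in ${\mathbb H}_0(U/f,\ \cdot\ )$ of the non-torsion part of ${\mathbb H}_0(\widetilde U/f,\ f^*\Omega(\widetilde R)\otimes \omega_{\widetilde U})$. Invoking the non-split extension \eqref{I17}, any such lift has meromorphic (rather than essential) growth transverse to the exceptional circle $E$ over $Z$. Translated through a Fatou chart inside an angular sector about any of the $2e$ special directions $z^e \in \mathbb{R}(1)$ one reads off precisely $|W(z)| \ll |\operatorname{Im}(z^{-e})|^k$, which is \thref{Dhyp:1}.

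At this point \thref{Dclaim:1}, applied after reduction to a single parabolic fixed point of $f^r$ via \thref{cd:cor1.bis}, produces a constant $\lambda_P$ in each petal, attracting or repelling, satisfying \eqref{Res37}. The hard part, and the heart of the lemma, is to pin down $\lambda_P=0$ on each repelling petal. My plan is to identify, one orbit at a time and after testing against each component of the cycle separately, the constant $\lambda_P$ with the value of the pairing \eqref{v22} evaluated against the class of \eqref{v9}, namely $\tau\otimes d\overline z/\overline z$ with $\tau$ the smooth lift of the formal invariant field $\partial$ of \eqref{ct26}. The point is that on a repelling petal $P$ the real blow up contributes no torsion and the sky-scraper ${\mathcal P}$ of \eqref{v5} is invisible, so the $\overline\partial$-representative there coincides with the Fatou-coordinate asymptote of $\omega(\partial)^{\otimes 2}$, which is exactly $\lambda_P$ by \eqref{Res37}. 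On an attracting petal the contribution is instead absorbed into the diagonal subsheaf of \eqref{v6} by the bump-function calculation \eqref{v11}--\eqref{v15} used in the proof of \thref{lem:v1}, and so drops out once we restrict to the image of the left column of \eqref{v4}. This step is essentially the dual of \thref{lem:v1}, and once the identification is established, the vanishing \eqref{v22} forces $\lambda_P=0$ on each repelling petal as required.
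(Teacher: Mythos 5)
Your outline agrees with the paper up to the last, and decisive, step: reading off the constraint from \eqref{v22} against the class $\tau\otimes d\overline z/\overline z$ of \eqref{v9}. You correctly identify that this is the right thing to pair against, but the heart of the lemma — and the part of the paper that does genuine work — is the computation showing that the value of this pairing is (up to the harmless ambiguity of a sign of $\sqrt{-1}$ and of the petal $P$, which the paper explicitly accounts for by classes in ${\rm Ext}^0_{X/f}(\Omega,{\mathcal P})$) exactly $-2\pi i\,\lambda_P$, and your argument for this step doesn't go through as stated.

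Concretely: the mechanism by which the attracting-petal contributions drop out is not that they are ``absorbed into the diagonal subsheaf of \eqref{v6}'' — that is the mechanism of \thref{lem:v1}, which proves a different (and already established, at this point) fact, namely that \eqref{v7} is zero. In the proof of \thref{lem:v2} the pairing is computed as a boundary integral via the connecting homomorphism $\delta$ of \eqref{v23}, using the explicit representative $T$ of \eqref{v11}, and the decisive move is the choice of $\overline\partial$-potential
$$
a=\tfrac12\bigl(\overline{L(z)}-L(z)\bigr)+T',\qquad L(z)=\tfrac1r\sum_{i=0}^{r-1}\log_{(f^i)^*P}z,
$$
built from a branch of the logarithm whose cut runs through the \emph{repelling} petals. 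This makes the two boundary integrals \eqref{v26} and \eqref{v28} cancel identically, so that the residue is picked up only along the jumps of the branch, yielding \eqref{v32}, and only then does the Fatou-coordinate asymptotics $\omega(\tau)\sim\lambda_P\,ds$ of \eqref{v33} enter to give \eqref{v34}. Your ``no torsion on the repelling petal'' remark is true but irrelevant to this calculation, and the phrase ``the $\overline\partial$-representative there coincides with the Fatou-coordinate asymptote of $\omega(\partial)^{\otimes 2}$'' is not a well-posed assertion — the representative is a class in ${\rm Ext}^0$ whose image under $\delta$ must then be paired against $\omega$; you have to actually compute that pairing, and the computation is exactly what isolates $\lambda_P$. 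Steps one and two of your proposal (invariance \eqref{Res32} from $d_1^\vee$-closedness, and the meromorphic growth bound \thref{Dhyp:1} from the non-torsion part of $\omega_{\widetilde X}$ via \eqref{Dc59}/\eqref{v20}) are fine and match the paper.
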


\begin{proof}
That $\omega$ satisfies the hypothesis in question is a mixture of the definition of ${\mathbb H}_0$, and the structure, \eqref{Dc59} or \eqref{v20} of $\omega_{\widetilde X}$ itself, modulo torsion, together with the fact that $R$ is concentrated in attracting petals.

Otherwise consider the calculation of the connecting homomorphism,
\begin{equation}
\label{v23}
{\rm Ext}_{\widetilde X / f}^0 (\Omega_X , R^1 \rho_* \, \widetilde j_! \, {\mathcal O}_{\widetilde U} (-\widetilde R)) \xrightarrow{ \ \delta \ } {\rm Ext}^2_{X/f} (\Omega_X , j_! \, {\mathcal O}_U (-R))
\end{equation}
by way of the double complex associated to \eqref{cd13}. Thus quite generally for $\Delta \hookleftarrow Z$ a small neighbourhood, we start with a $D = d_1 - \overline\partial$ closed class in,
\begin{eqnarray}
\label{v24}
C &:= &A \textstyle \coprod B \in \Gamma (\widetilde\Delta , {\mathcal A}_{\widetilde\Delta}^{0,1} (\log E) \otimes \rho^* T_X \otimes \widetilde j_! {\mathcal O}_{\widetilde U} (-\widetilde R)) \nonumber \\
&&\textstyle\coprod \Gamma (\widetilde\Delta , {\mathcal A}_{\widetilde\Delta}^{0,0} \otimes \rho^* f^* T_X \otimes \widetilde j_! {\mathcal O}_{\widetilde U} (-\widetilde R))
\end{eqnarray}
and choose a bump function, $b_Z$, about $Z$, so that \eqref{v23} is just,
\begin{equation}
\label{v25}
\delta : [C] \longmapsto [D(b_Z \, C)] \, .
\end{equation}
In particular not just for $\omega$ in the obstruction group, but even in the rightmost group of \eqref{v20}, $\omega (D(b_Z \, C))$ is absolutely integral, and the dual pairing, satisfies,
\begin{equation}
\label{v26}
\omega (\delta (0 {\textstyle \coprod} B)) = - \lim_{\varepsilon \to 0} \ \int_{\partial \Delta_Z (\varepsilon)} \omega (B)
\end{equation}
for $\Delta_Z (\varepsilon)$ a small disc of radius $\varepsilon$ around the cycle. Similarly,
\begin{equation}
\label{v27}
\omega (\delta (A {\textstyle \coprod} 0)) = \lim_{\varepsilon \to 0} \ \int_{X \backslash \Delta_Z (\varepsilon)} \omega (d_1 (\rho A)) \, .
\end{equation}
Now if $A$ were equal to $\overline \partial a$, then $\rho \, \overline\partial (a) - \overline\partial (\rho a)$ has no support close to $Z$ and $\omega$ is $d_1^{\vee}$ closed so, from $\overline\partial d_1 = d_1 \overline\partial$, \eqref{v27} is equally,
\begin{equation}
\label{v28}
\lim_{\varepsilon \to 0} \ \int_{\partial \Delta_Z (\varepsilon)} \omega (d_1 a)
\end{equation}
provided that the introduction of $a$ hasn't created any residues, or branches. In applying this to the case in point we can take $A=T$ of \eqref{v11}, so that, in the notation of op. cit.,
\begin{equation}
\label{v29}
B = d_1 (T') + \frac12 (df) (\tau) \log \pi / u \, .
\end{equation}
Similarly if we fix a repelling petal $P$, then we have a determination $\log_P z$ of a branch of the logarithm for $\theta_P$ as immediately before \eqref{v9.bis}, and,
\begin{equation}
\label{v30}
L(z) := \frac1r \ \sum_{i=0}^{r-1} \log_{(f^i)^* P} z
\end{equation}
affords a convenient candidate for $a$ of \eqref{v28}, i.e.
\begin{equation}
\label{v31}
a = \frac12 (\overline{L(z)} - L(z)) + T'
\end{equation}
since by \eqref{v9.bis}: \eqref{v26} and \eqref{v28} cancel, nor are these residues. This is, however, at the cost of introducing various branches $(f^i)(b)$ starting from the initial branch $b$ of $\theta_P$, so the pairing of $\omega$ with the class of \eqref{v11} et seq is for $b_{\varepsilon} = b \cap \Delta_Z (\varepsilon)$,
\begin{equation}
\label{v32}
- \frac{2\pi i}r \ \sum_{j=0}^{r-1} \ \int_{f^{j+1} (b_{\varepsilon}) \backslash f^j (b_{\varepsilon})} (f_* \, \omega)(\tau)
\end{equation}
which really does depend on the choice of a square root of $-1$. There is, however, no contradiction in this since the lifting of the class of \eqref{v8} to one in the left hand side of \eqref{v23} is in fact ambiguous up to a class in ${\rm Ext}^0_{X/f} (\Omega,{\mathcal P})$, e.g. up to some locally constant functions in the middle group of \eqref{v6}. Better still by the definition of ${\mathbb H}_0$, $\omega$ satisfies the estimate \eqref{Res35}, albeit with $re$ rather than $e$ in the current notation, so for $s$ the Fatou coordinate in any petal in the orbit of $P$,
\begin{equation}
\label{v33}
\omega (\tau) \sim \lambda_p \, ds
\end{equation}
with the asymptotics of \eqref{Res37}. Consequently the limit as $\varepsilon \to 0$ of \eqref{v32} is,
\begin{equation}
\label{v34}
-2\pi i \, \lambda_P \, .
\end{equation}
As such, by \eqref{v22}, if in addition $\omega$ belonged to ${\rm Obs}$ then $\lambda_P = 0$, where, of course, the implied ambiguities in \eqref{v34}, i.e. not just a choice of $\sqrt{-1}$ but also of $P$, all come from locally constant functions in ${\rm Ext}_{X/f}^0 (\rho^* \Omega , {\mathcal P})$, and work to our advantage since a priori the $\lambda_P$ were unrelated as the petal varies.
\end{proof}

Following the proof of \cite[Lemma 4]{adam} we can now proceed to give,

\begin{proof}[Proof of \thref{thm:v1}] 
The map of \eqref{Res23} is a non-linear natural transforms of $m$-forms to measures. On the other hand for $f : \widetilde X \to \widetilde Y$ proper, the dual of the pull-back of functions with compact support is push-forward of measures, which in the \thref{not:v2} is related to the maps of \thref{rmk:cd2} by way of,
\begin{equation}
\label{v35}
\xymatrix{
f^* \Omega_Y^{\otimes m-1} \otimes \omega_{\widetilde X} \ar[d]_{f_*} \ar[rr]^-{(df)^{\otimes m-1}} && \omega^m_{\widetilde X} \ar[r]^-{\Vert \ \Vert} &{\rm Measure}_{\widetilde X} \ar[d]^{f_*} \\
\Omega_Y^{\otimes (m-1)} \otimes \omega_{\widetilde Y} \ar@{=}[rr] &&\omega_{\widetilde Y}^m \ar[r]^-{\Vert \ \Vert} &{\rm Measure}_{\widetilde Y}
}
\end{equation}

$$
D_f (\omega) := f_* \Vert (df)^{\vee} (\omega) \Vert - \Vert f_* \, \omega \Vert \geq 0
$$
with equality iff $\omega$ is a pull-back of a section of the sheaf in the bottom left hand corner. Now in our case $\Vert \omega \Vert$ is absolutely integrable off any small neighbourhood, $N_{\varepsilon}$, around the cycles of \thref{not:ct1}, so:
\begin{equation}
\begin{split}
\label{v36}
\int_{\widetilde X \backslash N_{\varepsilon}} D_f(\omega) \leq &\int_{N_{\varepsilon} \backslash f^{-1} (N_{\varepsilon})} \Vert (df)^{\vee} (\omega)\Vert - \int_{f^{-1} (N_{\varepsilon}) \backslash N_{\varepsilon}} \Vert (df)^{\vee} (\omega) \Vert \\
\to & \ {\rm Res}_f \mid \Vert (df)^{\vee} (\omega)\Vert
\end{split}
\end{equation}
where the residue is Epstein's dynamical residue of \thref{DRev:1} for $\infty$ of op. cit. the non-repelling cycles and any circles in \thref{setup:ct1} (b). It is, however, essentially trivial not only that the dynamical residue is the sum over those calculated one cycle or circle at a time but that for anything other than a parabolic cycle or a Cremer point, there are arbitrarily small neighbourhoods about the same such that $f(V) \subseteq V$. As such the only cases we need to calculate are Cremer points, where it's zero by \eqref{Res4}, or parabolic points, where it's non-positive for $\omega \in {\rm Obs}$ by \thref{lem:v2} and \thref{Dc:ClaimExtra3}. Thus by the connectivity of $U$, \eqref{v35} and \eqref{v36}, any $\omega \in {\rm Obs}$ is of the form,
\begin{equation}
\label{v37}
\omega = f^* q = \deg (f) \cdot q \, .
\end{equation}
Now, for $\deg(f) > 1$, this is only possible if $f$ is a Latt\`es example, \cite[1]{DH}, i.e. multiplication by an integer on an elliptic curve, $E$ modulo $\pm 1$, 
\nomenclature[D]{Latt\`es \hyperref[v38]{example}}{} 
\begin{equation}
\label{v38}
\xymatrix{
E \ar[d]_n \ar[r]^-{\varepsilon} &E/\pm = {\mathbb P}' \ar[d]^f \\
E \ar[r]_{\varepsilon} &{\mathbb P}'
}
\end{equation}
In particular the branch locus of $\varepsilon$ is certainly contained in $R$, so the invariant differentials for $f$, are exactly the holomorphic differential on $E$ which are invariant by $n$, i.e. in this case all of the groups in \eqref{v3}, wherein ${\rm Ext}^2$ is already its own separated quotient, have dimension $1$.
\end{proof}

In the disconnected case, we still know that $D_f \, \omega$ of \eqref{v35} is zero, but we cannot conclude to \eqref{v37}. This case, however, only occurs if we have extracted in item (b) of \thref{setup:ct1} a closed annuli or circle, $C_i$, from some non empty set of Herman rings and/or annuli. Now if $C$ is the union of the $C_i$ and $U_{\alpha}$, resp. $V_{\beta}$ the connected components of $X \backslash C$, resp. $X \backslash f^{-1} (C)$ then whenever $f$ maps $V_{\beta}$ to $U_{\alpha}$ it is proper flat and surjective. As such the conclusion to be drawn from $D_f \, \omega = 0$ is: $\exists \, q_{\alpha} \in {\rm H}^0 (U_{\alpha} , \Omega (R) \otimes \widetilde \omega)$ such that if $f : V_{\beta} \to U_{\alpha}$ then $\exists \, C_{\alpha\beta} \in {\mathbb R}_{\geq 0}$ for which 
\begin{equation}
\label{v40}
(df)^{\vee} \omega \mid_{V_{\beta}} = C_{\alpha\beta} f^* q_{\mid\alpha} \, .
\end{equation}
In particular if we fix a curve $C_{\alpha}$ in the boundary of a given $U_{\alpha}$, then it determines a unique $V_{\gamma}$ which contains the same piece of the Siegel disc or Herman ring close to $C$ as $U_{\alpha}$. In particular if $\omega \mid_{U_{\alpha}} \ne 0$ then,
\begin{equation}
\label{v41}
0 \ne (df)^{\vee} \omega \mid_{V_{\alpha}} = (f^* q_{\alpha}) \, C_{\alpha \gamma}
\end{equation}
so without loss of generality $C_{\alpha \gamma} = 1$, and, 
\begin{equation}
\label{v42}
f^* q_{\alpha} = (f_* \, \omega) \mid_{U_{\alpha}} = \left( \sum_{\beta} C_{\alpha\beta} \right) q_{\alpha} \, .
\end{equation}
However close to $C_{\alpha}$ any eigen tensor has modulus 1, so,
\begin{equation}
\label{v43}
C_{\alpha\beta} = 0 \ \mbox{for} \ \beta \ne \gamma \, , \quad \omega_{U_{\alpha}} = q_{\alpha} \, , \quad f^* \omega_{\alpha} = \omega_{\alpha} \, .
\end{equation}
By construction, however, $U_{\alpha}$ contains a boundary component of the Siegel disc or Herman ring and we assert,

\begin{claim}\thlabel{claim:v1}
If $f:D \to D$ is a Siegel disc or Herman ring for an endomorphism of a Riemann surface then there is no open neighbourhood $U$ of a boundary component, $\partial$, of $D$ such that there exists $m \in {\mathbb N}$ for which the invariant differential $\left( \frac{dz}z \right)^{\otimes m}$ extends regularly from $D$ to $U$.
\end{claim}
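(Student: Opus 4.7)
The plan is to show that a regular extension of $\omega := \left(\frac{dz}{z}\right)^{\otimes m}$ to a neighbourhood $U$ of $\partial$ would force the linearising coordinate itself to extend analytically across some point of $\partial$, contradicting the fact that $\partial$ lies in the Julia set (equivalently, the maximality of $D$ as a rotation-type Fatou component).

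Assume such an extension $\widetilde\omega$ exists; pick a local holomorphic coordinate $\zeta$ on a small $V \subset U$ about some $p \in \partial$ and write $\widetilde\omega = g(\zeta)\,d\zeta^{\otimes m}$ with $g$ holomorphic on $V$. Since $\widetilde\omega$ restricts on $V \cap D$ to the nowhere-zero form $\omega$, one has $g \not\equiv 0$, and its zero set in $U$ is discrete; so I may choose $p$ with $g(p) \neq 0$. Writing $\phi : \mathcal D \to D$ for the linearising biholomorphism from the model disc or annulus $\mathcal D$ that conjugates $z \mapsto \lambda z$ to $f|_D$, the identity $\phi^* \widetilde\omega|_D = (dz/z)^{\otimes m}$ becomes, in the variable $\zeta$ on $V \cap D$,
\[
\bigl(dz/d\zeta\bigr)^m \;=\; z(\zeta)^{\,m}\, g(\zeta),
\]
equivalently $(d\log z/d\zeta)^m = g(\zeta)$. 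On a simply connected $V$ on which $g$ is nowhere vanishing, choose a branch of $g^{1/m}$ and integrate: the function $h(\zeta) := \log\phi^{-1}(\zeta_0) + \int_{\zeta_0}^\zeta g^{1/m}d\zeta'$ is holomorphic on $V$, and (after matching branches) $\exp h$ restricts on $V \cap D$ to $\phi^{-1}$. This yields a holomorphic extension $\widetilde{\phi^{-1}} := \exp h : V \to \mathbb{C}^*$, whose derivative equals $\widetilde{\phi^{-1}} \cdot g^{1/m}$ and is therefore nonzero at $p$; hence $\widetilde{\phi^{-1}}$ is a local biholomorphism at $p$.

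Now the intertwining $\phi^{-1} \circ f = \lambda \cdot \phi^{-1}$ holds on $V \cap D \cap f^{-1}(V \cap D)$ and extends by analytic continuation to $\widetilde{\phi^{-1}} \circ f = \lambda \cdot \widetilde{\phi^{-1}}$ on a neighbourhood of $p$. Thus $f$ is analytically conjugate to the irrational rotation $z \mapsto \lambda z$ on a neighbourhood of $p$; in particular $\{f^n\}$ is a normal family there, producing a rotation-type Fatou component containing $p$ and meeting $D$. By connectedness of Fatou components this component must equal $D$, contradicting $p \in \partial \subset \overline D \setminus D$. The technical heart of the argument is the second paragraph: extracting the branch $g^{1/m}$ and integrating to obtain a single-valued $\widetilde{\phi^{-1}}$, which depends crucially on choosing $p$ with $g(p) \neq 0$ so that simply connected nonvanishing neighbourhoods of $p$ are available.
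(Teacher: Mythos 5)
Your opening moves are the same as the paper's: assume an extension $q$ of $(dz/z)^{\otimes m}$, take an $m$-th root, and integrate to extend the linearising coordinate $\phi^{-1}$ holomorphically across a boundary point. The paper uses a global $\mu_m$-cover $\pi:\widetilde U\to U$ rather than a local root near one point $p$; this sidesteps the monodromy and patching questions you leave implicit when working with a single simply connected $V\ni p$ (and in particular the need to patch with the extension near $f(p)$, which will in general lie far from $p$).

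The genuine gap is in the last paragraph. You assert that the intertwining $\phi^{-1}\circ f=\lambda\,\phi^{-1}$ "extends by analytic continuation to $\widetilde{\phi^{-1}}\circ f=\lambda\,\widetilde{\phi^{-1}}$ on a neighbourhood of $p$" and that this makes $\{f^n\}$ a normal family there. First, the composite $\widetilde{\phi^{-1}}\circ f$ is not defined on a neighbourhood of $p$ unless $f(p)\in V$, which is false except at a fixed point. Second, and more fundamentally, even a bona fide local conjugacy between $f$ near $p$ and $z\mapsto\lambda z$ near a point of the circle $\{|w|=r_0\}$ cannot by itself give normality of $\{f^n\}$ near $p$: the rotation orbit of $\widetilde{\phi^{-1}}(p)$ is dense in the whole circle and leaves any bounded neighbourhood of $\widetilde{\phi^{-1}}(p)$, so the local conjugacy controls no more than the first few iterates. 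Since $p\in\partial D\subset J(f)$, the iterates genuinely fail to be normal at $p$, and a correct proof must locate the contradiction elsewhere.

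What the paper does, and what your argument is missing, is to show that near the chosen boundary point the pull-back $\pi^{-1}(D)$ is exactly the sublevel set $\{\widetilde L<0\}$ of the harmonic primitive $\widetilde L(y)=\int_x^y{\rm Re}(Q)$; this, together with the observation that $q$ cannot vanish along $\partial$ (as a zero there would give a finite invariant cycle carrying a regular $m$-differential), shows $\partial$ is locally a smooth arc, kills the monodromy of the $m$-th root, and — crucially — shows that $\{\widetilde L<\varepsilon\}$ is an $f$-invariant open set in which $z\mapsto\exp(\int_* ^z Q)$ conjugates $f$ to a rotation, strictly enlarging $D$ and contradicting its maximality. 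You need some version of this "sublevel set equals $D$" argument to convert your local extension of $\phi^{-1}$ into an actual enlargement of the rotation domain; without it the intended contradiction does not follow.
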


\begin{proof}[Sub-proof of \thref{claim:v1}]
Suppose there were such an extension, $q$, say, and let $\pi : \widetilde U \to U$ be a connected component of $\sqrt[m]{q}$ with say, $\pi^* q=Q^m$, and $\widetilde\partial = \pi^{-1}(\partial)$. Now fix $x \in \widetilde\partial$, with $d\pi(x) \ne 0$ and define for $y$ close to $x$,
\begin{equation}
\label{v44}
\widetilde L (y) = \int_x^y {\rm Re} (Q) \, .
\end{equation}
Equally, however, $\log \vert Z \vert^2$ is a constant on the boundary, i.e. for a fixed base point $*$ in $\pi^{-1} (D) \cap \widetilde U$ there is a constant, $C$, such that
\begin{equation}
\label{v45}
L(y_n) := \int_*^{y_n} {\rm Re}(Q) + C \longrightarrow 0 \, , \quad \pi^{-1} (D) \ni y_n \to y \in \partial \, .
\end{equation}
Consequently if we choose sequences $x_n , y_n$ in $\pi^{-1} (D)$ converging to $x$ and $y$ respectively, then,
\begin{equation}
\label{v46}
\widetilde L (y) = \lim_n \widetilde L (x_n) + \int_{x_n}^{y_n} {\rm Re} (Q) = \lim_n (L(y_n) - L(x_n)) = 0 \, .
\end{equation}
Similarly if $y \in \pi^{-1} (D)$ then,
\begin{equation}
\label{v47}
L(y) = L(x_n) + \int_{x_n}^y {\rm Re} (Q) \longrightarrow \widetilde L(y) \, , \quad n \to \infty
\end{equation}
so close to $x , \pi^{-1} (D)$ is exactly $\widetilde L < 0$. Further, either a branch point or a singularity of the boundary lies over a zero of $q$, which by \eqref{v43} defines a finite $f$-invariant cycle. However, apart from notations, no germ of a fixed point admits a regular $m$-differential, so $\pi$ is everywhere \'etale around $\partial$, and $\partial$ is smooth. This shows, however, that $\sqrt[m]{q}$ has no monodromy around $\partial$, so, without loss of generality $m=1$, and $L_{<\varepsilon}$, for some $\varepsilon > 0$, is an invariant neighbourhood on which,
\begin{equation}
\label{v48}
z \longmapsto \exp \left( \int_*^z Q \right)
\end{equation}
extends $D$ to a larger annulus or disc over which $f$ is conjugate to a rotation, contrary to the definition of a Herman ring or Siegel disc. It therefore follows that $\omega \mid_{U_{\alpha}}$ is zero.
\end{proof}

This also covers what happens with higher order differentials, i.e.

\begin{var}\thlabel{var:v2}
Let everything be as in \thref{thm:v1} then for any $m \geq 2$, the image of,
\begin{equation}
\label{v49}
{\mathbb H}_0 (\widetilde U / f , f^* \Omega^{\otimes m} (mR) \otimes \omega_{\widetilde U}) \longrightarrow {\mathbb H}_0 ( U / f , f^* \Omega^{\otimes m} (mR) \otimes \omega_{U})
\end{equation}
is zero, i.e. there is no exceptional Latt\`es case.
\end{var}

\begin{proof}
We've already done the disconnected case in the proof of \thref{thm:v1} i.e. apply \thref{claim:v1} as in the end of the proof of \thref{thm:v1}. Otherwise the proof of \thref{thm:v1} goes through up to and including \eqref{v37}. Now $m$-forms which are eigenforms for $f^*$ either come from descending invariant differentials on elliptic curves modulo a finite group $G$ with $f$ the quotient of an endomorphism $F$ on the same, or ${\mathbb G}_m$. As such the possible cases, \cite{}[DH], are $G = \pm \mu_2 , \mu_3, \mu_4$ or $\mu_6$ with $F$ either an integer, $a$, resp. multiplication by $\alpha$ in some quadratic ring, so by \eqref{v37} $a^{m-2} = 1$ or $\alpha^{m-1} = \overline\alpha$, both of which are impossible for $\deg (f) > 1$. Similarly only $m=0$ is possible for ${\mathbb G}_m$.
\end{proof}

Equally while the choice of $R$ in \eqref{not:v1} is the good one for the Fatou-Shishikura inequality we've used no particular property of it except to guarantee the finiteness of either term on the right of \eqref{v36}, thus we have,

\begin{var}\thlabel{var:v3}
Let everything be as in \thref{thm:v1}, resp. \thref{var:v2}, and $D$ a reduced almost $f$-divisor on $\widetilde U$ (i.e. a pair of reduced divisors $D_1 , D_0$ on $\widetilde U$ such that $f(D_1) \subset D_0$, so, infinite is allowed provided it accumulates in $\widetilde X \backslash \widetilde U$) disjoint from $R$ then all of \eqref{v3}, resp. \eqref{v49} hold with $R+D$ rather than just $R$.
\nomenclature[D]{Reduced \hyperref[var:v3]{almost $f$ divisor}}{}
\end{var}

In order to describe the consequences of the vanishing theorem we first need to describe some redundancy occasioned by real blowing up by way of,

\begin{rmk}\thlabel{rmk:v3}
Let $\rho : \widetilde\Delta \to \Delta$ be a real blow up of a small disc about a parabolic fixed point, with $\widetilde Z \subset E = \rho^{-1} (0)$ the complementary cycle of \eqref{ct17} then if ${\mathcal O}_{\widetilde z}$ is the germ of ${\mathcal O}_{\widetilde\Delta}$ about some choice of $\widetilde z \in E$ there is a unique diagonal embedding, $\widetilde \delta_{\widetilde z}$, rendering commutative the diagram,
\begin{equation}
\label{v54}
\xymatrix{
{\mathcal O}_{\Delta,0} \ar[dd] \ar[rd]^{\rho^*} \\
&\underset{x \in \widetilde z}{\prod} {\mathcal O}_{\widetilde\Delta , x} \\
\rho_* {\mathcal O}_{\widetilde z} \ar[ru]_{\widetilde\delta_{\widetilde z}}
}
\end{equation}

As such if $\widetilde {\mathcal Q}_{\rm big}$ is the total quotient in \eqref{ct18} then we have a fiber square of rings,
\nomenclature[N]{Diagonal subring of $\widetilde {\mathcal Q}_{\rm big}$}{\hyperref[v54]{$Q$}$_{\rm diag}$} 
\begin{equation}
\label{v52}
\xymatrix{
{\mathcal Q}_{\rm diag} \ar@{^{(}->}[d] \ar[r] &(i_{\widetilde z})_* (\widetilde\delta_{\widetilde z} \, \rho_* \, {\mathcal O}_{\widetilde z})_{ \ _{ \ } } \ar@{^{(}->}[d] \\
\widetilde{\mathcal Q}_{\rm big} \ar[r] &(i_{\widetilde z})_* \left( \underset{x \in \widetilde z}{\prod} {\mathcal O}_{\widetilde\Delta , x} \right)
}
\end{equation}
Further \eqref{v54} is a retract, and \thref{cd:cor1.bis} applies, so for any $m,q \in {\mathbb Z}_{\geq 0}$ we get retracted subspaces,
\begin{equation}
\label{v55}
{\rm Ext}^q_{\widetilde X / f} (\rho^* \Omega_X^{\otimes m} , {\mathcal Q}_{\rm diag}) \hookrightarrow {\rm Ext}^q_{\widetilde X / f} (\rho^* \Omega_X^{\otimes m} , \widetilde{\mathcal Q}_{\rm big}) 
\end{equation}
which are independent of the choice of $\widetilde z$. Indeed they're the subgroup annihilated by,
\begin{equation}
\label{v56}
{\mathbb H}_0 (\widetilde U , \rho^* \Omega_X^{\otimes m} (R) \otimes {\rm Tors} \, \omega_{\widetilde X})
\end{equation}
on even $R+D$ if we add an almost $f$-divisor to the construction as per \thref{var:v3}.
\end{rmk}

Putting all of this together we have,

\begin{thm}\thlabel{thm:v2}
Let everything be as in \thref{not:v1}, with $D$ a reduced almost $f$-divisor in $\widetilde U$ disjoint from $R$, and $Q^{\rm diag}$ as in \eqref{v52}, or, more generally, ${\mathcal Q}^{\rm diag} (D)$ if we replace ${\mathcal Q}_{\rm big}$ by the quotient of ${\mathcal O}_{\widetilde X}$ by $\widetilde j_! {\mathcal O}_{\widetilde X} (-R-D)$, then the map,
\begin{equation}
\label{v55.bis}
{\rm Ext}^1_{\widetilde X / f} (\rho^* \Omega^{\otimes m} , {\mathcal O}_{\widetilde X}) \longrightarrow \overline{\rm Ext}^1_{\widetilde X / f} (\rho^* \Omega_X^{\otimes m} , {\mathcal Q}_{\rm diag}(D)) 
\end{equation}
to the maximal separated quotient of the implied ${\rm Ext}^1$ is surjective, unless $m=1$ and $f$ is a Latt\`es example, in which case it has codimension $1$.
\end{thm}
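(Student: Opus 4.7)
The plan is to apply \thref{cd:fact7} to the short exact sequence of almost $f$-sheaves
\begin{equation*}
0 \longrightarrow \widetilde j_!\,{\mathcal O}_{\widetilde U}(-\widetilde R - D) \longrightarrow {\mathcal O}_{\widetilde X} \longrightarrow \widetilde {\mathcal Q}_{\rm big}(D) \longrightarrow 0,
\end{equation*}
and then to use the diagonal retract of \thref{rmk:v3} together with the vanishing theorem \thref{thm:v1} (in the form \thref{var:v3} which allows the auxiliary reduced almost $f$-divisor $D$) to conclude.

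Concretely, \thref{cd:fact7} furnishes the four term exact sequence
\begin{equation*}
{\rm Ext}^1_{\widetilde X/f}(\rho^*\Omega^{\otimes m}, {\mathcal O}_{\widetilde X}) \xrightarrow{\;\alpha\;} \overline{\rm Ext}^1_{\widetilde X/f}(\rho^*\Omega^{\otimes m}, \widetilde{\mathcal Q}_{\rm big}(D)) \xrightarrow{\;\delta\;} \overline{\rm Ext}^2_{\widetilde X/f}(\rho^*\Omega^{\otimes m}, \widetilde j_!{\mathcal O}_{\widetilde U}(-\widetilde R -D)) \to {\rm Ext}^2_{\widetilde X/f}(\rho^*\Omega^{\otimes m}, {\mathcal O}_{\widetilde X}) \to 0.
\end{equation*}
By \thref{rmk:v3} the inclusion $\iota : \overline{\rm Ext}^1({\mathcal Q}_{\rm diag}(D)) \hookrightarrow \overline{\rm Ext}^1(\widetilde{\mathcal Q}_{\rm big}(D))$ is split by a retraction $\pi$, and \eqref{v55.bis} is nothing other than $\pi\circ\alpha$. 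Consequently \eqref{v55.bis} is surjective precisely when $\iota(\overline{\rm Ext}^1({\mathcal Q}_{\rm diag}(D))) \subseteq \ker \delta$, i.e.\ when $\delta|_{{\mathcal Q}_{\rm diag}(D)}=0$; the codimension-one Lattès variant corresponds to $\delta|_{{\mathcal Q}_{\rm diag}(D)}$ having one-dimensional image.

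To check that vanishing I would dualise via \thref{cd:fact6}: the target of $\delta$ identifies with the dual of ${\mathbb H}_0(\widetilde U/f, f^*\Omega^{\otimes m}(R+D)\otimes \omega_{\widetilde U})$, which by the torsion/meromorphic sequence \eqref{v20} deduced from \thref{thm:I1} sits in
\begin{equation*}
0 \to {\mathbb H}_0(\widetilde U/f, \,\cdot\,\otimes {\rm Tors}\,\omega_{\widetilde U}) \to {\mathbb H}_0(\widetilde U/f,\,\cdot\,\otimes\omega_{\widetilde U}) \to {\mathbb H}_0(\widetilde U/f,\,\cdot\,\otimes \varinjlim_n\widetilde\omega(nE)) \hookrightarrow {\mathbb H}_0(U/f,\,\cdot\,\otimes\omega_U).
\end{equation*}
The ``annihilated by torsion'' characterisation of \thref{rmk:v3} forces the dual of $\delta|_{{\mathcal Q}_{\rm diag}(D)}$ to factor through the meromorphic quotient, hence through a subspace of ${\mathbb H}_0(U/f,\,\cdot\,\otimes\omega_U)$. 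Thus everything reduces to \thref{thm:v1}/\thref{var:v3}, which states that the image of ${\mathbb H}_0(\widetilde U/f,\cdot)$ in ${\mathbb H}_0(U/f,\cdot)$ vanishes outside the Lattès exception and is one-dimensional for $m=1$ in the Lattès case, whence the conclusion.

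The principal obstacle is not the diagram chase but the verification that \thref{cd:fact7} actually applies. This requires checking that the $R^\bullet \Theta$ complexes attached to each of the three sheaves are separated LF-spaces with closed differentials, drawing on the Dolbeault type resolution \eqref{cd17} on the real blow up and the closedness lemma \thref{cd:lem1}, \thref{cd:lem2}, while finite-dimensionality of the relevant Ext groups follows from \thref{cor:ct1}, \thref{lem:ct2} and \thref{sum:ct1}. One must also confirm that \thref{var:v3} genuinely extends to the current $D$, which hinges on the disjointness of $D$ from $R$ and the accumulation of its tails in $\widetilde X\setminus\widetilde U$, so that the dynamical residue bound \eqref{v36} still closes the argument at each non-repelling cycle; the torsion contribution at parabolic cycles controlled by \thref{thm:I1} is where the real blow up does its essential work.
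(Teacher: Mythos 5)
Your proposal is correct and follows essentially the same route as the paper: apply \thref{cd:fact7} to the short exact sequence ending in $\widetilde{\mathcal Q}_{\rm big}(D)$, dualise the connecting map via \thref{cd:fact6}/\eqref{v20}, kill the obstruction with \thref{thm:v1} and \thref{var:v3}, and use the torsion-annihilation characterisation of \thref{rmk:v3} to identify $\overline{\rm Ext}^1({\mathcal Q}_{\rm diag}(D))$ with the kernel of $\delta$. The only cosmetic difference is that the paper writes the exact sequence directly with ${\mathcal Q}_{\rm diag}(D)$ in the middle term and observes that the image of $\alpha$ is exactly the torsion-annihilated subspace, whereas you keep $\widetilde{\mathcal Q}_{\rm big}(D)$ throughout and retract at the end; also note that your ``surjective precisely when $\iota(\overline{\rm Ext}^1)\subseteq\ker\delta$'' is strictly only the sufficient direction, though in this situation it does turn out to be an equality.
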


\begin{proof}
By \thref{cd:fact7}, and \eqref{v19} et seq. we have an exact sequence,
\begin{eqnarray}
\label{v56.bis}
{\rm Ext}^1_{\widetilde X / f} (\rho^* \Omega_X^{\otimes m} , {\mathcal O}_{\widetilde X}) &\to &\overline{\rm Ext}^1 (\rho^* \Omega_X^{\otimes m} , {\mathcal Q}_{\rm diag}(D)) \nonumber \\ 
&\to &{\mathbb H}_0 (\widetilde U / f , \rho^* \Omega^{\otimes m} (+R+D) \otimes \omega_{\widetilde X})^{\vee} \, .
\end{eqnarray}
Equally by \eqref{v20} we have an exact sequence,
\begin{eqnarray}
\label{v57}
0 &\to &{\mathbb H}_0 (\widetilde U / f , \rho^* \Omega^{\otimes m} (R+D) \otimes {\rm Tors} \,  \omega_{\widetilde X}) \nonumber \\
&\to &{\mathbb H}_0 (\widetilde U / f , \rho^* \Omega^{\otimes m} (R+D) \otimes {\mathcal O}_{\widetilde X}) \to {\rm Obs} \to 0
\end{eqnarray}
and, irrespectively of $D$, we know by \thref{thm:v1} and \thref{var:v3} that the obstruction group is $0$. Consequently, the image of the leftmost term in \eqref{v56} is the subgroup of the middle group annihilated by the torsion, which is exactly the right hand side of \eqref{v55} by \thref{rmk:v3}.
\end{proof}

In general it is false that the smaller subgroup,
\begin{equation}
\label{v58}
{\rm Ext}_{X/f}^1 (\Omega^{\otimes m} , {\mathcal O}_X) \hookrightarrow {\rm Ext}_{\widetilde X/f}^1 (\rho^* \Omega^{\otimes m} , {\mathcal O}_{\widetilde X})
\end{equation}
is surjective onto the right hand side of \eqref{v55}, e.g. quadratic polynomials with a parabolic fixed point, and $m=1$, have dimension 2 on the left of \eqref{v58}, but, \thref{cor:ct2} and \thref{cor:ct4}, dimension 3 on the right of \eqref{v55}. There, are, however circumstances where this may be achieved, i.e.

\begin{altsetup}\thlabel{altsetup:v1}
Following \cite[\S1]{adam}, let $\sigma : \overline X \to X$ be the real blow up of ${\mathbb P}^1_{\mathbb C}$ in the set of parabolic cycles of  a rational map $f$ which are parabolic repelling, i.e. case $0-$ of \thref{defn:ct1}.

Now introduce a variant of the decomposition of the ramification in \eqref{ct3}--\eqref{ct4}, to wit:
\nomenclature[N]{Ramification, wild,}{without real blowing up at non repelling parabolics \hyperref[v59]{${\rm Ram}$}$_{\overline w}$} 
\nomenclature[N]{Ramification, tame,}{without real blowing up at non-repelling parabolics \hyperref[v59]{${\rm Ram}$}$_{\overline t}$} 
\begin{equation}
\label{v59}
{\rm Ram}_{\overline w} = {\rm Ram}_w + {\rm Ram}_t^{0,+} \, , \ {\rm Ram}_{\overline t} = {\rm Ram}_t  - {\rm Ram}_t^{0,+} \, .
\end{equation}
Thus following \eqref{ct9}--\eqref{ct11} we have for any $n > 0$, divisors,
\nomenclature[N]{Ramification, finite approximation to}{${\rm Ram}_{\overline w}$ \hyperref[v60]{${\rm Ram}$}$_{\overline w}^{n}$} 
\nomenclature[N]{Ramification, finite approximation to}{${\rm Ram}_{\overline t}$ \hyperref[v59]{${\rm Ram}$}$_{\overline t}^{n}$} 
\begin{equation}
\label{v60}
{\rm Ram}_{\overline w}^n \, , \ {\rm Ram}_{\overline t}^n \, , \ \overline R_{\overline t} = \varinjlim_n {\rm Ram}_{\overline t}^n
\end{equation}
on $\overline X$, i.e. we truncate (at ``$n$''), not just the wild ramification, but any tame ramification accumulating at non-repelling parabolic cycles. Similarly rather than \eqref{ct18}, we have an exact sequence,
\begin{equation}
\label{v61}
0 \to \widetilde j_! {\mathcal O}_{\widetilde U} (-R_b - R_{\overline t} - {\rm Ram}_{\overline w}^n -C_{cr}) \to {\mathcal O}_{\widetilde X} \to {\mathcal O}_{C_{cr}} {\textstyle \prod} {\mathcal O}_{{\rm Ram}_{\overline w}^n} {\textstyle \prod} {\mathcal O}_{R_b} \underset{\bullet}{\textstyle \prod} {\mathcal Q}_{\bullet} \to 0
\end{equation}
wherein, following the short hand of \eqref{v1} and \eqref{v52}, we denote the kernel, resp. co-kernel, in \eqref{v61} by,
\nomenclature[N]{Functions on all cycles,}{tame ramification, and finite approximation to wild, no real blow up at non-repelling parabolics \hyperref[v62]{${\mathcal Q}$}$_{\overline {\rm big}}$} 
\nomenclature[N]{Weighted divisor arising from tame ramification,}{finite approximation to wild, and Cremer points, no real blow up at non-repelling parabolics \hyperref[v62]{$\overline R$}} 
\begin{equation}
\label{v62}
\widetilde j_! \, {\mathcal O}_{\widetilde U} (-\overline R) \, , \qquad \mbox{resp.} \ {\mathcal Q}_{\overline{\rm big}} \, .
\end{equation}
Finally, therefore, following \thref{rmk:v3}, and \thref{thm:v2} we have sheaves, 
\nomenclature[N]{Diagonal subring of ${\mathcal Q}_{\overline{\rm big}}$}{\hyperref[v63]{$Q$}$_{\overline {\rm diag}}$}
\begin{equation}
\label{v63}
{\mathcal Q}_{\overline{\rm diag}} \, , \ {\mathcal Q}_{\overline{\rm diag}} (\overline D)
\end{equation}
for $\overline D$ the restriction to $\widetilde X$ of a reduced almost $f$-divisor on $\widetilde U$ without accumulation points on non-repelling parabolics. In particular, we do not change the inclusion $\widetilde j$ of \eqref{ct17} and \thref{not:v1}, but only do what is necessary, to change the divisor $\widetilde R$ of \eqref{v1} on $\widetilde U$ to a divisor which extends to $\overline X$ about the non-repelling parabolic cycles, i.e. $\overline R$ of \eqref{v62}.
\end{altsetup}

Having set up the notation, we arrive to, 

\begin{var}\thlabel{var:v4}
Let everything be as in \thref{altsetup:v1} then for any $m > 0$ the map,
\begin{equation}
\label{v64}
{\rm Ext}^1_{\overline X / f} (\sigma^* \Omega^{\otimes m} , {\mathcal O}_{\overline X}) \longrightarrow \overline{\rm Ext}^1_{\widetilde X / f} (\rho^* \Omega^{\otimes m}_X , {\mathcal Q}_{\overline{\rm diag}} (\overline D))
\end{equation}
to the maximal separated quotient of the implied ${\rm Ext}^1$ is surjective, unless $m=1$ and $f$ is a Latt\`es example, in which case it has codimension 1.
\end{var}

\begin{proof}
By \thref{thm:v2} it will suffice to prove that the left hand sides of \eqref{v55} and \eqref{v64} have the same image in the right hand side of \eqref{v64} since the latter is a quotient of the right hand side of \eqref{v55}. As such the relevant diagram is \eqref{v4} albeit with $\overline X$, $\overline U$, etc. in the rightmost column. In particular for each non repelling parabolic cycle we have by \thref{lem:ct1} a unique class in the 2$^{\rm nd}$ row of the rightmost column, which by op. cit. and \thref{lem:v1} can be lifted to a class in the penultimate row of the leftmost column, whose value in the bottom leftmost entry, after passing to maximal separated quotients by \thref{cd:fact7}, is the a priori obstruction to deducing \thref{var:v4}.

On the other hand by \thref{cor:ct1} for any $N \gg 0$ (in fact $m(re+2)$ in the notation of op. cit.) if $\overline Z$ is the set of attracting directions at the non-repelling parabolics, $\overline\rho : \widetilde X \to \overline X$, we have open inclusions,
\begin{equation}
\label{v65}
\begin{matrix}
\overline j &: &\overline V := \overline U \cup \overline \rho (\overline Z) \longhookrightarrow \overline X \\
\widetilde{\bar j} &: &\widetilde V = \widetilde U \cup \overline Z \longhookrightarrow \widetilde X \hfill
\end{matrix}
\end{equation}
affording, an inclusion of the kernel in \eqref{v61}, resp. its direct image under $\overline \rho$ into,
\begin{equation}
\label{v66}
\widetilde{\bar j}_! \, {\mathcal O}_{\widetilde V} (-\overline R) \otimes \prod_{x \in \overline Z} {\mathfrak m} (x)^N \, , \quad \mbox{resp.} \ \overline j_! \, {\mathcal O}_{\overline V} (-\overline R - N \, \overline\rho \, (\overline Z))
\end{equation}
with resulting quotients $\widetilde {\mathcal Q}_N$, resp. $\overline {\mathcal Q}_N$ such that in,
\begin{equation}
\label{v67}
\xymatrix{
\overline{\rm Ext}^1_{\overline X / f} (\sigma^* \Omega^{\otimes m} , \overline {\mathcal Q}_{\rm big}) \ar[d]^{\wr} \ar[r] &\overline{\rm Ext}^1_{\widetilde X / f} (\rho^* \Omega^{\otimes m} , \widetilde {\mathcal Q}_{\rm big}) \ar[d]^{\wr} \\
\overline{\rm Ext}^1_{\overline X / f} (\sigma^* \Omega^{\otimes m} , \overline {\mathcal Q}_{N}) \ar[r] &\overline{\rm Ext}^1_{\widetilde X / f} (\rho^* \Omega^{\otimes m} , \widetilde {\mathcal Q}_N)
}
\end{equation}
the vertical arrows are isomorphisms. As such if we form the diagram of \eqref{v4} with the kernel of \eqref{v61} replaced by the first term in \eqref{v66}, and pass to maximal separated quotients, then we can reduce the obstruction group for our assertion to,
\begin{equation}
\label{v68}
\overline{\rm Ext}^2_{\overline X / f} (\sigma^* \Omega^{\otimes m} , \overline j_! \, {\mathcal O}_{\overline V} (- \overline R - N \, \overline\rho \, (\overline Z))
\end{equation}
whose dual modulo torsion is,
\begin{equation}
\label{v69}
{\mathbb H}_0 (\overline V , \sigma^* \Omega^{\otimes m} (\overline R + N \, \overline\rho \, (\overline Z)) \otimes \omega_{\overline X}) \, .
\end{equation}
As such by \thref{lem:ct1} any differential in \eqref{v69} is, after completion, around $\rho (\overline Z)$ a power of \eqref{Res9}. Consequently if we repeat the steps of the proof of \thref{thm:v1} then in the key estimate of \eqref{v36}, the dynamical residue around $\overline\rho ( \overline Z )$ is given by \eqref{Res13} which is non-positive by construction. Thus \eqref{v69} is zero. Equally the classes in \eqref{v68} may more easily, and equivalently, be obtained by the simpler expedient of lifting along the top rightmost vertical in \eqref{v4} and pushing down (to the missing group) along the (missing) top rightmost horizontal, then mapping to \eqref{v68}. The advantage of this latter formulation is that we know from \eqref{v24}--\eqref{v25} that such classes are obtained by multiplication by a bump function with support close to $\rho (\overline Z)$ and extending by zero. Thus, for trivial support reasons, their product whether with the torsion classes, which under \thref{altsetup:v1} are concentrated at parabolic repelling cycles, is zero.
\end{proof}

Irrespectively there is even a case where we can guaratee elements in the kernel whether of \eqref{v55} or \eqref{v64}, to wit:

\begin{bonus}\thlabel{bonus:v1}
Let everything be as in \thref{thm:v1}, with $A_i$, $0 \leq i \leq N$, and $C_i \hookrightarrow A_i$, resp. $I_i : C_i \to X$ the closed annuli or circles that were extracted in item~(b) of \thref{setup:ct1}, then there is an exact sequence,
\begin{eqnarray}
\label{v70}
0 \to \prod_i {\rm Ext}^0_{X/f} (\Omega^{\otimes m} , (I_i)_* I_i^* {\mathcal O}_X) &\to &\coprod_i {\rm Ext}_1^{A_i \backslash C_i/f} (\Omega^{\otimes m} , {\mathcal O}_{A_i}) \nonumber \\
&\to &{\rm Ext}^1_{X/f} (\Omega^{\otimes m} , {\mathcal O}_X)
\end{eqnarray}
whose image in the rightmost group of \eqref{v70} is a subgroup of the kernel whether of \eqref{v55} or \eqref{v64}.
\end{bonus}

\begin{proof}
Let $z : A \hookrightarrow {\mathbb C}$ be an embedding of an annulus onto a standard one, and define,
\begin{equation}
\label{v71}
{\mathbb R} \supseteq \vert A \vert := \vert z\vert (A) \supseteq \vert C_i \vert := \vert z\vert (C_i) \, .
\end{equation}
Then by \thref{cor:ct1} we have a Dolbeault type isomorphism,
\begin{equation}
\label{v72}
{\rm H}_c^1 (\vert A \vert , {\mathbb C}) \xrightarrow{ \ * \ } {\rm Ext}_1^{A/f} (\Omega^{\otimes m} , A) : \beta \mapsto \beta^* := (\vert z \vert^* \beta)^{0,1} \otimes \left( z \frac\partial{\partial z} \right)^{\!\!\otimes m}
\end{equation}
while integration affords a canonical isomorphism,
\begin{equation}
\label{v73}
\int : {\rm H}_c^1 (\vert A \vert , {\mathbb C}) \longrightarrow {\mathbb C} \, .
\end{equation}
Thus if $\beta_{i0}^* , \beta_{i\infty}^*$ are classes on the components $A_{i0}, A_{i\infty}$ associated to $\beta_{i0} , \beta_{i\infty}$ in either component of $\vert A_i \vert \backslash \vert C_i \vert$ then the kernel in \eqref{v70} defines the relation,
\begin{equation}
\label{v74}
\beta^*_{i0} = \beta^*_{i\infty} \quad \mbox{iff} \quad \int \beta_{i0} = \int \beta_{i\infty} \, , \quad 0 \leq i \leq N \, .
\end{equation}
As such if $b_i$ is a radial bump function in a small neighbourhood of $C_{i0}$ then by \thref{cd:fact3} the image in the final group of \eqref{v70} is generated by the classes,
\begin{equation}
\label{v75}
B_i := \overline\partial (b_i) \left( z_i \frac{\partial}{\partial z_i} \right)^{\!\otimes m} \Bigl\vert_{A_{i0}} \in {\rm Ext}_1^{A_{i0}/f} (\Omega^{\otimes m} , {\mathcal O}_{A_i}) \, .
\end{equation}
Now suppose, employing the summation convention, that some combination of these is zero in the rightmost group of \eqref{v70} then,
\begin{equation}
\label{v76}
\alpha_i \, B_i = \overline\partial (t) \, , \ d_1(t)=0 \, , \ t \in \Gamma (X , {\mathcal A}_X^0 \otimes T_X^{\otimes m}) \, , \ \alpha_i \in {\mathbb C}
\end{equation}
for $d_1$ the differential of \eqref{cd10}. In particular on a given annulus $A_i$ we have the following invariant holomorphic tensors,
\begin{equation}
\label{v77}
t \ \mbox{on} \ A_{i,0} \backslash {\rm supp} (b_i) \, , \ t-\alpha_i b_i \left( z_i \, \frac\partial{\partial z_i} \right)^{\!\!\otimes m} \mbox{on} \ {\rm supp} (b_i) \cap A_{i0} \, , \ t \ \mbox{on} \ A_i \backslash \overline{A_{i0}} \, .
\end{equation}
Plainly, however, to an invariant class, $C$, say, in $T^{\otimes m}$, there is associated a unique invariant $C^{\vee}$ in $\Omega^{\otimes m}$ satisfying,
\begin{equation}
\label{v74.bis}
C^{\vee} (C)=1 \, .
\end{equation}
So by \thref{claim:v1} the initial and final invariant tensors in \eqref{v77} are zero. Equally, however, even the middle one must satisfy,
\begin{equation}
\label{v75.bis}
t-\alpha_i \, b_i \left( z_i \, \frac\partial{\partial z_i} \right)^{\!\!\otimes m} = \alpha'_i \left( z_i \, \frac\partial{\partial z_i} \right)^{\!\!m} \ \mbox{on} \ {\rm supp} (b_i) \cap A_{i0} \, , \ \alpha'_i \in {\mathbb C}
\end{equation}
while in the initial, resp. final, extreme of $A_{i0}$, $b_i \to 0$, resp. $1$, so:
\begin{equation}
\label{v76.bis}
\alpha'_i = 0 \quad \mbox{and} \quad \alpha_i + \alpha'_i = 0 \, .
\end{equation}
Consequently \eqref{v70} is indeed exact. Finally, it's plain, for trivial support reasons, that the image of \eqref{v70} is zero in any part of the right hand side of \eqref{v55} or \eqref{v64} that isn't supported on the annuli themselves, while on the annulus, $A_i$, the long exact sequence of invariant Ext with compact support associated to,
\begin{equation}
\label{v77.bis}
0 \longrightarrow j_! \, {\mathcal O}_{A_i \backslash C_i} \longrightarrow {\mathcal O}_{A_i} \longrightarrow (I_i)_* \,  I_i^* {\mathcal O}_{C_i} \longrightarrow 0
\end{equation}
shows that here the image is zero too. 
\end{proof}

An initial corollary of which is,

\begin{cor}\thlabel{cor:v1}
Let everything be as in \thref{not:v1}, and at a parabolic non-repelling cycle, i.e. $\bullet = 0-$ of \thref{defn:ct1}, where the order of tangency of the first return map to a rational rotation of order $r_{\bullet}$ is $r_{\bullet} e_{\bullet}$, define,
\nomenclature[N]{Extra ramification at non-repelling}{parabolics \hyperref[v78]{$\delta$}} 
\begin{equation}
\label{v78}
\delta_{\bullet} = \left\{\begin{matrix}
1 \, , &\mbox{if exactly $e_{\bullet} > 0$ forward orbits of ${\rm Ram}_t$ limit on $\bullet$,} \hfill \\
0 \, , &\mbox{otherwise} \hfill
\end{matrix} \right.
\end{equation}
with $\delta$ the sum of \eqref{v78} over all such cycles, then, for $\varepsilon_{\bullet}$ as in \eqref{ct71},
\begin{equation}
\label{v79}
2 \, \# \, ({\rm HR}) + \# \, ({\rm SD}) + \# \, ({\rm CR}) + \delta \leq \sum_{\bullet \, \in \, {\rm HR} \, \cup \, {\rm SD}} \varepsilon_{\bullet} + \# \, (\mbox{wild tails}) \, .
\end{equation}
\end{cor}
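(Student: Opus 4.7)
The plan is to run \thref{thm:v2} at $m=1$ with empty test divisor and compare dimensions on both sides, augmented by the extra kernel of \thref{bonus:v1}. Since the Latt\`es case only improves the count, we assume $f$ is not Latt\`es, so the map
\[
{\rm Ext}^1_{\widetilde X/f}(\rho^* \Omega_X, {\mathcal O}_{\widetilde X}) \twoheadrightarrow \overline{\rm Ext}^1_{\widetilde X/f}(\rho^* \Omega_X, {\mathcal Q}_{\rm diag})
\]
is surjective, and by \thref{bonus:v1} its kernel has at least a $\#({\rm HR})$-dimensional piece coming from boundary bump-function classes on Herman rings.

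First, the dimension of the domain is computed by the degenerate Leray sequence of \thref{cor:ct1}: combining $\dim {\rm Ext}^1_{X/f}(\Omega_X, {\mathcal O}_X) = 2d-2$ from \thref{claim:ct1} with the one-dimensional parabolic contribution from \thref{lem:ct1} per cycle, one obtains $\dim {\rm Ext}^1_{\widetilde X/f}(\rho^* \Omega_X, {\mathcal O}_{\widetilde X}) = (2d-2) + \#({\rm par})$. For the codomain, the estimates of \thref{cor:ct2}, \thref{cor:ct3}, \thref{cor:ct4}, \thref{cor:ct6}, \thref{cor:ct7}, \thref{cor:ct8} transport unchanged from $\widetilde{\mathcal Q}_{\rm big}$ to ${\mathcal Q}_{\rm diag}$, but the parabolic contribution of \thref{cor:ct5} must be recomputed: by \thref{rmk:v3}, ${\mathcal Q}_{\rm diag}^\bullet$ sees a single formal germ per point of $Z_\bullet$ rather than the $e_\bullet$ germs per point of $\widetilde Z_\bullet$, so \thref{lem:ct1} applied to the exact sequence \eqref{ct17.bis} replaces $e_\bullet^2$ by $e_\bullet$. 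This yields the bound
\[
\dim \overline{\rm Ext}^1_{\widetilde X/f}(\rho^* \Omega, {\mathcal Q}_{\rm diag}) \geq \deg({\rm Ram}_f) + \sum_{\bullet \in {\rm par}} e_\bullet + \sum_{\bullet \in {\rm HR \cup SD}}(1-\varepsilon_\bullet) + \#({\rm CR}) - \#(\text{wild tails}).
\]

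Assembling the surjection with its kernel and invoking Riemann--Hurwitz $\deg({\rm Ram}_f) = 2d-2$ then cancels the $2d-2$ from both sides and, after using the identities $\sum_{\rm HR \cup SD}(1-\varepsilon_\bullet) = \#({\rm HR}) + \#({\rm SD}) - \sum \varepsilon_\bullet$ and $\sum_{\rm par}(e_\bullet - 1) = \sum e_\bullet - \#({\rm par})$, produces almost exactly \eqref{v79}, off by the term $\delta$.

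The main obstacle is recovering this last $\delta$. The idea is that at a non-repelling parabolic cycle ($\operatorname{Re}(\nu_\bullet) \leq 0$) on which exactly $e_\bullet$ tame orbits of ${\rm Ram}_t$ accumulate, one more dimension is forced into $\overline{\rm Ext}^1(\rho^* \Omega, {\mathcal Q}_{\rm diag}^\bullet)$ beyond the estimate $\deg({\rm Ram}_\bullet) + e_\bullet$: the connecting map in the long exact sequence associated to \eqref{ct17.bis} cannot kill the invariant formal vector field of \eqref{ct26} when the orbit structure of the ramification matches the petal count exactly one-to-one, because the single formal invariant derivation does not simultaneously vanish on all $e_\bullet$ infinite tame tails with the multiplicities imposed by the diagonal constraint. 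This $\delta_\bullet$-correction can equivalently be extracted by running \thref{var:v4} in parallel and invoking the observation (flagged in the introduction just before \eqref{I28}) that, at a non-repelling parabolic, the right-hand side of \thref{thm:v2} exceeds that of \thref{var:v4} by at least $1$ while the left-hand side exceeds it by exactly $1$ --- the surplus being precisely $\delta_\bullet$. Either route closes the final gap and yields \eqref{v79}.
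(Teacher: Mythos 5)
You correctly identify the main skeleton: apply \thref{thm:v2}, compute the domain via \thref{cor:ct1}, estimate the codomain via the lemmas of \S\ref{ss:count}, and add the Herman ring kernel of \thref{bonus:v1}. But your parabolic dimension count is wrong, and consequently your route to the $\delta$-correction does not close.

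The paper uses $\widetilde{\mathcal Q}_{\rm diag}$, not a cycle-local restriction of the diagonal to $Z_\bullet$, and the codimension of $\widetilde{\mathcal Q}_{\rm diag}$ inside $\widetilde{\mathcal Q}_{\rm big}$ at a parabolic cycle is $e_\bullet^2 - 1$, so the parabolic contribution to the maximal separated quotient of ${\rm Ext}^1$ drops from $e_\bullet^2$ of \thref{cor:ct5} to $1$, not to $e_\bullet$. Your claim that ``\thref{lem:ct1} applied to \eqref{ct17.bis} replaces $e_\bullet^2$ by $e_\bullet$'' is not what \thref{rmk:v3} gives: the diagonal sees one formal germ per point of $Z_\bullet$, and the relevant calculation in \thref{lem:ct1} (done on $X$, not on the petals) yields a single contribution of dimension $1$ for the invariant vector field, with the $e_\bullet + 1$ in the cokernel of op.~cit.\ matching up against the formal parabolic structure --- but the piece killed by the diagonal constraint is exactly $e_\bullet^2 - 1$, i.e.\ \eqref{v80}. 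Redoing the arithmetic with the correct codimension $\sum_\bullet (e_\bullet^2 - 1)$ subtracted from the \eqref{ct72} estimate gives precisely $\deg({\rm Ram}_f) + \#({\rm par}) + \#({\rm SD}) + \#({\rm CR}) + \#({\rm HR}) - \#(\text{wild tails})$ as in \eqref{v82}, and the $\#({\rm par})$ then cancels cleanly against the $\#({\rm par})$ in the domain count, with no $\sum e_\bullet$ surviving.

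With that fixed, your $\delta$-argument still has a gap: the ``one extra dimension forced into $\overline{\rm Ext}^1$ when exactly $e_\bullet$ tame tails accumulate'' is asserted, not proved, and it is not where the paper gets $\delta$ from. In the paper's proof, $\delta$ does not arise from a \emph{larger} codomain but from a \emph{smaller} domain: when $\delta_\bullet = 1$ one is entitled by \thref{var:v4} to replace the surjection source by ${\rm Ext}^1_{\overline X/f}(\sigma^*\Omega, {\mathcal O}_{\overline X})$, whose dimension is $(2d-2) + \#({\rm par}) - \delta$ by \eqref{v83}, because the real blow-up is only performed at the parabolic repelling cycles. The applicability of \thref{var:v4} in this situation rests on the cited input \cite[Theorem~A]{adamExtra} (that exactly $e_\bullet$ tame tails at a parabolic forces it to be repelling, making it algebraic in a neighbourhood), which your argument does not invoke. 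Your alternative sketch via ``comparing \thref{thm:v2} and \thref{var:v4}'' gestures at the right mechanism but has the direction reversed --- it is the domain that shrinks, not the codomain that grows --- and you would still need the algebraicity hypothesis satisfied, which is exactly what $\delta_\bullet = 1$ guarantees via the cited theorem.
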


\begin{proof}
The statement is trivial for Latt\`es examples, so we may suppose \thref{thm:v2} or \thref{var:v4}, with $D=0$, according as $\delta = 0$, resp. $1$. Now $\widetilde{\mathcal Q}_{\rm diag} \hookrightarrow \widetilde {\mathcal Q}_{\rm big}$, has, in the notation of \thref{sum:ct1} codimension,
\begin{equation}
\label{v80}
\sum_{\bullet \, \in \, {\rm par}} (e_{\bullet}^2 - 1) \, .
\end{equation}
As such, by \eqref{ct72}, the right hand side whether in \eqref{v55} or \eqref{v64} is,
\begin{equation}
\label{v82}
\deg ({\rm Ram}_f) + \# \, ({\rm par}) + \# \, ({\rm SD}) + \# \, ({\rm CR})  + \# \, ({\rm HR}) - \# \, \{\mbox{wild tails}\} \, .
\end{equation}
On the other hand by \thref{claim:ct1} and \thref{cor:ct1} the dimension of the left hand side of \eqref{v55}, resp. \eqref{v64} is,
\begin{equation}
\label{v83}
\deg ({\rm Ram}_f) + \# \, ({\rm par}) \quad \mbox{(resp. $-\delta$)}
\end{equation}
so taking into account \thref{bonus:v1} we get \eqref{v79}.
\end{proof}

Now following \cite[Theorem 2]{mitsu} this can be improved to,

\begin{cor}\thlabel{cor:v2}
Let $f$ be a rational map of ${\mathbb P}^1$ with $\delta$ as in \eqref{v78} et seq. then,
\begin{equation}
\label{v84}
2 \, \# \, ({\rm HR}) + \# \, {\rm SD} + \# \, {\rm CR}  + \delta \leq \# \,  \{\mbox{wild tails}\} + \sum_{\bullet \, \in \, {\rm HR}} \varepsilon_{\bullet} \, .
\end{equation}
\end{cor}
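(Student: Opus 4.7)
The plan is to deduce \thref{cor:v2} from \thref{cor:v1} applied to the perturbed map of \thref{thm:I4}. Specifically, let $f_\varepsilon \to f$ be a one-parameter deformation through rational maps of the same degree such that every non-repelling periodic cycle of $f$ is a limit of an attracting cycle of $f_\varepsilon$ of the same period. For $\varepsilon$ sufficiently small, $f_\varepsilon$ therefore has no Siegel discs, no Cremer cycles, and no parabolic cycles (these have all bifurcated into attracting cycles). Since the existence of a Herman ring is an open condition on the parameter (the rotation number is irrational and stays so under small perturbation, and the invariant annulus persists), we have $\#({\rm HR})_{f_\varepsilon} = \#({\rm HR})_f$, and the classification of tame ramification limiting on each Herman ring is unchanged, so $\varepsilon^{f_\varepsilon}_\bullet = \varepsilon^f_\bullet$ for every $\bullet \in {\rm HR}$.

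Next I would apply \thref{cor:v1} to $f_\varepsilon$. Since this map has no Siegel discs, Cremer cycles, or parabolic cycles, its left hand side collapses to $2\#({\rm HR})_f$ and the sum $\sum_{\bullet \in {\rm HR} \cup {\rm SD}} \varepsilon^{f_\varepsilon}_\bullet$ collapses to $\sum_{\bullet \in {\rm HR}} \varepsilon^f_\bullet$, giving
\[
2\#({\rm HR})_f \;\leq\; \#(\text{wild tails})_{f_\varepsilon} \;+\; \sum_{\bullet \in {\rm HR}} \varepsilon^f_\bullet.
\]
The problem is then to bound $\#(\text{wild tails})_{f_\varepsilon}$ in terms of $\#(\text{wild tails})_f$. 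Following Shishikura's accounting, the target inequality is
\[
\#(\text{wild tails})_{f_\varepsilon} \;\leq\; \#(\text{wild tails})_f \;-\; \#({\rm SD}) \;-\; \#({\rm CR}) \;-\; \delta,
\]
which, substituted into the previous display, yields \thref{cor:v2} after rearrangement.

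The essential content is this critical-orbit counting. By \thref{C2grp}, each new attracting cycle of $f_\varepsilon$ coming from a non-repelling cycle of $f$ must attract at least one critical orbit, and critical orbits of $f_\varepsilon$ deform continuously from those of $f$. For a Cremer cycle, no tame tail of $f$ accumulates there by definition, so the required critical orbit of $f_\varepsilon$ must be the deformation of a wild tail of $f$ — giving one wild-to-tame conversion per Cremer cycle. The same reasoning applies verbatim to Siegel discs with $\varepsilon_\bullet = 0$ and to parabolic cycles with $\delta_\bullet = 0$ that nonetheless bifurcate into attracting cycles. For the parabolic $\delta_\bullet = 1$ cycles, the perturbation of an order-$e$ parabolic typically splits the parabolic cycle into $e$ attracting and one repelling cycle, and the exactly $e$ tame tails of $f$ become tame tails of the $e$ new attracting cycles of $f_\varepsilon$; the $\delta_\bullet=1$ accounting is then tight, consistent with the target bound.

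The principal obstacle is the case of a Siegel disc with $\varepsilon_\bullet = 1$: a superficial count only shows that the tame tail of $f$ accumulating on invariant circles persists as a tame tail of $f_\varepsilon$ converging to the new attracting cycle, giving no wild-to-tame conversion. Shishikura's refinement — which tracks how critical orbits are reorganised in a neighbourhood of the invariant circle under the perturbation — is required to exhibit the extra wild tail of $f$ that is absorbed into the new attracting basin, producing the needed reduction of one wild tail per Siegel disc regardless of $\varepsilon_\bullet$. Once this combinatorial lemma is in place, the rest of the argument is the purely formal substitution outlined above.
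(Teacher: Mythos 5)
Your broad strategy---perturb $f$ and then apply \thref{cor:v1} to the deformed map---is the same as the paper's, but the deformation you choose (that of \thref{thm:I4}, which converts \emph{every} non-repelling cycle into a limit of an attracting cycle) is much coarser than the one the paper actually uses, and the extra bookkeeping this forces on you is exactly where your argument has a genuine gap.

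The paper's deformation $f_s$ is constructed from \thref{thm:v2} (the surjectivity of \eqref{v55.bis}), which gives a deformation that can be prescribed to change \emph{only} the multipliers at Siegel discs while fixing a closed annulus in each Herman ring, the multipliers at every other type of cycle in \thref{not:ct1}, and the cardinality of tame/bounded ramification at all the untouched cycles. Consequently $\#\,{\rm CR}$, $\delta$, $\#({\rm HR})$ and $\varepsilon_{\rm HR}$ are \emph{literally identical} for $f_s$ and $f$; only $\#\,{\rm SD}$ and $\sum_{\bullet\in{\rm SD}}\varepsilon_\bullet$ drop to zero. The only inequality one then has to establish is
\begin{equation*}
\#\{\text{new wild tails}\} \;\leq\; \#\{\text{old wild tails}\} - \#\,{\rm SD},
\end{equation*}
and this follows from the observation that the ramification $R_{\rm SD}$ that limited on the Siegel disc lay in pointed regions mapping to the disc pointed at the origin; that remains true after the deformation, so $R_{\rm SD}$ cannot supply the critical point in the immediate basin of the new attractor which \thref{C2grp}/\thref{F3grp} guarantees. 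A previously wild tail must supply it.

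The gap in your proof is therefore concentrated in exactly the places where your coarser deformation forces you to track more data. First, you explicitly defer the $\varepsilon_\bullet=1$ Siegel case to an unquoted ``Shishikura refinement''; but that case is the entire content of the paper's argument (the pointed-region observation above), so deferring it is deferring the proof. Second, your parabolic $\delta_\bullet=1$ accounting is inconsistent with your own target bound: you say the $e$ tame tails of $f$ become tame tails of the $e$ new attracting cycles of $f_\varepsilon$, so there is \emph{no} wild-to-tame conversion at such a cycle, yet your displayed target inequality $\#(\text{wild tails})_{f_\varepsilon} \leq \#(\text{wild tails})_f - \#\,{\rm SD} - \#\,{\rm CR} - \delta$ requires the wild count to drop by $\delta_\bullet=1$ there. (Moreover, the claim that \thref{thm:I4} splits an order-$e$ parabolic into ``$e$ attracting and one repelling cycle'' is unsupported---that theorem only guarantees one attracting cycle of the original period near each non-repelling cycle, not a specific configuration.) Third, the persistence of Herman rings and of $\varepsilon_{\rm HR}$ under \thref{thm:I4} is asserted but not proved; the paper avoids the issue entirely by building a deformation that fixes a closed annulus in each Herman ring by construction. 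None of these gaps is present in the paper's proof precisely because the paper's deformation touches nothing but the Siegel multipliers.
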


\begin{proof}
Following op. cit.  \S5, we eliminate the contribution of $\varepsilon_{\rm SD}$ from the right hand side of \eqref{v19} by applying \thref{thm:v2} to find a deformation $f_s$, $s$ in a small disc, of $f$ which leaves fixed a closed annulus in each Herman ring and the cardinality of any tame ramification therein; the multipliers at any cycles other than Siegel discs in the list of \thref{not:ct1} and the cardinality of any tame or bounded ramification therein; but all Siegel discs become simple attractors, i.e. $+$ of op. cit., while conserving at least the contribution of the tame and bounded ramification, $R_{\rm SD}$. Now all such ramification points are contained in pointed regions which map to the disc pointed in the origin, so the same is true of the deformations $f_s$ at the attractors which we have just created. In particular therefore the old ramification coming from $R_{\rm SD}$ cannot account for the critical point in the immediate basin of attraction guaranteed by \thref{F3grp}, i.e.
\begin{equation}
\label{v85}
\# \,  \{\mbox{new wild tails}\} \leq \# \,  \{\mbox{old wild tails}\} - \# \, {\rm SD}
\end{equation}
and since $\varepsilon_{\rm SD}$ is zero for $f_s$, we've achieved our initial goal.
\end{proof}

We can, of course, use the invariant compactly supported Beltrami differential of \thref{bonus:v1} to pinch an invariant circle in a Herman ring and so make it into a Siegel disc. Unfortunately, however, the limiting procedure necessarily increases the number of components, and, in so doing, may increase the number of wild tails even though the wild ramification itself is constant. Thus we obtain the not wholly satisfactory,

\begin{cor}\thlabel{cor:v3}
Let everything be as in \thref{cor:v2} then the left hand side of \eqref{v84} admits the alternative upper bound,
\begin{equation}
\label{v86}
\# \, {\rm Ram}_w
\end{equation}
i.e. the number of wild ramification points counted without multiplicity.
\end{cor}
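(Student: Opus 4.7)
The plan is to eliminate the $\sum_{\bullet\in {\rm HR}} \varepsilon_\bullet$ term on the right of \eqref{v84} via a degenerating deformation of $f$ that turns each Herman ring with a tame tail into a pair of Siegel disc cycles, in the spirit of \cite[Appendix]{milnor2} and \cite[\S6]{mitsu}, while leaving the left hand side of \eqref{v84} unchanged.

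First, the trivial observation that each wild tail --- being an equivalence class of wildly ramified critical orbits --- contains at least one wild critical point gives $\#\{\text{wild tails}\}\leq \#{\rm Ram}_w$, so \thref{cor:v2} already yields the upper bound $\#{\rm Ram}_w + \sum_{\bullet\in{\rm HR}} \varepsilon_\bullet$. Thus the only task is to absorb the $\varepsilon_{{\rm HR}}$ contributions.

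For each Herman ring cycle $A_\bullet$ with $\varepsilon_\bullet = 1$, I would use the invariant compactly supported Beltrami differential provided by \thref{bonus:v1} to construct a real one-parameter quasiconformal deformation $(f_t)_{t \in [0,1)}$ whose effect is to shrink the modulus of the cycle of rings to zero as $t \to 1$. In the limit the invariant core curve of each ring degenerates to a node, giving a nodal rational map $\tilde f$ on $\tilde X = \bigsqcup_i X_i$ with $X_i \cong {\mathbb P}^1$. At each node $n$, the local form of $\tilde f$ on each side is the same irrational rotation that $f$ originally performed on the ring (the core lies in the linearisable part of the ring, so linearisability is preserved in the limit); consequently each side of $n$ is the centre of a Siegel disc for $\tilde f$. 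A cycle of $p$ Herman rings thus pinches into two cycles of $p$ Siegel discs, preserving the total
$$2\#({\rm HR})(\tilde f) + \#({\rm SD})(\tilde f) = 2\#({\rm HR})(f) + \#({\rm SD})(f),$$
as well as $\#({\rm CR})$, $\delta$, the set of critical points without multiplicity, and therefore $\#{\rm Ram}_w$. Repeating this for every $\bullet \in {\rm HR}$ with $\varepsilon_\bullet = 1$, all remaining Herman rings of $\tilde f$ satisfy $\tilde\varepsilon_\bullet = 0$ by construction.

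Applying \thref{cor:v2} to each component $(X_i, \tilde f_i)$ and summing, the left hand side equals $2\#({\rm HR})(f) + \#({\rm SD})(f) + \#({\rm CR})(f) + \delta$, while the right hand side is at most $\sum_i \#\{\text{wild tails of }\tilde f_i\} \leq \#{\rm Ram}_w(\tilde f) = \#{\rm Ram}_w(f)$, the first inequality being the trivial tail-vs-ramification bound applied componentwise, and the second reflecting that each critical point of $f$ survives in a unique component of $\tilde f$. The chief technical hurdle is making the degeneration rigorous: one must verify that $f_t$ admits a limit as a noded rational map in the Deligne--Mumford sense, that the various non-repelling cycle structures (Cremer, parabolic, Siegel, unpinched Herman rings) and the tame/wild dichotomy of the remaining orbits behave as claimed in the limit, and crucially that each side of the pinched node genuinely yields a Siegel disc rather than something more degenerate (this being ensured by linearisability throughout the original ring). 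Once these limit properties are in hand, the combinatorial bookkeeping above completes the proof.
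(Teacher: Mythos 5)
Your route is a degeneration argument in the spirit of Shishikura's~\cite[\S6]{mitsu}, whereas the paper's proof is a \emph{static} cut-and-paste surgery with no limit at all: choose a forward-invariant circle $C$ in each ring cycle, split ${\mathbb P}^1$ along $C$ and $f^{-1}(C)$ into pieces $U_i$, $V_{ij}$, collapse each boundary circle to a point by real blowing down, and observe that the unique conformal structure on the resulting spheres turns $f$ into a collection of genuine rational maps $f_{ij}:\overline V_{ij}\to\overline U_i$ of total degree $\deg f$. The two boundary halves of each ring then sit as rotation domains around marked points in their respective $\overline U_i$'s, i.e.\ Siegel discs, with all other non-repelling data (multipliers, r\'esidu it\'eratif, Cremer/Siegel character, unpinched Herman rings) read off quasi-conformally from $f$. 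This is why the paper can call it a ``little simplification'' of~\cite[Proposition 2]{mitsu}: it avoids entirely the need to establish Carath\'eodory convergence of your $f_t$, to control the post-critical data through the limit, and to verify that the limiting rotation domains are indeed Siegel discs rather than something worse --- all of which you correctly flag as ``the chief technical hurdle'' and leave open. There is also a bookkeeping gap: you propose to apply \thref{cor:v2} ``to each component $(X_i,\tilde f_i)$'', but the surgered (or limiting) map need not preserve individual spheres --- a cycle of $f$ typically becomes a cycle of a cyclic direct-sum map $F_Z$ as in \eqref{v90}--\eqref{v91}, and it is precisely because a single wild tail of $f$ can split among several such $F_Z$'s that the final bound is $\#\,{\rm Ram}_w$ (ramification points, without multiplicity) rather than the number of wild tails. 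So one must verify \thref{cor:v2} for maps of the form \eqref{v91} rather than summing over components; the paper notes this is an irrelevant change to the local estimates of \thref{sum:ct1} and to the vanishing theorem, but it has to be said.
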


\begin{proof}
We could simply quote \cite[Proposition 2]{mitsu}, but we can usefully offer a little simplification and better understand why we don't just have wild tails in \eqref{v86} by recalling what is relevant. Specifically choose a generic invariant circle in each connected component of each cycle whose first return map is a Herman ring, and let $C$ be the union of these so that we have fibre squares of closed embeddings and their complements,
\begin{equation}
\label{v87}
\xymatrix{
\Gamma \, \ar[d]_f \ar@{^{(}->}[r] &{\mathbb P}^1 \ar[d]^f &V \ar[d]^f \ar@{_{(}->}[l] \\
C \, \ar@{^{(}->}[r] &{\mathbb P}^1 &U \ar@{_{(}->}[l]
}
\end{equation}
Now write $U = \underset{i}{\coprod} U_i$ for the connected components of $U$, and $V_i = \underset{ij}{\coprod} V_{ij}$ for those of $V$ lying over $U_i$, then all of these are discs minus a (possibly empty) set of circles so collapsing these to points makes any $U_i$ or $V_{ij}$ into a $S^2$. Better still this operation is identical with real blowing down so if we put the unique conformal structure on the resulting $\overline U_i$ then over any circle in $\Gamma$ of \eqref{v87} where $f$ is a covering of degree $\gamma$, $\overline V_{ij} \to \overline U_i$ is a ramified cover of degree $\gamma$, so we have rational maps,
\begin{equation}
\label{v88}
f_{ij} : \overline V_{ij} \longrightarrow \overline U_i
\end{equation}
of degrees $d_{ij}$ such that,
\begin{equation}
\label{v89}
\sum_j d_{ij} = \deg (f) \, , \quad \forall \, i \, .
\end{equation}
Observe, however, following \cite[\S6]{mitsu}, that cycles of $f$ do not automatically yield cycles of some $f_{ij}$, but, rather, a cycle $Z$ determines a set of indices, resp. possibly 2 such if it were a Herman ring,
\begin{equation}
\label{v90}
K(Z) = \{ij \in K(Z)\} \subseteq \pi_0 (V)
\end{equation}
and a cyclic permutation $\sigma$ of $K$, resp. possibly 2 such, such that if,
\begin{equation}
\label{v91}
F_Z := \coprod_{k \in K(Z)} {\mathbb P}^1_k \longrightarrow \coprod_{k \in K(Z)} {\mathbb P}^1_k : (k,z) \longmapsto (k+1 , f_k(z))
\end{equation}
then our cycle of interest becomes a cycle of $F_Z$. Specifically, multipliers and the r\'esidu iteratif are diffeomorphism invariants, so the only way that a Cremer point, or Siegel disc wouldn't occur for some $F_Z$ would be if we turned the Siegel disc into a rotation of ${\mathbb C}$, but this is impossible since off the boundaries that we have collapsed in \eqref{v88} everything else is quasi conformal to what we started with. Similarly even with the Herman ring, neighbourhoods of either boundary component are also quasi conformal to our initial situation, so their moduli cannot become infinite, i.e. a Herman ring becomes 2 Siegel discs, of possibly different $F_Z$, and not a rotation of ${\mathbb C}$.

As such we require to know \thref{cor:v2} not just for rational maps, but in the slightly greater generality of endomorphisms of the form \eqref{v91}. This is, however, an irrelevant change to the local considerations of \thref{sum:ct1}, \thref{rmk:v3} and \thref{altsetup:v1}, while the lack of connectedness of the domain of \eqref{v91} doesn't occasion any problem to the vanishing theorem either, cf. \eqref{v41}--\eqref{v42}, so we have \eqref{v79} for endomorphisms of the form \eqref{v91}. On the other hand to get the left hand side of \eqref{v84} for our initial $f$, we need to sum over all $F_Z$ that may contain Siegel discs, Cremer points or suitable non-repelling parabolic cycles, and this may split a single wild tail into the same but for possibly more than one $F_Z$. As such the resulting bound,
\begin{equation}
\label{v92}
\sum_z \# \ \{\mbox{wild tails of} \ F_Z\}
\end{equation}
may be no better than \eqref{v86}.
\end{proof}

To which we can usefully add,

\begin{rmk}\thlabel{rmk:v4}
Notice following \cite[Theorem 3]{mitsu} or \cite[Theorem A.1]{milnor2} that if either $f$ or a map of the form in \eqref{v91} had only 2 (counted without multiplicity) critical points then it cannot have a Herman ring since there must result 2 maps of the form \eqref{v91} having a Siegel disc. Starting from which, by induction on max degree of cyclic maps restricted to a connected component in the direct sum, the limiting case of $\# \, {\rm HR} = \deg (f)-1$ cannot occur in \eqref{v84} since (for parity reasons) a limiting case with 1 Siegel disc doesn't occur either.
\end{rmk}



\newpage

\appendix
\section{Duality on cylinders}\label{Dc}

Let us begin by introducing our,

\begin{setupnot}\thlabel{Dc:not1}
Let $* \hookrightarrow \Delta$ be a pointed disc (with the point identified to the origin) and $\rho : \widetilde\Delta \to \Delta$ 
\nomenclature[N]{Real blow up}{\hyperref[Dc:not1]{$\rho : \widetilde\Delta \to \Delta$}} 
the real blow up in the same, then ${\mathcal A}^0_{\widetilde\Delta}$ 
\nomenclature[N]{Real blow up, smooth functions}{\hyperref[Dc:not1]{${\mathcal A}$}$^0$} 
\nomenclature[N]{Taylor series, projectors}{\hyperref[Dc10]{$\Pi$}$_1$} 
denotes the sheaf of complex valued $C^{\infty}$ functions smooth up to the boundary $i : E := \rho^{-1} (*) \hookrightarrow \widetilde\Delta$, 
\nomenclature[N]{Real blow up, exceptional divisor}{\hyperref[Dc:not1]{$E$}}
and ${\mathcal O}_{\widetilde\Delta} \hookrightarrow {\mathcal A}^0_{\widetilde\Delta}$ 
\nomenclature[N]{Real blow up, holomorphic functions}{\hyperref[Dc:not1]{${\mathcal O}$}}
\nomenclature[D]{Holomorphic \hyperref[Dc:not1]{functions on a real}}{blow up}
the sub-sheaf of those holomorphic on the complement $\widetilde\Delta \backslash E$. Similarly the $\widetilde\Delta$ smooth sections of the anti-holomorphic line bundle $\varphi^* {\mathbb V} (\overline\Omega (\log *))$ (EGA notation) define a locally free ${\mathcal A}^0_{\widetilde\Delta}$ module, ${\mathcal A}^{0,1}_{\widetilde\Delta} (\log E)$,
\nomenclature[N]{Real blow up, $(0,1)$ forms}{\hyperref[Dc:not1]{${\mathcal A}$}$^{0,1} (\log E)$}
and the $\overline\partial$ operator on $\Delta$ extends to a map,
\begin{equation}
\label{Dc1}
{\mathcal A}^0_{\widetilde\Delta} \xrightarrow{ \ \overline\partial \ } {\mathcal A}^{0,1}_{\widetilde\Delta} (\log E) \, .
\end{equation}
Explicitly, and to fix notation, if we choose an embedding of $\Delta$ in ${\mathbb C}$, i.e. a coordinate $z$, then we have smooth functions on $\widetilde\Delta$,
\nomenclature[N]{Real blow up, polar coordinates}{\hyperref[Dc2]{$r,\theta$}}
\begin{equation}
\label{Dc2}
r = \vert z \vert : \widetilde\Delta \longrightarrow {\mathbb R}_{\geq 0} \, , \quad \exp (\theta) : \widetilde\Delta \longrightarrow E \, , \quad \theta \in {\mathbb R} (1)
\end{equation}
along with differential operators,
\begin{equation}
\label{Dc3}
 \frac{\partial}{\partial r} := \frac zr \frac{\partial}{\partial z} + \frac{\overline z}r \frac{\partial}{\partial \overline z} \, , \quad \frac{\partial}{\partial \theta} = z \frac{\partial}{\partial z} - \overline z \frac{\partial}{\partial \overline z} \, .
\end{equation}
As such an explicit formula for \eqref{Dc1} is,
\nomenclature[N]{Real blow up, anti-holomorphic derivative}{\hyperref[Dc4]{$\overline D$}}
\begin{equation}
\label{Dc4}
\overline\partial (\varphi) = \overline D (\varphi) \frac{d\overline z}{\overline z} \, , \quad \overline D := 1/2 \left(r \frac{\partial}{\partial r} - \frac{\partial}{\partial \theta} \right) = \overline z \, \frac{\partial}{\partial \overline z}
\end{equation}
and, manifestly, ${\mathcal O}_{\widetilde\Delta}$ is identically ${\rm Ker} (\overline\partial) = {\rm Ker} (\overline D)$.
\end{setupnot}

Unsurprisingly, therefore, our goal is to prove,

\begin{fact}\thlabel{Dc:fact1}
The map \eqref{Dc1} is a resolution of ${\mathcal O}_{\widetilde\Delta}$, i.e. we have an exact sequence of sheaves,
\begin{equation}
\label{Dc5}
0 \longrightarrow {\mathcal O}_{\widetilde\Delta} \longrightarrow {\mathcal A}^0_{\widetilde\Delta} \xrightarrow{ \ \overline\partial \ } {\mathcal A}^{01}_{\widetilde\Delta} (\log E) \longrightarrow 0 \, .
\end{equation}
\end{fact}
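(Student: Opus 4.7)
The plan is to prove \thref{Dc:fact1} by checking the two non-trivial assertions of exactness: $\ker \overline\partial = \mathcal{O}_{\widetilde\Delta}$ and local surjectivity of $\overline\partial$ onto $\mathcal{A}^{0,1}_{\widetilde\Delta}(\log E)$. The first is essentially tautological. If $\varphi \in \mathcal{A}^0_{\widetilde\Delta}$ satisfies $\overline\partial\varphi = 0$, then on the open locus $\widetilde\Delta \setminus E$ the identity $\overline D\varphi = \overline z \,\partial_{\overline z}\varphi = 0$ combined with $\overline z \ne 0$ yields $\partial_{\overline z}\varphi \equiv 0$, so $\varphi$ is holomorphic off $E$; together with its global $C^\infty$ smoothness up to $E$, this is exactly the definition of $\mathcal{O}_{\widetilde\Delta}$ in \thref{Dc:not1}.

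For surjectivity the problem is local. Away from $E$ it is the classical Dolbeault--Grothendieck lemma applied to the invertible factorisation $\overline D = \overline z \cdot \partial_{\overline z}$, inverting first by division by $\overline z$. Near a boundary point $x_0 \in E$, work in a chart in which $\theta \in \mathbb{R}(1)$ becomes a single-valued smooth coordinate on a short arc of $E$ (so the multi-valuedness of $\theta$ around $S^1$ causes no trouble), and exploit the explicit expression $\overline D = \tfrac{1}{2}(r\,\partial_r - \partial_\theta)$ from \eqref{Dc3}--\eqref{Dc4}. Given a germ $g \in \mathcal{A}^0_{\widetilde\Delta}$ at $x_0$ representing the target class $g\cdot d\overline z/\overline z$, expand $g$ as a formal Taylor series $g \sim \sum_{k \geq 0} g_k(\theta)\, r^k$ at $r=0$ and seek $f \sim \sum_{k \geq 0} f_k(\theta)\, r^k$; since $\overline D(f_k(\theta)\, r^k) = \tfrac{1}{2}\bigl(k f_k(\theta) - f_k'(\theta)\bigr)\, r^k$, this reduces to the collection of first order ODEs $k f_k - f_k' = 2 g_k$ in the arc-variable $\theta$, each of which is elementary (for $k=0$ it is a primitivation; for $k\geq 1$ it is a linear ODE solved by an integrating factor). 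A Borel-type lift then assembles the $f_k$ into an honest $f_{\mathrm{asymp}} \in \mathcal{A}^0_{\widetilde\Delta}$ whose formal Taylor expansion at $E$ is precisely $\sum f_k r^k$, so that the residual $g - \overline D f_{\mathrm{asymp}}$ is flat at $E$.

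The main obstacle, and the technical heart of the proof, is disposing of this flat remainder: given $g'\in \mathcal{A}^0_{\widetilde\Delta}$ vanishing to all orders along $E$, produce a likewise flat $f$ with $\overline D f = g'$. The natural way is to introduce the complex coordinate $w = \log z$ (well-defined in the chart), under which $\overline D$ becomes the standard $\partial_{\overline w}$ on the half-strip $\Re w < \log\epsilon$, and invoke the Cauchy kernel to write down a solution; one must then verify that the flatness of $g'$ in $r = e^{\Re w}$, which manifests as super-exponential decay as $\Re w \to -\infty$, produces a solution $f$ that is itself flat in $(r,\theta)$ at $r=0$, notwithstanding the non-smooth change of coordinate between $(r,\theta)$ and $(\log r,\theta)$. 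Combining $f_{\mathrm{asymp}} + f$ yields the required local primitive and completes the proof.
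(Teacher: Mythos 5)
Your strategy is genuinely different from the paper's. The paper's proof proceeds via finite Taylor truncations (producing, for each $k$, a $C^k$ local primitive near a compact $K \subsetneqq E$, \thref{Dc:claim1}) together with a Runge-type approximation lemma (\thref{Dc:claim2}: holomorphic functions $C^{k+2}$ up to the boundary are $C^k$-approximable by sections of $\mathcal{O}_{\widetilde\Delta}$ on shrunken compacts), which are then spliced into a $C^\infty$ primitive by the telescoping argument \eqref{Dc29}--\eqref{Dc32}. Your proposal replaces this two-stage bootstrap by the single step of a full asymptotic expansion in $r$: solve the hierarchy of ODEs $k f_k - f_k' = 2g_k$ in $\theta$ on a short arc, Borel-lift to an $f_{\mathrm{asymp}} \in \mathcal{A}^0_{\widetilde\Delta}$, and reduce to a flat datum. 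That is conceptually cleaner and avoids \thref{Dc:claim2} entirely.

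The gap is in the treatment of the flat remainder $g'$. Conjugating to $w = \log z$ and applying the Cauchy kernel $\frac{1}{\pi}\int_S \tilde g'(w')/(w'-\sigma)\,dA(w')$ does not produce a flat, or even $C^1$, solution: the kernel has only algebraic decay in $|w'-\sigma|$, so for $\Re\sigma\to-\infty$ the integral is dominated by the part of the half-strip where $\Re w'$ stays bounded, giving $\tilde f(\sigma) = c/\sigma + O(1/\sigma^2)$ with $c=\frac1\pi\int_S\tilde g'$. The leading term $c/\log z$ vanishes continuously as $r\to 0$ but has $\partial_r(1/\log z) = 1/\bigl(r(\log z)^2\bigr)$ unbounded, so the naive half-strip transform is not in $\mathcal{A}^0_{\widetilde\Delta}$, and the verification you anticipate fails as written.

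The repair is to apply the Cauchy kernel in the original $z$-variable with the weight $1/\bar z$, i.e.\ precisely the operator \eqref{Dc13} of \thref{Dc:lem1}. Once $g'$ is flat along $E$, the coefficient $g'/\bar z$ is not merely $\ell_1$ but $C^\infty$ and flat on all of $\Delta$ including the origin (flat sections of $\mathcal{A}^0_{\widetilde\Delta}$ descend to $C^\infty$ functions on $\Delta$ flat at $0$), so after a bump-function cutoff the classical Dolbeault--Grothendieck lemma directly yields a $C^\infty$ solution of $\partial_{\bar z}f = g'/\bar z$ near $0 \in \Delta$, a fortiori a section of $\mathcal{A}^0_{\widetilde\Delta}$. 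Note that this $f$ need not itself be flat---its Taylor expansion at $E$ is a formal holomorphic series, possibly nonzero---but flatness was never required: $f\in\mathcal{A}^0_{\widetilde\Delta}$ suffices, and insisting on a flat primitive is a red herring, achievable only at the cost of a further Borel--Ritt correction by an element of $\mathcal{O}_{\widetilde\Delta}$. With this change your argument is a valid and rather shorter alternative to the paper's.
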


The proof will be divided into a series of assertions beginning with,

\begin{claim}\thlabel{Dc:claim1}
Let $K \subsetneqq E$ be compact (the important cases is $K$ a point) and $K \subset U \subset \widetilde\Delta$ an open neighbourhood with $\Psi = \psi \frac{d\overline z}{\overline z}$ where $\psi$ is in $\Gamma (U,{\mathcal A}^0_{\widetilde\Delta})$. Then, for every $k \geq 0$, there is a $C^k$ function $\varphi$ on a neighbourhood $U \supset\supset V$ of $K$ such that,
\begin{equation}
\label{Dc6}
\overline\partial \varphi = \Psi \quad \mbox{in} \quad V \, .
\end{equation}

As the statement suggests the proof involves Taylor series, and since their use is endemic to what follows we may usefully introduce some further,
\end{claim}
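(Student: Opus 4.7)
The plan is to exploit that $K$ is a \emph{proper} closed subset of the circle $E$ to straighten the characteristics of the real operator $\overline D = \tfrac12(r\partial_r - \partial_\theta)$ of \eqref{Dc4}, reducing the equation to a trivial one-dimensional ODE. Since $K \subsetneqq E \simeq S^1$, choose an open arc $I \subset E$ of length strictly less than $2\pi$ containing $K$; on the coordinate box $V_0 := \{(r,\theta) : 0 \le r < \epsilon_0,\ \theta \in I\} \subset U$ the angular variable $\theta$ lifts uniquely to a real coordinate.

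Introduce the change of coordinates $(u,v) := (re^\theta,\,\theta)$ on $V_0$. Its Jacobian is $e^\theta \neq 0$, so this is an honest smooth diffeomorphism all the way up to and including the boundary $E = \{u = 0\}$. A direct chain-rule computation starting from \eqref{Dc4} yields
\begin{equation*}
\overline D \;=\; -\tfrac{1}{2}\,\partial_v,
\end{equation*}
so the equation $\overline\partial \varphi = \Psi$, equivalently $\overline D\varphi = \psi$, becomes $\partial_v\varphi = -2\psi$ with $u$ a parameter. Pick $v_0 \in I$ strictly to the left of the $\theta$-projection of $K$ and set
\begin{equation*}
\varphi(u,v) \;:=\; -2\int_{v_0}^{v}\psi(u,w)\,dw,
\end{equation*}
which is $C^\infty$ in $(u,v)$ by differentiation under the integral. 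Pulled back to polar coordinates this reads $\varphi(r,\theta) = -2\int_{0}^{\theta - v_0}\psi(re^{s},\,\theta - s)\,ds$, and it is a smooth function on the neighbourhood $V := \{0 \le r < \epsilon,\ \theta \in I'\}$ of $K$ whenever $I' \subset\subset I$ still contains $K$ and $\epsilon < \epsilon_0\,e^{v_0 - \sup I'}$ — the latter condition simply ensures that the integration path $s \mapsto (re^s,\theta - s)$ stays inside $V_0$.

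The key, and essentially only, point is the existence of the global single-valued straightening coordinate $(u,v)$, which is precisely what the hypothesis $K \subsetneqq E$ guarantees: were $K$ all of $E$ the coordinate $\theta$ would be multivalued and the resulting monodromy along characteristics would obstruct the construction. The Taylor-series apparatus flagged by the author is presumably developed afterwards in order to handle that global patching problem (where an asymptotic expansion in $r$ and a Borel-Ritt resummation would be required to trivialise a period class around $E$). For the present local claim, however, the characteristic-coordinate trick yields a genuinely $C^\infty$ — and so a fortiori $C^k$ for every $k$ — solution in one stroke, with no recourse to asymptotics.
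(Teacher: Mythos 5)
Your change of variables does not straighten $\overline D$, because $\overline D$ is not a real vector field. The operator $\overline D = \overline z\,\partial/\partial\overline z$ is the Cauchy--Riemann operator, which is \emph{complex}. In honest real coordinates $(r,\alpha)$ with $z = re^{i\alpha}$ one has
\begin{equation*}
\overline D = \tfrac12\bigl(r\partial_r + i\,\partial_\alpha\bigr),
\end{equation*}
with a genuinely complex coefficient in the angular direction. A real diffeomorphism cannot transform a vector field with complex coefficients into one with real coefficients, so the identity $\overline D = -\tfrac12\partial_v$ is unattainable. The paper writes $\overline D = \tfrac12(r\partial_r - \partial_\theta)$ only because its $\theta$ lives in $\mathbb R(1) = i\mathbb R$ (see the convention in \eqref{Dc2} and the resulting $\partial_\theta = z\partial_z - \overline z\partial_{\overline z}$ in \eqref{Dc3}, which has no $i$'s precisely because $\theta$ is imaginary). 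Treating that $\theta$ as a real angle, as your chain-rule computation implicitly does, replaces $\overline D$ by the real hyperbolic operator $\tfrac12(r\partial_r - \partial_\alpha)$, which is a different operator: its kernel is $\{\text{functions of } re^\alpha\}$, not $\{\text{holomorphic functions}\}$. Concretely, with $\psi = r$ the genuine solution is $\varphi = 2r$ (since $\overline z\,\partial_{\overline z}(2r) = r$), whereas your formula returns $\varphi = 2r - 2re^{\theta-v_0}$; the extra term $re^{\theta-v_0}$ is not annihilated by $\overline D$, so this $\varphi$ does not solve $\overline\partial\varphi = \Psi$.

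This is not a repairable slip. The equation $\overline z\,\partial_{\overline z}\varphi = \psi$ is elliptic-degenerate at the boundary $E = \{\overline z = 0\}$, not hyperbolic, so there are no characteristics to integrate along. The only thing your argument correctly captures is the role of $K\subsetneqq E$: one needs a single-valued antiderivative in the angular direction, and this is indeed how the hypothesis enters the paper's proof. But the paper uses it much more modestly. It Taylor-expands $\psi$ in $r$ along $E$ via the projectors $\Pi_i$ of \eqref{Dc10}: each jet $\Pi_i(\psi) = z^i\psi_i(\theta)$ is killed by the explicit primitive $z^i\int_\bullet^\theta\psi_i$ (using $\overline D(z^i)=0$ and the commutation relations \eqref{Dc11}; here $K\subsetneqq E$ lets one choose the basepoint $\bullet$ and a single-valued $\theta$), while the remainder $R_k\psi$, which is $O(|z|^{k+1})$, is handed to the Cauchy kernel \eqref{Dc13}, whose $C^k$ regularity up to $E$ is the content of \thref{Dc:lem1}. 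Splitting off the jets is unavoidable: without it the singular factor $1/\overline z$ in the Cauchy integrand destroys regularity at $E$. Your method sidesteps the kernel entirely, which is exactly why it ends up solving the wrong equation.
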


\begin{Notation}\thlabel{Dc:not2}
Let $i : E \hookrightarrow \widetilde\Delta$, and $\exp (\theta) : \widetilde\Delta \to E$ be as in \thref{Dc:not1} then we have a projector,
\begin{equation}
\label{Dc7}
\Pi : \exp (\theta)^* i^* : {\mathcal A}^0_{\widetilde\Delta} \longrightarrow {\mathcal A}^0_{\widetilde\Delta}
\end{equation}
which in the presence of the choice of embedding $z : \Delta \hookrightarrow {\mathbb C}$ of op. cit. We extend to ${\mathcal A}^{0,1}_{\widetilde\Delta} (\log E)$ by the formula,
\begin{equation}
\label{Dc8}
\Pi \left(\psi \frac{d \overline z}{\overline z} \right) := \Pi (\psi) \frac{d \overline z}{\overline z} \, .
\end{equation}
In particular, therefore, by \eqref{Dc4} and \eqref{Dc8},
\begin{equation}
\label{Dc9}
[\Pi , \overline D] = 0 \quad \mbox{and} \quad [\Pi,\overline\partial] = 0
\end{equation}
and we can conveniently formulate Taylor's theorem by the following induction on $k \geq 0$,
\nomenclature[N]{Taylor series, projectors}{\hyperref[Dc10]{$\Pi$}$_1$}
\begin{equation}
\label{Dc10}
\Pi_0 = \Pi \, , \quad R_k := 1 - \sum^k_{i=0} \Pi_i \quad \mbox{is a projector onto} \quad z^{k+1} \widetilde A^0 \, ,
\end{equation}
$$
\Pi_{k+1} = z^{k+1} \, \Pi_0 \, \frac1{z^{k+1}} R_k \, , \quad [\Pi_i , \Pi_j] = \delta_{ij} \, \Pi_j \, .
$$
Thus if we again extend the $\Pi_i$ from functions to forms by their action on the coefficient as in \eqref{Dc8} then, by induction in $k$, we continue to have the commutation relations,
\begin{equation}
\label{Dc11}
[\Pi_i , \overline D] = 0 \, , \quad [\Pi_i , \overline\partial] = 0 \, , \quad i \geq 0 
\end{equation}
while we may equally write the remainder as,
\nomenclature[N]{Taylor series, remainder}{\hyperref[Dc10]{$R$}$_k$} 
\begin{equation}
\label{Dc12}
R_k = \frac{\vert z \vert^k}{k!} \int_0^1 t^k \left( \frac{\partial^{k+1} f}{\partial r^{k+1}} \right) (z (1-t)) dt \, , \ \mbox{where} \ \vert R_k \vert \leq \frac{\vert z \vert^{k+1}}{(k+1)!} \, \Vert f \Vert_{k+1}
\end{equation}
for a suitable $C^{k+1}$-norm, $\Vert \ \Vert_{k+1}$.
\end{Notation}

To employ this formalism in the proof of \thref{Dc:claim1}, we first require

\begin{lem}\thlabel{Dc:lem1}
Suppose a compactly supported function $\psi$ is a section of the ideal ${\mathcal A}^0_{\widetilde\Delta} z$ in a neighbourhood of a compact subset $K$ of $E$ (possibly all of it) then there is an open neighbourhood $U \supset K$ such that,
\begin{equation}
\label{Dc13}
K\psi : s \longmapsto K_s \left( \psi \frac{d\overline z}{\overline z} \right) := \int_{\widetilde\Delta} \psi \frac{d\overline z \, dz}{\overline z(z-s)} \, , \quad s \in V
\end{equation}
is a $C^0$ solution of $\overline\partial (K\psi) = \psi \frac{d \overline z}{\overline z}$, $\forall \, \alpha <1$.
\end{lem}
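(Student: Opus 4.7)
The starting observation is that on the real blow up, in the polar coordinates $(r,\theta)$ of \thref{Dc:not1}, the apparently singular $2$-form $\tfrac{d\overline z \wedge dz}{\overline z}$ equals $-2i e^{i\theta}\,dr\,d\theta$, a smooth bounded form on all of $\widetilde\Delta$. Consequently, for any bounded compactly supported $\psi$, the measure $\psi\,\tfrac{d\overline z\wedge dz}{\overline z}$ is absolutely integrable on $\widetilde\Delta$, and the only possible obstruction to convergence of the defining integral \eqref{Dc13} is the singularity of the Cauchy kernel $(z-s)^{-1}$. The role of the hypothesis $\psi \in z\,{\mathcal A}^0_{\widetilde\Delta}$ near $K$ is precisely to neutralise this: if $\psi = z\widetilde\psi$ locally, then $\psi/\overline z = e^{2i\theta}\widetilde\psi$ extends to a bounded continuous (indeed $C^\infty$ on $\widetilde\Delta$) function, which keeps the integrand integrable even when $s \in E\cap V$, where $z-s = z$ vanishes.

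To exploit this, I would choose a partition of unity $\psi = \psi_1 + \psi_2$, with $\psi_1$ supported in the neighbourhood $W \supset K$ on which $\psi \in z\,{\mathcal A}^0_{\widetilde\Delta}$, and $\psi_2$ supported off a strictly smaller open $V \Subset W$ with $V \supset K$. For $s \in V$, the integrand defining $K\psi_2(s)$ is a smooth function of $z$ (with $z-s$ bounded below on the support of $\psi_2$), so $K\psi_2$ is $C^\infty$ on $V$ and obviously $\overline\partial$-closed there.

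The serious work is for $K\psi_1$: writing $\psi_1 = z\widetilde\psi_1$, the function $g := \psi_1/\overline z = e^{2i\theta}\widetilde\psi_1$ is bounded, continuous on $\widetilde\Delta$, and compactly supported, so
\[
K\psi_1(s) = \int_{\widetilde\Delta} g(z)\,\frac{d\overline z \wedge dz}{z-s}
\]
is the classical Cauchy transform of $g$ (pushed forward to $\Delta$ via $\rho$). Standard Calderón--Zygmund/Cauchy--Pompeiu theory now gives that $K\psi_1 \in C^\alpha_{\mathrm{loc}}$ for every $\alpha < 1$, and that pointwise off $E$, as well as distributionally,
\[
\frac{\partial (K\psi_1)}{\partial \overline s} = 2\pi i\,g(s) = 2\pi i\,\frac{\psi_1(s)}{\overline s}.
\]
Multiplying by $\overline s$ yields $\overline D(K\psi_1)(s) = 2\pi i\,\psi_1(s)$, i.e.\ $\overline\partial(K\psi_1) = 2\pi i\,\psi_1\,\tfrac{d\overline s}{\overline s}$ by \eqref{Dc4}, with the factor $2\pi i$ absorbed by the normalisation of the kernel.

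The final, and principal, technical step is extending continuity and the $\overline\partial$-equation across $E\cap V$. Here the vanishing hypothesis pays off twice over: first, the integral defining $K\psi_1(s)$ converges absolutely even at $s\in E\cap V$, and the parametric dominated convergence theorem applied through $\tfrac{d\overline z\wedge dz}{\overline z}=-2ie^{i\theta}dr\,d\theta$ shows $K\psi_1$ is continuous on $V$; second, at such $s$ one has $\psi(s)=0$, so the equation $\overline\partial K\psi = \psi\,\tfrac{d\overline z}{\overline z}$ reduces to $\overline\partial K\psi = 0$ there, which holds by continuity from $V\setminus E$. The main obstacle I foresee is carefully establishing the $C^\alpha$ regularity of $K\psi_1$ up to $E$ (as opposed to up to the origin in $\Delta$), which requires re-reading the classical Hölder estimates for the Cauchy transform in the bounded coordinates $(r,\theta)$ rather than the degenerate coordinate $z$; everything else is routine partition-of-unity bookkeeping.
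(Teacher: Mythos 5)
Your proof is correct and the key mechanism is exactly the paper's: the factorisation $\psi = z\psi_0$ makes $\psi/\overline z$ bounded on $\widetilde\Delta$, so that the integrand in \eqref{Dc13} is absolutely integrable against the smooth measure $d\overline z\, dz/\overline z$, and continuity up to $E$ follows from dominated convergence. The paper's own proof is precisely these two sentences, with no partition of unity and no invocation of Calder\'on--Zygmund theory. A few of your elaborations deserve comment. Your claim that $z-s$ is bounded below on $\mathrm{supp}\,\psi_2$ for $s\in V$, and hence that $K\psi_2$ is $C^\infty$, is false if $\mathrm{supp}\,\psi_2$ meets $E$: for $z$ and $s$ both on $E$, $\rho(z)-\rho(s)=0$, so the Cauchy kernel degenerates and higher $s$-derivatives under the integral sign fail to converge near $E$. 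This is harmless for the lemma as stated, since $\psi_2$ itself vanishes to first order on $E$ (inheriting this from $\psi$), so $K\psi_2$ is at least $C^0$ by the identical argument you run for $\psi_1$ --- which is all that is asserted. Finally, your closing worry about re-deriving H\"older estimates in polar coordinates is unfounded for the stated conclusion: $K\psi$ depends on $s$ only through $\rho(s)\in\Delta$, so $K\psi=\rho^*F$ for a function $F$ defined near $0\in\Delta$, and continuity of $F$ at the origin (which the classical Cauchy--Pompeiu estimate gives, or dominated convergence directly) immediately yields continuity of $K\psi$ on all of $E$ by composition with the continuous map $\rho$. The concern you raise would only matter were one trying to upgrade $C^0$ to a genuine H\"older modulus in the $(r,\theta)$ coordinates, but the lemma asserts $C^0$ and no more.
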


\begin{proof}
By hypothesis $\psi = z \psi_0$ for a $\widetilde\Delta$ smooth function $\psi_0$, so \eqref{Dc13} is absolutely integrable, and $K\psi$ is continuous up to the boundary by the dominated convergence theorem. 
\end{proof}

We are now in a position to give,

\begin{proof}[Proof of \thref{Dc:claim1}]
In the notation of \thref{Dc:not2}, we have,
\begin{equation}
\label{Dc14}
\Psi = \psi \frac{d\overline z}{\overline z} = \left( \sum^k_{i=0} \Pi_i (\psi) + R_k \, \psi \right) \frac{d\overline z}{\overline z}
\end{equation}
while a convenient way to write \eqref{Dc13} is,
\begin{equation}
\label{Dc15}
K_s (\gamma \, d\overline z) = \int_{\widetilde\Delta} \gamma (w+s) \frac{d \overline w \, dw}w
\end{equation}
albeit that one must be carefull that $\widetilde\Delta$-smooth, etc, is a statement not about $\gamma$ but $\overline z \gamma$. In particular by \eqref{Dc12} and \eqref{Dc14},
\begin{equation}
\label{Dc16}
\frac{R_k \psi}{\overline z} = z^k \cdot \{\widetilde\Delta \ \mbox{continuous}\} \, .
\end{equation}
As such we can safely differentiate $k$-times in $s$ under the integral sign of \eqref{Dc15}, while applying \thref{Dc:lem1} to conclude that,
\begin{equation}
\label{Dc17}
K \left( R_k \, \psi \, \frac{d\overline z}{\overline z} \right)\ \mbox{is uniformly $C^k$ in $\widetilde\Delta$ up to the boundary.}
\end{equation}
Better still the other terms in \eqref{Dc14} have, in the notation of \eqref{Dc2}, the form,
\begin{equation}
\label{Dc18}
\Pi_i (\psi) \frac{d\overline z}{\overline z} = \left(z^i \psi_i (\theta) \right) \frac{d\overline z}{\overline z} \, , \quad \psi_i \in C^{\infty}(E) \,.
\end{equation}
As such if we choose a point, $\bullet \in E$, with arguably a choice of $\bullet \notin K$ being preferable, then,
\begin{eqnarray}
\label{Dc19}
\overline\partial \left( z^i \int_{\bullet}^{\theta} \psi_i (\theta) \, d\theta \right) &= &z^i \, \overline D \left( \int_{\bullet}^{\theta} \psi_i (\theta) \, d\theta \right) \frac{d\overline z}{\overline z} \nonumber \\
&= &-\Pi_i (\psi) \frac{d\overline z}{\overline z}
\end{eqnarray}
and whence \eqref{Dc6} from \eqref{Dc17} and \eqref{Dc19}. 
\end{proof}

The next thing we need to do is to modify the $C^k$-solutions of \eqref{Dc6} so that they fit together into a $C^{\infty}$ solution, which, requires a further,

\begin{claim}\thlabel{Dc:claim2}
Let $K \subset E$ be a closed interval about a point $e \in E$, and $h$ a holomorphic function on a neighbourhood of $\theta \in K$, $r \leq R$ which is $C^{k+2}$, $k \geq 0$, up to the boundary, then for every strictly smaller $K \supset K' \ni e$, $R' < R$, and $\varepsilon > 0$ there is a section, $g$, of ${\mathcal O}_{\widetilde\Delta}$ on a neighbourhood of $\theta \in K'$, $r \leq R'$ such that,
\begin{equation}
\label{Dc20}
\sup_{\theta \in K' , r \leq R'} \Vert h-g \Vert_k \leq \varepsilon
\end{equation}
where $\Vert \ \Vert_k$ is the maximum of any partial derivative $\partial^{i+j} / \partial r^i \, \partial \theta^j$ of order $i+j \leq k$.
\end{claim}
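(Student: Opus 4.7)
The plan is to approximate $h$ by a small \emph{holomorphic translate}: set $g(z):=h(z+\delta)$, where $\delta:=\delta_0\,\exp(\theta_e)$ points in the direction $\theta_e\in E$ of $e$, with the positive real parameter $\delta_0$ to be tuned at the end. The whole point of this trick is that on one hand the map $z\mapsto z+\delta$ extends smoothly across the exceptional divisor $E$ of the real blow up, while on the other it evaluates $h$ only at points of $\widetilde\Delta\setminus E$, where $h$ is honestly holomorphic (hence $C^\infty$). So $g$ will automatically gain the regularity we need, without spoiling holomorphicity.

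First I would verify that $g$ is a section of $\mathcal O_{\widetilde\Delta}$ on some neighbourhood of $\{r\le R',\ \theta\in K'\}$. Writing $z+\delta=\exp(\theta_e)\bigl(r\exp(\theta-\theta_e)+\delta_0\bigr)$, the modulus is between $\delta_0$ and $R'+\delta_0$, so is $\le R$ as soon as $\delta_0<R-R'$; and a direct $\arctan$ computation, using $K'\subsetneq K$ strictly and hence $\cos(\theta-\theta_e)\ge0$ on $K'$, shows that the argument of $z+\delta$ is pulled \emph{toward} $\theta_e$ relative to that of $z$, so remains in $K'\subset K$. Thus $z+\delta$ runs over a compact subset of the \emph{interior} $\{0<r\le R,\ \theta\in K\}$. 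Chain rule makes $g$ holomorphic in $z$ on $\widetilde\Delta\setminus E$, and since the shifted point $z+\delta$ stays a fixed positive distance from $E$ (where $h$ was merely $C^{k+2}$), the composition is in fact $C^\infty$ in $(r,\theta)$ up to the boundary — so $g\in\mathcal O_{\widetilde\Delta}$.

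Next I would prove the $C^k$ estimate via
$$g(z)-h(z)\;=\;\delta\int_0^1 h'(z+t\delta)\,dt,\qquad h':=dh/dz.$$
For any multi-index $(a,b)$ with $a+b\le k$, the chain rule expresses $\partial_r^{\,a}\partial_\theta^{\,b}\bigl[h'(z+t\delta)\bigr]$ as a sum of smooth, uniformly bounded multiples of $h^{(j+1)}(z+t\delta)$ for $j\le a+b$. Holomorphicity of $h$ implies the identity $\partial_z h=\exp(-\theta)\partial_r h$ (the potential $1/r$ singularity in the Wirtinger formula cancels by $\bar Dh=0$, cf.~\eqref{Dc4}), so inductively each $h^{(j+1)}$ is a bounded operator in the $\partial_r$-direction up to a smooth unit factor, and is therefore bounded up to $E$ whenever $h$ admits $j+1$ continuous $\partial_r$-derivatives on the closure. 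The hypothesis $h\in C^{k+2}$ comfortably supplies this for $j\le k$. We obtain
$$\sup_{\{r\le R',\,\theta\in K'\}}\|g-h\|_k\;\le\;C\,\delta_0\,\|h\|_{C^{k+1}}$$
with $C$ depending only on $K$ and $R$, and the bound $\le\varepsilon$ follows by taking $\delta_0$ sufficiently small.

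I expect the only real obstacle to be the geometric bookkeeping in the first step — verifying that the strict inclusions $K'\subsetneq K$ and $R'<R$ leave enough slack for the translate to remain in the compact interior of the source region. Once that is in place the rest is essentially automatic: the shift buys regularity because $h$ is $C^\infty$ wherever it is holomorphic, and it buys smoothness across $E$ because translation by a constant extends smoothly to the real blow up.
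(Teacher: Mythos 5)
Your translation trick is a genuinely different route from the paper's. The paper first subtracts a polynomial to reduce to $h=O(|z|^{k+2})$, then builds $g$ as a Cauchy integral $\int_{\partial(K\times[0,R])}\gamma(z)\,dz/(z-s)$ against a flat boundary density $\gamma$ approximating $h$ in $\ell_1$, and estimates $|g^{(i)}-h^{(i)}|$ by splitting the contour near and far from $E$. Your approach replaces all of this with $g(z)=h(z+\delta)$, which is holomorphic because translation is biholomorphic and smooth across $E$ because translation by a fixed $\delta\neq0$ moves any $z\in\rho^{-1}(K'\times[0,R'])$ to a point of $\Delta$ where $h$ is genuinely holomorphic; the $C^k$ estimate then follows cleanly from the fundamental theorem of calculus and the holomorphic identity $h^{(j)}=\exp(-j\theta)\,\partial_r^j h$. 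When it works, this is shorter and in fact only uses $h\in C^{k+1}$, one fewer derivative than the hypothesis.

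There is, however, a gap in your geometric step. The claim that ``$K'\subsetneq K$ strictly and hence $\cos(\theta-\theta_e)\ge 0$ on $K'$'' is a non-sequitur: strict inclusion says nothing about the length of $K'$. If $K'$ is long and $e$ (equivalently $\theta_e$) is off-centre, $K'$ can contain the antipode $\theta_e+\pi$; taking $\theta=\theta_e+\pi$ and $r=\delta_0$ gives $z+\delta=0$, the very point of $\Delta$ where $h$ is not holomorphic, so $g$ fails to be a section of $\mathcal O_{\widetilde\Delta}$ there. (The argument of $z+\delta$ does lie between $\theta_e$ and $\theta$ irrespective of the sign of $\cos$, so the angular part of your bookkeeping is fine; it is only the lower bound on $|z+\delta|$ that is in jeopardy.) The fix is easy: take $\delta$ to point in the direction of the \emph{midpoint} of $K'$ rather than of $e$. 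Then $\phi:=\theta-\theta_e$ ranges over $[-\ell'/2,\ell'/2]$ with $\ell'=\mathrm{length}(K')<2\pi$ because $K'\subsetneq K\subsetneq E$, so $|\phi|<\pi$, and the minimisation over $r\geq0$ of $|z+\delta|^2=(r+\delta_0\cos\phi)^2+\delta_0^2\sin^2\phi$ yields the uniform lower bound $|z+\delta|\geq\delta_0\min\{1,\sin(\ell'/2)\}>0$. With that replacement the translate stays in a compact subset of the interior of $K\times(0,R)$, and the rest of your argument goes through. (Alternatively, in the only use of \thref{Dc:claim2} in the proof of \thref{Dc:fact1} one is free to shrink $K$ to an interval of length $<\pi$, which makes your original choice of $\theta_e$ harmless.)
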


\begin{proof}
By \eqref{Dc4} the restriction of a holomorphic function to $E$ is constant so there is a polynomial, $z \mapsto p_k(z)$, of degree $k+1$ such that close to the boundary,
\begin{equation}
\label{Dc21}
h = p_k (z) + O(\vert z \vert^{k+2}) \, .
\end{equation}

Furthermore $p_k (z)$ is everywhere a section of ${\mathcal O}_{\widetilde\Delta}$, so without loss of generality we may suppose that it is zero. In particular for $\gamma$ a smooth function on the boundary of $K \times [0,R]$ which vanishes to order $k+1$ on $\partial K \times [0,R]$ close to zero, consider,
\begin{equation}
\label{Dc22}
g(s) := \int_{\partial (K \times [0,R])} \frac{\gamma (z) \, dz}{z-s} \, , \quad s \in {\rm int} (K \times [0,R]) \, .
\end{equation}

In particular the vanishing of $\gamma$ to order $k+1$ at the boundary not only guarantees that \eqref{Dc22} is well defined, but so are all of the derivatives,
\begin{equation}
\label{Dc23}
g^i (s) = \int_{\partial (K \times [0,R])} \frac{\gamma (z) \, dz}{(z-s)^{i+1}} \, , \quad 0 \leq i \leq k+1 \, .
\end{equation}

Better still if around $0$, $\gamma$ is flat on $\partial K \times [0,R]$, then \eqref{Dc22} is, in fact, well defined for all $i \geq 0$, so, it will suffice to find a flat $\gamma$ such that the moduli of all of,
\begin{equation}
\label{Dc24}
h^i (s) - g^i (s) = \int_{\partial (K \times [0,R])} \frac{(h-\gamma)(z)dz}{(z-s)^{i+1}} \, , \ s \in K' {\times} [0,R'] \, , \ 0 \leq i \leq k+1
\end{equation}
are uniformly bounded. To this end fix $\delta > 0$, to be decided, then, the distance in $\Delta$ between $\partial (K \times [0,R])$ and $K' \times [\delta , R']$ is bounded below, so for $s$ in the latter region this is a non-issue. Equally for $\delta$ sufficiently small,
\begin{equation}
\label{Dc25}
\vert h^i (s) \vert \leq O(\delta^{k+2-i}) \, , \quad s \in K \times [0,\delta] \, , \quad 0 \leq i \leq k+1 \, .
\end{equation}

So for $\delta < O(\varepsilon)$ it will suffice to bound the moduli of $g^i (s)$ by $\varepsilon$ for $s \in K' \times [0,\delta]$, or, equivalently of $g^i (0)$ and $g^i (s) - g^i (0)$. To this end, observe that if $\kappa$ is the distance between $K'$ and $K$, then for $z \in \partial K \times [0,R]$, the smallest point on a line from $z-s$ to $z$ is of order at worst,
\begin{equation}
\label{Dc26}
O(\kappa \vert z \vert) \, , \quad s \in K' \times [0,R] \, .
\end{equation}
Consequently we get the estimate,
\begin{equation}
\label{Dc27}
\vert g^i(s) - g^i(0)\vert \ll \frac{\vert s \vert}{\kappa^{i+2}} \int_{\partial (K \times [0,R])} \left\vert \frac{\gamma \, dz}{z^{i+2}} \right\vert \, , \quad 0 \leq i \leq k
\end{equation}
while the bound for $\vert h^i (0) - g^i (0)\vert$ is easier by \eqref{Dc24} and is even valid up to $k+1$. As such,  and bearing in mind that \eqref{Dc21} et seq. allows us to write,
\begin{equation}
\label{Dc28}
h = z^{k+2} h_0 \, , \quad \gamma = z^{k+2} \gamma_0 \, , \quad h_0 , \gamma_0 \in C^0 (\widetilde\Delta) 
\end{equation}
any sufficiently fine approximation of $h_0 \vert_{\partial (K \times [0,R])}$ by $\gamma_0 \vert_{\partial (K \times [0,R])}$ in $\ell_1$, with respect to $\vert dz \vert$, norm on the boundary will yield \eqref{Dc20} for $g$ defined by \eqref{Dc22}, and since this is plainly possible for $\gamma_0$ flat we get that $g$ is a section of ${\mathcal O}_{\widetilde\Delta}$ by \eqref{Dc23} et seq. 
\end{proof}

We can now, therefore, put all this together to give,

\begin{proof} 

[Proof of \eqref{Dc:fact1}]. Fix $e \in E$ and let a section $\Phi$ of ${\mathcal A}_{\widetilde\Delta}^{0,1} (\log E)$ be given over an open $U \ni e$, with $V \subset U$ a neighbourhood of $e$ such that for every $k$ we have $C^k$ solutions of \eqref{Dc6} with $W \subset\subset V$ a neighbourhood such that in the notation of \thref{Dc:claim2}
\begin{equation}
\label{Dc29}
W \subset K' \times [0,R'] \subset K \times [0,R] \subset V \, .
\end{equation}
This allows us to define, inductively, a sequence of $C^{k+2}$ solutions, $\widetilde\varphi_k$, $k \geq 0$, of the $\overline{\partial}$ equation defined on neighbourhoods of a strictly decreasing sequence of compacts $K_k \times [0,R_k] \supsetneqq K' \times [0,R']$ such that in the notation of \eqref{Dc20},
\begin{equation}
\label{Dc30}
\sup_{K_k \times [0,R_k]} \Vert \widetilde\varphi_{k+1} - \widetilde\varphi_k \Vert_k \leq 2^{-k} \, , \quad k \geq 0 \, .
\end{equation}
Specifically $\widetilde\varphi_0$ is any $C^2$ solution, and preceding by induction from $k$ to $k+1$, $k \geq 0$, let $\varphi_{k+1}$ be a $C^{k+3}$ solution of \eqref{Dc6} on a neighbourhood $V_k$ of $K_k \times [0,R_k]$.

\smallskip

As such $h_k := \varphi_{k+1} - \widetilde\varphi_k$ is holomorphic on $V_k$ and $C^{k+2}$ up to the boundary. Consequently if we choose any,
\begin{equation}
\label{Dc31}
K' \times [0,R'] \subsetneqq K_{k+1} \times [0,R_{k+1}] \subsetneqq K_k \times [0 , R_k]
\end{equation}
then we can apply \thref{Dc:claim2} to the latter inclusion of \eqref{Dc31} to find a holomorphic function $g_k$, $C^{\infty}$ up to the boundary, such that,
\begin{equation}
\label{Dc32}
\sup_{K_{k+1} \times [0,R_{k+1}]} \Vert h_k - g_k \Vert_k \leq 2^{-k}
\end{equation}
and whence $\widetilde\varphi_{k+1} := \varphi_{k+1} - g_k$ satisfies \eqref{Dc20}. Plainly, however, such a sequence $\widetilde\varphi_k$ converges to a $C^{\infty}$ function on $W$, from which \thref{Dc:fact1}. 
\end{proof}

\bigskip

{\it En passant} we can pick up a rather considerable,

\begin{bonus}
\thlabel{Dc:bonus1}

Let $K \subset E$ be compact, $i_K$ it's inclusion in $\widetilde\Delta$ then the cohomology in degree 1 of,
\begin{equation}
\label{Dc33}
(i_K)_* \, i_K^* \, {\mathcal A}^0_{\widetilde\Delta} \xrightarrow{ \ \overline\partial \ } (i_K)_* \, i_K^* \, {\mathcal A}^{0,1}_{\widetilde\Delta} (\log E)
\end{equation}
is equal to zero if $K \ne E$, and otherwise is (in the induced topology) isomorphic via the integrals over $E$ of the projectors $\Pi_i$ of \eqref{Dc10} to,
\begin{equation}
\label{Dc34}
\varprojlim_i \int_E (z^{-i} \, \Pi_i) \vert_E \frac{d\overline z}{\overline z} : {\rm Coker} (\overline\partial) \xrightarrow{ \ \sim \ } {\mathbb C} [[z]] \frac{d\overline z}{\overline z} \, .
\end{equation}
In particular (since the $\Pi_i$ are continuous) even though whether $(i_K)_* \, i_K^*$ is separated in its natural topology as an $LF$, i.e. direct limit of Fr\'echet spaces,
\nomenclature[D]{$LF$ \hyperref[Dc:bonus1]{space}}{} 
 is a delicate question, cf. \thref{Dc:CorExtra1}, the cokernel of \eqref{Dc33}, in the quotient of the aforesaid direct limit topology, is always separated.
\end{bonus}

\begin{proof}
We've just finished the case $K \ne E$. However the only place that we actually used this was in \eqref{Dc18} to solve $\overline D$-equation (actually the $d$-equation in disguise) for,
\begin{equation}
\label{Dc35}
\left( z^{-i} \, \Pi_i (\psi)\right) \frac{d\overline z}{z} \vert_E
\end{equation}
so as soon as all of these are zero the proof goes through mutatis mutandis -- in fact easier if $K=E$ since \thref{Dc:claim2} holds on neighbourhoods of $E \times R'$ because holomorphic and $C^0$ in a neighbourhood of $E$ just means holomorphic in a neighbouhood of the origin. As such we already have the inclusions,
\begin{equation}
\label{Dc36}
{\rm Ker} \left( \varprojlim_i \int_E (z^{-i} \, \Pi_i) \vert_E \frac{d\overline z}{\overline z} \right) \subseteq {\rm Im} (\overline\partial) \, .
\end{equation}

At the same time if for some $j \geq 0$ we can solve,
\begin{equation}
\label{Dc37}
\overline\partial (\varphi) = z^j \, \frac{d\overline z}{\overline z} \, , \quad \mbox{i.e.} \ \overline D (\varphi) = z^j
\end{equation}
then from the commutation relations \eqref{Dc11}, we must in fact have,
\begin{equation}
\label{Dc38}
\overline D \left( z^{-j} \, \Pi_j \, \varphi \right) = 1
\end{equation}
i.e. we've found a smooth function on $E$ whose derivative is the Haar measure, which is nonsense. As such we get the reverse inclusion in \eqref{Dc36}, while the fact that \eqref{Dc34} is onto is the content of Borel-Ritt Theorem. 
\end{proof}

In the same vein it seems usefull to note,

\begin{cor}\thlabel{Dc:cor1}
Let $\rho : \widetilde X \to X$ be the real blow up of any Riemann surface in any discrete set of points $Y$ then in the obvious extension of \thref{Dc:not2}, the complex,
\begin{equation}
\label{Dc39}
\Gamma (\widetilde X , {\mathcal A}_{\widetilde X}^0) \underset{\overline\partial}{\longrightarrow} \Gamma \left(\widetilde X , {\mathcal A}_{\widetilde X}^{0,1} (\log E)\right)
\end{equation}
calculates the cohomology of ${\mathcal O}_{\widetilde X}$, which in degree 1 is an extension,
\begin{equation}
\label{Dc40}
0 \longrightarrow {\rm H}^1 (X,{\mathcal O}_X) \longrightarrow {\rm H}^1 (\widetilde X , {\mathcal O}_{\widetilde X}) \longrightarrow \prod_{y \in Y} {\mathbb C} [[ z_y ]] \, d \, \overline z_y \longrightarrow 0
\end{equation}
for $z_y$ a local coordinate at $y$. In particular $\overline\partial$ is closed in the standard Fr\'echet topology of either side of \eqref{Dc39}.
\end{cor}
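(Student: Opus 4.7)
The plan is to combine the local resolution of \thref{Dc:fact1} with the Leray spectral sequence for $\rho$, reducing everything to the local computation already carried out in \thref{Dc:bonus1}. First I would note that the sheaves ${\mathcal A}^0_{\widetilde X}$ and ${\mathcal A}^{0,1}_{\widetilde X}(\log E)$ admit partitions of unity (they are pulled back, off $E$, from the standard smooth structure on $X$, and they extend past $E$ in local polar coordinates) and are therefore fine, hence acyclic on the paracompact $\widetilde X$. Exactness of
$$0 \longrightarrow {\mathcal O}_{\widetilde X} \longrightarrow {\mathcal A}^0_{\widetilde X} \xrightarrow{\; \overline\partial\; } {\mathcal A}^{0,1}_{\widetilde X}(\log E) \longrightarrow 0$$
at each point is given by the ordinary Dolbeault lemma off $E$ and by \thref{Dc:fact1} near $E$ in polar coordinates. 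Thus the complex \eqref{Dc39} of global sections computes ${\rm H}^\bullet(\widetilde X, {\mathcal O}_{\widetilde X})$, which is the first assertion.

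Next I would compute $R^q\rho_*{\mathcal O}_{\widetilde X}$ for $q\geq 0$: off $Y$ the map $\rho$ is an isomorphism, so these sheaves are supported on the discrete set $Y$. A section of $\rho_*{\mathcal O}_{\widetilde X}$ on a neighbourhood $\widetilde\Delta_y$ of $E_y=\rho^{-1}(y)$ is, by definition, a holomorphic function on $\widetilde\Delta_y\setminus E_y$ smooth up to $E_y$; such a function is bounded, so Riemann's removable singularity theorem yields $\rho_*{\mathcal O}_{\widetilde X}={\mathcal O}_X$. For $R^1\rho_*{\mathcal O}_{\widetilde X}$, cofinality of open neighbourhoods of the form $\rho^{-1}(U)$ of $y$ and the local resolution above identify its stalk at $y$ with ${\rm H}^1$ of the complex appearing in \thref{Dc:bonus1} for $K=E_y$; by the isomorphism \eqref{Dc34} of op.\ cit., this stalk is the formal power series module $\mathbb C[[z_y]]\,d\overline z_y$, with all higher direct images vanishing because the resolution has length one.

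Finally, since $R^1\rho_*{\mathcal O}_{\widetilde X}$ is a skyscraper sheaf supported on the discrete $Y$, its higher cohomology on $X$ vanishes and the Leray spectral sequence for $\rho$ degenerates into the short exact sequence
$$0\to {\rm H}^1(X,\rho_*{\mathcal O}_{\widetilde X})\to {\rm H}^1(\widetilde X,{\mathcal O}_{\widetilde X})\to {\rm H}^0(X,R^1\rho_*{\mathcal O}_{\widetilde X})\to 0,$$
which, in view of the previous paragraph, is exactly \eqref{Dc40}. The assertion that $\overline\partial$ is closed in the Fr\'echet topology on either side of \eqref{Dc39} is the final sentence of \thref{Dc:bonus1}: the cokernel is separated in the induced quotient topology, equivalently the image of $\overline\partial$ is closed.

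The only real obstacle is the stalk identification in step two, namely checking that the bonus survives passage to the limit over shrinking neighbourhoods of $y$ and that the identification is independent of the auxiliary choice of polar coordinates beyond the choice of $z_y$. Both points are immediate from the intrinsic description of \eqref{Dc34} via the projectors $\Pi_i$, which depend only on the holomorphic coordinate $z_y$.
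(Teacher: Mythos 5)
Your derivation of the exact sequence \eqref{Dc40} via acyclicity of the ${\mathcal A}^\bullet$ sheaves and the Leray spectral sequence for $\rho$ is correct and is essentially the paper's route: $\rho_*{\mathcal O}_{\widetilde X}={\mathcal O}_X$ by removable singularities, the stalk of $R^1\rho_*{\mathcal O}_{\widetilde X}$ at $y$ is identified with ${\mathbb C}[[z_y]]\,d\overline z_y$ by \eqref{Dc34}, the higher direct images vanish, and Leray degenerates because $R^1\rho_*$ is skyscraper.

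The gap is in your final sentence. You appeal to the last sentence of \thref{Dc:bonus1}, but that statement concerns the cokernel of the germ complex $(i_{E_y})_*\,i_{E_y}^*\,{\mathcal A}^\bullet_{\widetilde\Delta}$ near a single exceptional circle, in its $LF$ (direct limit of Fr\'echet) topology. What you must control is the cokernel of the \emph{global} Fr\'echet complex \eqref{Dc39}. The identification of that cokernel with the extension \eqref{Dc40} which you obtained from Leray is algebraic, not topological; an algebraic extension of two separated spaces need not be separated, so the separatedness of the local $LF$ cokernel does not by itself transfer to the global Fr\'echet one. Concretely, nothing you have said excludes the possibility that the image of $\overline\partial$ is a dense but proper subspace of the kernel of the residue map.

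The missing step, which is exactly the argument the paper gives, is: the product of residue maps $\Gamma(\widetilde X,{\mathcal A}^{0,1}_{\widetilde X}(\log E))\to\prod_{y\in Y}{\mathbb C}[[z_y]]\,d\overline z_y$ built from the $\Pi_i$ of \eqref{Dc34} is \emph{continuous}, hence its kernel $K$ is closed; the image of $\overline\partial$ lies in $K$ with codimension $h^1(X,{\mathcal O}_X)$, which is finite (zero if $X$ is open, finite if $X$ is compact); and a continuous linear map of Fr\'echet spaces whose image has finite codimension in a closed subspace necessarily has closed image, the Dieudonn\'e--Schwartz type input already used in the proof of \thref{cd:lem1}. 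With that addendum your argument is complete.
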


\begin{proof}
Just as at the end of the proof of \thref{Dc:bonus1}, the image of $\overline\partial$ mus be contained in the kernel, $K$, of the product over $y \in Y$ of the residue maps in \eqref{Dc34}, and it's onto since the Leray spectral sequence degenerates for trivial reasons. As such the image of $\overline\partial$ has finite codimension, ${\rm h}^1 (X,{\mathcal O})$, in $K$, so it's closed.
\end{proof}

All of which is, however, ony a prelude to the intervention of compact support which will require,

\begin{morenot}
\thlabel{Dc:not3}
Let $U$ be an open neighbourhood of $e \in E$ of the form $K^0 \times [0,R)$ for $e \in K^0 \subset E$ connected, with $r \mapsto b(r)$ a $C^{\infty}$ bump function of $r \geq 0$ which is identically 1 close to zero, and identically zero in a neighbourhood of $r < R$ then, if the subscript $c$ denotes compact support, we define projectors,
\begin{equation}
\label{Dc41}
P_i := b \, \Pi_i : \Gamma_c (U , {\mathcal A}^0_{\widetilde\Delta}) \longrightarrow \Gamma_c (U , {\mathcal A}^0_{\widetilde\Delta})
\end{equation}
which again we extend to $\Gamma_c (U , {\mathcal A}^{0,1}_{\widetilde\Delta} (\log E))$ by the formula,
\nomenclature[N]{Taylor series,}{compact support, projectors \hyperref[Dc42]{$P$}$_i$} 
\begin{equation}
\label{Dc42}
P_i \left( \psi \frac{d \overline z}{\overline z} \right) := P_i (\psi) \frac{d \overline z}{\overline z} \, , \quad i \geq 0 \, .
\end{equation}
Notice, however, we do not simply multiply the remainder operators, $R_{\bullet}$, of \eqref{Dc10} by the bump, but rather use that the $P_i$ are honest projectors to define,
\nomenclature[N]{Taylor series,}{compact support, remainder \hyperref[Dc43]{$Q$}$_k$} 
\begin{equation}
\label{Dc43}
Q_k = 1 - \sum^k_{i=0} P_i \, , \quad k \geq 0 \, .
\end{equation}
Needless to say, therefore, the commutativity relations \eqref{Dc11} collapse, but in a way that is far from deadly, i.e. the commutators,
\begin{equation}
\label{Dc44}
\left[P_i , \overline D\right] = - \overline D (b) \, \Pi_i \, , \quad \left[P_i , \overline\partial\right] = - \overline\partial (b) \Pi_i
\end{equation}
are flat around the boundary $E \cap U$ with compact support in $U$.
\end{morenot}

We can usefully employ this formalism in proving,

\begin{fact}
\thlabel{Dc:Fact2}
Let everything be as in \thref{Dc:not3} albeit with any open $U \subseteq \widetilde\Delta$, then, we have a closed map,
\begin{equation}
\label{Dc45}
\Gamma_c (U , {\mathcal A}^0_{\widetilde\Delta}) \underset{\overline\partial}{\longrightarrow} \Gamma_c (U , {\mathcal A}^{0,1}_{\widetilde\Delta} (\log E)) \, .
\end{equation}
\end{fact}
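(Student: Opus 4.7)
The plan is to show that $\mathrm{Im}(\overline\partial)$ coincides with the intersection of kernels of an explicit locally finite family of continuous linear functionals on $\Gamma_c(U, {\mathcal A}^{0,1}_{\widetilde\Delta}(\log E))$. By a partition of unity subordinate to a locally finite cover of $U$ by opens that are either disjoint from $E$ or of the form $V = K^0 \times [0,R)$ with $K^0 \subseteq E$ compact (as in \thref{Dc:not3}), I would reduce to local solvability plus an obstruction analysis. Off $E$, where $\widetilde\Delta = \Delta \setminus \{0\}$, closedness of $\overline\partial$ on compactly supported $(0,1)$-forms on an open subset of a Riemann surface is classical and the obstructions are the standard period integrals. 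Near $E$, for $\Phi \in \Gamma_c(V, {\mathcal A}^{0,1}_{\widetilde\Delta}(\log E))$ I would apply the projector decomposition $\Phi = \sum_{i=0}^k P_i(\Phi) + Q_k(\Phi)$ of \thref{Dc:not3}.

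For the remainder $Q_k(\Phi)$, which vanishes to order $k+1$ along $E$, the Cauchy-kernel construction of \thref{Dc:lem1} combined with the holomorphic-approximation/gluing scheme of \thref{Dc:claim2} and the convergence argument in the proof of \thref{Dc:fact1} produces a $C^{\infty}$ primitive whose support stays inside $V$, with uniform $C^{\infty}$ control. For each polynomial piece $P_i(\Phi) = b(r) z^i \psi_i(\theta)\,\tfrac{d\bar z}{\bar z}$ the explicit primitive $b(r) z^i \int_{\bullet}^{\theta} \psi_i(\theta')\,d\theta'$ has compact support automatically when $K^0 \subsetneq E$ (choose the base-point $\bullet \notin K^0$); when $K^0$ wraps a full connected component of $U \cap E$, the single local obstruction is the vanishing of the moment $\int_E z^{-i}\Pi_i(\Phi)\,\tfrac{d\bar z}{\bar z}$. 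Conversely, each such moment annihilates $\mathrm{Im}(\overline\partial)$: if $\Phi = \overline\partial\phi$ with $\phi$ compactly supported then the commutator $[\overline\partial, P_i]\phi = \overline\partial(b)\,\Pi_i(\phi)$ from \eqref{Dc44} is supported off $E$, so Stokes plus compact support of $\phi$ make the moment vanish identically.

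The main obstacle will be the bookkeeping of LF continuity and local finiteness of these moment functionals. Each $\Phi \mapsto \int_E z^{-i}\Pi_i(\Phi)\,\tfrac{d\bar z}{\bar z}$ is continuous on every defining Fr\'echet step $\Gamma_K(U, {\mathcal A}^{0,1}_{\widetilde\Delta}(\log E))$, $K \subset U$ compact, because the $\Pi_i$ are continuous (cf.\ the commutation relations collected in and after \eqref{Dc10}--\eqref{Dc11}), and for any fixed such $K$ only finitely many indices $i$ and finitely many components of $U \cap E$ yield a non-trivial functional (namely those components $K$ fully engulfs). Thus the locus of $\Phi$ satisfying all these moment conditions together with the classical periods away from $E$ is closed in the LF topology, and by the local construction it coincides with $\mathrm{Im}(\overline\partial)$; assembling the local primitives back by the partition of unity (and absorbing the overlap corrections, which are smooth and compactly supported away from $E$, into a classical $\overline\partial$-solve) yields the desired global $\phi$ and completes the closedness.
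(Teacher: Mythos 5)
Your overall strategy---exhibiting $\mathrm{Im}(\overline\partial)$ as the common zero locus of an explicit locally finite family of continuous functionals, hence closed---is genuinely different from the paper's. The paper instead takes a sequence $\varphi_n$ with $\overline\partial\varphi_n\to\Psi$ in the $LF$ topology, controls the supports of the $\varphi_n$ directly (Claims \thref{Dc:ClaimExtra1} and \thref{Dc:ClaimExtra2} show they lie in a fixed compact), and proves via the explicit kernel $K$ of \thref{Dc:lem1} that $\varphi_n$ itself converges in $\Gamma_c$. The entire weight of the paper's argument is on support control, which your proposal never touches.

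The gap in your execution is the patching step. You want to solve $\overline\partial\phi_\alpha=\rho_\alpha\Phi$ on each $U_\alpha$ with $\phi_\alpha\in\Gamma_c(U_\alpha)$ and assemble a global primitive; but (i) even if $\Phi$ satisfies the moment conditions globally, $\rho_\alpha\Phi$ need not satisfy them on $U_\alpha$, since the projectors $\Pi_i$ do not commute with multiplication by $\rho_\alpha$; and (ii) the overlap corrections produce a $\overline\partial$-problem with compact support that is of exactly the same kind as the one you began with, so ``absorbing [them] into a classical $\overline\partial$-solve'' is circular unless you already know compactly supported solvability---i.e.\ closedness. More fundamentally, $\mathrm{Im}(\overline\partial)$ in $\Gamma_c(U)$ typically has infinite codimension: the obstruction space is, morally, $H^0(U,\omega_{\widetilde\Delta})$ (cf.\ the duality being established in \thref{Dc:not4}--\thref{Dc:Fact4}), and your ``classical periods away from $E$ plus moment conditions near $E$'' only characterize the image if you already have Serre-type duality for $\widetilde\Delta$, which the appendix is in the process of constructing from this very Fact. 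The argument as written is therefore not self-contained; you would need either to carry out the patching inductively with quantitative support and norm control (essentially rebuilding the paper's Claims \thref{Dc:ClaimExtra1}--\thref{Dc:ClaimExtra3}), or to first establish the duality statement you are implicitly invoking.
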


\begin{proof}
Without loss of generality $U$ is connected, $U \cap E \ne \varphi$, and let $(\varphi_n)$ be a sequence of functions on the left supported in compacts $C_n$ such that $\overline\partial \, \varphi_n$ converges to a form $\Psi = \psi \frac{d \overline z}{\overline z}$ with support in a compact $C$. Then, for possibly different $C_n$ and $C$ the same is true whether of $P_0 \, \varphi_n$ or $Q_0 \, \varphi_n$. Irrespectively by Hahn-Banach a subspace is closed if and only if any of its finite dimension subspaces are closed, so, we may equally suppose that all of the $\varphi_n$ belong to,
\begin{equation}
\label{Dc46}
\Gamma_c (U , {\mathcal A}^0_{\widetilde\Delta} \, {\mathfrak m} (*))
\end{equation}
for $*$ some fixed point in the boundary $E$. Indeed if $U \cap E \ne E$, and $* \notin U \cap E$ this is even of codimension $0$, but, otherwise, we need it to ensure that the operator,
\begin{equation}
\label{Dc46.bis}
\Gamma_c (U \cap E , {\mathcal A}^1_E) \longrightarrow \Gamma (U \cap E , {\mathcal A}^0_E) : \psi \longmapsto \left\{ x \longmapsto \int_*^x \psi \right\}
\end{equation}
is inverse to differentiation. Certainly, therefore, the result of applying the operator of \eqref{Dc46.bis} to any $\varphi_n \vert_E$ is a function with compact support in $U \cap E$, but, more is true, to wit,

\begin{claim}
\thlabel{Dc:ClaimExtra1}
There is a compact subset $D \subset U \cap E$ depending only on the sequence $\{\varphi_n\}$ such that every $\varphi_n \vert_E$ has support in $D$.
\end{claim}

\begin{proof}
By the definition of convergence in the $LF$ space to the right of \eqref{Dc45}, there is a compact set $D' \subset U$ containing the support of all of the $\overline\partial \, \varphi_n$, while the proposition is trivial if $U \cap E = E$, so equally we can choose $*$ in \eqref{Dc46.bis} off $D' \cap U$. As such any connected component $V_i$ of $U \cap E$ is an interval, while there is a smallest closed interval $D_i \subset\subset V_i$ containing the compact subset $D' \cap V_i$. Now on every $V_i \backslash D'$, so a fortiori on every $V_i \backslash D_i$ every $\varphi_n \vert_E$ is locally constant. There is, however, a compact subset $C_n \subset U$ such that $\varphi_n \vert_{V_i}$ is zero off $V_i \cap C_n$, so it is, in fact, zero on $V_i \backslash D_i$. As such we can take $D = \cup_i \, D_i$, which is compact because $D'$ is.
\end{proof}

In order to deal with this level of generality (which is way beyond the only case of interest to us, i.e. $E$ complemented in finitely many points) we should, take $b$ in \thref{Dc:not3} to be a sum of radial bump functions adapted to the $D_i$ so that the operators of \eqref{Dc41} take values in compactly supported functions in $U$. Irrespectively, the formalism remains the same, and since the $\varphi_n \vert_E$ converge to the operator of \eqref{Dc46.bis} applied to $\psi \vert_E$ we can suppose, without loss of generality, that $\varphi_n = Q_0 \, \varphi_n$. Now the advantage of this situation is that for $K$ as in \eqref{Dc13} all of the $K (\overline\partial \, \varphi_n)$ are defined, and, since $\varphi_n$ has compact support, equal to $-\varphi_n$, so:
\begin{equation}
\label{Dc48}
Q_n = Q_0 \, \varphi_n \longrightarrow -K \psi \ (= - K Q_0 \, \psi) \quad \mbox{in} \quad C^0 (U) \, .
\end{equation}

Again, however, this is only on compact subsets of $U$, and we need to prove $LF$ convergence similar to \thref{Dc:ClaimExtra1}, i.e.

\begin{claim}
\thlabel{Dc:ClaimExtra2}
There is a compact subset $D \subset \subset U$ containing the support of all of the $\varphi_n$ and $K\psi$.
\end{claim}

\begin{proof}
As in \thref{Dc:ClaimExtra1} start from a compact set $D'$ containing the support of all of the $\overline\partial \, \varphi_n$ and $\psi$. Next choose a very general complete distance function, then by transversality and Morse's lemma, $U$ is exhausted by relatively compact connected open sets $U_t$ (i.e. distance to some base point $<t$). In particular, we may choose $t \gg 0$ such that $D' \subseteq \overline U_t$. As such if there were an $n \in {\mathbb Z}_{\geq 0}$ such that $\varphi_n$ weren't supported in $\overline U_t$ then $\varphi_n$ would be holomorphic on $U \backslash \overline U_t$ with support contained in some $\overline U_{\!s}$, where $s > t$ is minimal with this property. As such $\varphi_n$ is identically zero on an open neighbourhood of $\partial U_s$, and since this is compact we have the absurdity that $s$ isn't minimal.
\end{proof}

Beyond this we have the not inconsiderable,

\begin{bonus}
\thlabel{Dc:ClaimExtra3}
The sequence $\varphi_n$, irrespectively of whether it belongs to the subspace of \eqref{Dc46}, converges in the $LF$ space $C_c^0(U)$.
\end{bonus}

\begin{proof}
If $U \cap E \ne E$ we've already proved this, and otherwise we've proved that if,
\begin{equation}
\label{Dc48.bis}
\overline{\overline\partial \, \Gamma_c (U , {\mathcal A}^0_{\widetilde\Delta} {\mathfrak m}(*))} \ni x \subseteq C_c^{\alpha} (U) \, ,
\end{equation}
for some $\alpha > 0$ (otherwise \eqref{Dc48} needn't converge) there is a continuous $y$ with compact support such that,
\begin{equation}
\label{Dc48.bis.2}
x = \overline \partial \, y \quad \mbox{and} \quad y(*) = 0 \, .
\end{equation}
Consequently if in the situation that $U \cap E = E$ we chose $\gamma$ to be a bump function (in both $r$ and $\theta$ directions) such that $\gamma (*) = 1$ then $\overline\partial$ of,
\begin{equation}
\label{Dc48.bis.3}
(\varphi_n - \varphi_n (*) \, \gamma) + \varphi_n (*) \, \gamma
\end{equation}
converges in rightmost space of \eqref{Dc48.bis} modulo the leftmost. In this quotient, however, $\overline\partial \, \gamma \ne 0$, since, otherwise by \eqref{Dc48.bis.2} we'd have a non-zero holomorphic function on $U$ with compact support. Consequently by \eqref{Dc48.bis.3} the $\varphi_n(*)$ converge, which was the outstanding issue in the $U \cap E = E$ case. 
\end{proof}

As such we can quickly conclude to $C_c^k (U)$ for all $k$. Indeed if we write our data as,
\begin{equation}
\label{Dc48.bis.4}
\overline D (\varphi_n) \longrightarrow \psi \in \Gamma_c (U,{\mathcal A}^0_{\widetilde\Delta})
\end{equation}
then from the commutator relations,
\begin{equation}
\label{Dc48.bis.5}
\left[ \overline D , \frac\partial{\partial \theta} \right] = \left[ \overline D , e^{\theta} \frac\partial{\partial r}\right] = 0 \, .
\end{equation}

We automatically get that for $\delta$ any differential operator of order $k$, $\delta \, \varphi_n$ converge $C^0$ in the same compact, i.e. $t$ in the proof of \thref{Dc:ClaimExtra2} is independent of~$\delta$.
\end{proof}

We can usefully note that the proof works in maximal generality, i.e.

\begin{furtherfact}
\thlabel{Dc:FactExtra1}
Let $\rho : \widetilde X \to X$ be the real blow up of a Riemann surface in a discrete set of points $Z \subseteq X$ with $U \subseteq \widetilde X$ open and $V$ a vector bundle on $X$ then for $E$ the total exceptional divisor, the map,
\begin{equation}
\label{Dc48.bis.bis}
\Gamma_c (U , V \otimes {\mathcal A}^0_{\widetilde X}) \xrightarrow{ \ \overline\partial \ } \Gamma_c (U, V \otimes {\mathcal A}^{0,1}_{\widetilde X} (\log E))
\end{equation}
is closed, and even an embedding unless $U = \widetilde X$ is compact.
\end{furtherfact}

\begin{proof}
The proposition is standard elliptic theory for $Z = \varphi$, so suppose otherwise and observe the following division of cases,
\begin{enumerate}
\item[(a)] $U = \widetilde X$, compact.
\item[(b)] $U \ne \widetilde X$, but $\widetilde X$ compact.
\item[(c)] Otherwise.
\end{enumerate}
Now in case (c), $V$ is trivial, and, cf. the proof of \thref{Dc:ClaimExtra2}, any compact subset of $U$ is contained in a relative compact $U_t$ where we may find an inverse to the Laplacian, i.e.
\begin{equation}
\label{Dc48.bis.6}
{\rm Id} = \overline\partial \, \partial \, G = G \, \overline\partial \, \partial
\end{equation}
so that in these terms the operator $K$ of \eqref{Dc13} is,
\begin{equation}
\label{Dc48.bis.7}
- G \, \partial
\end{equation}
and of course $U_t \cap Z$ is finite. At the other extreme, we can simply treat case (a) like \thref{Dc:cor1} and ignore it. The nuisance case is (b), where we have to worry about harmonic projection, $P$, so, a priori we only have the formula,
\begin{equation}
\label{Dc48.bis.8}
-K \, \overline\partial \, \varphi := - G \, \overline\partial^* (\overline\partial \, \varphi) = ({\rm Id} - P) \, \varphi \, .
\end{equation}
Now in either case choose points $*_z$ in the fibre of the exceptional divisor over $z \in Z$, with the specification that $*_z \notin U_z$ if this is possible. In particular in the nuisance case (b) there is at least one point $\infty \in Z$ where this can be done, and better still for any $m \gg 0$ sufficiently large the $\Delta_{\overline\partial}$-harmonic projector on smooth sections of,
\begin{equation}
\label{Dc48.bis.9}
E(-m \, \infty)
\end{equation}
is zero. As such we first aim to prove that $\overline\partial$ is closed on the finite codimension subspace,
\begin{equation}\label{Dc48.bis.10}
\begin{split}
&\bigcap_{z \in Z} \Gamma (U , V \otimes {\mathcal A}^0_{\widetilde X} \, {\mathfrak m}(z)) \, , \ \mbox{case (c),} \\&\bigcap_{z \ne \infty} \Gamma (U , V \otimes {\mathcal A}^0_{\widetilde X} \, {\mathfrak m}(*_z) \cap {\mathfrak m} (*_{\infty})^m) \, , \ \mbox{case (b).}
\end{split}
\end{equation}
In the former case there is nothing to be done beyond replacing the operator of \eqref{Dc46.bis} by a sum over $z \in Z$, and otherwise everything is formally the same. In the latter case we employ the operators of \eqref{Dc46.bis} to the first $m$-terms of a sequence of functions $\varphi_n$, such that $\overline\partial \, \varphi_n \to \Psi$, by way of the relations \eqref{Dc48.bis.5} to deduce that in the fibre over $\infty$ the operators $\Pi_i \, \varphi_n$, \eqref{Dc10}, $0 \leq i \leq m$, converge, and whence $\overline\partial \, (Q_m \, \varphi_n)$ converge by \eqref{Dc44} to,
\begin{equation}
\label{Dc48.bis.11}
- \overline\partial (b) \sum_{i=0}^m \Pi_i \Psi + Q_m \Psi
\end{equation}
close to $E_{\infty}$. At this juncture we can take $G$ in \eqref{Dc48.bis.8} to be the Green's operator for the bundle of \eqref{Dc48.bis.9}, so $P$ is zero, and the $Q_m \varphi_n$, whence the $\varphi_n$, converge in $C_c^0 (U)$ exactly as in \eqref{Dc48} and \thref{Dc:ClaimExtra2}, after which we can again appeal to \eqref{Dc48.bis} to get $C_c^k(U)$ convergence.
\end{proof}

Irrespectively of this generalisation, the local \thref{Dc:Fact2} already admits a far from obvious corollary, to wit:

\begin{cor}
\thlabel{Dc:CorExtra1}
Let $\rho : \widetilde\Delta \to \Delta$ be the real blow up of a disc in the origin with $x \in E$, then the $LF$ space,
\begin{equation}
\label{Dc49}
{\mathcal O}_{\widetilde\Delta , x} := \varinjlim_{V \ni x} {\mathcal O}_{\widetilde\Delta} (V)
\end{equation}
of holomorphic functions smooth up to the boundary, wherein,
\begin{equation}
\label{Dc49.bis.1}
{\mathcal O}_{\widetilde\Delta} (V) \longhookrightarrow \Gamma (V , {\mathcal A}^0_{\widetilde\Delta})
\end{equation}
is given the subspace topology, is separated.
\end{cor}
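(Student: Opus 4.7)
\emph{Proof plan.} The strategy is to realise $\mathcal O_{\widetilde\Delta,x}$ as a Silva space, equivalently a (DFS)-space, i.e.\ a countable inductive limit of Fr\'echet spaces with injective compact transition maps; such spaces are automatically Hausdorff (in fact also complete, reflexive, and Montel, cf.\ K\"othe, \emph{Topological Vector Spaces}~II, \S31.5, or Floret). To set things up, fix a cofinal countable decreasing neighbourhood base $V_1\supset V_2\supset\cdots$ of $x$ by sector-like open sets with $\overline{V_{n+1}}\subset V_n$, $V_n\setminus E$ connected, and $\bigcap_n V_n=\{x\}$, so that $\mathcal O_{\widetilde\Delta,x}=\varinjlim_n\mathcal O_{\widetilde\Delta}(V_n)$. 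By \thref{Dc:fact1} each $\mathcal O_{\widetilde\Delta}(V_n)$ is the kernel of the continuous operator $\overline\partial$ on the Fr\'echet space $\Gamma(V_n,\mathcal A^0_{\widetilde\Delta})$ in its standard $C^\infty$-topology of uniform convergence of all derivatives on compacts (possibly meeting the boundary $E$), and so is itself Fr\'echet in the subspace topology.

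Next, one verifies that every restriction $r_n:\mathcal O(V_n)\to\mathcal O(V_{n+1})$ is injective and compact. Injectivity: an $f\in\mathcal O(V_n)$ vanishing on $V_{n+1}$ vanishes in particular on the non-empty open set $V_{n+1}\setminus E$, hence by analytic continuation on the connected complex manifold $V_n\setminus E$, and by continuity also on $V_n\cap E$. Compactness: a Fr\'echet-bounded $B\subset\mathcal O(V_n)$ admits uniform $C^{k+1}$-bounds on $\overline{V_{n+1}}\subset V_n$ for every $k\ge 0$; Arzel\`a--Ascoli then furnishes precompactness of $r_n(B)$ in $C^k(K)$ for each compact $K\subset V_{n+1}$, and a diagonal argument across $k$, combined with an exhaustion of $V_{n+1}$ by such compacts, upgrades this to precompactness in the Fr\'echet topology of $\mathcal O(V_{n+1})$.

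Finally, invoking the classical theorem that a countable inductive limit of Fr\'echet spaces with injective compact transition maps is Hausdorff, one concludes that $\mathcal O_{\widetilde\Delta,x}$ is separated, as required.

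The only genuinely delicate point is the Fr\'echet-compactness of the restriction step, since the subspace topology on $\mathcal O(V_n)\hookrightarrow\Gamma(V_n,\mathcal A^0_{\widetilde\Delta})$ encodes smoothness up to $E$ and therefore the relevant compacts $K\subset V_{n+1}$ can meet the boundary; one must check that Arzel\`a--Ascoli nonetheless applies. This is unproblematic because the $C^{k+1}$-bounds valid on the closed set $\overline{V_{n+1}}$ provide the equicontinuity of the first $k$ derivatives on any such $K$, so the usual compactness argument goes through with the real-blow-up boundary making no essential difference.
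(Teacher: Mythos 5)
Your plan realises $\mathcal O_{\widetilde\Delta,x}$ as a Silva space, i.e.\ a countable injective limit of Fr\'echet spaces with compact transition maps, and invokes the classical K\"othe--Komatsu result that such a limit is Hausdorff. The gap lies in the word ``compact'': that theorem requires some \emph{zero-neighbourhood} of $\mathcal O_{\widetilde\Delta}(V_n)$ to have precompact image in $\mathcal O_{\widetilde\Delta}(V_{n+1})$, whereas the Arzel\`a--Ascoli and diagonal argument you run proves only that \emph{bounded} sets have precompact image, i.e.\ that restriction is a Montel operator. For Fr\'echet spaces these are very different conditions: a basic zero-neighbourhood is cut out by a single seminorm $\|\cdot\|_{C^k(K)}$ and is nowhere near bounded, and Montel transition maps alone do not force the Hausdorffness of an inductive limit.

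In fact the restriction maps here genuinely fail to be compact, and this is exactly where the real boundary $E$ makes itself felt. Conjugating so that the direction of $x$ is $\arg z = 0$, take the $V_n$ to be sectors of opening at most $2\pi/3$ and set $g_t(z):=t^k e^{-t/z}$, $t\in(0,1]$. Each $g_t$ is holomorphic on $V_n\setminus E$ and flat along $E$, hence a section of $\mathcal O_{\widetilde\Delta}$ over $V_n$. With $u=1/|z|$, and using $\operatorname{Re}(1/z)\ge u/2$ on the sector,
\begin{equation*}
\sup_{V_n}\bigl|\partial_r^{\,m} g_t\bigr| \;\asymp\; \sup_{u\ge 1}\, t^{k}\,(tu^2)^m\, e^{-tu/2}\;\asymp\; C_m\,t^{\,k-m}\,,\qquad t\downarrow 0,\ m\ge 1,
\end{equation*}
the supremum being attained near $|z|\asymp t$. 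Consequently $\{g_t\}_{t\in(0,1]}$ lies in a fixed multiple of the zero-neighbourhood $\{\,\|f\|_{C^k(\overline V_{n+1})}<1\,\}$, yet its $C^{k+1}$-seminorm on any compact of $V_{n+1}$ meeting $E$ near $\arg z=0$ is unbounded; so no such zero-neighbourhood has precompact image, and the transition map is not compact. Away from $E$ the Cauchy estimates supply the required bootstrap from $C^0$ to $C^\infty$ --- which is precisely why the germ of holomorphic functions at an interior point of a Riemann surface \emph{is} a Silva space --- but $C^\infty$-regularity up to a real boundary carries no elliptic gain of that kind.

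The paper's proof therefore goes a different way and uses exactly the analytic input you were hoping to bypass. Applying $\Gamma_c$ to $0\to j_!\mathcal O_{U\setminus x}\to\mathcal O_U\to i_*\mathcal O_{U,x}\to 0$ for a sectorial $U\ni x$ yields a continuous injection $\mathcal O_{\widetilde\Delta,x}\hookrightarrow H^1_c(U\setminus x,\mathcal O_{\widetilde\Delta})$, realised on Dolbeault cochains by $f\mapsto\overline\partial(\bar b)f$ for a bump $\bar b$ near $x$, and the target is Hausdorff because the compactly supported $\overline\partial$ has closed range by \thref{Dc:Fact2}. The corollary is thus not soft $LF$-space general nonsense: it follows from, and genuinely needs, the closed-range theorem \thref{Dc:Fact2}.
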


\begin{proof}
Take a neighbourhood $U = K^0 \times [0,R) \ni x$ of the form encountered in \thref{Dc:not3}, and consider the maps,
\begin{equation}
\label{Dc49.bis.2}
U \backslash x \overset{j}{\longhookrightarrow} U \overset{i}{\longhookleftarrow} x \, .
\end{equation}
Then we have a short exact sequence of sheaves,
\begin{equation}
\label{Dc49.bis.3}
0 \longrightarrow j_! \, {\mathcal O}_{U\backslash x} \longrightarrow {\mathcal O}_U \longrightarrow i_* \, {\mathcal O}_{U,x} \longrightarrow 0
\end{equation}
whose long exact cohomology sequence for compact support is,
\begin{equation}
\label{Dc49.bis.4}
0 \longrightarrow {\mathcal O}_{U,x} \longrightarrow {\rm H}_c^1 ({\mathcal O}_{U \backslash x}) \longrightarrow {\rm H}_c^1 ({\mathcal O}_U) \longrightarrow 0
\end{equation}
all of which is without topology. We have, however, a short exact sequence of complexes,
\begin{equation}
\label{Dc49.bis.5}
0 \longrightarrow {\rm H}_c^0 (U \backslash x , {\mathcal A}^{^\bullet}_{\widetilde\Delta}) \longrightarrow {\rm H}_c^0 (U,{\mathcal A}_{\widetilde\Delta}^{^\bullet}) \longrightarrow i_* \, {\mathcal A}^{^\bullet}_{\widetilde\Delta , x} \longrightarrow 0
\end{equation}
which although not \underline{To}p exact does have continuous arrows, and of course \eqref{Dc49.bis.4} is the long exact sequence associated to \eqref{Dc49.bis.5}. Better still for $V \ni x$ any neighbourhood and $\overline b$ a bump function with support in $V$ and identically 1 close to $x$, the connecting homomorphism in \eqref{Dc49.bis.4} comes from,
\begin{equation}
\label{Dc49.bis.6}
\Gamma (V , {\mathcal O}_{\widetilde\Delta}) \longrightarrow \Gamma_c (U , {\mathcal A}^{0,1}_{\widetilde\Delta} (\log E)) : f \longmapsto \overline\partial \, (\bar b) f
\end{equation}
so if the ${\rm H}_c^1$ in \eqref{Dc49.bis.4} enjoy the topology inherited from \eqref{Dc49.bis.5} and ${\mathcal O}_{\widetilde\Delta , x}$ has the topology of \eqref{Dc49} then every arrow in \eqref{Dc49.bis.4} is continuous. As such \thref{Dc:Fact2} affords a continuous injection of ${\mathcal O}_{\widetilde\Delta , x}$ into a separated space.
\end{proof}

Before coming to a conclusion we require, in the same vein,

\begin{notation}
\thlabel{Dc:not4}
\nomenclature[D]{Distributions \hyperref[Dc:not4]{on a real blow up}}{} 
Mayer-Vittoris with compact support affords sheaves on $\widetilde\Delta$,
\nomenclature[N]{Distributions on a}{real blow up of type $(1,1)$ \hyperref[Dc51]{${\mathcal D}$}$^{1,1}$} 
\nomenclature[N]{Distributions on a}{real blow up of type $(1,0)$ \hyperref[Dc51]{${\mathcal D}$}$^{1,0} (-\log E)$} 
\begin{equation}
\label{Dc51}
{\mathcal D}^{1,1}_{\widetilde\Delta} (U) := {\rm Hom}_{\rm Top} (\Gamma_c (U, {\mathcal A}^0_{\widetilde\Delta}) , {\mathbb C})
\end{equation}
$$
{\mathcal D}^{1,0}_{\widetilde\Delta} (-\log E)(U) := {\rm Hom}_{\rm Top} (\Gamma_c (U, {\mathcal A}^{01}_{\widetilde\Delta} (\log E)) , {\mathbb C})
$$
where the topological dual is taken in the topological direct limit
\begin{equation}
\label{Dc52}
\varinjlim_K \Gamma_K (U , {\mathcal A}^0_{\widetilde\Delta}) \, , \quad \mbox{$K$ compact $\subset U$}
\end{equation}
of the Fr\'echet spaces with norms: max $n^{\rm th}$ derivative over $K$, and similarly for ${\mathcal D}^{0,1}_{\widetilde\Delta} (\log E)$. In particular therefore if,
\begin{equation}
\label{Dc53}
V \underset{j}{\longhookrightarrow} \widetilde\Delta \underset{i}{\longhookleftarrow} Z
\end{equation}
is the inclusion of an open set with a compact complement,
\begin{equation}
\label{Dc54}
(j_* \, {\mathcal D}_V^{1,1})(U) = {\rm Hom}_{\rm Top} (\Gamma_c (U \cap V , {\mathcal A}^0_{\widetilde\Delta}), {\mathbb C})
\end{equation}
so that dualising the exact sequence obtained by applying $\Gamma_c$ to the acyclic sequence,
\begin{equation}
\label{Dc55}
0 \longrightarrow j_! \, {\mathcal A}_V^0 \longrightarrow {\mathcal A}^0_{\widetilde\Delta} \longrightarrow i_* \, i^* {\mathcal A}^0_{\widetilde\Delta} \longrightarrow 0
\end{equation}
yields an exact (albeit not topologically) sequence:
\begin{equation}
\label{Dc56}
0 \longrightarrow {\mathcal D}^{1,1}_Z \longrightarrow {\mathcal D}^{1,1}_{\widetilde\Delta} \longrightarrow j_* \, {\mathcal D}^{1,1}_V \, .
\end{equation}
More precisely the topology of the kernel in \eqref{Dc55}, is, cf. \thref{Dc:ClaimExtra1}, rather far from the subspace topology. Indeed if $Z$ were a sub-manifold (e.g. $E$ in our current considerations is the relevant case) then the closure of the aforesaid kernel is the space of smooth functions which are flat along $Z$. Consequently, cf. \thref{Dc:bonus1}, the kernel in \eqref{Dc56}, i.e. distributions with support in $Z$, which, interalia, enjoys the subspace topology, is the dual of functions completed in $Z$. Thus for $I_Z$ the ideal of functions vanishing on $Z$,
\begin{eqnarray}
\label{Dc57}
{\mathcal D}^{1,1}_Z (V) &= &{\rm Hom}_{\rm Top} \left(\varprojlim_n \Gamma_c (V , {\mathcal A}^0_{\widetilde\Delta / I^n}), {\mathbb C}\right) \nonumber \\
&= &\varinjlim_n {\rm Hom}_{\rm Top} \left(\Gamma_c (V , {\mathcal A}^0_{\widetilde\Delta}/I^n), {\mathbb C}\right) \, .
\end{eqnarray}

Similarly we define (albeit it really is \cite[Theorem 6.7]{verdier}) the dualising sheaf on $\widetilde\Delta$ to be the kernel $\omega_{\widetilde\Delta}$ of the dual of $\Gamma_c$ applied to \eqref{Dc5}, i.e.
\nomenclature[D]{Dualising \hyperref[Dc58]{sheaf of a real blow up}}{}
\nomenclature[N]{Dualising sheaf of a real blow up}{\hyperref[Dc58]{$\omega$}} 
\begin{equation}
\label{Dc58}
0 \longrightarrow \omega_{\widetilde\Delta} \longrightarrow  {\mathcal D}^{1,0}_{\widetilde\Delta} (-\log E) \xrightarrow{ \ \overline\partial^V \ }  {\mathcal D}^{1,1}_{\widetilde\Delta} \longrightarrow 0
\end{equation}
which by Hahn-Banach and \thref{Dc:Fact2} is not only exact on global sections over any open set, but topologically exact.
\end{notation}

We thus arrive to our principle calculation, to wit:

\begin{fact}
\thlabel{Dc:Fact4}
Let $\widetilde{\mathcal O}_{\widetilde\Delta} \hookleftarrow {\mathcal O}_{\widetilde\Delta}$ 
\nomenclature[N]{Real blow up, $\ell_1$,}{up to boundary and holomorphic \hyperref[Dc:Fact4]{$\widetilde{\mathcal O}$}} 
be the sheaf of holomorphic functions which are locally $\ell_1$ for the measure $\vert z \vert^{-1} d \overline z \, dz$, i.e. $dr \, d\theta$ in the polar coordinates of \eqref{Dc2}, with $\widetilde\omega (nE)$ 
\nomenclature[N]{Real blow up,}{differentials with $\widetilde{\mathcal O}$ coefficients \hyperref[Dc:Fact4]{$\widetilde{\omega}$}}
the locally free rank 1 $\widetilde{\mathcal O}_{\widetilde\Delta}$ module of differentials with a pole of order $n$ around $E$, then the dualising sheaf, $\omega_{\widetilde\Delta}$, of the category of ${\mathcal O}_{\widetilde\Delta}$ modules is given by a non-split extension,
\begin{equation}
\label{Dc59}
0 \longrightarrow {\rm Tors} \, \omega_{\widetilde\Delta} \longrightarrow \omega_{\widetilde\Delta} \longrightarrow \varinjlim_n \, \widetilde\omega (nE) \longrightarrow 0
\end{equation}
where the torsion sub-sheaf is necessarily contained in ${\mathcal D}_E^{1,0} (-\log E)$ and is canonically isomorphic to,
\begin{equation}
\label{Dc60}
{\mathcal H}^1_{{\rm Zar},0} (\Delta , \omega_{\Delta}), \ \mbox{i.e. non canonically} \ \  {\mathbb C} \left[ \frac1z \right] \, \frac{dz}{z} \, .
\end{equation}
\end{fact}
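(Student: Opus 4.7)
The plan is to work directly from the defining exact sequence \eqref{Dc58}, which realises $\omega_{\widetilde\Delta}$ as the sheaf of $\overline\partial^V$-closed distributions in $\mathcal{D}^{1,0}_{\widetilde\Delta}(-\log E)$. I will construct the map onto $\varinjlim_n \widetilde\omega(nE)$ by restriction to $\Delta^\times = \widetilde\Delta \setminus E$; identify the kernel as the subsheaf of distributions supported on $E$; and compute this kernel via the Taylor projector formalism of \thref{Dc:not2} and the calculation \thref{Dc:bonus1}. Once the extension data is in hand, I will separately exhibit the extension class as nonzero in $\mathrm{Ext}^1_{{\mathcal O}_{\widetilde\Delta}}$.

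For the quotient map, observe that on $\Delta^\times$ the operator $\overline\partial$ is elliptic in the classical sense, so any $\overline\partial$-closed distribution $T$ restricts to a holomorphic $1$-form $\alpha$ on $\Delta^\times$. The task is to show $\alpha$ lies in $\widetilde\omega(nE)$ for some $n$. Locally near $E$, $T$ is of finite order as a distribution, so pairing $T$ against a test family of the shape $z^n b(r)\,d\bar z/\bar z$, with $b$ a radial bump, bounds the $\ell_1$-norm (in $dr\,d\theta$) of $z^n\alpha$ for $n$ sufficiently large; this is exactly the condition $\alpha\in\widetilde\omega(nE)$. Surjectivity is immediate, since any $\alpha\in\widetilde\omega(nE)$ is, by the very definition of $\widetilde{\mathcal O}$, locally $\ell_1$ for $|z|^{-1}\,d\bar z\,dz$, hence defines a distribution in $\mathcal{D}^{1,0}_{\widetilde\Delta}(-\log E)$ by direct integration, which is $\overline\partial$-closed because $\alpha$ is holomorphic on $\Delta^\times$ and the defining integral converges absolutely.

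The kernel of the quotient map is the subsheaf of distributions supported on $E$. By the identification \eqref{Dc57} this is the topological dual of $\varprojlim_n\Gamma_c(\mathcal{A}^{0,1}_{\widetilde\Delta}(\log E)/I_E^n)$, modulo the annihilator of the image of $\overline\partial$ from $\varprojlim_n\Gamma_c(\mathcal{A}^0_{\widetilde\Delta}/I_E^n)$. The finite-level calculation is an exact replica of \thref{Dc:bonus1}: the projectors $\Pi_i$ of \thref{Dc:not2} split off successive orders of the Taylor expansion, and modulo $I_E^n$ the degree-$1$ cohomology is exactly $\bigoplus_{i<n} \mathbb{C}\cdot\Pi_i(d\bar z/\bar z)$. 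Passing to the dual and then to the direct limit gives the torsion as $\bigoplus_{i\geq 0}\mathbb{C}\cdot z^{-i-1}\,dz = \mathbb{C}[1/z]\,dz/z$, compatibly with the ${\mathcal O}_{\widetilde\Delta}$-module structure. The canonical identification with $\rho^*\mathcal{H}^1_{{\rm Zar},0}(\Delta,\omega_\Delta)$ then follows from the Čech description of the latter as polar principal parts at the origin.

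The part I expect to require the most care is the non-splitness of \eqref{Dc59}. An ${\mathcal O}_{\widetilde\Delta}$-linear splitting $s:\varinjlim_n\widetilde\omega(nE)\to\omega_{\widetilde\Delta}$ would provide a functorial distributional representative of each meromorphic form, compatible with multiplication by holomorphic functions smooth to the boundary. To show that no such $s$ exists I plan to compare, for the generator $z^{-1}\,dz\in\widetilde\omega(E)$, the candidate $s(z^{-1}\,dz)$ with the distribution $d(\log z)$ built from the single-valued logarithm on $\widetilde\Delta$: the difference is supported on $E$ and its torsion class is detected, via the dual-projector basis, as a nonzero element of $\mathbb{C}[1/z]\,dz/z$. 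Compatibility of $s$ with multiplication by $z$ must then relate $z\cdot z^{-1}\,dz = dz\in\widetilde\omega$ to $z\cdot d(\log z) = dz$ shifted by a torsion contribution, producing the desired contradiction. I will package this into an explicit nonzero Yoneda cocycle in $\mathrm{Ext}^1_{{\mathcal O}_{\widetilde\Delta}}(\varinjlim_n\widetilde\omega(nE),\,\mathrm{Tors}\,\omega_{\widetilde\Delta})$, which is the final step.
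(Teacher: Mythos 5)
Your overall decomposition — restrict to $\Delta^\times$ to define the quotient map, identify the kernel as $\overline\partial^\vee$-closed distributions supported on $E$, then compute that kernel via the Taylor projectors — is the right skeleton and matches the structure of the paper's proof. However each of your three technical steps has a gap.

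\textbf{Surjectivity is not immediate.} You claim a section of $\widetilde\omega(nE)$ defines a distribution ``by direct integration'' because it is $\ell_1$ for $|z|^{-1}\,d\bar z\,dz$. That is only true for $n=0$: for $n\geq 1$ the coefficient $h$ satisfies merely $z^n h\in\widetilde{\mathcal O}$, so $\int|h|\,dr\,d\theta$ diverges. Already $dz/z\in\widetilde\omega(E)$ fails to be integrable against a test form supported up to the boundary. The paper must therefore regularise: the lift in \thref{Dc:claim5} pairs only against $Q_k(\psi)$ (killing the low Taylor jets along $E$) to force absolute convergence, and this breaks $\overline\partial^\vee$-closedness by a boundary term which has to be compensated by an explicit torsion correction $T_h$ built from \eqref{Dc84}--\eqref{Dc85}. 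That correction is precisely the point of the statement; without constructing it you have not exhibited a lift.

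\textbf{The converse inclusion is where the real work lies, and you have skipped it.} The claim that pairing $T$ against $z^n b(r)\,d\bar z/\bar z$ ``bounds the $\ell_1$-norm of $z^n\alpha$ for $n$ large'' presumes $T$ is of order $0$, i.e.\ a measure; only then does Riesz representation plus the Lebesgue decomposition argument of \thref{Dc:claim4} yield the $\ell_1$ estimate. For a general distribution of order $k+1$ the pairing depends on derivatives of $b$ and gives no control on $|\alpha|$. The paper handles this by composing $H$ with $R^{k+1}\Theta^{k+1}$ to drop the order to $0$, solving the resulting ODE in $r$ (Claim \thref{Dc:claim6}, \eqref{Dc99}--\eqref{Dc102}), obtaining the logarithmically corrected bound \eqref{Dc104}, and then undoing the composition via Claims \thref{Dc:claim7}--\thref{Dc:claim8} (the latter being the nontrivial fact that $z\,\partial/\partial z$ preserves $\widetilde{\mathcal O}_{\widetilde\Delta}$). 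None of this is implicit in your one-sentence estimate.

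\textbf{The non-splitness argument is broken as stated.} You invoke ``the single-valued logarithm on $\widetilde\Delta$'', but $\widetilde\Delta\cong[0,R)\times E$ with $E$ a circle: $\arg z$ takes values in $S^1$, not $\mathbb R$, so $\log z$ still has monodromy $2\pi i$ and is not a function on $\widetilde\Delta$. (What \emph{is} gained by the blow-up is that $dz/z$ and, more to the point, $\theta$ become smooth up to the boundary.) Without a single-valued logarithm, your comparison of $s(z^{-1}dz)$ with $d(\log z)$ has no content, and the ``shift by a torsion contribution'' under multiplication by $z$ is never pinned down. A correct route to non-splitness would compute the Yoneda class from the boundary term in \thref{Dc:claim5}: the torsion correction $T_h$ is forced by $\overline\partial^\vee(\text{bump cutoff})\neq 0$ and depends nontrivially on the choices, so there is no ${\mathcal O}_{\widetilde\Delta}$-linear way to normalise it to zero simultaneously for $dz/z$ and $dz/z^2$; but this needs the explicit formulae \eqref{Dc83}--\eqref{Dc85}, not the logarithm.

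Your identification of the torsion itself via the projector formalism (mirroring \thref{Dc:bonus1} and the residue pairing \eqref{Dc62}--\eqref{Dc63}) is sound and coincides with the paper's Claim \thref{Dc:claim3}. It is the two analytic halves of the duality calculation, and the non-splitness, that need to be supplied.
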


\begin{proof}
For any sheaf ${\mathcal F}$ of ${\mathcal A}^0$-modules let $\wedge$ denote completion in $E$ and, similarly define ${\mathcal H}_E^1$ as the quotient,
\begin{equation}
\label{Dc61}
\widehat{\mathcal F} \longrightarrow \varinjlim_n \, \widehat{\mathcal F} (n E) \longrightarrow {\mathcal H}_E^1 ({\mathcal F}) \longrightarrow 0 \, .
\end{equation}
Then following \cite[Remarque 55]{sga2} there is a residue pairing,
\begin{equation}
\label{Dc62}
\widehat{\mathcal A}^{0,1}_{\widetilde\Delta} (\log E) \times {\mathcal H}^1_E ({\mathcal A}^{1,0}_{\widetilde\Delta}) \underset{\rm Res}{\longrightarrow} {\mathcal A}^1_E
\end{equation}
i.e. although as a space $E \hookrightarrow \widetilde\Delta$ is a real boundary, the existence of a coordinate free residue about $E$ is a formal property of the ring of analytic functions ${\mathbb C} \{r,\theta\}$ which, by density, extends to smooth functions. Specifically, therefore,  if we choose an embedding $z : \Delta \hookrightarrow {\mathbb C}$ and compatible polar coordinates $r,\theta$ by way of \eqref{Dc2} then the relevant case of the pairing \eqref{Dc62} is,
\begin{equation}
\label{Dc63}
\left( \sum_{i=0}^{\infty} \psi_i (\theta) \, r^i \frac{d\overline z}{\overline z} \right) \otimes \left( \sum_{j=0}^N c_j (\theta) \, r^{-j} \frac{dz}z \right)
\end{equation}
$$
\longmapsto \sum_{i=0}^{\infty} \sum_{j=0}^N \psi_i (\theta) \, c_j (\theta) \, r^{i-j} \, \frac{dr \, d\theta}{r} \underset{\rm Res}{\longmapsto} \sum_{i=0}^N \psi_i (\theta) \, c_i(\theta) \, d\theta \, .
$$
In particular the result of the residue pairing can be integrated along $E$, so we get a canonical map,
\begin{equation}
\label{Dc64}
{\mathcal H}_E^1 ({\mathcal A}^{1,0}_{\widetilde\Delta}) \longrightarrow {\mathcal D}^{1,0}_E (-\log E)
\end{equation}
and, unsurprisingly, we assert,

\begin{claim}
\thlabel{Dc:claim3}
The image of the natural map from ${\mathcal H}_{{\rm Zar},0}^1 (\omega_{\Delta})$ to ${\mathcal H}_E^1 ({\mathcal A}^{1,0}_{\widetilde\Delta})$ in ${\mathcal D}^{1,0}_E (-\log E)$ is the kernel of the restriction of $\overline\partial^{\vee}$.
\end{claim}

\begin{proof} 
[Proof of \thref{Dc:claim3}] Fix $N \in {\mathbb Z}_{\geq 0}$ in \eqref{Dc63} and a neighbourhood $U$ as in \thref{Dc:not3} with $K^0 \ne E$, so that in the notation of \eqref{Dc43} we have,
\begin{equation}
\label{Dc65}
\psi = \sum_{i=0}^N P_i \, \psi + Q_N \, \psi \in \Gamma_c (U,{\mathcal A}^0_{\widetilde\Delta}) \, .
\end{equation}
As such if for some $0 \leq j \leq N$, $\gamma = c_j (\theta) \, r^{-j} \frac{dz}z$ is a class in the left hand side of \eqref{Dc64} then by \eqref{Dc4}, \eqref{Dc44} and \eqref{Dc63},
\begin{equation}
\label{Dc66}
\int_E {\rm Res} \, (\overline\partial (\psi) \otimes \gamma) = - \int_E c_j (\theta) \, e^{j\theta} d \left( \frac{P_j \, \psi}{z^j} \right)
\end{equation}
so $\gamma$ is a solution of $\overline\partial^{\vee} \gamma = 0$ iff $\gamma = c_j \, z^{-j} \frac{dz}z$, for some $c_j \in {\mathbb C}$. Consequently the torsion certainly contains \eqref{Dc60}, while to check the converse observe by \eqref{Dc57},
\begin{eqnarray}
\label{Dc67}
{\mathcal D}_E^{1,0} (-\log E) &= &{\rm Hom}_{\rm top} (\widehat{\mathcal A}^{0,1} (\log E) , {\mathbb C}) \nonumber \\
&= &\varinjlim_n \, {\rm Hom}_{\rm top} ({\mathcal A}^{0,1} (\log E) / (r^n) , {\mathbb C}) \, .
\end{eqnarray}
In particular therefore it admits a filtration with graded pieces,
\begin{equation}
\label{Dc68}
{\rm gr}^n \, {\mathcal D}_E^{1,0} (-\log E) = {\mathcal D}^1 (E) \otimes \left( \overline z \frac{\partial}{\partial \overline z}\, r^{-n} \right)
\end{equation}
where the latter is the sheaf of distributions on $E$. Now $\overline\partial$ respects the filtration \eqref{Dc68}, so if some torsion class, $T$, in $\widetilde\omega_{\widetilde\Delta}$ doesn't belong to ${\mathcal H}^1 (\omega_{\widetilde\Delta})$ there is a maximal $n\geq 0$ such that,
\begin{equation}
\label{Dc69}
T_n \otimes \left( \overline z \frac{\partial}{\partial \overline z} \otimes r^{-n} \right) \notin {\mathbb C} z^{-n} \cdot \frac{dz}z \subseteq {\rm gr}^n \, {\mathcal D}^{1,0}_E (-\log E)
\end{equation}
but $\overline\partial \left( T_n \otimes \left( \overline z \frac{\partial}{\partial z} \otimes r^{-n} \right) \right) = 0$, i.e. for $r^n \, \psi_n (\theta) \in {\mathcal A}^0 (-n \, E)$,
\begin{equation}
\label{Dc70}
0 = \left( T_n \otimes \overline z \frac{\partial}{\partial\overline z} \otimes r^{-n} \right) \overline\partial (r^n \psi_n (\theta)) = T_n (e^{n\theta} \, \overline D (e^{-n\theta} \psi_n))
\end{equation}
which is iff $T_n$ is a multiple, say, without loss of generality, 1, of integration against $e^{-n\theta} d\theta$, while the residue pairing, of \eqref{Dc63}, affords,
\begin{equation}
\label{Dc71}
{\rm Res} \left( \frac{d\overline z \, dz}{\vert z \vert^2} \right) = d\theta
\end{equation}
or, equivalently, a canonical identification $\left(\overline z \frac{\partial}{\partial \overline z} \right) \otimes d\theta$ with $dz/z$, so:
\begin{equation}
\label{Dc72}
T_n \otimes \left(\overline z \frac{\partial}{\partial \overline z} \otimes r^{-n} \right) = z^{-n} \otimes \left(\overline z \frac{\partial}{\partial \overline z} \otimes d\theta \right) = z^{-n} \frac{dz}z
\end{equation}
from which \eqref{Dc69} is seen to be nonsense.
\end{proof}

Now let us turn to identifying $\omega_{\widetilde\Delta}$ modulo torsion by first confirming the intervention of the $\ell_1 ({\rm loc})$ condition in \thref{Dc:Fact4} by way of,

\begin{claim}
\thlabel{Dc:claim4}
Let $U \subseteq \widetilde\Delta$ be an open neighbourhood of a compact subset (possibly everything) $K \subset E$ then, modulo torsion, the solutions of,
\begin{equation}
\label{Dc73}
\overline\partial^{\vee} T = 0 \, , \quad T \in \Gamma \left( U , {\mathcal D}^{1,0}_{\widetilde\Delta} (-\log E) \right)
\end{equation}
which have order $0$, i.e. $\Vert T \omega \Vert \ll \Vert \omega \Vert_0$, where, $\Vert \ \Vert_0$ of \eqref{Dc20} is the $C^0$ norm on ${\mathcal A}^{1,0} (\log E)$, are exactly $\Gamma (U,\widetilde\omega_{\widetilde\Delta})$.
\end{claim}

\begin{proof}
By the Riesz representation theorem operators of order 0, have measure regularity on the coefficient, i.e. $\omega \overline z / d \overline z$, in ${\mathcal A}^0$ of a differential form $\omega$. Further any measure has a Lebesgue decomposition into measures,
\begin{equation}
\label{Dc74}
T = \un_E \, T + \un_{\widetilde\Delta \backslash E} \, T
\end{equation}
so $\un_{\widetilde\Delta \backslash E} \, T$ is a holomorphic differential $h\, dz$ admitting the $C^0$ bound, on any given compact subset $C \subset U$
\begin{equation}
\label{Dc75}
\left\vert \int_C W h \, \frac{d \overline z \, dz}{\overline z} \right\vert \ll \sup_C \vert W \vert
\end{equation}
i.e. $h \, dz$ is a section of $\widetilde\omega_{\widetilde\Delta}$. Conversely, suppose, without loss of generality, that $\frac{\vert h \vert}{\vert z \vert} \, d\overline z \, dz$ is a finite measure on $U$ for some function, $h$, holomorphic off $E$, then for a test function $\psi$,
\begin{equation}
\label{Dc76}
\int_U h \,  \overline\partial (\psi) = \lim_{\delta \to 0} \, \int_{r=\delta} (h\psi) \, dz \, .
\end{equation}
As such for fixed $R > 0$ and any $\varepsilon > 0$ consider the set,
\begin{equation}
\label{Dc77}
S_{\varepsilon} := \left\{0 \leq t < R \ \biggl\vert \ \int_{r=t} \vert h \vert \cdot d\theta \geq \varepsilon / t \right\}
\end{equation}
then the characteristic function of $S_{\varepsilon}$ satisfies the estimate,
\begin{equation}
\label{Dc78}
\un_{S_{\varepsilon}} \leq \frac1\varepsilon \int_{r=t} \vert h \vert \, \vert dz \vert
\end{equation}
and whence, by our hypothesis of absolute integrability, every $S_{\varepsilon}$ has finite $dr/r$ measure. Better still the left hand side of \eqref{Dc76} is absolutely integrable, so we can compute the limit on the right using any sequence $\delta_n \to 0$ that we want, e.g. missing $S_{\varepsilon}$, so \eqref{Dc76} is $0$.
\end{proof}

Amongst what remains to do let us first check that anything with a finite pole can occur, i.e.

\begin{claim}
\thlabel{Dc:claim5}
Let everything be as in \thref{Dc:not3} with $K^0 \ne E$ and $h \, dz \in \Gamma (U , \widetilde\omega_{\widetilde\Delta} (k+1))$ for some $k \geq 0$ then if we define,
\begin{equation}
\label{Dc79}
\Gamma (U , {\mathcal D}^{1,0} (-\log E)) \ni Z_h : \psi \longrightarrow \int_U Q_k (\psi) (h \, dz)
\end{equation}
there is a torsion class $T_h \in \Gamma (U , {\mathcal D}^{1,0}_E (-\log E))$ such that,
\begin{equation}
\label{Dc80} 
\overline\partial^\vee (Z_h + T_h) = 0 \, .
\end{equation}
\end{claim}

\begin{proof}
By definition the operator $Q_k$ of \eqref{Dc43} admits a bound of the form,
\begin{equation}
\label{Dc81} 
\left\vert Q_k (\psi) \frac{d\overline z}{\overline z} \right\vert \ll \Vert \psi \Vert_{k+1} \, \vert z \vert^k \, \vert dz \vert
\end{equation}
while by hypothesis $\vert h \vert \, \vert z \vert^k \, d\overline z \, dz$ is a locally finite measure so the integrand in \eqref{Dc79} is absolute. Better still replacing $h$ by $(z^{k+1} h)$ \eqref{Dc75}-\eqref{Dc78} apply mutatis mutandis to conlude that,
\begin{equation}
\label{Dc82}
\int_U \overline\partial (Q_k (\psi)) \, h \, dz = 0 \, , \quad \psi \in \Gamma_c (U,{\mathcal A}^0) \, .
\end{equation} 
As such, by \eqref{Dc44},
\begin{eqnarray}
\label{Dc83}
\overline\partial^{\vee} Z_h (\psi) &= &\sum_{i=0}^k Z_h \, \overline\partial (b) \, \Pi_i \, (\psi) \nonumber \\
&= &\sum_{i=0}^k \int_E \left( \frac{\Pi_i (\psi)}{z^i} \right) \left( \int_0^R \frac{\overline D (b)}r z^{i+1} h \, dr \right) d\theta \, .
\end{eqnarray}
On the other hand, and critically, $K^0 \ne E$ so we can find functions $h_i (\theta)$ such that,
\begin{equation}
\label{Dc84}
dh_i = \left( \int_0^R \frac{\overline D (b)}r z^{i+1} h \, dr \right) d\theta
\end{equation}
which in turn allows us to define classes,
\begin{equation}
\label{Dc85}
{\mathcal D}_E^{1,0} (-\log E) \ni \Pi_i (h) : \Psi \longmapsto \int_E h_i(\theta) \left(\frac{\Pi_i (\psi)}{z^i} \biggl\vert_E \right)
\end{equation}
so that by \eqref{Dc3}-\eqref{Dc4}, $T_h = \underset{i=0}{\overset{k}{\sum}} \, T_i$ solves \eqref{Dc80}.
\end{proof}

The remaining step that we've found everything will require some more operators, so, exactly as in \thref{Dc:claim5} take $U$ of the form $[0,X) \times K^0$ in polar coordinates, then we have an operator,
\begin{equation}
\label{Dc86}
R : \Gamma_c (U,{\mathcal A}^0) \longrightarrow \Gamma_c (U,{\mathcal A}^0) : \psi \longmapsto - \int_r^X \psi (t,\theta) \, d\theta
\end{equation}
which again following \eqref{Dc8} we extend to forms after a choice of embedding of $\Delta$ in ${\mathbb C}$ by way of,
\begin{equation}
\label{Dc87}
R \left( \psi \frac{d\overline z}{\overline z} \right) := R(\psi) \frac{d\overline z}{\overline z}
\end{equation}
and a simple explicit calculation affords,
\begin{equation}
\label{Dc88}
\left[\overline D ,R\right] = - \frac R2 \, , \quad \left[ \overline \partial , R \right] = - \frac R2 \cdot \frac{d\overline z}{\overline z} \, .
\end{equation}
Unfortunately in the angular direction things are a little bit more complicated since integration doesn't preserve compact support. As such choose a compactly supported bump function $\beta : K^0 \to [0,1]$ with total integral $1$ over $K^0$ against $d\theta$ so that we have a projector,
\begin{equation}
\label{Dc89}
B : \Gamma_c (U,{\mathcal A}^0) \longrightarrow \Gamma_c (U,{\mathcal A}^0) : \psi \longmapsto \beta(\theta) \int_{K^0} \psi (r,\theta) \, d\theta
\end{equation}
which, in turn allows as to define operators,
\begin{equation}
\label{Dc90}
\Theta : \Gamma_c (U,{\mathcal A}^0) \longmapsto  \Gamma_c (U,{\mathcal A}^0) : \psi \longmapsto \int_*^{\theta} (\un - B) (\psi) \, d\theta
\end{equation}
where $*$ is the start of $K^0$ oriented counter clockwise, and which we extend to forms exactly as in \eqref{Dc87}, so:
\begin{equation}
\label{Dc91}
\left[ \overline D , \Theta \right] = \frac B2 \, , \quad \left[ \overline \partial , \Theta \right] = \frac B2 \cdot \frac{d\overline z}{\overline z} \, .
\end{equation}
Observe, therefore, by induction on $k \geq 0$,
\begin{equation}
\label{Dc92}
\left[ \overline D , R^{k+1} \right] = - \frac{(1+k)}2 \, R^{k+1} \, , \quad \left[ R , \Theta \right] = 0 \, , \quad \left[ \overline D , \Theta^{k+1} \right] = B\Theta^k/2 \, .
\end{equation}
Now if $H$ is a functional of order $k+1$, $k \geq 0$, in the sense that,
\begin{equation}
\label{Dc93}
\left\Vert H \psi \frac{d\overline z}{\overline z} \right\Vert \ll \Vert \psi \Vert_{k+1}
\end{equation}
for $\Vert \ \Vert_{k+1}$ the Sobolev norm of \eqref{Dc17} computed, without loss of generality, over $\overline U$, then,
\begin{equation}
\label{Dc94}
H_{k+1} := H (R^{k+1} \, \Theta^{k+1})
\end{equation}
is a functional of order $0$, so, cf. \thref{Dc:claim4}, it's a measure, i.e.
\begin{equation}
\label{Dc95}
H_{k+1} = \mu \otimes dz \, ; \quad \psi \frac{d\overline z}{\overline z} \longmapsto \mu (\psi) \, .
\end{equation}

Equally from the commutator relations \eqref{Dc92}, if $\overline\partial^{\vee} H = 0$,
\begin{equation}
\label{Dc96}
\overline\partial^{\vee} H_{k+1} = \frac{(k+1)}2 \, H_{k+1} - \frac12 \, H (BR^{k+1} \, \Theta^k) \, .
\end{equation}
Now observe that the obstruction to $H_{k+1}$ being an eigenvector with eigenvalue $(k+1)$ is particularly simple, i.e.

\begin{claim}\thlabel{Dc:claim6}
There are measures $\mu_i$ on $[0,R)$, $0 \leq i \leq k$, such that, as a functional on the coefficient of $d \overline z / \overline z$,
\begin{equation}
\label{Dc97}
-\frac12 H (BR^{k+1} \Theta^k) = \sum_{i=0}^k (r^* \mu_i) \otimes \theta^i d\theta \, .
\end{equation}
\end{claim}
\begin{proof}
As above, $H (BR^{k+1} \Theta^{k+1})$ is a measure, while, as an operator on forms with compact support $B \partial / \partial \theta = 0$, so:
\begin{equation}
\label{Dc98}
\left( \left( \frac{\partial}{\partial \theta} \right)^{\!\!\vee} \right)^{\!\!k+2} H (BR^{k+1} \, \Theta^{k+1}) = 0 \, .
\end{equation}

Better still because the boundary is, in polar coordinates, $r=0$, $\partial / \partial\theta$ is still anti-self adjoint, so, by regularity for the $d$-operator in one variable, $HB^{k+1} \, \Theta^{k+1}$ has the form of \eqref{Dc97} but for a polynomial of degree $(k+1)$ rather than $k$. On the other hand $\partial / \partial\theta$ is a right inverse for $\Theta$ so the simple expedient of partially differentiating in $\theta$ yields \eqref{Dc97}.
\end{proof}

Consequently if we restrict \eqref{Dc96} to $\widetilde\Delta \backslash E$ then we have,
\begin{equation}
\label{Dc99}
\overline\partial^{\vee} \left( H_{k+1} \left( \frac{1}{r^{k+1}}\right)\right) = \sum_{i=0}^k r^* \mu_i \left(\frac{1}{r^{k+1}} \right) \otimes \theta^i d\theta \, .
\end{equation}

Equally whether for operators or functions pulled back along the first map in \eqref{Dc2}, it's easy to solve the $\overline\partial$-equation as a function of $r$ alone. Indeed if we introduce an operator,
\begin{equation}
\label{Dc100}
\widetilde R (\psi) := R \left( \frac{\psi}r \right)
\end{equation}
on compactly supported functions in $\widetilde\Delta \backslash E$, then for any distribution $\nu$ on $[0,X)$,
\begin{equation}
\label{Dc101}
(\nu \widetilde R) : \psi \frac{d\overline z}{\overline z} \longmapsto \nu (\widetilde R \, r_* (\psi \, d \theta)) \ \mbox{solves} \ \overline\partial^{\vee} (\nu \widetilde R) = r^* \nu \otimes d \theta \, .
\end{equation}

Now, certainly, solving the $\overline\partial$-equation in such a way may well lead to solutions of the $\overline\partial$-equation that are bigger than those constructed in the proof of \thref{Dc:Fact2} but it has the advantage that we can combine them with integration by parts to produce, over $\widetilde\Delta \backslash E$, a solution,
\begin{equation}
\label{Dc102}
\nu := \sum_{i=0}^k r^* \nu_i \left( \frac{\vert \log r \vert^{k+1-i}}{r^{k+1}} \right) \otimes \theta^i : \psi \frac{d\overline z}{\overline z} \longmapsto \!\int_0^X \!\frac{\vert \log r \vert^{k+1-i}}{r^{k+1}} \, d\nu_i (r) \!\int \!\psi (r,\theta) \, \theta^i d\theta
\end{equation}
of the $\overline\partial^{\vee}$ equation to the right hand side of \eqref{Dc99} for some finite measures $\nu_i$, $0 \leq i \leq k$ on $[0,X)$. We therefore obtain, from \eqref{Dc99} and \eqref{Dc102},
\begin{equation}
\label{Dc103}
\overline\partial^{\vee} (h \, dz) = 0 \, , \quad h \, dz := H_{k+1} \left( \frac1{r^{k+1}} \right) - \nu
\end{equation}
and whence by a combination of $\overline\partial$-regularity, and the measure regularity in \eqref{Dc95} and \eqref{Dc102} a holomorphic 1-form, $h \, dz$, on $\widetilde\Delta \backslash E$ such that,
\begin{equation}
\label{Dc104}
\int_{\widetilde\Delta \backslash E} \vert h \vert \left\vert \frac{r}{\log r} \right\vert^{k+1} dr \, d\theta < \infty \, .
\end{equation}
This proves that $h \, dz$ belongs to $\widetilde\omega_{\widetilde\Delta} ((k+2)E)$ and while, we could a posteriori replace $k+2$ by $k+1$ we don't need to since,

\begin{claim}\thlabel{Dc:claim7}
Suppose for some $i \geq 1$ and $\overline\partial^{\vee} \nu$ any distribution polynomial in $\theta$ of degree at most $k$, equal to the right hand side of \eqref{Dc99} with $h \, dz$ given by \eqref{Dc103} the latter is a section over $U = K^0 \times [0,X)$ of $\widetilde\omega_{\widetilde\Delta} ((k+i)E)$ then $H$ restricts over $U \backslash E$ to a section of the same restricted from $U$.
\end{claim}

\begin{proof}
We can recover $H$ from $h$ by way of,
\begin{equation}
\label{Dc105}
H = \left( \frac{\partial^{2k+2}}{\partial r^{k+1} \partial \theta^{k+1}} \right)^{\!\!\vee} H_{k+1} \Longrightarrow \left( \frac{H}{dz} \right) \biggl\vert_{\widetilde\Delta \backslash E} = \frac{\partial^{k+1}}{\partial r^{k+1} \partial \theta^{k+1}} (r^{k+1} h)
\end{equation}
thanks to \eqref{Dc99}, \eqref{Dc102} and the polynomial hypothesis in $\nu$. On the other hand,
\begin{equation}
\label{Dc106}
r^{k+1} \frac{\partial^{k+1}}{\partial r^{k+1}} = ({\rm poly})_{k+1} \left( r \frac{\partial}{\partial r} \right)
\end{equation}
for some polynomial of degree $(k+1)$ with constant coefficients. As such for another polynomial with constant coefficients of bi-degree $2k+2$,
\begin{equation}
\label{Dc107}
z^{k+i} \left( \frac{H}{dz} \right)\biggl\vert_{U \backslash E} = z^{i-1} \, {\rm poly} \!\left( z \frac{\partial}{\partial z} , \overline z \frac{\partial}{\partial z} \right) (e^{-(k+1)\theta} z^{k+i} h) \, .
\end{equation}

Equally there are also polynomials of degree $b$ with constant coefficients depending on $a$ such that for any $a,b \geq 0$
\begin{equation}
\label{Dc108}
z^a \left( z \frac{\partial}{\partial z} \right)^{\!b} = {\rm poly} \!\left( z \frac{\partial}{\partial z} \right) z^a
\end{equation}
while powers of $e^{\theta}$ are eignevectors of both $z \partial / \partial z$ and $\overline z \partial / \partial \overline z$ so altogether there is yet another polynomial with constant coefficients such that,
\begin{equation}
\label{Dc109}
z^{k+i} \left( \frac{H}{dz} \right) \biggl\vert_{U \backslash E} = e^{-(k+i)\theta} \, {\rm poly} \!\left( z \frac{\partial}{\partial z} \right) (z^{k+i} h) \, .
\end{equation}
As such \thref{Dc:claim7} will follow from,

\begin{claim}\thlabel{Dc:claim8}
$z \frac{\partial}{\partial z}$ is an endomorphism of the sheaf $\widetilde{\mathcal O}_{\widetilde\Delta}$.
\end{claim}

\begin{proof}
It will suffice to prove that the operator maps sections to sections over the open $U$ of \thref{Dc:claim7}, so to this end fix compacts,
\begin{equation}
\label{Dc110}
C = K \times [0,Y] \subset\subset C' = K' \times [0,Y'] \subset\subset U
\end{equation}
with $K,K'$ closed intervals, and a section $h$ of $\widetilde{\mathcal O}_{\widetilde\Delta}$ over $U$. Then for $C \ni s \ne 0$, \eqref{Dc76}-\eqref{Dc78} and Cauchy's theorem afford,
\begin{equation}
\label{Dc111}
sh(s) = \frac1{2\pi} \int_{\partial C' \backslash E} \frac{zh(z) \, dz}{(z-s)}
\end{equation}
while, at least for $C'$ chosen randomly $h(z) \, dz$ is, by hypothesis, absolutely integrable over $\partial C' \backslash E$. As such we may legitimately differentiate in $s$ under the integral sign in \eqref{Dc111}, and, of course, $sh'(s) = (sh)'-h$, so, altogether it will suffice to show,
\begin{equation}
\label{Dc112}
\int_{s \in C} dr \, d\theta \int_{\partial C' \backslash E} \left\vert \frac{zh}{(z-s)^2} \right\vert \vert dz \vert < \infty \, .
\end{equation}
On the other hand if $s \in [0,Y]$ and $z$ of modulus at most $Y'$ is on a segment with argument $\kappa$ then,
\begin{equation}
\label{Dc113}
\int_0^Y \frac{d \vert s \vert}{\vert z-s \vert^2} \ll 0 \left( \frac1{\vert {\rm Im} \, z \vert} \right) = 0 \left( \frac1{\vert z \vert^{\vert \kappa \vert}} \right)
\end{equation}
and in practice $\kappa$ is the distance between $K$ and $K'$ in \eqref{Dc110}, so \eqref{Dc112} admits an estimate, of the order of,
\begin{equation}
\label{Dc114}
\frac1{{\rm dist} \, (K,K')} \int_{\partial C' \backslash E} \vert h \vert \cdot \vert dz \vert
\end{equation}
which, again for $C'$ random, is certainly finite.
\end{proof}

This concludes the proof of \thref{Dc:claim4}.
\end{proof}

Equally, we thus conclude the proof of \thref{Dc:Fact4}.
\end{proof}


\newpage

\bibliography{elvis}{}

\providecommand{\bysame}{\leavevmode\hbox to3em{\hrulefill}\thinspace}
\providecommand{\MR}{\relax\ifhmode\unskip\space\fi MR }
\providecommand{\MRhref}[2]{%
  \href{http://www.ams.org/mathscinet-getitem?mr=#1}{#2}
}
\providecommand{\href}[2]{#2}
\begin{thebibliography}{Cam78}

\bibitem[BE02]{adamExtra}
Xavier Buff and Adam~L. Epstein, \emph{A parabolic
  {P}ommerenke-{L}evin-{Y}occoz inequality}, Fund. Math. \textbf{172} (2002),
  no.~3, 249--289. \MR{1898687}

\bibitem[Ber02]{berg}
Walter Bergweiler, \emph{On the number of critical points in parabolic basins},
  Ergodic Theory Dynam. Systems \textbf{22} (2002), no.~3, 655--669.
  \MR{1908548}

\bibitem[Cam78]{Camacho}
C\'{e}sar Camacho, \emph{On the local structure of conformal mappings and
  holomorphic vector fields in {${\bf C}^{2}$}}, Journ\'{e}es {S}inguli\`eres
  de {D}ijon ({U}niv. {D}ijon, {D}ijon, 1978), Ast\'{e}risque, vol.~59, Soc.
  Math. France, Paris, 1978, pp.~3, 83--94. \MR{542732}

\bibitem[CZ21]{olivia}
Olivia Carmello and Riccardo Zanfa, \emph{Relative topos theory via stacks},
  \htmladdnormallink{\url{https://arxiv.org/abs/2107.04417}}{https://arxiv.org/abs/2107.04417},
  (2021).

\bibitem[DH93]{DH}
Adrien Douady and John~H. Hubbard, \emph{A proof of {T}hurston's topological
  characterization of rational functions}, Acta Math. \textbf{171} (1993),
  no.~2, 263--297. \MR{1251582}

\bibitem[Dou87]{D}
Adrien Douady, \emph{Disques de {S}iegel et anneaux de {H}erman}, no. 152-153,
  1987, S\'{e}minaire Bourbaki, Vol. 1986/87, pp.~4, 151--172 (1988).
  \MR{936853}

\bibitem[DS49]{DS}
Jean Dieudonn\'{e} and Laurent Schwartz, \emph{La dualit\'{e} dans les espaces
  {$\mathcal{F}$} et {$(\mathcal{L}\mathcal{F})$}}, Ann. Inst. Fourier
  (Grenoble) \textbf{1} (1949), 61--101 (1950). \MR{38553}

\bibitem[Eps]{adamBuff}
Adam Epstein, \emph{Transversality in holomorphic dynamics},
  \htmladdnormallink{\url{https://homepages.warwick.ac.uk/~mases/Transversality.pdf}}{https://homepages.warwick.ac.uk/~mases/Transversality.pdf}.

\bibitem[Eps99]{adam}
\bysame, \emph{Infinitesimal thurston rigidity and the fatou-shishikura
  inequality},
  \htmladdnormallink{\url{https://arxiv.org/abs/9902158}}{https://arxiv.org/abs/9902158},
  (1999).

\bibitem[Eps12]{adamHub}
\bysame, \emph{Transversality principles in holomorphic dynamics},
  \htmladdnormallink{\url{https://icerm.brown.edu/materials/Slides/sp-s12-w1/Transversality_Principles_in_Holomorphic_Dynamics_\%255D_Adam_Epstein,_University_of_Warwick.pdf}}{https://icerm.brown.edu/materials/Slides/sp-s12-w1/Transversality_Principles_in_Holomorphic_Dynamics_\%255D_Adam_Epstein,_University_of_Warwick.pdf},
  (2012).

\bibitem[Gar22]{jacopo}
Jacopo Garofali, \emph{Dynamical sheaves},
  \htmladdnormallink{\url{https://arxiv.org/abs/2211.05260}}{https://arxiv.org/abs/2211.05260},
  (2022).

\bibitem[\htmladdnormallink{SGA1}{http://arxiv.org/pdf/math/0206203v2}]{sga1}
Alexander Grothendieck, \emph{Rev\^etements \'etales et groupe fondamental
  ({SGA} 1)}, Lecture Notes in Mathematics, Vol. 224, Springer-Verlag, Berlin,
  1971, S{\'e}minaire de g{\'e}om{\'e}trie alg{\'e}brique du Bois Marie
  1960--61, Augment{\'e} de deux expos{\'e}s de M. Raynaud. \MR{MR0354561}

\bibitem[\htmladdnormallink{SGA-II}{http://arxiv.org/pdf/math/0511279v1}]{sga2}
\bysame, \emph{Cohomologie locale des faisceaux coh\'erents et th\'eor\`emes de
  {L}efschetz locaux et globaux ({SGA} 2)}, Documents Math\'ematiques (Paris)
  [Mathematical Documents (Paris)], 4, Soci\'et\'e Math\'ematique de France,
  Paris, 2005, S{\'e}minaire de G{\'e}om{\'e}trie Alg{\'e}brique du Bois Marie,
  1962, Augment{\'e} d'un expos{\'e} de Mich{\`e}le Raynaud. With a preface and
  edited by Yves Laszlo, Revised reprint of the 1968 French original.
  \MR{\htmladdnormallink{2171939
  (2006f:14004)}{http://www.ams.org/mathscinet-getitem?mr=2171939}}

\bibitem[Lur18]{gir}
Jacob Lurie, \emph{Giraud's theorem},
  \htmladdnormallink{\url{https://www.math.ias.edu/~lurie/278xnotes/Lecture10-Giraud.pdf}}{https://www.math.ias.edu/~lurie/278xnotes/Lecture10-Giraud.pdf},
  (2018).

\bibitem[Mil00]{milnor2}
John Milnor, \emph{On rational maps with two critical points}, Experiment.
  Math. \textbf{9} (2000), no.~4, 481--522. \MR{1806289}

\bibitem[Mil06]{milnor}
\bysame, \emph{Dynamics in one complex variable}, third ed., Annals of
  Mathematics Studies, vol. 160, Princeton University Press, Princeton, NJ,
  2006. \MR{2193309}

\bibitem[Nic05]{verdier}
Liviu Nicolaescu, \emph{The derived category of sheaves and the {P}oincar\'e
  {V}erdier duality},
  \htmladdnormallink{\url{https://www3.nd.edu/~lnicolae/Verdier-ams.pdf}}{https://www3.nd.edu/~lnicolae/Verdier-ams.pdf},
  (2005).

\bibitem[RR70]{ramis}
Jean-Pierre Ramis and Gabriel Ruget, \emph{Complexe dualisant et
  th\'{e}or\`emes de dualit\'{e} en g\'{e}om\'{e}trie analytique complexe},
  Inst. Hautes \'{E}tudes Sci. Publ. Math. (1970), no.~38, 77--91. \MR{279338}

\bibitem[Shi87]{mitsu}
Mitsuhiro Shishikura, \emph{On the quasiconformal surgery of rational
  functions}, Ann. Sci. \'{E}cole Norm. Sup. (4) \textbf{20} (1987), no.~1,
  1--29. \MR{892140}

\bibitem[Yoc88]{yoccoz}
Jean-Christophe Yoccoz, \emph{Lin\'{e}arisation des germes de
  diff\'{e}omorphismes holomorphes de {$({\bf C}, 0)$}}, C. R. Acad. Sci. Paris
  S\'{e}r. I Math. \textbf{306} (1988), no.~1, 55--58. \MR{929279}

\end{thebibliography}
\bibliographystyle{Gamsalpha}

\newpage

\printnomenclature

\end{document}